\documentclass[12pt]{article}

\usepackage[margin=1in]{geometry}

\usepackage{amsthm,amsmath,amsfonts,amssymb}

\usepackage[authoryear]{natbib}
\usepackage{bibunits}
\defaultbibliographystyle{plainnat}
\defaultbibliography{references}

\usepackage[colorlinks,citecolor=blue,urlcolor=blue]{hyperref}

\usepackage{graphicx}

\usepackage{mathrsfs}
\usepackage{mathtools}
\usepackage{enumitem}
\usepackage{cleveref}
\crefname{assumption}{Assumption}{Assumptions}
\Crefname{assumption}{Assumption}{Assumptions}
\usepackage{bm}
\usepackage{booktabs}
\usepackage{placeins}
\usepackage{float}

\theoremstyle{plain}
\newtheorem{theorem}{Theorem}[section]
\newtheorem{proposition}[theorem]{Proposition}
\newtheorem{lemma}[theorem]{Lemma}
\newtheorem{corollary}[theorem]{Corollary}

\newtheorem*{theorem*}{Theorem}

\theoremstyle{definition}

\newtheorem{example}[theorem]{Example}

\theoremstyle{definition}
\newtheorem{remark}[theorem]{Remark}

\DeclareMathOperator{\Var}{Var}
\DeclareMathOperator{\diam}{diam}
\DeclareMathOperator{\inj}{inj}
\DeclareMathOperator{\Sec}{Sec}

\DeclareMathOperator{\tr}{tr}

\DeclareMathOperator*{\argmin}{arg\,min}

\DeclareMathOperator{\supp}{supp}

\newcommand{\cA}{\mathcal{A}}
\newcommand{\cB}{\mathcal{B}}
\newcommand{\cX}{\mathcal{X}}
\newcommand{\cF}{\mathcal{F}}
\newcommand{\cH}{\mathcal{H}}
\newcommand{\cG}{\mathcal{G}}

\newcommand{\cE}{\mathcal{E}}

\newcommand{\cD}{\mathcal{D}}
\DeclareMathOperator{\Dir}{Dir} 
\newcommand{\bR}{\mathbb{R}}
\newcommand{\bE}{\mathbb{E}}
\newcommand{\bP}{\mathbb{P}}

\newcommand{\bZ}{\mathbb{Z}}
\newcommand{\bH}{\mathbb{H}}
\newcommand{\Id}{\mathrm{Id}}
\newcommand{\dd}{\,\mathrm{d}}
\newcommand{\norm}[1]{\left\lVert #1 \right\rVert}
\newcommand{\abs}[1]{\left\lvert #1 \right\rvert}
\newcommand{\ip}[2]{\left\langle #1,\, #2 \right\rangle}

\begin{document}

\title{Harmonic Map Regression: Rate-Optimal Nonparametric Estimation on Manifolds with Topological Recovery}

\author{Xiaoyu Chen\\[4pt]
  \textit{Department of Industrial and Systems Engineering,
  University at Buffalo}\\[2pt]
  }
\date{}
\maketitle

\begin{abstract}
We study harmonic map regression, a nonparametric estimator for manifold-valued responses, that penalizes the empirical Fr\'echet risk by the Dirichlet energy.  By connecting penalized regression to the theory of harmonic maps, the estimator acquires a structural theory that parallels the classical Euclidean smoothing spline.  The Euler--Lagrange equation characterizes the solution as a piecewise-geodesic spline, an equivalent kernel controls pointwise risk at the rate $n^{-2/3}$, and the infinite-dimensional variational problem reduces exactly to a finite-dimensional optimization.  Such newly established connection reveals a topological phenomenon that has no analogue in Euclidean nonparametric regression and, to our knowledge, has not been studied in the manifold regression literature.  On manifolds whose regression curves can wrap around in topologically distinct ways, maps in distinct homotopy classes are separated by energy barriers intrinsic to the geometry of the target, and the Dirichlet penalty makes the estimator sensitive to this structure, recovering the correct topological class with probability tending to one, a phase transition we call topological recovery.  A curvature-dependent oracle inequality yields the minimax rate $n^{-2s/(2s+1)}$ for Sobolev order~$s$, matching the Euclidean constant on non-positively curved targets, while five geometric obstructions show that the full structural theory is unique to the Dirichlet energy ($s=1$).  Simulations on $S^2$, $\mathbb{H}^2$, $SO(3)$, $\mathrm{Sym}^+(2)$, and~$T^2$ corroborate the theory, and an application to wind-direction data on~$S^1$ illustrates practical advantages.
\end{abstract}

\noindent\textbf{MSC 2020 subject classifications:} Primary 62G08; secondary 62R20, 58E20, 53C43.

\noindent\textbf{Keywords:} Manifold-valued regression, Dirichlet energy, minimax optimality, smoothing spline, topological recovery.

\begin{bibunit}

\section{Introduction}\label{sec:intro}

Nonparametric regression with a Sobolev penalty is one of the most
thoroughly understood estimation problems in statistics.  When
the response is real-valued, the smoothing spline estimator that
minimizes $R_n(f)+\lambda\int|f^{(s)}|^2$ achieves the minimax
optimal rate $n^{-2s/(2s+1)}$ \citep{Stone1982}.  Beyond rate
optimality, the estimator possesses a structural theory.  The
Euler--Lagrange equation produces a polynomial spline characterization,
the Green's function of the associated differential operator serves as
an equivalent kernel, and together they yield an explicit pointwise
bias-variance decomposition \citep{Wahba1990, Silverman1984}.  This
structural chain, from the variational penalty to pointwise risk control
via PDE theory explains the mechanism by which the estimator achieves its performance.
This paper shows that penalizing by the Dirichlet energy connects
manifold-valued regression to the theory of harmonic maps, and that
this connection produces the Riemannian analogue of the entire
structural chain together with a topological phenomenon that has no
Euclidean counterpart.

In many modern applications the response takes values not in~$\bR$ but
in a Riemannian manifold $(M,g)$.  For example, directional data lie on spheres
$S^d$, covariance matrices on the manifold $\mathrm{Sym}^+(k)$ of
symmetric positive-definite matrices, and rotational measurements on
the Lie group $\mathrm{SO}(3)$.  A natural approach to nonparametric
regression in this setting is to replace the squared-error loss by
the squared geodesic distance $d_M^2$ and the Sobolev seminorm by an
intrinsic analogue, such as the Dirichlet energy
$\Dir(F)=\frac{1}{2}\int_0^1|F'|_g^2\,dt$.  However, the passage from
$\bR^D$ to a curved manifold introduces several difficulties, and it is
not clear a priori whether the Euclidean structural chain survives.

First, the Riemannian metric introduces curvature, represented by the sectional curvature.  Comparison
inequalities for geodesic triangles depend on the sign of the sectional
curvature, so standard variance bounds in~$\bR^D$ acquire a
curvature-dependent correction factor on
positively curved targets \citep{Karcher1977}.
Second, when $M$ has non-trivial fundamental group, the
Dirichlet energy develops homotopy-class-dependent lower bounds.  A map
from $S^1$ to $S^1$ with winding number~$k$ must have energy at least
$2\pi^2 k^2/L$, and this topological constraint creates a statistical
challenge with no Euclidean analogue.
\Cref{fig:winding} illustrates such topological difficulty.
When the winding number is small ($k=0.5$), all methods succeed;
but when the data wrap fully around $S^1$ ($k=2$), log-map-based
methods collapse to near-constant fits because of cut-locus
discontinuities, whereas the Dirichlet energy penalty correctly
recovers the winding number.

\begin{figure}[t]
  \centering
  \includegraphics[width=1\linewidth]{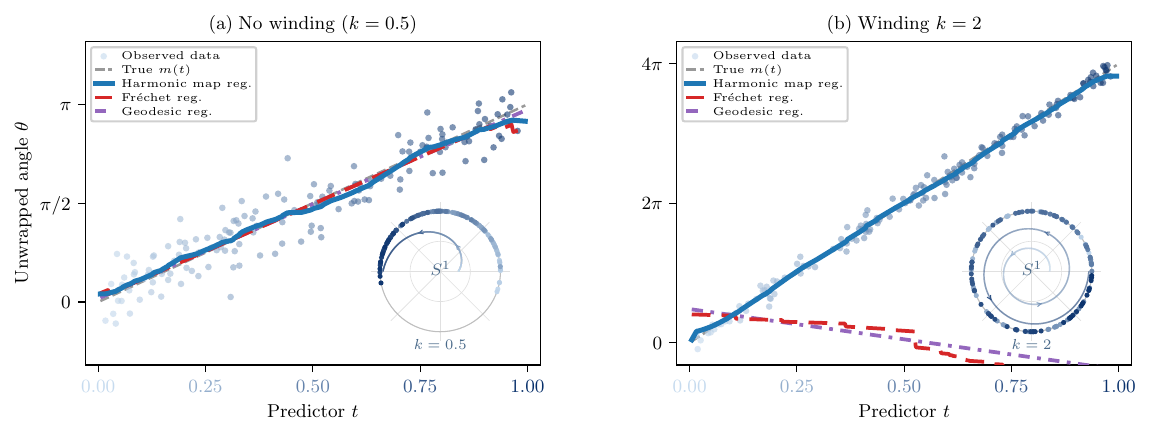}
  \caption{Winding ambiguity in circle-valued regression ($n=150$, $\sigma=0.3$).
  Main panels plot the unwrapped angle $\theta$ against the predictor~$t$,
  with colored dots showing noisy observations, the dashed gray line
  the true regression function $m(t)=2\pi k t$, and solid/dashed curves
  the fitted estimators.  Insets visualize the winding on~$S^1$ by plotting
  the angle $m(t)$ at increasing radius to reveal the number of
  wraps, with color varying from
  light blue (early~$t$) to dark blue (late~$t$).
  (a)~Low winding ($k=0.5$): all methods succeed.
  (b)~High winding ($k=2$): Fr\'echet and geodesic regression
  collapse to near-constant fits, and only the proposed estimator
  (harmonic map regression, see~\eqref{eq:estimator-intro})
  recovers the correct winding number.}
  \label{fig:winding}
\end{figure}

Several approaches to manifold-valued regression have been developed.
Geodesic regression \citep{Fletcher2013, MuralidharanFletcher2012,
Shi2012} fits $t\mapsto\exp_{y_0}(tv)$, which cannot represent
nonlinear regression functions.
Intrinsic regression models for Riemannian symmetric spaces
\citep{CorneaZhu2017} exploit the algebraic structure of
symmetric spaces to build semiparametric models, but their
framework does not extend to
general Riemannian targets.  Local polynomial methods for
symmetric positive-definite matrices \citep{YuanZhu2012}
are similarly restricted in scope.
The Fr\'echet regression framework, initiated by
\citet{Petersen2019} and developed extensively by M\"uller and
collaborators provides
a systematic theory of regression for random objects in general
metric spaces.
Fr\'echet regression \citep{Petersen2019, Schotz2022}
achieves $\sqrt{n}$-consistency but imposes no smoothness structure.
\citet{Schotz2022nonparametric} extends this to kernel-smoothed
Fr\'echet regression with convergence rates on NPC targets,
a construction related to the kernel-smoothed estimator in \S6
in the Appendix.
\citet{LinMuller2021TV} introduced TV-regularized Fr\'echet regression
in Hadamard spaces, establishing the rate $n^{-2/3}$. However,
the $L^1$-type penalty does not admit an Euler--Lagrange equation,
pointwise risk bounds are limited to the local averaging rate
$n^{-1/2}$, and simple connectivity of Hadamard spaces excludes
targets with non-trivial fundamental group.
Work on Riemannian spline interpolation
\citep{Noakes1989, Machado2010, Kim2021spline},
intrinsic polynomial splines with higher-order covariant
penalties \citep{HinkleFletcherJoshi2014},
trajectory models on rotation groups
\citep{SuJuddWuFletcherSrivastava2014},
and graph-Laplacian estimation \citep{SteinkeHein2010}
develop geometric structure without establishing convergence rates.
The higher-order penalties of \citet{HinkleFletcherJoshi2014}
correspond to $s\ge 2$, for which the structural theory does not
extend (\Cref{sec:obstructions}).
In the Euclidean setting, the smoothing spline owes its effectiveness
not merely to rate optimality but to a structural chain. Specifically, the
Euler--Lagrange equation yields a spline characterization, whose
Green's function serves as an equivalent kernel and delivers pointwise
risk control. Such structural chain explains why the estimator works.
However, none of the above manifold regression formulations produces such structural understanding.

In this paper, we study the estimator
\begin{equation}\label{eq:estimator-intro}
  \hat F_n \;=\; \argmin_{F\in\cH^s(I,M)}\;
  \bigl\{R_n(F) + \lambda_n\, J_s(F)\bigr\},
\end{equation}
where $I=[0,1]$ without loss of generality, $R_n(F)=n^{-1}\sum_{i=1}^n d_M^2(Y_i,F(t_i))$ is the
empirical Fr\'echet risk for observations $(t_i,Y_i)\in I\times M$,
$\cH^s(I,M)$ is the Sobolev space of maps into~$M$ (defined via an
isometric Nash embedding), and $J_s(F)=|F|_{H^s}^2$ is the $s$-th order
Sobolev energy.  When $s=1$,
$J_1(F)=2\Dir(F)=\int_0^1|F'|_g^2\,dt$ is twice the Dirichlet
energy, and the critical points of~$\Dir$ are geodesics
\citep{EellsSampson1964}.  We call~\eqref{eq:estimator-intro}
\emph{harmonic map regression}.

At $s=1$ the Dirichlet penalty connects the regression problem to the
theory of harmonic maps
\citep{EellsSampson1964, Schoen1984, Helein2002, KorevaarSchoen1993},
and this connection yields a structural theory that parallels the
Euclidean smoothing spline.  Four families of results make this precise.

\emph{Euler--Lagrange equation and geodesic spline.}
On the geometric side (\Cref{sec:regularity}), the estimator
satisfies a perturbed harmonic map equation (\Cref{thm:EL}).  Between
consecutive design points $t_i$ it is a geodesic in~$M$, with derivative
jumps proportional to the data residuals (\Cref{thm:regularity}).

\emph{Equivalent kernel and pointwise risk.}
The geodesic spline representation yields an equivalent-kernel pointwise
risk bound of order $n^{-2/3}$ (\Cref{thm:pointwise}).
Moreover, the geodesic spline structure implies an exact
finite-dimensional reduction (\Cref{prop:exact-reduction}), under which the
infinite-dimensional continuous problem~\eqref{eq:estimator-intro} is
equivalent to a finite-dimensional optimization on $M^n$ with
zero discretization error, closing the theory-computation gap
completely.

\emph{Topological phase transition.}
On the topological side (\Cref{sec:topology}), when the target manifold
has non-trivial fundamental group ($\pi_1(M)\neq 0$, e.g., $SO(3)$, $T^2$),
maps in distinct homotopy classes are separated by energy barriers intrinsic
to the geometry of~$M$.  The minimum Dirichlet energy in each class is
strictly positive and proportional to the squared length of the shortest
geodesic representative (\Cref{thm:topological}).  The Dirichlet penalty
makes the estimator sensitive to this structure.  Any estimator confined to
the wrong homotopy class incurs excess risk of order at least $\sqrt{\lambda_n}$,
strictly larger than the minimax rate (\Cref{thm:topo-cost}), while the
penalized estimator recovers the correct class with probability tending to
one (\Cref{thm:topo-phase}).

\emph{Rate optimality.}
The geometric penalty does not sacrifice statistical efficiency.
For any integer $s\ge 1$, we prove the oracle inequality
\begin{equation}\label{eq:oracle-intro}
  \bE\bigl[\cE(\hat F_n)\bigr]
  \;\le\; C_1\inf_{F\in\cH^s(I,M)}
  \bigl\{\cE(F)+\lambda_n J_s(F)\bigr\}
  \;+\;\frac{C_2}{n\,\lambda_n^{1/(2s)}},
\end{equation}
where $R(F)=\bE[d_M^2(Y,F(t))]$ is the population risk,
$m(t)=\argmin_{y\in M}\bE[d_M^2(Y,y)\mid t]$ is the regression
function, $\cE(F)=R(F)-R(m)$ is the excess risk, and the constants
depend on the curvature of~$M$ through a curvature factor
$\eta\in(0,1]$ (defined in \eqref{eq:curvature-factor}).
For non-positively curved targets $\eta=1$ and the constant
matches the Euclidean theory
(\Cref{thm:NPC-sharp}).  The resulting rate $n^{-2s/(2s+1)}$ is minimax
optimal (\Cref{thm:minimax}), and a high-probability version is given
in \Cref{cor:hp-oracle}.
The minimax rate itself holds for any Sobolev-type penalty satisfying
standard entropy conditions.  What distinguishes the Dirichlet energy
is the structural theory.  The geodesic spline, the equivalent kernel,
and the topological phase transition are all consequences of the
harmonic map connection and do not extend to higher-order penalties.
\Cref{sec:obstructions} makes this precise through five geometric
obstructions, including topological blindness of the bienergy
(\Cref{prop:bienergy-blind}), showing that the restriction to $s=1$
is necessary.

The formal theory assumes compact target manifolds. The noncompact
targets $\bH^d$ and $\mathrm{Sym}^+(k)$ in \Cref{sec:numerics} are
treated in bounded-region regimes where the results apply locally.
\Cref{sec:prelim} fixes notation, \Cref{sec:model} states the model,
\Cref{sec:main} establishes rate optimality, \Cref{sec:regularity}
develops the geodesic spline, equivalent kernel, and exact
finite-dimensional reduction,
\Cref{sec:topology} presents the topological phase transition,
\Cref{sec:obstructions} identifies the obstructions for higher-order
penalties, \Cref{sec:numerics} presents numerical experiments, and
\Cref{sec:discussion} discusses open problems.  Full proofs appear
in the Appendix.

\section{Geometric and statistical preliminaries}\label{sec:prelim}

Standard Riemannian notation follows \citet{doCarmo1992} and
\citet{Lee2018}.  We write $(M,g)$ for a smooth compact connected
Riemannian $d$-manifold without boundary, $d_M$ for the geodesic
distance, $\exp_y\colon T_yM\to M$ for the exponential map,
$\log_y$ for its local inverse, $\Sec_M$ for the sectional
curvature, $\inj(M)$ for the injectivity radius, and
$B_M(y,r)$ for the open geodesic ball of radius~$r$.
Informally, $\exp_y(v)$ follows the shortest path (geodesic)
starting at~$y$ with velocity~$v$ for unit time, and $\log_y(p)$
returns the initial velocity of the geodesic from~$y$ to~$p$.
The injectivity radius $\inj(M)$ is the largest radius~$r$ such
that, for every $y\in M$, the exponential map $\exp_y$ is a
diffeomorphism on the ball of radius~$r$ in~$T_yM$; equivalently,
within distance~$\inj(M)$ of any point, shortest paths are unique
and the logarithmic map $\log_y$ is smooth.
The sectional curvature $\Sec_M$ measures how geodesics spread:
positive curvature (e.g., spheres) makes nearby geodesics converge,
negative curvature (e.g., hyperbolic space) makes them diverge, and
zero curvature corresponds to flat geometry.

We fix the following additional notation.  The second fundamental form of
$M\hookrightarrow\bR^D$ is denoted~$A$, which measures how $M$ curves
within~$\bR^D$ and vanishes when $M$ is an affine subspace.
The orthogonal projection onto $T_yM$ is $\Pi_{T_yM}$.  For $I=[0,1]$ and integer $s\ge 0$,
$H^s(I,\bR^D)=W^{s,2}(I,\bR^D)$ is the $L^2$-based Sobolev space of
functions possessing $s$ weak derivatives, with norm
$\norm{F}_{H^s}^2=\sum_{\abs\alpha\le s}\norm{D^\alpha F}_{L^2}^2$ and
seminorm $\abs{F}_{H^s}^2=\sum_{\abs\alpha=s}\norm{D^\alpha F}_{L^2}^2$.
Here $\alpha$ is a multi-index, $\abs{\alpha}$ is its order, and
$D^\alpha$ denotes the corresponding weak partial derivative; on the
one-dimensional domain~$I$, a multi-index reduces to a single
non-negative integer~$k$, and $D^\alpha F=F^{(k)}$ is the $k$-th
weak derivative.
We write $a_n\lesssim b_n$ if $a_n\le Cb_n$ for a constant~$C$ independent
of~$n$, and $a_n\asymp b_n$ if $a_n\lesssim b_n\lesssim a_n$.

\subsection{Riemannian geometry}\label{subsec:riem}

The \emph{convexity radius} of~$M$ is
\[
  \rho_M \;=\; \min\!\Bigl\{\tfrac{1}{2}\inj(M),\;
  \tfrac{\pi}{2\sqrt{\kappa_+}}\Bigr\},
\]
where $\kappa_+=\max(\sup\Sec_M,0)$ is the positive part of the
sectional curvature upper bound and $\pi/(2\sqrt{0})=+\infty$.
Within a ball of radius~$\rho_M$, the squared distance
$d_M^2$ is jointly smooth and convex, so the Fr\'echet mean is
unique and the manifold behaves much like Euclidean space
\citep{Karcher1977, Afsari2011}.

By the Nash embedding theorem \citep{Nash1956}, we fix a smooth isometric
embedding $\iota\colon M\hookrightarrow\bR^D$ and identify $M$ with
$\iota(M)$.  For a smooth curve $F\colon I\to M\subset\bR^D$, the
intrinsic Dirichlet energy coincides with the extrinsic one:
\begin{equation}\label{eq:energy-extrinsic}
  \Dir(F) \;=\;\frac{1}{2}\int_0^1|F'(t)|_g^2\,dt
  \;=\;\frac{1}{2}\int_0^1|F'(t)|_{\bR^D}^2\,dt,
\end{equation}
since the embedding is isometric.

A \emph{geodesic} in~$M$ is a constant-speed curve of zero
intrinsic acceleration, the manifold analogue of a straight line.
A smooth curve $F\colon I\to M$ is called a \emph{harmonic map} if it
is a critical point of the Dirichlet energy, i.e., it satisfies the
Euler--Lagrange equation $\tau(F)=0$, where the \emph{tension field}
$\tau(F)=F'' + A(F)(F',F')$ measures the failure of~$F$ to be a
geodesic \citep{EellsSampson1964}.  Here $F''$ is the ambient
acceleration in~$\bR^D$, and $A(y)(u,v)\in(T_yM)^\perp$ is the
second fundamental form evaluated at $y\in M$ on tangent
vectors $u,v\in T_yM$; the term $A(F)(F',F')$ is the normal
component of the acceleration, so $\tau(F)=0$ says that all
acceleration is absorbed by the curvature of~$M$.
In one dimension, $\tau(F)=0$ if and only if $F$ is a geodesic, so
harmonic maps from an interval into~$M$ are precisely geodesics.
When $M=\bR^D$, $A$ vanishes and this reduces to $F''=0$.  The existence and
regularity theory of harmonic maps, developed by
\citet{EellsSampson1964}, \citet{Schoen1984}, and
\citet{Helein2002}, plays a central role in the structural analysis of
our estimator (Sections~\ref{sec:regularity}--\ref{sec:topology}).

The next lemma is a specialization of
variance-convexity results in NPC spaces
\citep[Proposition~4.4]{Sturm2003} and local Hessian-comparison
results under positive curvature
\citep[Theorem~1.2]{Karcher1977}.
We state it in the form needed for the statistical analysis.

\begin{lemma}[Curvature-dependent variance inequality]
\label{lem:variance-ineq}
  Let $\mu$ be a probability measure on~$M$ with Fr\'echet mean~$\bar y$,
  supported in $B_M(\bar y,r_0)$ with
  $r_0<\rho_M$.  Then for
  every $y$ strictly within the open ball $B_M(\bar y,\rho_M)$,
  \begin{equation}\label{eq:variance-decomp}
    \int_M d_M^2(z,y)\dd\mu(z)
    \;\ge\; \eta(\kappa_+,r_0)\;d_M^2(\bar y,y)
    \;+\;\int_M d_M^2(z,\bar y)\dd\mu(z),
  \end{equation}
  where the \emph{curvature factor} is
  \begin{equation}\label{eq:curvature-factor}
    \eta(\kappa,r) \;=\;
    \begin{cases}
      \dfrac{\sqrt\kappa\, r}{\tan(\sqrt\kappa\, r)}
      & \text{if }\kappa>0,\\[6pt]
      1 & \text{if }\kappa\le 0.
    \end{cases}
  \end{equation}
  For $\kappa>0$, $\eta(\kappa,r)\in(0,1)$ and
  $\eta(\kappa,r)=1-\frac{\kappa r^2}{3}+O(\kappa^2 r^4)$ as $\kappa r^2\to 0$.
\end{lemma}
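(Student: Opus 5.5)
The plan is to prove \eqref{eq:variance-decomp} pointwise in $z$ and then integrate. There are two cases. For $\kappa_+\le 0$ the pointwise bound follows from the CAT(0) inequality \citep[Proposition~4.4]{Sturm2003}. For $\kappa_+>0$ it should follow from Toponogov comparison with the model sphere of curvature $\kappa_+$. In both cases the key input is the first-order condition at the Fr\'echet mean. Since $\mu$ is supported in $B_M(\bar y,r_0)$ and $r_0<\rho_M$, the function $f(y)=\int d_M^2(z,y)\dd\mu(z)$ is smooth near $\bar y$ and $\bar y$ minimizes it, so
\[
  \int_M \log_{\bar y}(z)\dd\mu(z)=0 .
\]
Fix $y\in B_M(\bar y,\rho_M)$ and write $v=\log_{\bar y}(y)$, $s=|v|=d_M(\bar y,y)$. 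For each $z$ in the support, put $a=d_M(z,\bar y)\le r_0$ and let $\theta$ be the angle at $\bar y$ between $\log_{\bar y}z$ and $v$.

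The core step is a \emph{pointwise} inequality that depends only on $a$, not on the distances $d_M(z,\gamma(t))$ along the geodesic $\gamma$ from $\bar y$ to $y$. Integrating a Hessian lower bound along $\gamma$ would instead produce the factor $\eta$ at the enlarged radius, which is weaker than what the lemma states. The inequality I aim for is
\[
  d_M^2(z,y)\;\ge\; a^2-2\ip{\log_{\bar y}z}{v}+\eta(\kappa_+,a)\,s^2 .
\]
All points lie in a convex ball of radius below $\pi/(2\sqrt{\kappa_+})$, so the hinge version of Toponogov's theorem for $\Sec\le\kappa_+$ applies. Thus $d_M(z,y)\ge \tilde d$, where $\tilde d$ is the third side of the spherical hinge with sides $a$, $s$ and included angle $\theta$. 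Rescaling to $\kappa_+=1$, $\tilde d$ is given by
\[
  \cos\tilde d=\cos a\cos s+\sin a\sin s\cos\theta .
\]
The problem therefore reduces to a two-variable calculus inequality on the unit sphere:
\[
  \tilde d^{\,2}\ge a^2-2as\cos\theta+\tfrac{a}{\tan a}\,s^2 .
\]
This is the main obstacle, and I would attack it directly. Set $\phi(s)=\tilde d(s)^2$. Then $\phi(0)=a^2$ and $\phi'(0)=-2a\cos\theta$. The coefficient $a\cot a$ is exactly half the tangential Hessian of $\tilde d^{\,2}$ at distance $a$, which is the smallest eigenvalue there. So the inequality holds to second order, and it remains to show that the exact remainder $\phi(s)-a^2+2as\cos\theta$ never falls below $a\cot a\,s^2$. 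I would try to prove monotonicity in $s$ of
\[
  \frac{\phi(s)-a^2+2as\cos\theta}{s^2}
\]
on $a+s<\pi$. The case $\theta=\pi$ (moving directly away from $z$) is exact, since there $\phi=(a+s)^2$ and the quotient equals $1\ge a\cot a$. The case $\theta=\pi/2$ is the extremal one and should be handled by the closed form $\cos\tilde d=\cos a\cos s$. If the inequality fails for large $s$, the fallback is to use $y\in B_M(\bar y,\rho_M)$, which keeps $a+s<\pi/\sqrt{\kappa_+}$, and verify it on that range.

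Granting the pointwise bound, the rest is routine. The map $a\mapsto \eta(\kappa_+,a)=\sqrt{\kappa_+}a/\tan(\sqrt{\kappa_+}a)$ is decreasing on $(0,\pi/(2\sqrt{\kappa_+}))$, so $\eta(\kappa_+,a)\ge\eta(\kappa_+,r_0)$ for all $z$ in the support. Integrating against $\mu$, the linear term vanishes by the first-order condition, which gives \eqref{eq:variance-decomp}. For $\kappa\le 0$ the hinge comparison with Euclidean space gives the same bound with coefficient $1$.

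Finally, for $\kappa>0$ and $r<\pi/(2\sqrt\kappa)$ we have $0<x/\tan x<1$ with $x=\sqrt\kappa r$, so $\eta\in(0,1)$. The expansion $x\cot x=1-x^2/3+O(x^4)$ gives $\eta=1-\kappa r^2/3+O(\kappa^2r^4)$.
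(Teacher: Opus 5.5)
The step that fails is your two-variable spherical inequality $\tilde d^{\,2}\ge a^2-2as\cos\theta+a\cot a\,s^2$. It fails already for small $s$, so neither the monotonicity plan nor the fallback on $a+s$ can rescue it. Take $\theta=\pi/2$, where $\cos\tilde d=\cos a\cos s$. Expanding in $s$ gives
\[
  \tilde d^{\,2}=a^2+a\cot a\,s^2-\frac{h(a)}{12\tan^3 a}\,s^4+O(s^6),
  \qquad h(a)=a\tan^2 a-3\tan a+3a .
\]
Here $h>0$ on $(0,\pi/2)$, because $h(0)=0$ and $h'(a)=2\tan a\,(a\sec^2 a-\tan a)>0$. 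So the quotient you want to bound below by $a\cot a$ starts at $a\cot a$ and immediately decreases. Matching the tangential Hessian at $s=0$ is all you get. Concretely, $a=s=1$ gives $\tilde d^{\,2}=\arccos(\cos^2 1)^2\approx 1.6245$, which is below $1+\cot 1\approx 1.6421$.

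The hinge comparison is an equality on the round sphere, so this is a counterexample to the lemma itself, not only to your route. On the unit $S^2$ we have $\kappa_+=1$ and $\rho_M=\pi/2$. Let $\mu=\frac12(\delta_{z_+}+\delta_{z_-})$, with $z_\pm$ on a great circle through $\bar y$ at distance $1$ on either side; then $\bar y$ is the unique Fr\'echet mean. Take $r_0=1.01$, and let $y$ lie at distance $1$ from $\bar y$ on the orthogonal great circle. The left side of \eqref{eq:variance-decomp} is $\approx 1.6245$, while the right side is $1.01\cot(1.01)+1\approx 1.6343$.

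Your diagnosis of the Hessian route is correct, and it is exactly where the paper's appendix proof breaks. That proof bounds $\mathrm{Hess}(\varphi/2)$ by $\eta(\kappa,r_0)$ using distances measured from $\bar y$, then uses this as strong convexity on all of $B_M(\bar y,\rho_M)$. What is actually true, by either argument, is the enlarged-radius inequality with $\eta\bigl(\kappa_+,r_0+d_M(\bar y,y)\bigr)$ in place of $\eta(\kappa_+,r_0)$. In your setting you get it by integrating $\mathrm{Hess}(\tilde d^{\,2}/2)\ge\eta(\kappa_+,a+\tau)$ along the side of length $s$, rather than matching only at $s=0$. Constants close to $\eta(\kappa_+,r_0)$ are available only for $y$ close to $\bar y$. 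The other pieces of your plan are fine: the first-order condition, the $\kappa_+=0$ case, the monotonicity of $\eta$, and its expansion.
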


\begin{proof}
  See Appendix~\ref{supp:lemma21}.
\end{proof}

\subsection{Sobolev spaces of manifold-valued maps}\label{subsec:sobolev}

The Nash embedding allows us to define Sobolev spaces of curves in $M$.
For integer $s\ge 1$, define
\begin{equation}\label{eq:sobolev-def}
  \cH^s(I,M) \;=\;
  \bigl\{F\in H^s(I,\bR^D):\; F(t)\in M\;\text{for all }t\in I\bigr\}.
\end{equation}
For integer $s\ge 1$, the Sobolev embedding $H^s(I,\bR^D)\hookrightarrow C(\bar I,\bR^D)$ ensures that every $H^s$ curve has a continuous representative and the pointwise constraint $F(t)\in M$ is meaningful.  The \emph{$s$-th order energy} is
$J_s(F)=\abs{F}_{H^s}^2=\sum_{\abs\alpha=s}\norm{D^\alpha F}_{L^2}^2$.
When $s=1$, $J_1(F)=\norm{F'}_{L^2}^2=2\Dir(F)$, where
$\Dir(F)$ is the Dirichlet energy.

\begin{proposition}[Compactness (Sobolev-Rellich)]
\label{prop:compactness}
  For any $E_0>0$, $\cF_{E_0}=\{F\in\cH^s(I,M):J_s(F)\le E_0\}$ is
  compact in $C(\bar I,\bR^D)$ and sequentially weakly compact in
  $H^s(I,\bR^D)$.
\end{proposition}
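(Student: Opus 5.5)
The plan is to combine a uniform $H^s$ bound, obtained from the seminorm bound and compactness of $M$, with Rellich--Kondrachov compactness, and then to check that the pointwise constraint $F(t)\in M$ passes to the limit.

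\emph{Step 1: uniform $H^s$ bound.} Since $M$ is compact, $\iota(M)$ lies in a ball of radius $R_M$ in $\bR^D$. Hence $\norm{F}_{L^2}\le R_M$ for every $F\in\cF_{E_0}$. The Gagliardo--Nirenberg (Ehrling) interpolation inequality on the bounded interval $I$ gives
\[
\norm{F^{(k)}}_{L^2}\le C\bigl(\abs{F}_{H^s}+\norm{F}_{L^2}\bigr),\qquad 0\le k\le s.
\]
Alternatively, subtract the degree-$(s-1)$ Taylor polynomial and apply the Poincaré inequality iteratively. Either way, $\norm{F}_{H^s}^2\le C(s)(E_0+R_M^2)$ uniformly on $\cF_{E_0}$, so $\cF_{E_0}$ is bounded in $H^s(I,\bR^D)$.

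\emph{Step 2: weak compactness.} $H^s(I,\bR^D)$ is a Hilbert space, so bounded sets are weakly sequentially precompact. Every sequence $F_j\in\cF_{E_0}$ therefore has a subsequence with $F_j\rightharpoonup F$ in $H^s$.

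\emph{Step 3: strong convergence in $C(\bar I)$.} Take the same subsequence. The embedding $H^s(I)\hookrightarrow C(\bar I)$ is compact for $s\ge1$. This follows from Arzelà--Ascoli: boundedness in $H^1$ gives the Hölder-$\tfrac12$ estimate $\abs{F(t)-F(u)}\le\norm{F'}_{L^2}\abs{t-u}^{1/2}$ together with uniform boundedness. A compact operator maps weakly convergent sequences to strongly convergent ones, so $F_j\to F$ uniformly. Also, the weak limit and the uniform limit coincide, since both convergences imply convergence in distributions.

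\emph{Step 4: closedness of the constraint and the energy bound.} This is the step that needs care, because both the manifold constraint and the energy bound must survive the limit. Since $M$ is closed in $\bR^D$ and $F_j(t)\to F(t)$ for every $t$, we get $F(t)\in M$ for all $t$. The seminorm $J_s$ is a continuous convex functional on $H^s$, so it is weakly lower semicontinuous; equivalently, $F_j^{(s)}\rightharpoonup F^{(s)}$ in $L^2$ and the norm is weakly lsc. This gives $J_s(F)\le\liminf J_s(F_j)\le E_0$, so $F\in\cF_{E_0}$.

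\emph{Conclusion.} Every sequence in $\cF_{E_0}$ has a subsequence converging weakly in $H^s$ and uniformly to a limit in $\cF_{E_0}$. This is sequential weak compactness in $H^s$. Since $C(\bar I,\bR^D)$ is a metric space, sequential compactness there is the same as compactness, which gives compactness of $\cF_{E_0}$ in $C(\bar I,\bR^D)$.
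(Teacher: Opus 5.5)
Your proof is correct and takes essentially the paper's route: a uniform $H^s$ bound from the $L^\infty$ bound given by compactness of $M$ together with $J_s(F)\le E_0$, followed by Rellich--Kondrachov and closedness of $M\subset\bR^D$ (the paper cites Poincar\'e where you cite Gagliardo--Nirenberg/Ehrling, which serves the same purpose). Your Step~4 also makes explicit the weak lower semicontinuity of $J_s$, which is needed to keep the limit inside $\cF_{E_0}$ and which the paper's two-line proof leaves implicit.
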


\begin{proof}
  See Appendix~\ref{supp:prop22}.
\end{proof}

Let $(t,Y)\sim P$ on $I\times M$.  The \emph{conditional Fr\'echet
mean}, \emph{population risk}, and \emph{conditional variance} are
\begin{gather*}
  m(t)=\argmin_{y\in M}\bE\bigl[d_M^2(Y,y)\mid t\bigr],\qquad
  R(F)=\bE\bigl[d_M^2(Y,F(t))\bigr],\\
  \sigma^2(t)=\bE\bigl[d_M^2(Y,m(t))\mid t\bigr].
\end{gather*}

\section{The harmonic map regression model}\label{sec:model}

We now state the formal assumptions and define the estimator.

\subsection{Assumptions}\label{subsec:assumptions}

The predictor domain is a compact interval $I\subset\bR$. Without loss
of generality we take $I=[0,1]$ throughout.  The target $(M,g)$ is a
smooth, compact, connected Riemannian manifold without boundary, with
$\dim M=d$.  Write
$\sigma_0^2=\sup_{t\in I}\sigma^2(t)$ for the supremum of the
conditional variance. On a compact target with bounded noise support,
$\sigma_0^2\le r_0^2<\infty$.
The following conditions are imposed on the model.

\begin{enumerate}[label=\textup{(A\arabic*)},ref=\textup{A\arabic*},leftmargin=*,nosep]
  \item\label{ass:curvature}
    \emph{Curvature bound.}\;
    $\abs{\Sec_M}\le\kappa$ for some $\kappa\ge 0$.
    Write $\kappa_+=\max(\sup\Sec_M,0)\le\kappa$ for the positive part;
    $\kappa_+$ enters the curvature
    factor~$\eta(\kappa_+,r_0)$ defined in \eqref{eq:curvature-factor}.
  \item\label{ass:density}
    \emph{Design density.}\;
    The covariate $t$ has Lebesgue density $f$ with $0<c_f\le f\le C_f<\infty$ on $I$.
  \item\label{ass:concentration}
    \emph{Concentration.}\;
    There exists $0<r_0<\rho_M$ such that $Y\mid t$ is supported in
    $B_M(m(t),r_0)$ for every $t\in I$.
  \item\label{ass:smoothness}
    \emph{Sobolev regularity.}\;
    $m\in\cH^s(I,M)$ for an integer $s\ge 1$, with $J_s(m)\le B$.
\end{enumerate}

\begin{remark}[On (\ref{ass:concentration})]\label{rem:NPC}
  Since $\rho_M \le \frac{1}{2}\inj(M)$, (\ref{ass:concentration}) places the data within
    the injectivity radius, ensuring smoothness of the Riemannian
    $\log$ map. When $\Sec_M\le 0$ (e.g., Hadamard manifolds, SPD matrices),
  (\ref{ass:concentration}) can be replaced by
  $\bE[d_M^2(Y,y_0)\mid t]<\infty$ for some $y_0\in M$, and the
  central results hold with $\eta=1$.
  For positively curved targets, the bounded-support condition can be
  relaxed to a sub-Gaussian tail condition via a sample-splitting
  truncation procedure that preserves the minimax rate exponent.
  The estimation constant acquires a multiplicative correction
  $1/\eta(\kappa_+, 3r_n/2)=1+O(\kappa_+\sigma^2\log n)$,
  where $r_n=\sigma\sqrt{2c\log n}$ is the effective truncation radius.
  This correction grows with~$n$ but does not affect the rate exponent
  $n^{-2s/(2s+1)}$. See Appendix~\ref{supp:subgauss} for the precise statement.

  The effective restriction depends on the target geometry.  For
  $S^2$ ($\kappa_+=1$), $\rho_M=\pi/2\approx 1.57$, so the noise
  radius must satisfy $r_0<\pi/2$.  For $\mathrm{SO}(3)$ with the
  bi-invariant metric ($\kappa_+=1$, $\inj=\pi$), $\rho_M=\pi/2$.
  For flat tori $T^d=\bR^d/\Lambda$, $\kappa_+=0$ and
  $\rho_M=\frac{1}{2}\min_i L_i$ where $L_i$ are the side lengths.
  For NPC targets ($\mathrm{Sym}^+(k)$, $\bH^d$), $\eta=1$
  unconditionally and any finite $r_0$ suffices.
\end{remark}

\subsection{Estimator}\label{subsec:estimator}

With the model and assumptions in place, we can state the estimator.
Given observations $(t_i,Y_i)_{i=1}^n$ with $t_i\in I$ and $Y_i\in M$,
define the \emph{empirical risk}
$R_n(F)=n^{-1}\sum_{i=1}^n d_M^2(Y_i,F(t_i))$, which measures the average
squared geodesic distance between the observations and the fitted values.
The \emph{harmonic map regression estimator} minimizes the sum of data fit
and a regularity penalty:
\begin{equation}\label{eq:estimator}
  \hat F_n \;=\; \argmin_{F\in\cH^s(I,M)}\;
  \bigl\{R_n(F) + \lambda_n\, J_s(F)\bigr\}.
\end{equation}
The first term pulls $\hat F_n$ toward the data, while the second,
$\lambda_n J_s(F)$, controls the Sobolev regularity of the map.  The
parameter $\lambda_n>0$ balances the two objectives.  Large $\lambda_n$
produces a smooth but potentially biased fit, while small $\lambda_n$ tracks
the data closely at the expense of roughness.  The optimal choice
$\lambda_n\asymp n^{-2s/(2s+1)}$, determined by the oracle inequality in
\Cref{sec:main}, balances the bias-variance trade-off at the minimax rate.

The optimization is carried out over the manifold-valued Sobolev space
$\cH^s(I,M)$ rather than a space of continuous or measurable functions.
When $s\ge 1$, the Sobolev embedding guarantees that elements of
$\cH^s(I,M)$ are continuous, so the fidelity term $R_n(F)$ is
well defined.
As in the Euclidean case, no boundary conditions are imposed on $\hat F_n$, meaning that
the minimization is over all of~$\cH^s(I,M)$.  The estimator therefore
satisfies natural (Neumann) boundary conditions automatically,
with $\hat F_n'=0$ at the domain endpoints
(\Cref{thm:regularity}\ref{item:reg-neumann}).

\section{Oracle inequality and minimax optimality}
\label{sec:main}

This section establishes existence, a curvature-dependent oracle
inequality, and minimax optimality of the proposed estimator for any integer Sobolev order
$s\ge 1$.

\subsection{Existence}\label{subsec:existence}

The first question is whether the minimum in~\eqref{eq:estimator} is
attained.  Existence is not automatic because the Sobolev space $\cH^s(I,M)$ is
not a linear space, so the classical theory for quadratic functionals on
Hilbert spaces does not apply directly.  A minimizing sequence could, in
principle, develop oscillations that converge weakly to a map leaving~$M$.
The key ingredients are the compactness of~$M$ (which keeps values bounded),
the Rellich--Kondrachov theorem
\citep[Theorem~6.3]{Adams2003} (which, for $s\ge 1$, upgrades weak
Sobolev convergence to uniform convergence), and the weak lower
semicontinuity of the energy $J_s$ on $H^s(I,\bR^D)$
\citep{KorevaarSchoen1993}.  Together, these
guarantee that the limit of any minimizing sequence lies in $\cH^s(I,M)$
and achieves the minimum.

\begin{theorem}[Existence]\label{thm:existence}
  Under (\ref{ass:curvature}) with $s\ge 1$:
  \begin{enumerate}[label=(\alph*)]
    \item\label{item:exist-emp}
      For any $\lambda>0$ and data $(t_i,Y_i)_{i=1}^n$, a minimizer $\hat F_n$
      of $R_n+\lambda J_s$ over $\cH^s(I,M)$ exists.
    \item\label{item:exist-pop}
      Under (\ref{ass:density})--(\ref{ass:concentration}), for any $\lambda>0$,
      a minimizer of $R+\lambda J_s$ over $\cH^s(I,M)$ exists.
  \end{enumerate}
\end{theorem}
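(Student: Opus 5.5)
The plan is the direct method of the calculus of variations, carried out in the ambient space $H^s(I,\bR^D)$ and relying on \Cref{prop:compactness}. For part~\ref{item:exist-emp}, write $\Phi_n=R_n+\lambda J_s$. Since $M$ is compact, $d_M\le\diam(M)<\infty$, so $0\le\Phi_n<\infty$ on $\cH^s(I,M)$; any constant map gives $\Phi_n\le\diam(M)^2$. Set $\Phi^*=\inf\Phi_n$ and take a minimizing sequence $(F_k)$ with $\Phi_n(F_k)\le\Phi^*+1$. Then $J_s(F_k)\le E_0:=(\Phi^*+1)/\lambda$, so every $F_k$ lies in $\cF_{E_0}$.

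The sequence is also bounded in the full $H^s$ norm. Because $M\subset\bR^D$ is compact, $\norm{F_k}_{L^2}\le\sup_{y\in M}|y|$. The intermediate derivatives are controlled by the interpolation (Gagliardo--Nirenberg/Ehrling) inequality $\norm{F^{(j)}}_{L^2}\lesssim\norm{F}_{L^2}+\abs{F}_{H^s}$ for $0<j<s$ on the bounded interval.

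By \Cref{prop:compactness} we can pass to a subsequence with $F_k\rightharpoonup F$ weakly in $H^s(I,\bR^D)$ and $F_k\to F$ uniformly on $\bar I$. Since $M$ is closed in $\bR^D$ and the convergence is uniform (in particular pointwise), $F(t)\in M$ for every $t$, so $F\in\cH^s(I,M)$. This is exactly where a weak limit could otherwise leave $M$, and compactness together with the Rellich upgrade rules it out.

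Next we check lower semicontinuity of each term. The fidelity term is continuous along the sequence: $d_M$ is continuous on $M\times M$, and $F_k(t_i)\to F(t_i)$ for each $i$, so $R_n(F_k)\to R_n(F)$. The penalty $J_s(F)=\norm{F^{(s)}}_{L^2}^2$ is a continuous convex functional of the $s$-th weak derivative. Weak convergence in $H^s$ gives $F_k^{(s)}\rightharpoonup F^{(s)}$ weakly in $L^2$, and weak lower semicontinuity of the Hilbert norm yields $J_s(F)\le\liminf_k J_s(F_k)$. Combining the two, $\Phi_n(F)\le\liminf_k\Phi_n(F_k)=\Phi^*$, so $F$ is a minimizer.

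For part~\ref{item:exist-pop} the argument is identical with $R$ in place of $R_n$. The only new point is continuity of $R$ along the subsequence. We have $|d_M^2(Y,F_k(t))-d_M^2(Y,F(t))|\le 2\diam(M)\,d_M(F_k(t),F(t))$. Since uniform convergence in $\bR^D$ implies uniform convergence in $d_M$ (the intrinsic and extrinsic metrics are equivalent on a compact embedded submanifold), we get $|R(F_k)-R(F)|\le2\diam(M)\sup_t d_M(F_k(t),F(t))\to0$. Assumptions (\ref{ass:density})--(\ref{ass:concentration}) only guarantee that $R$ is well defined and finite, and dominated convergence is not even needed.

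I expect the main obstacle to be the step showing that the weak limit stays on $M$. This is handled entirely by the compact embedding into $C(\bar I,\bR^D)$ for $s\ge1$ from \Cref{prop:compactness}, together with the $H^s$-boundedness from the interpolation step above. Everything else is routine, and (\ref{ass:curvature}) plays no essential role beyond the standing compactness of $M$.
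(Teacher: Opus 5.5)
Your proposal is correct and follows essentially the same route as the paper's proof. Both use the constant-map bound on a minimizing sequence, Gagliardo--Nirenberg interpolation for the intermediate derivatives, and Rellich--Kondrachov to extract a subsequence converging weakly in $H^s$ and uniformly; closedness of $M$ keeps the limit in $\cH^s(I,M)$, continuity handles the fidelity term, and weak lower semicontinuity handles $J_s$. Your explicit Lipschitz argument for continuity of $R$ in part~(b), and your bound $\norm{F_k}_{L^2}\le\sup_{y\in M}|y|$, only fill in details the paper leaves implicit (``follows identically'').
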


\begin{proof}
  See Appendix~\ref{supp:thm41}.
\end{proof}

The theorem asserts existence only.  This restriction is essential.  Even
when $\Sec_M\le 0$, a compact target need not be simply connected, and the
quotient geometry can prevent global strict convexity of the squared
distance.  Flat tori provide the basic example, where the distance on the quotient
is obtained by minimizing over deck-transformation branches on the universal
cover, and the resulting Fr\'echet functional can have more than one global
minimizer.  For the statistical results developed below, existence together
with measurable selection is sufficient.

\begin{remark}[Measurable selection]\label{rem:measurable-selection}
  When the minimizer is not unique, the symbol
  $\hat F_n$ denotes any element of the (nonempty, compact)
  minimizer set.  A measurable selection exists by the
  Kuratowski--Ryll-Nardzewski theorem, since the objective
  $(t_1,Y_1,\ldots,t_n,Y_n,F)\mapsto R_n(F)+\lambda_n J_s(F)$ is
  jointly measurable (as a composition of the continuous squared-distance
  function and point evaluation, which is continuous on $\cH^s(I,M)$ by
  the Sobolev embedding), the minimizer set
  $\Gamma(\omega)=\argmin_{F\in\cH^s(I,M)}\{R_n(F)+\lambda_n J_s(F)\}$
  is nonempty (\Cref{thm:existence}) and closed-valued (by lower
  semicontinuity of $J_s$ and continuity of $R_n$ on the compact sublevel
  sets), hence admits a universally measurable selection.
  The oracle inequality (\Cref{thm:oracle}) holds
  for every minimizer, so all statistical conclusions (convergence rates,
  excess risk bounds, and the topological phase
  transition) are independent of which minimizer is selected.
\end{remark}

\subsection{Curvature-dependent excess risk bound}

The oracle inequality in \Cref{subsec:oracle-ineq} below controls the
excess risk $\cE(\hat F_n)=R(\hat F_n)-R(m)$.  Converting this to the
estimation distance $\bE[d_M^2(\hat F_n,m)]$ requires a comparison
between $\cE(F)$ and $\bE[d_M^2(F,m)]$.  In Euclidean spaces the two
coincide by the bias-variance decomposition. On curved manifolds, the
comparison acquires a curvature-dependent factor.

\begin{lemma}[Excess risk]\label{lem:excess-risk}
  Under (\ref{ass:curvature}) and (\ref{ass:concentration}), define
  $\cE(F)=R(F)-R(m)$.  For every measurable $F\colon I\to M$ with
  $d_M(F(t),m(t))<\rho_M$ for all~$t$,
  \begin{equation}\label{eq:excess-lower}
    \cE(F) \;\ge\;
    \eta(\kappa_+,r_0)\;\bE\bigl[d_M^2(F(t),m(t))\bigr],
  \end{equation}
  where $\eta(\kappa_+,r_0)$ is the curvature factor \eqref{eq:curvature-factor}.
\end{lemma}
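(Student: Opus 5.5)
The plan is to condition on the covariate $t$ and apply \Cref{lem:variance-ineq} pointwise, then integrate over the design distribution. Fix $t\in I$ and let $\mu_t$ denote the conditional law of $Y$ given $t$. By definition, $m(t)$ is the conditional Fr\'echet mean of $\mu_t$. Assumption (\ref{ass:concentration}) says that $\mu_t$ is supported in $B_M(m(t),r_0)$ with $r_0<\rho_M$. The hypothesis $d_M(F(t),m(t))<\rho_M$ places $y=F(t)$ in the open ball $B_M(m(t),\rho_M)$. Thus all hypotheses of \Cref{lem:variance-ineq} hold with $\bar y=m(t)$ and $y=F(t)$, and that lemma yields
\[
  \bE\bigl[d_M^2(Y,F(t))\mid t\bigr]
  \;\ge\;\eta(\kappa_+,r_0)\,d_M^2\bigl(F(t),m(t)\bigr)
  +\bE\bigl[d_M^2(Y,m(t))\mid t\bigr].
\]
The factor $\eta$ depends only on $\kappa_+$ and $r_0$, not on $t$, which is what makes the next step clean.

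Next I would take expectations over $t$ using the tower property. The left side becomes $R(F)$ and the last term becomes $R(m)$. Subtracting gives $\cE(F)\ge\eta(\kappa_+,r_0)\,\bE[d_M^2(F(t),m(t))]$, which is \eqref{eq:excess-lower}.

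Two technical points need checking before subtracting. The first is measurability: $t\mapsto d_M^2(F(t),m(t))$ and $t\mapsto\bE[d_M^2(Y,F(t))\mid t]$ must be measurable. This follows because $F$ is measurable, $m$ is measurable (it is continuous under (\ref{ass:smoothness}), or measurable via a selection argument), and $d_M$ is continuous, with conditional expectations taken through a regular conditional distribution. The second is finiteness: $M$ is compact, so all quantities are bounded by $\diam(M)^2$, and subtracting $R(m)$ is legitimate.

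The main obstacle is confirming that the conditional Fr\'echet mean $m(t)$ coincides with the mean $\bar y$ required in \Cref{lem:variance-ineq}, since the global minimizer of $y\mapsto\bE[d_M^2(Y,y)\mid t]$ over $M$ could a priori differ from the local Karcher mean. Under (\ref{ass:concentration}) the support lies in a ball of radius $r_0<\rho_M$, where the squared distance is convex. The Fr\'echet mean is then unique and lies in that ball by the result of \citet{Afsari2011} cited in \Cref{subsec:riem}, so the identification holds.
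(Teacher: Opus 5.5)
Your argument is exactly the paper's proof: condition on $t$, apply \Cref{lem:variance-ineq} with $\bar y=m(t)$ and $y=F(t)$, subtract $\sigma^2(t)$, and integrate against $f$. The step you single out as the main obstacle is not one. $m(t)$ is \emph{defined} as the global conditional Fr\'echet mean, and (\ref{ass:concentration}) is stated relative to it, so \Cref{lem:variance-ineq} applies verbatim.

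The real problem is one you inherit from the paper rather than introduce. For $\kappa_+>0$, \Cref{lem:variance-ineq} is false at the stated generality, and so is the present statement. A counterexample:
\begin{itemize}
\item Take $M=S^2$, so $\kappa_+=1$ and $\rho_M=\pi/2$.
\item Let $m\equiv\bar y$ be a point on the equator.
\item Let $Y\mid t$ be uniform on the two equatorial points at distance $a=1.5$ on either side of $\bar y$. Their unique Fr\'echet mean is $\bar y$, since they are $3<\pi$ apart.
\item Set $r_0=1.501$, and let $F\equiv y$, where $y$ is the point at distance $s=1.5<\rho_M$ from $\bar y$ along the meridian.
\end{itemize}
Then $\cos d_M(Y,y)=\cos a\cos s$, so $\cE(F)=\arccos^2(\cos^2 1.5)-1.5^2\approx 0.2017$. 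The claimed lower bound is $\eta(1,1.501)\,s^2\approx 0.1049\cdot 2.25\approx 0.236$, which is larger.

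The error in the proof of \Cref{lem:variance-ineq} is in where the Hessian bound is used. The bound $\eta(\kappa,r_0)$ holds at $\bar y$, where $d_M(z,\bar y)\le r_0$. It does not hold along the geodesic from $\bar y$ to $y$. There $d_M(z,\cdot)$ grows up to $r_0+d_M(\bar y,y)$, and the transverse Hessian of $d_M^2(z,\cdot)/2$ is only bounded below by $\eta(\kappa,r_0+d_M(\bar y,y))$.

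What your route can actually deliver is the following. Let $\varphi_t(y)=\bE[d_M^2(Y,y)\mid t]$ and integrate its second derivative along that geodesic; its first derivative vanishes at $m(t)$, and $\eta(\kappa_+,\cdot)$ is decreasing. This gives
\[
\varphi_t(F(t))-\varphi_t(m(t))\ge\eta\bigl(\kappa_+,\,r_0+d_M(F(t),m(t))\bigr)\,d_M^2(F(t),m(t)).
\]
The bound is informative only when $r_0+d_M(F(t),m(t))<\pi/(2\sqrt{\kappa_+})$, and your integration step goes through with this $t$-dependent factor. It recovers $\eta(\kappa_+,r_0)$ only as $\sup_t d_M(F(t),m(t))\to 0$. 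That is the regime in which the paper actually invokes the lemma (for $n\ge n_1$), so those uses can be repaired with $\eta(\kappa_+,r_0+\delta)$. For $\kappa_+=0$ the Hessian is at least $g$ along the whole path, and your argument and the statement are correct as written.
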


The factor $\eta(\kappa_+,r_0)<1$ when $\kappa_+>0$ means that
positive curvature degrades the conversion.  For non-positively curved
targets, the next result shows that this degradation disappears
entirely, and the excess risk decomposes into estimation distance plus a
non-negative remainder, exactly as in the Euclidean case.

\begin{theorem}[Sharp excess risk identity for NPC targets]\label{thm:NPC-sharp}
  Suppose $\Sec_M\le 0$ everywhere (non-positively curved target).  Then
  the excess risk satisfies the identity
  \begin{equation}\label{eq:NPC-identity}
    \cE(F)
    \;=\;\bE\bigl[d_M^2(F(t),m(t))\bigr]
    \;+\;h(F),
  \end{equation}
  where $h(F)\ge 0$ is a non-negative remainder that depends on the curvature
  and the noise distribution but not on the estimation error:
  \[
    h(F) \;=\;\bE\Bigl[d_M^2(Y,F(t))-d_M^2(Y,m(t))
    -d_M^2(F(t),m(t))\Bigr] \;\ge\; 0.
  \]
  In particular, $\cE(F)\ge\bE[d_M^2(F(t),m(t))]$, which is
  \eqref{eq:excess-lower} with $\eta=1$.  For the oracle inequality, this
  gives, with $\lambda_n\asymp n^{-2s/(2s+1)}$,
  \begin{equation}\label{eq:NPC-rate}
    \bE\biggl[\int_I d_M^2(\hat F_n,m)\,f\,dt\biggr]
    \;\le\; C\,n^{-\frac{2s}{2s+1}},
  \end{equation}
  with no curvature degradation in the excess risk lower bound (i.e., $\eta=1$), though the absolute constant $C$ retains a mild dependence on the manifold through the metric entropy and Lipschitz properties of the squared distance function.
\end{theorem}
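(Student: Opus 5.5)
The identity \eqref{eq:NPC-identity} is definitional: it holds by setting $h(F)$ equal to the displayed expectation and using linearity of expectation, since $\cE(F)=\bE[d_M^2(Y,F(t))]-\bE[d_M^2(Y,m(t))]$. The content of the theorem is therefore the sign $h(F)\ge 0$, followed by the rate \eqref{eq:NPC-rate}. I would prove nonnegativity pointwise in $t$ by conditioning. Fix $t$ and let $\mu_t$ be the law of $Y\mid t$, whose Fr\'echet mean is $m(t)$. Then $h(F)=\bE_t\bigl[\int d_M^2(z,F(t))\dd\mu_t(z)-\int d_M^2(z,m(t))\dd\mu_t(z)-d_M^2(F(t),m(t))\bigr]$, so it suffices that for each $t$ and each $y=F(t)$
\[
\int d_M^2(z,y)\dd\mu_t(z)\;\ge\;d_M^2(m(t),y)+\int d_M^2(z,m(t))\dd\mu_t(z).
\]
This is exactly \Cref{lem:variance-ineq} with $\kappa_+=0$, so $\eta(0,r_0)=1$, provided $F(t)\in B_M(m(t),\rho_M)$.

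To remove that proviso, which \Cref{lem:excess-risk} needed, I would argue more directly in the NPC case. For $\Sec_M\le 0$ the function $y\mapsto d_M^2(z,y)$ is $2$-strongly geodesically convex along geodesics of length below $\inj(M)$. This gives the comparison inequality $d_M^2(z,\gamma_s)\le(1-s)d_M^2(z,y_0)+s\,d_M^2(z,y_1)-s(1-s)d_M^2(y_0,y_1)$ along the geodesic $\gamma$ from $m(t)$ to $y$. Integrating against $\mu_t$ and using that $m(t)$ minimizes the integrated functional, so that $\int d_M^2(z,\gamma_s)\dd\mu_t\ge\int d_M^2(z,m)\dd\mu_t$, we get $s\bigl[\int d^2(z,y)-\int d^2(z,m)\bigr]\ge s(1-s)d^2(m,y)$. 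Dividing by $s$ and letting $s\downarrow0$ yields the claim. This is the Sturm-type argument \citep{Sturm2003}.

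The main obstacle is that a compact NPC manifold is not simply connected, for example a flat torus, so the comparison inequality holds only locally. The geodesic from $m(t)$ to $F(t)$ must stay inside a ball where $d_M(z,\cdot)$ is realized by a unique minimizing geodesic, uniformly in $z\in\supp\mu_t\subset B_M(m(t),r_0)$. I would handle this by lifting to the universal cover $\tilde M$, which is Hadamard, so the inequality holds globally there. Since $d_M(z,y)=\min_{\text{lifts}}d_{\tilde M}$, I would choose lifts with $\tilde z$ near $\tilde m$; concentration within $\rho_M\le\inj/2$ makes the lift of $\supp\mu_t$ canonical. The remaining issue is whether $d_M$ and $d_{\tilde M}$ agree for the pair $(z,y)$, and this is where a restriction $d_M(F(t),m(t))<\rho_M$ as in \Cref{lem:excess-risk} may be unavoidable. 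I would state the result with that restriction, or note that it holds eventually for $\hat F_n$ by consistency.

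Finally, \eqref{eq:NPC-rate} follows by combining $\cE(F)\ge\bE[d_M^2(F,m)]$ with the oracle inequality \eqref{eq:oracle-intro}, taking $F=m$ in the infimum. This bounds $\bE\,\cE(\hat F_n)$ by $C_1\lambda_nB+C_2/(n\lambda_n^{1/(2s)})$. Choosing $\lambda_n\asymp n^{-2s/(2s+1)}$ balances the two terms at $n^{-2s/(2s+1)}$. The density bound (\ref{ass:density}) converts $\bE[d_M^2(\hat F_n(t),m(t))]$ into the integral with weight $f$, and the constant $C$ inherits $C_1,C_2$, which depend on the manifold through entropy and Lipschitz constants.
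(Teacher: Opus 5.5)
Your decomposition, your reduction of $h(F)\ge 0$ to the pointwise variance inequality for $\mu_t$, and your rate step (oracle inequality at $F=m$) are exactly the paper's proof. The paper, however, asserts $\varphi(y)\ge\varphi(m(t))+d_M^2(m(t),y)$ for \emph{every} $y\in M$ by citing Sturm's Proposition~4.4, which concerns global NPC spaces. By Cartan--Hadamard, a compact $M$ with $\Sec_M\le 0$ has infinite fundamental group and is only locally NPC. Your worry is therefore justified, and the unrestricted claim is in fact false.

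Here is a counterexample on the flat torus $\bR^2/(2\pi\bZ)^2$, where $\rho_M=\pi/2$. Take $m\equiv(0,0)$, let $Y\mid t$ be uniform on $\{(\pm r_0,0)\}$ with $0<r_0<\pi/2$, and let $F\equiv(\pi,0)$. Then $m(t)$ is the unique conditional Fr\'echet mean, and (\ref{ass:concentration}) holds with any radius in $(r_0,\pi/2)$. We get $h(F)=(\pi-r_0)^2-r_0^2-\pi^2=-2\pi r_0<0$, and even $\cE(F)=\pi^2-2\pi r_0<\bE[d_M^2(F(t),m(t))]$.

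So the proviso $d_M(F(t),m(t))<\rho_M$ is needed. For a simply connected target the paper's citation is fine, but then the compactness used throughout \Cref{thm:oracle} is lost. Your lifting argument does prove the localized version, and the step you left open closes as follows.
\begin{itemize}
\item On an NPC manifold every nontrivial deck transformation moves points by at least $2\inj(M)\ge 4\rho_M$.
\item Take $z\in B_M(m,r_0)$, $y\in B_M(m,\rho_M)$, and lifts $\tilde z,\tilde y\in B_{\tilde M}(\tilde m,\rho_M)$. Any other lift satisfies $d_{\tilde M}(\tilde z,\gamma\tilde y)>2\inj(M)-2\rho_M\ge 2\rho_M>d_{\tilde M}(\tilde z,\tilde y)$, so the canonical lifts realize $d_M$.
\item The lifted Fr\'echet functional therefore agrees with $\varphi$ near $\tilde m$. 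So $\tilde m$ is a local Fr\'echet mean of the lifted measure, hence a global one by convexity on the Hadamard manifold $\tilde M$.
\item Sturm's inequality on $\tilde M$ then descends to such $y$, pointwise in $t$.
\end{itemize}

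The gap that remains in your proposal is the rate step. With only the localized inequality, ``it holds eventually for $\hat F_n$ by consistency'' does not yield \eqref{eq:NPC-rate}. Convergence in probability of $\sup_t d_M(\hat F_n(t),m(t))$ gives no control of the expectation, and on the bad event the integrand can be of order $\diam(M)^2$. You need one of two things.
\begin{itemize}
\item A quantitative bound $\bP\bigl(\sup_t d_M(\hat F_n(t),m(t))\ge\rho_M\bigr)=O(n^{-2s/(2s+1)})$. This requires the tail bound of \Cref{cor:hp-oracle} plus an equicontinuity/interpolation argument.
\item A pointwise-in-$t$ fallback. Where $\phi=d_M(F(t),m(t))\ge\rho_M$, the reverse triangle inequality and (\ref{ass:concentration}) give $\bE[d_M^2(Y,F(t))-d_M^2(Y,m(t))\mid t]\ge\phi(\phi-2r_0)\ge(1-2r_0/\rho_M)\phi^2$, as in the proof of \Cref{thm:topo-cost}.
\end{itemize}
Combined with the local bound, the fallback gives $\cE(F)\ge(1-2r_0/\rho_M)\,\bE[d_M^2(F(t),m(t))]$ for every $F$ when $r_0<\rho_M/2$. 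That yields \eqref{eq:NPC-rate} with a constant inflated by $(1-2r_0/\rho_M)^{-1}$. The torus example shows some such degradation is unavoidable. So on compact NPC targets the ``$\eta=1$, no degradation'' conclusion holds only locally or asymptotically, not as the global identity the statement claims.
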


\begin{proof}
  See Appendix~\ref{supp:thm44}.
\end{proof}

For non-positively curved targets (e.g., Hadamard manifolds, SPD
matrices), the estimation constant is exactly that of the Euclidean
theory, with no curvature penalty.  The minimax lower bound
(\Cref{thm:minimax}) matches with the same exponent and a
curvature-independent constant.

\subsection{Oracle inequality}\label{subsec:oracle-ineq}

The excess risk comparison of \Cref{lem:excess-risk} converts
estimation distance into excess risk with a curvature-dependent
constant.  The next result controls $\bE[\cE(\hat F_n)]$ itself
via an oracle inequality.  The proof combines a basic inequality,
metric entropy bounds via the Birman--Solomjak theorem
\citep[Theorem~1]{Birman1967}, and a dyadic peeling argument.

\begin{theorem}[Oracle inequality and convergence rate]\label{thm:oracle}
  Under (\ref{ass:curvature})--(\ref{ass:smoothness}) with $s\ge 1$, there exist
  constants $C_1,C_2>0$ depending on $s,M,c_f,C_f,\sigma_0^2$ such that
  for every $\lambda_n>0$ and $n\ge 1$:
  \begin{enumerate}[label=(\alph*)]
    \item\label{item:oracle-excess}
      \textbf{(Excess risk.)} Unconditionally,
      \begin{equation}\label{eq:oracle}
        \bE\bigl[\cE(\hat F_n)\bigr]
        \;\le\; C_1\inf_{F\in\cH^s(I,M)}
        \bigl\{\cE(F)+\lambda_n J_s(F)\bigr\}
        \;+\;\frac{C_2}{n\,\lambda_n^{1/(2s)}}.
      \end{equation}
    \item\label{item:oracle-estimation}
      \textbf{(Estimation.)}
      The estimation bound
      \begin{equation}\label{eq:oracle-estimation}
        \bE\biggl[\int_I d_M^2(\hat F_n,m)\,f\,dt\biggr]
        \;\le\;\frac{1}{\eta(\kappa_+,r_0)}\biggl(
        C_1\inf_{F}\bigl\{\cE(F)+\lambda_n J_s(F)\bigr\}
        +\frac{C_2}{n\lambda_n^{1/(2s)}}\biggr)
      \end{equation}
      holds unconditionally when $\Sec_M\le 0$ (with $\eta=1$,
      by \Cref{thm:NPC-sharp}), and for $n\ge n_1$ when
      $\Sec_M>0$, where $n_1=n_1(s,M,B)$ is a finite threshold
      (see \Cref{rem:locality}).
    \item\label{item:oracle-rate}
      \textbf{(Convergence rate.)}
      With $\lambda_n\asymp n^{-2s/(2s+1)}$,
      \begin{equation}\label{eq:rate}
        \bE\biggl[\int_I d_M^2(\hat F_n,m)\,f\,dt\biggr]
        \;\le\;\frac{C}{\eta(\kappa_+,r_0)}\;n^{-\frac{2s}{2s+1}}.
      \end{equation}
  \end{enumerate}
\end{theorem}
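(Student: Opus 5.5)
The argument is the standard penalized empirical-process proof (basic inequality, entropy bound, peeling), run on the loss class $\{d_M^2(y,F(t))-d_M^2(y,m(t))\}$. Parts \ref{item:oracle-estimation} and \ref{item:oracle-rate} then follow from \ref{item:oracle-excess} together with \Cref{lem:excess-risk} or \Cref{thm:NPC-sharp}.

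\emph{Step 1 (basic inequality).} Fix any comparator $F_0\in\cH^s(I,M)$ and write $\ell_F(t,Y)=d_M^2(Y,F(t))-d_M^2(Y,m(t))$, so that $\cE(F)=P\ell_F$. Optimality of $\hat F_n$ gives
\[
\cE(\hat F_n)+\lambda_n J_s(\hat F_n)\le \cE(F_0)+\lambda_n J_s(F_0)+(P-P_n)(\ell_{\hat F_n}-\ell_{F_0}).
\]
Since $M$ is compact, the losses are bounded by $\diam(M)^2$. Since $d_M^2(y,\cdot)$ is Lipschitz in the ambient $C(\bar I,\bR^D)$ sense, we have $|\ell_F-\ell_G|\le 2\diam(M)\,|F(t)-G(t)|$.

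\emph{Step 2 (variance–mean link).} To obtain fast rates we need a Bernstein condition $P(\ell_F-\ell_m)^2\lesssim \cE(F)$. On the region $d_M(F,m)<\rho_M$ this follows from the Lipschitz bound combined with \Cref{lem:excess-risk}:
\[
P\ell_F^2\lesssim \bE\, d_M^2(F,m)\le \eta^{-1}\cE(F).
\]
Outside that region, $\cE(F)$ is bounded below by a positive constant. The argument is that under (\ref{ass:concentration}) $m(t)$ is the unique minimizer, so compactness and continuity give a strictly positive gap. Since the losses are bounded, this again yields $P\ell_F^2\lesssim\cE(F)$ with a worse constant. This is where the constants pick up their dependence on $M$ and $\sigma_0^2$.

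\emph{Step 3 (entropy and peeling).} By \Cref{prop:compactness} and the Birman--Solomjak theorem, the sublevel class $\cF_{E}=\{J_s\le E\}$ has sup-norm entropy
\[
\log N(\epsilon,\cF_E,\|\cdot\|_\infty)\lesssim (\sqrt{E}/\epsilon)^{1/s}.
\]
This holds because $\cF_E$ sits inside a bounded $H^s(I,\bR^D)$ ball, $M$ being bounded. The Lipschitz bound transfers this entropy to the loss class. Next I peel over the shells
\[
\{F:\ 2^{j-1}\delta\le \cE(F)+\lambda_n J_s(F)\le 2^j\delta\}.
\]
On each shell, $J_s\le 2^j\delta/\lambda_n$ and the variance is at most $2^j\delta$. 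Bernstein's inequality with chaining (a Talagrand/Bousquet bound) controls the expected supremum on a shell by
\[
\frac{1}{\sqrt n}\,(2^j\delta)^{\frac12-\frac1{4s}}\Bigl(\frac{2^j\delta}{\lambda_n}\Bigr)^{\frac1{4s}}+\frac{\text{entropy}}{n}.
\]
Setting this equal to $2^j\delta/4$ gives the critical level $\delta\asymp 1/(n\lambda_n^{1/(2s)})$. Summing the shell tail probabilities and integrating yields \eqref{eq:oracle} with $C_1$ something like $2$; taking the infimum over $F_0$ completes part \ref{item:oracle-excess}. The $F_0$ fluctuation term is handled the same way, or absorbed by the triangle inequality.

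\emph{Step 4 (parts \ref{item:oracle-estimation} and \ref{item:oracle-rate}).} For $\Sec_M\le0$, \Cref{thm:NPC-sharp} gives $\bE\,d_M^2\le\cE$ with no restriction, so \eqref{eq:oracle-estimation} is immediate. For $\Sec_M>0$, \Cref{lem:excess-risk} requires $d_M(\hat F_n,m)<\rho_M$ uniformly. I would obtain this from consistency in sup norm. The oracle bound gives $\cE(\hat F_n)\to0$ and $J_s(\hat F_n)=O_P(1)$ (the oracle is taken at $F_0=m$, so $J_s\lesssim B+\delta/\lambda_n$). By \Cref{prop:compactness} and the uniqueness of $m$ as the zero-excess map, this upgrades to $\|\hat F_n-m\|_\infty\to0$. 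That yields the threshold $n_1$, with the bad event's contribution absorbed into the rate. Finally, choosing $F=m$ in the infimum bounds it by $\lambda_n B$, and $\lambda_n\asymp n^{-2s/(2s+1)}$ balances $\lambda_n$ against $n^{-1}\lambda_n^{-1/(2s)}$, giving \eqref{eq:rate}.

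The main obstacle is Step 2 and the locality issue in Step 4 for positive curvature. The Bernstein condition and the $\eta$-conversion hold only near $m$, so converting the expectation bound requires controlling the far-away event. Since it is only a constant-probability-small event, I would try an exponential tail from the peeling bound, showing its probability decays faster than $n^{-2s/(2s+1)}$.
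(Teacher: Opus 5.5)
Your proposal has the same skeleton as the paper's proof: the basic inequality; Birman--Solomjak entropy pushed through the Lipschitz loss; a Bernstein condition from \Cref{lem:excess-risk}; localized chaining with Bousquet-type peeling over $V=\cE+\lambda_n J_s$, giving the critical level $1/(n\lambda_n^{1/(2s)})$; and conversion to $d_M^2$ via sup-norm consistency. It differs in how you get the Bernstein condition away from $m$, and your version is the more reliable one.

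The paper asserts that every $F$ with $V(F)\le\delta_0^2$ stays in the $\rho_M$-tube around $m$, by Arzel\`a--Ascoli, and it uses the trivial variance bound $b^2/4$ beyond $\delta_0$. That assertion cannot hold with $\delta_0$ independent of $n$. On this class $J_s\le\delta_0^2/\lambda_n$ is unbounded. A bump of geodesic height $\rho_M$ and width $\lambda_n^{1/(2s)}$ has $V\lesssim\lambda_n^{1/(2s)}\to0$ yet leaves the tube; this is the same scaling behind \Cref{thm:topo-cost}.

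Your uniqueness-gap argument avoids this, but only if you run it pointwise in $t$. Apply it to the conditional excess $\cE_t(y)=\bE[d_M^2(Y,y)-d_M^2(Y,m(t))\mid t]$, not to $\cE(F)$; for whole functions the claimed positive lower bound fails because of the same bump. Uniformity in $t$ should come from compactness, not from continuity in $t$, which is not assumed. The relevant compact set is the set of pairs $(p,\mu)$ where $\mu$ is supported in $\bar B(p,r_0)$ and has Fr\'echet mean $p$; that mean is unique because $r_0<\rho_M$. This gives $\cE_t(y)\ge c_{\mathrm{gap}}>0$ whenever $d_M(y,m(t))\ge\rho_M$. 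Hence $\cE(F)\ge c'\,\bE[d_M^2(F(t),m(t))]$ for \emph{every} $F$, with $c'=\min\{\eta,c_{\mathrm{gap}}/\diam(M)^2\}$.

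That global quadratic growth buys a Bernstein condition valid on all shells. It also already proves part (c), and part (b) with $1/c'$ in place of $1/\eta$, for all $n$ without any locality. Locality is needed only for the sharp factor $1/\eta$.

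Here too your plan is sounder than the paper's. The paper passes from $\sup_t d_M(\hat F_n(t),m(t))\to0$ in probability directly to a deterministic threshold $n_1$, which does not follow. In your version the peeling tail gives $\bP(V(\hat F_n)>K\lambda_n)\le\exp(-cKn^{1/(2s+1)})$ at oracle tuning. On the complementary event $J_s(\hat F_n)\le K$, so a deterministic compactness or interpolation argument yields the tube condition for $n\ge n_1$. The bad event then contributes a negligible amount, which can be absorbed into $C_2$.

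What the paper's route buys is an explicit Bernstein constant $2b/\eta$, whereas your $c_{\mathrm{gap}}$ is non-explicit. Note also that both arguments obtain $J_s(\hat F_n)=O_P(1)$, and hence the $1/\eta$ locality, only for $\lambda_n$ of oracle order. The statement suggests arbitrary $\lambda_n$, which neither argument covers.
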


\begin{proof}
  See Appendix~\ref{supp:thm46}.
\end{proof}

\begin{remark}[Locality condition for positively curved targets]
\label{rem:locality}
  Part~\ref{item:oracle-estimation} converts excess risk to
  estimation distance via \Cref{lem:excess-risk}, which requires
  $d_M(\hat F_n(t),m(t))<\rho_M$ for all~$t$.
  When $\Sec_M\le 0$ this holds vacuously.
  For $\Sec_M>0$, the basic inequality
  $\lambda_n J_s(\hat F_n)\le\diam(M)^2+\lambda_n B$
  bounds the Sobolev norm, and the embedding
  $H^s\hookrightarrow C^{0,s-1/2}$ yields equicontinuity.
  Combined with $L^2$-convergence from \eqref{eq:oracle}, an
  Arzel\`a--Ascoli argument gives
  $\sup_t d_M(\hat F_n(t),m(t))\to 0$ in probability
  (Appendix~\ref{supp:cor49}), so a finite threshold~$n_1$ exists.
\end{remark}

The oracle inequality controls the expected excess risk.
Sharper tail control is also available.  The next result shows that
$\cE(\hat F_n)$ concentrates around its rate with sub-exponential
tails, which is needed for the $L^\infty$-consistency used in
\Cref{sec:regularity}.

\begin{corollary}[High-probability oracle inequality]
\label{cor:hp-oracle}
  Under the conditions of \Cref{thm:oracle} with
  $\lambda_n\asymp n^{-2s/(2s+1)}$, there exist constants
  $C,c>0$ such that for every $t\ge 1$ and $n\ge 1$,
  \begin{equation}\label{eq:hp-oracle}
    \bP\Bigl(\cE(\hat F_n)
    \;\ge\;C\,t\,n^{-\frac{2s}{2s+1}}\Bigr)
    \;\le\;\exp\bigl(-c\,t\,n^{\frac{1}{2s+1}}\bigr).
  \end{equation}
  This gives sub-exponential concentration
  $\cE(\hat F_n)=O_P(n^{-2s/(2s+1)})$ and, via \Cref{rem:locality},
  uniform consistency
  $\sup_t d_M(\hat F_n(t),m(t))\to 0$ in probability.
\end{corollary}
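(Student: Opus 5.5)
The plan is to rerun the localized empirical-process argument behind \Cref{thm:oracle}, but to keep the tail bound from the peeling step instead of integrating it into an expectation. Write the loss as $\ell_F(t,Y)=d_M^2(Y,F(t))-d_M^2(Y,m(t))$. Its population mean is $\cE(F)$, it is uniformly bounded by $\diam(M)^2$, and by the curvature-dependent comparison of \Cref{lem:excess-risk} its variance is bounded by a constant times $\cE(F)$. This Bernstein-type variance condition is what produces tails linear in $t$.

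\textbf{Basic inequality.} Let $F^*$ be the minimizer of $R+\lambda_n J_s$, which exists by \Cref{thm:existence}\ref{item:exist-pop}. Comparing $\hat F_n$ with $F^*$ gives
\[
\cE(\hat F_n)+\lambda_n J_s(\hat F_n)\le \cE(F^*)+\lambda_n J_s(F^*)+(P-P_n)(\ell_{\hat F_n}-\ell_{F^*}).
\]
Taking $F=m$ in the oracle term gives $\cE(F^*)+\lambda_n J_s(F^*)\le \lambda_n B\lesssim n^{-2s/(2s+1)}$. Set $\delta_n^2=n^{-2s/(2s+1)}$ and peel over the shells
\[
\mathcal S_{j,k}=\{F:\ 2^{j-1}t\delta_n^2<\cE(F)\le 2^j t\delta_n^2,\ J_s(F)\le 2^k\}.
\]
On the event $\cE(\hat F_n)\ge Ct\delta_n^2$, some shell must contain a function whose empirical deviation exceeds a constant fraction of $2^jt\delta_n^2+\lambda_n 2^k$.

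\textbf{Entropy and expected suprema.} The Birman--Solomjak entropy bound $\log N(\epsilon)\lesssim (2^{k/2}/\epsilon)^{1/s}$ in sup-norm transfers to the loss class, because $d_M^2$ is Lipschitz on compact $M$. A chaining (Dudley) bound then controls the expected supremum on each shell by a quantity of order $2^jt\delta_n^2+\lambda_n2^k$ once $C$ is large; this is the same computation already used in \Cref{thm:oracle}.

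\textbf{Concentration on each shell.} Apply Talagrand's inequality in Bousquet's form with envelope $b\lesssim 1$ and variance $v\lesssim 2^jt\delta_n^2$. The probability that shell $(j,k)$ is violated is at most $\exp(-c\,n\,2^jt\delta_n^2)$. This uses that the deviation level $x\asymp 2^jt\delta_n^2$ satisfies $nx^2/(v+bx)\asymp nx$. Summing the geometric series over $j,k$ gives $\exp(-c\,t\,n\delta_n^2)$, and $n\delta_n^2=n^{1/(2s+1)}$, which is \eqref{eq:hp-oracle}. The $k$-sum converges because $\lambda_n2^k$ is added to the threshold, so large-energy shells are even less likely.

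\textbf{Main obstacle.} The hard step is the variance condition $\Var(\ell_F)\lesssim\cE(F)$. Bounding $|\ell_F|\lesssim d_M(F(t),m(t))$ via the triangle inequality gives $\Var(\ell_F)\lesssim\bE[d_M^2(F,m)]$, but converting this to $\cE(F)$ through \Cref{lem:excess-risk} requires $d_M(F,m)<\rho_M$. For NPC targets this holds by \Cref{thm:NPC-sharp}. For $\Sec_M>0$ I would restrict the peeling to functions in a sup-norm $\rho_M$-neighbourhood of $m$, using the equicontinuity from the energy bound. Functions outside this neighbourhood have excess risk bounded below by a constant $c_0>0$: the class is compact by \Cref{prop:compactness}, and $m$ is the unique minimizer by (\ref{ass:concentration}). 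Such functions therefore fall into a single shell at level $c_0$ whose probability is $\exp(-cn)$, which is absorbed for $t\le n^{2s/(2s+1)}$; for larger $t$ the event is empty, since $\cE\le\diam(M)^2$.

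\textbf{Uniform consistency.} The final claim follows by taking $t$ fixed, so that $\cE(\hat F_n)\to0$ in probability. Combining this with the equicontinuity and Arzel\`a--Ascoli argument of \Cref{rem:locality} gives $\sup_t d_M(\hat F_n(t),m(t))\to0$ in probability.
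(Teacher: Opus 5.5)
Your joint peeling over $\cE$ and $J_s$, driven by a Bernstein condition and Bousquet's inequality, is a different route from the paper's. The paper peels only over energy shells $\{2^{k-1}B<J_s(\hat F_n)\le 2^kB\}$ and uses the unlocalized bound $\Var(g)\le b(\cE(F)+2\sigma_0^2)$.

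Your architecture is the one that can reach the stated scale. A supremum of $|\Psi_n|$ over an energy ball dominates $|\Psi_n(G_0)|$ for a fixed constant map $G_0$, so it is generically of order at least $n^{-1/2}\gg n^{-2s/(2s+1)}$. The paper's own bounds, $\mu_k\lesssim 2^{k/(4s)}n^{-1/2}$ and $\sqrt{2b^2u/n}\asymp\sqrt{t}\,n^{-s/(2s+1)}$ at $u=t\,n^{1/(2s+1)}$, both exceed $t\,n^{-2s/(2s+1)}$. So everything rests on the Bernstein condition $\Var(\ell_F-\ell_{F^*})\lesssim\cE(F)+\cE(F^*)$ holding on every shell.

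For $\Sec_M>0$ your justification of that condition has a genuine gap.
\begin{itemize}
\item \Cref{prop:compactness} gives compactness of $\{J_s\le E_0\}$ only for fixed $E_0$, so your argument produces $c_0=c_0(E_0)$. But the peeling must cover energies up to $\diam(M)^2/\lambda_n+B\asymp n^{2s/(2s+1)}$, and $c_0(E)\to 0$.
\item For $s=1$, take a map equal to $m$ except for an excursion of duration $\epsilon$ that reaches distance $\rho_M$. It can have $J_1\lesssim\rho_M^2/\epsilon+B$ and $\cE\le C_f\diam(M)^2\epsilon$, so $c_0(E)\lesssim 1/E$. This matches the lower bound $c_f\rho_M^4/(64(\Dir(F)+B))$ in the proof of \Cref{thm:topo-cost}.
\item For the same reason the equicontinuity modulus degrades with the energy level. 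Small $\cE$ therefore does not place a map in the $\rho_M$-tube uniformly over your shells.
\item On shells containing such maps you only have $\Var\le b^2$, and Bousquet then gives tails $\exp(-cn(x+\lambda_n2^k)^2)$ with $x\asymp 2^jt\delta_n^2$. Take $s=1$ and $t\asymp n^{1/2}$, where far maps with $\cE\asymp n^{-1/6}$ need only $J_1\asymp n^{1/6}$. The tail is then $\exp(-cn^{2/3})$, not the required $\exp(-c\,t\,n^{1/3})=\exp(-cn^{5/6})$.
\end{itemize}

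The missing idea is to localize pointwise in $t$ rather than in sup-norm, through a uniform quadratic growth condition: there is $c_*>0$ with $\bE[d_M^2(Y,y)\mid t]-\sigma^2(t)\ge c_*\,d_M^2(y,m(t))$ for all $t\in I$ and all $y\in M$.
\begin{itemize}
\item For $d_M(y,m(t))<\rho_M$, this is \Cref{lem:variance-ineq} applied to the law of $Y$ given $t$.
\item For $d_M(y,m(t))\ge\rho_M$, you need a gap $g>0$ uniform in $t$. It follows by contradiction and compactness. The laws supported in some closed ball of radius $r_0$ form a weakly compact set on which the Fr\'echet mean is unique (Afsari's theorem, since $r_0<\rho_M$). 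Fr\'echet functions converge uniformly along weakly convergent sequences.
\item When $r_0<\rho_M/2$ the gap is explicit, namely $\rho_M(\rho_M-2r_0)$. This comes from $\cE_t(y)\ge\phi(\phi-2r_0)$ with $\phi=d_M(y,m(t))$, as in the proof of \Cref{thm:topo-cost}.
\end{itemize}
With $c_*=\min\{\eta(\kappa_+,r_0),\,g/\diam(M)^2\}$, integrating gives $\bE[d_M^2(F(t),m(t))]\le\cE(F)/c_*$ for every measurable $F$. That is the Bernstein condition on all shells. Your two-parameter peeling then goes through with no tube restriction and no separate far shell, and the uniform-consistency step follows as you describe.

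The same device also covers compact non-positively curved quotients such as $T^2$. There the all-$y$ inequality you take from \Cref{thm:NPC-sharp} is Sturm's inequality, which needs simple connectivity, and it fails for $y$ near the cut locus of $m(t)$.
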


\begin{proof}
  See Appendix~\ref{supp:cor49}.
\end{proof}

\subsection{Minimax optimality}

The oracle inequality establishes that the harmonic map regression estimator
converges at rate $n^{-2s/(2s+1)}$.  The next result shows that no estimator
can do better uniformly over the Sobolev ball, confirming that this rate is
optimal in the minimax sense.

\begin{theorem}[Minimax lower bound]\label{thm:minimax}
  Under (\ref{ass:curvature}) and (\ref{ass:density}) with $\dim M\ge 1$ and
  $s\ge 1$, there exist $c,\sigma_1>0$ such that
  \begin{equation}\label{eq:minimax}
    \inf_{\tilde F_n}\sup_{m\in\cF_B}
    \bE\biggl[\int_I d_M^2(\tilde F_n,m)\,f\,dt\biggr]
    \;\ge\; c\,n^{-\frac{2s}{2s+1}},
  \end{equation}
  where $\cF_B=\{F\in\cH^s(I,M):J_s(F)\le B\}$, using the same notation
  for the Sobolev ball as in the oracle inequality.
\end{theorem}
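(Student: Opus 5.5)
The plan is to prove \Cref{thm:minimax} by the standard Assouad (or Fano/Varshamov--Gilbert) construction, performed in a small geodesic chart of~$M$ so that the problem reduces to the Euclidean lower bound with constants affected only by bounded metric distortion. First I fix a point $y_0\in M$ and a unit vector $e\in T_{y_0}M$, and work inside $B_M(y_0,\rho_M/2)$. On this ball, $\exp_{y_0}$ is a diffeomorphism with bi-Lipschitz constants depending only on $\kappa$ and $\rho_M$ (Rauch comparison via (\ref{ass:curvature})), so $d_M(\exp_{y_0}u,\exp_{y_0}v)\asymp|u-v|$ there.

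\textbf{Hypotheses.} Let $h_n=n^{-1/(2s+1)}$ and $N=\lfloor 1/h_n\rfloor$. Take a fixed smooth bump $\psi$ supported in $[0,1]$, and set $\psi_j(t)=h_n^{s}\psi((t-(j-1)h_n)/h_n)$. For $\omega\in\{0,1\}^N$, define
\[
m_\omega(t)=\exp_{y_0}\Bigl(\epsilon\sum_{j=1}^N\omega_j\psi_j(t)\,e\Bigr),
\]
with $\epsilon>0$ a small constant. Each $m_\omega$ is smooth and takes values in $M$. I then bound $J_s(m_\omega)$ by expanding $F=\iota\circ\exp_{y_0}\circ(\text{curve})$ with the chain rule (Fa\`a di Bruno). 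The $s$-th derivative is a sum of terms consisting of derivatives of the fixed smooth map $\iota\circ\exp_{y_0}$, which are bounded on the ball, multiplied by products of derivatives $\psi_j^{(k_1)}\cdots\psi_j^{(k_r)}$ with $\sum k_i=s$. Since $|\psi_j^{(k)}|\lesssim h_n^{s-k}\le 1$, every product is bounded by the leading linear term, of size $\epsilon\,h_n^{0}$. The bumps have disjoint supports, so $J_s(m_\omega)\lesssim\epsilon^2\sum_j h_n\lesssim\epsilon^2\le B$ once $\epsilon$ is small. Hence $m_\omega\in\cF_B$.

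\textbf{Noise model and separation.} For each $t$, let $Y\mid t$ equal $\exp_{m_\omega(t)}(\pm\sigma_1 v(t))$ with probability $1/2$ each, where $v(t)$ is a unit parallel field along a geodesic, or more simply equal $m_\omega(t)$ perturbed symmetrically along $e$-geodesics. The conditional Fr\'echet mean is then $m_\omega(t)$ (by symmetry and uniqueness within $\rho_M$), which is compatible with the model. An alternative that avoids Fr\'echet-mean bookkeeping is a Gaussian-type law in normal coordinates, truncated to stay within $r_0$. For Assouad I need two bounds:
\begin{enumerate}[label=(\roman*)]
\item \emph{Separation.} By the bi-Lipschitz property and $f\ge c_f$, $\int d_M^2(m_\omega,m_{\omega'})f\,dt\gtrsim\epsilon^2 h_n^{2s+1}\rho_H(\omega,\omega')$, where $\rho_H$ is the Hamming distance.
\item \emph{Closeness.} For neighbours $\omega\sim\omega'$ the per-observation KL (or Hellinger) divergence is $\lesssim\sigma_1^{-2}d_M^2(m_\omega(t),m_{\omega'}(t))$, so the total over $n$ samples is $\lesssim n\epsilon^2h_n^{2s}\cdot h_n\,C_f=O(\epsilon^2)$, which I keep below $1/2$.
\end{enumerate}
Assouad's lemma then yields a lower bound $\gtrsim N\cdot\epsilon^2h_n^{2s+1}\asymp h_n^{2s}=n^{-2s/(2s+1)}$. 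To handle estimators $\tilde F_n$ that leave the chart, I first project them onto the family via the triangle inequality in $d_M$, which is standard.

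\textbf{Main obstacle.} The delicate step is (ii): the divergence bound requires a noise family on $M$ whose conditional Fr\'echet mean is exactly $m_\omega(t)$ and whose KL divergence is locally quadratic in the shift. I would handle this by using a fixed smooth positive density in normal coordinates centred at the point, symmetric under $v\mapsto -v$. On a general curved $M$ symmetry does not automatically fix the Fr\'echet mean, so instead I would choose two-point noise $\{\exp_{y_0}(u\pm\sigma_1 e)\}$ lying on the single geodesic through $y_0$ in direction $e$. On that geodesic the problem is one-dimensional and flat, so the Fr\'echet mean is the midpoint $\exp_{y_0}(u)$ exactly, with $u$ parallel to $e$. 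Because two-point laws with distinct supports have infinite KL divergence, I would replace them by a smooth symmetric density on the geodesic segment, which gives KL $\asymp \text{shift}^2/\sigma_1^2$. This makes the whole construction essentially the Euclidean one-dimensional lower bound transplanted onto a geodesic of~$M$.
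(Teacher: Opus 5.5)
\textbf{Overall.} Your argument has the same skeleton as the paper's proof. Both run Assouad's lemma over a hypercube of bumps of height $\asymp h_n^s$ on blocks of length $h_n\asymp n^{-1/(2s+1)}$, pushed into $M$ by $\exp_{y_0}$ along one direction. The paper's $f_\omega$ is also a scalar multiple of $e_1$, so its hypotheses lie on a single geodesic too.

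\textbf{Where you differ.} In the details you differ, and mostly you are more careful.
\begin{itemize}
\item You verify $J_s(m_\omega)\lesssim\epsilon^2$ for the actual embedded curve $\iota\circ\exp_{y_0}\circ u_\omega$ via Fa\`a di Bruno. The paper only bounds $\|f_\omega\|_{H^s}$ inside the chart.
\item You place the noise on the geodesic $\gamma(s)=\exp_{y_0}(se)$ itself. This makes the model an exact one-dimensional location family, since $d_M(\gamma(s_1),\gamma(s_2))=|s_1-s_2|$ on the relevant segment. It removes the paper's Jacobian and normalizing-constant bookkeeping for a $d$-dimensional truncated Gaussian in $T_{m_\omega(t)}M$.
\end{itemize}

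\textbf{Fr\'echet mean.} Your worry here is unnecessary. Suppose the law of $\log_{m_\omega(t)}Y$ is symmetric in $T_{m_\omega(t)}M$ and supported in a ball of radius $<\rho_M$. Then $\bE[\log_{m_\omega(t)}Y]=0$, so $m_\omega(t)$ is a critical point of the Fr\'echet function. By Karcher--Afsari uniqueness it is then the global minimizer over $M$. This is also the correct justification of your ``midpoint'' claim. Flatness of the geodesic alone does not suffice, because the Fr\'echet mean minimizes over all of $M$, not just over the geodesic.

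\textbf{A step that must be corrected.} Take any compactly supported noise density, including a smooth bump on the geodesic segment. The laws under $\omega$ and $\omega'$ then have different supports, so their KL divergence is $+\infty$, not $\asymp(\text{shift})^2/\sigma_1^2$.

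The fix is the one you half-anticipate in (ii): run the Hellinger version of Assouad's lemma, with a density satisfying $\sqrt p\in H^1(\bR)$. For example, take $p(x)=\cos^2(\pi x/2)\mathbf 1_{[-1,1]}(x)$ rescaled by $\sigma_1$. Then
\begin{itemize}
\item conditional on $t$, the squared Hellinger distance is at most $\frac{\pi^2}{4\sigma_1^2}|u_\omega(t)-u_{\omega'}(t)|^2$;
\item integrating against $f\le C_f$ gives $\mathrm{Hel}^2(P_\omega,P_{\omega'})\le\frac{\pi^2C_f}{4\sigma_1^2}\int|u_\omega-u_{\omega'}|^2\,dt$;
\item for products, $\mathrm{Hel}^2(P_\omega^{\otimes n},P_{\omega'}^{\otimes n})\le n\,\mathrm{Hel}^2(P_\omega,P_{\omega'})\lesssim\epsilon^2 n h_n^{2s+1}\asymp\epsilon^2$.
\end{itemize}
With this, your conclusion stands.

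\textbf{Why the smoothness of $p$ matters.} The vanishing of $p$ at the endpoints is essential. The paper's truncated Gaussian has a jump at the truncation sphere. Its KL divergence is therefore infinite, and its squared Hellinger distance is linear rather than quadratic in the shift. That gives a total of order $nh_n\cdot h_n^{s}=n^{s/(2s+1)}\to\infty$ over a block, so the paper's noise model does not support the bound at this rate. Your smooth-density choice, combined with Hellinger, is the right way to close this step.
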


\begin{proof}
  See Appendix~\ref{supp:thm411}.
\end{proof}

The proof follows the Assouad scheme
\citep[Theorem~2.12]{Tsybakov2009} by embedding a hypercube of bump
functions on~$I$ into~$M$ via the exponential map, reduces the testing
problem to pairwise Kullback--Leibler divergence control, and invokes
Assouad's lemma to obtain the lower bound.  The rate $n^{-2s/(2s+1)}$
coincides with the classical Stone-Wahba rate for Euclidean smoothing splines
\citep{Stone1982, Wahba1990}, confirming that the Riemannian geometry of~$M$
does not affect the minimax rate. However, the
lower bound constant~$c$ in~\eqref{eq:minimax} is curvature-free because the bump
amplitudes shrink with~$n$, making curvature corrections negligible.  Whether
the minimax constant depends on curvature for positively curved targets
remains an open problem, discussed further in \Cref{sec:discussion}.

\section{The harmonic map equation and regularity}\label{sec:regularity}

The oracle inequality of the previous section controls the
integrated risk $\bE[\cE(\hat F_n)]$.  This section refines that
bound to pointwise control by exploiting the PDE structure available
at $s=1$, paralleling the Euclidean smoothing-spline theory.
The Euler--Lagrange equation (\Cref{thm:EL}) identifies the estimator
as a solution of the perturbed harmonic map equation.
Elliptic regularity (\Cref{thm:regularity}) then shows that
the estimator is a \emph{geodesic spline}, piecewise geodesic with
prescribed derivative jumps at the design points.
Finally, an equivalent-kernel argument
(\Cref{thm:pointwise}) delivers a pointwise risk bound of order
$n^{-2/3}$ that separates the local noise $\sigma^2(t)$ from the
design density $f(t)$, with a non-perturbative proof for NPC targets
whose variance constant is at most the Euclidean constant.

\subsection{The Euler-Lagrange equation}

The variational identities in this subsection are standard in harmonic-map
theory. We adapt them to the penalized empirical objective
\citep[Chapter~8]{Jost2017}.

\begin{theorem}[Harmonic map equation for the estimator]\label{thm:EL}
  Let $s=1$, and let
  $\hat F_n\in\cH^1(I,M)$ be the empirical minimizer of $R_n+\lambda_n J_1$.
  Suppose that
  \begin{equation}\label{eq:log-condition}
    d_M\bigl(\hat F_n(t_i),\,Y_i\bigr)\;<\;\inj(M)
    \qquad\text{for all }i=1,\ldots,n.
  \end{equation}
  Then $\hat F_n$ satisfies the \emph{perturbed harmonic map equation} in the
  distributional sense:
  \begin{equation}\label{eq:EL}
    \hat F_n''(t) + A\bigl(\hat F_n(t)\bigr)
    \bigl(\hat F_n'(t),\hat F_n'(t)\bigr)
    \;=\;
    -\frac{1}{n\lambda_n}\sum_{i=1}^n\delta_{t_i}(t)\;
    \log_{\hat F_n(t_i)}(Y_i),
  \end{equation}
  where $A(y)\colon T_yM\times T_yM\to (T_yM)^\perp$ is the second
  fundamental form of $M\hookrightarrow\bR^D$ at~$y$, and
  $\log_y(q)\in T_yM$ denotes the Riemannian logarithmic map (the
  initial velocity of the minimizing geodesic from $y$ to~$q$, defined
  whenever $d_M(y,q)<\inj(M)$).
\end{theorem}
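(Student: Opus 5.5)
We derive \eqref{eq:EL} as the first-order condition for the minimization in \eqref{eq:estimator} (with $s=1$), using variations that stay on $M$. Fix a test field $\varphi\in C_c^\infty(I,\bR^D)$; compactly supported fields suffice for the distributional statement, and the boundary terms produced by general $\varphi$ would only recover the Neumann condition. Let $\pi$ be the nearest-point projection onto $M$, which is smooth on a tubular neighbourhood of $M\subset\bR^D$, and set
\[
F_\epsilon=\pi\bigl(\hat F_n+\epsilon\varphi\bigr).
\]
For small $|\epsilon|$ this lies in $\cH^1(I,M)$, because $\hat F_n$ is continuous with compact image in $M$. Write $v=\frac{d}{d\epsilon}F_\epsilon|_{\epsilon=0}=\Pi_{T_{\hat F_n}M}\varphi$. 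Minimality gives $\frac{d}{d\epsilon}\bigl[R_n(F_\epsilon)+\lambda_n J_1(F_\epsilon)\bigr]_{\epsilon=0}=0$, provided both terms are differentiable at $\epsilon=0$.

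For the data term, use the hypothesis \eqref{eq:log-condition}. Since $d_M(\hat F_n(t_i),Y_i)<\inj(M)$, the function $y\mapsto d_M^2(y,Y_i)$ is smooth near $\hat F_n(t_i)$, and its gradient there is $-2\log_{\hat F_n(t_i)}(Y_i)$. The condition is open, so it persists for $F_\epsilon(t_i)$ when $\epsilon$ is small. Hence
\[
\frac{d}{d\epsilon}R_n(F_\epsilon)\Big|_0=-\frac{2}{n}\sum_i\ip{\log_{\hat F_n(t_i)}(Y_i)}{\varphi(t_i)}.
\]
Here the tangential projection has been dropped because $\log$ already lies in $T_{\hat F_n(t_i)}M$.

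For the energy term, $\frac{d}{d\epsilon}\int|F_\epsilon'|^2=2\int\ip{\hat F_n'}{(d\pi(\hat F_n)\varphi)'}$. Expanding $(d\pi(\hat F_n)\varphi)'=d\pi(\hat F_n)\varphi'+d^2\pi(\hat F_n)(\hat F_n',\varphi)$ and using the standard identity $\ip{u}{d^2\pi(y)(u,w)}=-\ip{A(y)(u,u)}{w}$ for $u\in T_yM$ (as in \citet[Ch.~8]{Jost2017}) gives
\[
\frac{d}{d\epsilon}J_1(F_\epsilon)\Big|_0=2\int\bigl(\ip{\hat F_n'}{\varphi'}-\ip{A(\hat F_n)(\hat F_n',\hat F_n')}{\varphi}\bigr)dt.
\]
The sign here is the one to check most carefully.

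Setting the total derivative to zero gives, for every $\varphi$,
\[
\int\ip{\hat F_n'}{\varphi'}\,dt-\int\ip{A(\hat F_n)(\hat F_n',\hat F_n')}{\varphi}\,dt=\frac{1}{n\lambda_n}\sum_i\ip{\log_{\hat F_n(t_i)}(Y_i)}{\varphi(t_i)}.
\]
The first integral is $-\langle \hat F_n'',\varphi\rangle$ in the distributional sense, so this is exactly the weak form of \eqref{eq:EL} with Dirac masses at the $t_i$.

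The main obstacle is justifying the differentiation under merely $H^1$ regularity. The term $A(\hat F_n)(\hat F_n',\hat F_n')$ is only in $L^1$, which is still a valid distribution on $I$, and $\epsilon\mapsto\int|F_\epsilon'|^2$ must be differentiable. I would handle this with dominated convergence. The bounds $|F_\epsilon'|\le C(|\hat F_n'|+|\varphi'|)$ and the uniform $C^2$ bounds on $\pi$ over a compact neighbourhood of $M$ give an integrable majorant for the difference quotients, using that $\hat F_n'\in L^2$. Point evaluation is continuous on $H^1$, so the data term poses no difficulty.
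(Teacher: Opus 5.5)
Your proposal is correct and follows essentially the paper's route. Both use variations $\Pi_M(\hat F_n+\epsilon\varphi)$ through the nearest-point projection; the first variation of $J_1$ produces the tension field via the second fundamental form, and the first variation of $R_n$ produces $-2\log_{\hat F_n(t_i)}(Y_i)$ at the design points. The sign you flagged is right under the paper's convention that $\tau$ vanishes on geodesics, so that $A(y)(u,u)$ is minus the normal part of the acceleration.

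The differences are in your favour. The paper tests against $\Pi_{T_{\hat F_n}M}\varphi$ and then only against tangential fields; by testing against arbitrary ambient $\varphi$, you obtain the full $\bR^D$-valued distributional identity, including its normal component. You also supply the dominated-convergence justification for differentiating $J_1(F_\epsilon)$, which the paper leaves implicit.
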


\begin{remark}[Validity of condition~\eqref{eq:log-condition}]
\label{rem:log-condition}
  Condition~\eqref{eq:log-condition} ensures that the logarithmic map
  on the right-hand side of~\eqref{eq:EL} is well-defined and smooth.
  It is satisfied automatically when $\diam(M)\le\inj(M)$, which
  holds for all simply connected compact manifolds of non-negative
  curvature (e.g., $S^d$) and more generally for compact Lie groups
  with bi-invariant metrics (e.g., $\mathrm{SO}(3)$, which is not
  simply connected but satisfies $\diam(\mathrm{SO}(3))=\inj(\mathrm{SO}(3))=\pi/\sqrt{2}$).  For quotient manifolds where
  $\diam(M)>\inj(M)$ (e.g., flat tori, lens spaces),
  \eqref{eq:log-condition} must be verified.
  Under (\ref{ass:concentration}), we have
  $d_M(\hat F_n(t_i),Y_i)\le d_M(\hat F_n(t_i),m(t_i))+r_0$,
  so \eqref{eq:log-condition} holds whenever
  $\sup_t d_M(\hat F_n(t),m(t))<\inj(M)-r_0\ge\inj(M)/2$.
  By the $L^\infty$-consistency established in
  \Cref{cor:hp-oracle}, this is satisfied for all $n\ge n_2$
  with a finite threshold~$n_2$ depending on
  $s,M,I,\sigma_0^2,B$.
  In particular, the Euler-Lagrange equation holds for all
  sufficiently large~$n$ under the standing assumptions.
\end{remark}

\begin{proof}
  See Appendix~\ref{supp:thm51}.
\end{proof}

Equation \eqref{eq:EL} is a second-order ODE
with a measure-valued right-hand side supported on the design points.
The left-hand side, $\tau(F)=F''+A(F)(F',F')$, is the
\emph{tension field} of $F$, whose vanishing characterizes harmonic maps
\citep{EellsSampson1964}.  The estimator is therefore a harmonic map
perturbed by data-driven point sources.

\subsection{Regularity of the estimator}

The Euler--Lagrange equation identifies $\hat F_n$ as a solution of a
second-order ODE with point-source forcing.  Classical regularity theory
for harmonic maps then determines the smoothness of $\hat F_n$
between the design points and the form of the singularities at the
design points.  The key conclusion is that $\hat F_n$ is a
\emph{geodesic spline}, piecewise geodesic with explicit derivative
jumps.

\begin{theorem}[Derivative jump identity]\label{thm:regularity}
  Under the conditions of \Cref{thm:EL}, assume
  $(M,g)$ is a $C^k$ Riemannian manifold with $k\ge 2$.
  \begin{enumerate}[label=(\alph*)]
    \item\label{item:reg-smooth}
      $\hat F_n\in C^k(I\setminus\{t_1,\ldots,t_n\},M)$, i.e., the estimator
      inherits the regularity of~$M$ on each open interval between
      consecutive design points.  In particular, if $M$ is
      $C^\infty$ (resp.\ real analytic), then $\hat F_n$ is $C^\infty$
      (resp.\ $C^\omega$) on each such interval.
    \item\label{item:reg-jump}
      At each design point $t_i$, the estimator is continuous but has a jump
      discontinuity in the first derivative.  The first derivative is bounded
      ($\|\hat F_n'\|_{L^\infty} < \infty$, see proof below),
      and the jump satisfies:
      \begin{equation}\label{eq:jump}
        \hat F_n'(t_i^+)-\hat F_n'(t_i^-)
        \;=\;-\frac{1}{n\lambda_n}\;
        \log_{\hat F_n(t_i)}(Y_i).
      \end{equation}
      The one-sided derivatives $\hat F_n'(t_i^\pm)$ are evaluated in the
      ambient space~$\bR^D$ as one-sided limits, and their difference lies in
      $T_{\hat F_n(t_i)}M$ since $\log_y(q)\in T_yM$.
    \item\label{item:reg-spline}
      On each interval $(t_{(i)},t_{(i+1)})$ (where $t_{(1)}<\cdots<t_{(n)}$
      are the order statistics), $\hat F_n$ is a harmonic map from the one-dimensional interval
      to~$M$, which implies it is a constant-speed geodesic.  For $M=\bR^d$, this reduces to an affine function (i.e.,
      $\hat F_n$ is a piecewise linear spline); for $M=S^d$, it is an arc of a
      great circle traversed at constant speed.
    \item\label{item:reg-neumann}
      The estimator satisfies the natural (Neumann) boundary conditions
      $\hat F_n'(0^+)=0$ and $\hat F_n'(1^-)=0$, so the velocity vanishes
      at the domain endpoints.
  \end{enumerate}
\end{theorem}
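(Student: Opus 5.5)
The plan is to work directly from the weak form of the Euler--Lagrange equation in \Cref{thm:EL}. For every $\varphi\in C_c^\infty(I,\bR^D)$ with $\varphi(t)\in T_{\hat F_n(t)}M$, it reads
\[
  \int_0^1\ip{\hat F_n'}{\varphi'}\,dt
  \;=\;\frac{1}{2n\lambda_n}\cdot 2\sum_{i=1}^n\ip{\log_{\hat F_n(t_i)}(Y_i)}{\varphi(t_i)},
\]
with the constant chosen to match \eqref{eq:EL}. The same identity holds for test fields that are not compactly supported at $0,1$, because no boundary conditions are imposed.

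\textbf{Step 1: parts (a) and (c).} Take the variation supported in an open interval $J=(t_{(i)},t_{(i+1)})$ containing no design points. There the right-hand side vanishes, so $\hat F_n|_J$ is a weakly harmonic map from a one-dimensional domain, i.e.\ a critical point of $\Dir$ with fixed endpoints. The first task is to show it is an energy-minimizing curve between its endpoints.
\begin{enumerate}[label=(\roman*)]
  \item Replacing $\hat F_n|_J$ by any other $H^1$ curve with the same endpoints leaves $R_n$ unchanged. Hence $\hat F_n|_J$ minimizes $\int_J|F'|^2$ among curves with those endpoints.
  \item By Cauchy--Schwarz, $\int_J|F'|^2\ge \mathrm{length}^2/|J|$, with equality iff $F$ has constant speed.
  \item Therefore $\hat F_n|_J$ is a constant-speed length-minimizing curve, which is a geodesic.
\end{enumerate}
Geodesics solve the ODE $F''+A(F)(F',F')=0$, whose coefficients are $C^{k-2}$ in the embedded picture. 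Standard ODE regularity then gives $\hat F_n\in C^k$ on $J$, and $C^\infty$ or $C^\omega$ when $M$ is. The Euclidean and sphere specializations follow because the geodesics there are lines and great-circle arcs.

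\textbf{Step 2: boundedness of $\hat F_n'$ and existence of one-sided limits.} On each interval the speed is constant, equal to $\ell_i/|J_i|$ where $\ell_i$ is the length of that piece. By minimality, $\ell_i=d_M(\hat F_n(t_{(i)}),\hat F_n(t_{(i+1)}))\le\diam(M)$. If design points are distinct, finitely many intervals give $\|\hat F_n'\|_{L^\infty}<\infty$. Each piece is a smooth geodesic on a closed interval, so $\hat F_n'(t_i^\pm)$ exist. Continuity of $\hat F_n$ follows from the Sobolev embedding.

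\textbf{Step 3: the jump identity (b).} Take a tangent test field $\varphi$ supported in a small neighborhood of $t_i$ and integrate by parts on each side of $t_i$.
\begin{enumerate}[label=(\roman*)]
  \item On each side, $\hat F_n''$ is normal to $M$ by the geodesic equation, while $\varphi$ is tangent. So $\int\ip{\hat F_n''}{\varphi}=0$ there.
  \item Only the boundary terms survive, giving $\ip{\hat F_n'(t_i^-)-\hat F_n'(t_i^+)}{\varphi(t_i)}$.
  \item Matching this with the right-hand side yields $\Pi_{T}(\hat F_n'(t_i^+)-\hat F_n'(t_i^-))=-\frac{1}{n\lambda_n}\log_{\hat F_n(t_i)}(Y_i)$.
  \item Both one-sided derivatives are velocities of curves in $M$ through $\hat F_n(t_i)$, so they lie in $T_{\hat F_n(t_i)}M$. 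Their difference is therefore already tangent, and the projection can be dropped, which is \eqref{eq:jump}.
\end{enumerate}
The constant must be tracked carefully. The derivative of $d^2(Y,\cdot)$ is $-2\log$, and $J_1=\int|F'|^2$ contributes a factor $2$, so the factors of $2$ cancel.

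\textbf{Step 4: Neumann condition (d).} Use a test field supported near $0$ but not vanishing at $0$, and assume $t_{(1)}>0$. Integrating by parts on $(0,t_{(1)})$ leaves the single boundary term $-\ip{\hat F_n'(0^+)}{\varphi(0)}$, which must equal zero. Since $\varphi(0)$ is an arbitrary tangent vector and $\hat F_n'(0^+)$ is tangent, we get $\hat F_n'(0^+)=0$. The endpoint $1$ is handled the same way.

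A consequence worth noting: the geodesic pieces on $(0,t_{(1)})$ and $(t_{(n)},1)$ are then constant maps.

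\textbf{Main obstacle.} The delicate step is justifying the weak-form manipulations at the level of $H^1$ maps constrained to $M$. Admissible variations are built by projecting $\hat F_n+\epsilon\varphi$ back to $M$ through the nearest-point retraction. We also need differentiability of $d_M^2(Y_i,\cdot)$ at $\hat F_n(t_i)$, which is where \eqref{eq:log-condition} is used. I would take these from the proof of \Cref{thm:EL}, so the present argument only adds the piecewise integration by parts. Ties among design points would be handled by summing their contributions at a common $t_i$.
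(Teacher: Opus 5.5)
Your argument is correct. For parts (a) and (c) it takes a genuinely different route from the paper.

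\textbf{How the two routes differ.} The paper derives everything from the distributional equation \eqref{eq:EL}. Off the design points the right-hand side vanishes, so $\tau(\hat F_n)=0$, and the paper quotes that one-dimensional harmonic maps are geodesics. For (b) it integrates \eqref{eq:EL} over $[t_i-\epsilon,t_i+\epsilon]$ and kills the $A$-term with $\|\hat F_n'\|_{L^\infty}<\infty$.

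You get (a) and (c) from minimality alone. Any $H^1$ competitor with the same nodal values leaves $R_n$ unchanged, so Cauchy--Schwarz forces each piece to be a constant-speed minimizing geodesic. This is the same argument the paper later uses for \Cref{prop:exact-reduction}. It buys you three things:
\begin{itemize}
  \item no weak-to-classical regularity step for weakly harmonic maps;
  \item no use of \eqref{eq:log-condition} for (a), (c), (d);
  \item minimality of each piece for free, whereas the paper invokes $L^\infty$-consistency and $d_M<\inj(M)$ to discuss uniqueness.
\end{itemize}
For (b), your piecewise integration by parts against tangent fields is the weak-form dual of the paper's computation. Your step (iv) also makes explicit something the paper leaves implicit. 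The variational identity only determines the tangential part of the jump, and it is the tangency of the one-sided limits $\hat F_n'(t_i^\pm)$ that lets you drop the projection. The paper instead reads tangency of the jump off the formula itself.

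\textbf{Two small repairs.}

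First, Step~1 only treats interior intervals. Part (a) also concerns $[0,t_{(1)})$ and $(t_{(n)},1]$, and your Step~4 integrates by parts there assuming the geodesic equation. The one-free-endpoint version of your comparison settles this at once. The constant map equal to $\hat F_n(t_{(1)})$ on $[0,t_{(1)}]$ has zero energy and leaves $R_n$ unchanged, so $\hat F_n$ is constant there. That gives (a) on the boundary pieces and (d) with no test field at all.

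Second, do not take test fields in $C_c^\infty$ with $\varphi(t)\in T_{\hat F_n(t)}M$. At a kink, a $C^1$ tangent field is constrained. On $S^d$, differentiating $\ip{\varphi(t)}{\hat F_n(t)}=0$ from both sides at $t_i$ gives $\ip{\varphi(t_i)}{\hat F_n'(t_i^+)-\hat F_n'(t_i^-)}=0$. So $\varphi(t_i)$ cannot be an arbitrary tangent vector, and only part of \eqref{eq:jump} would follow. Instead use $\varphi=\Pi_{T_{\hat F_n}M}\phi$ with ambient $\phi\in C_c^\infty(I,\bR^D)$. This is exactly the variation field of $\Pi_M(\hat F_n+\epsilon\phi)$. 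It is Lipschitz, takes any tangent value at $t_i$, and suffices for your piecewise integration by parts.
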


\begin{proof}
  See Appendix~\ref{supp:thm54}.
\end{proof}

Part~\ref{item:reg-spline} is a Riemannian analogue of the
classical result that the Euclidean first-order smoothing spline is
piecewise linear.  The estimator is a geodesic spline,
piecewise geodesic with prescribed derivative jumps \eqref{eq:jump}
at the design points.
The $C^0$ junction conditions follow from the second-order
Euler--Lagrange equation with point-source forcing, matching the
regularity of the Sobolev class
$H^1(I,M)\hookrightarrow C^{0,1/2}(I,M)$.
Higher-order covariant-derivative penalties
\citep{Kim2021spline} produce smoother junctions ($C^1$ or $C^2$)
at the cost of more complex PDE structure and the obstructions
analyzed in \Cref{sec:obstructions}.

\subsection{Pointwise risk via the equivalent kernel}\label{subsec:pointwise}

The geodesic spline structure (\Cref{thm:regularity}) enables a pointwise
risk analysis that goes beyond the integrated risk bounds of \Cref{sec:main}
and requires the harmonic map ODE.  This is possible because
the explicit solution of the harmonic map equation on each inter-design-point
interval provides a representation of the estimator in terms of a Green's
function.

\begin{theorem}[Pointwise risk bound]\label{thm:pointwise}
  Let $s=1$.  Suppose
  (\ref{ass:curvature})--(\ref{ass:smoothness}) hold with $s=1$, and the
  regression function $m$ is Lipschitz with
  $\sup_t\abs{m'(t)}_g\le L_m$.
  For all sufficiently large~$n$ and every interior point
  $t\in(\delta_n,1-\delta_n)$ with $\delta_n=C_0\lambda_n^{1/2}\log n$,
  the following hold.
  \begin{enumerate}[label=(\alph*)]
    \item\label{item:pointwise-npc}
      \textbf{(NPC targets.)}  If\/ $\Sec_M\le 0$, then
      \begin{equation}\label{eq:pointwise-npc}
        \bE\bigl[d_M^2(\hat F_n(t),\,m(t))\bigr]
        \;\le\;
        C_1\,\lambda_n\,L_m^2
        \;+\;
        \frac{C_2\,\sigma^2(t)}{n\,\lambda_n^{1/2}\,f(t)},
      \end{equation}
      where $C_1,C_2>0$ satisfy $C_1\le C_1^{\bR^d}$ and
      $C_2\le C_2^{\bR^d}$, the corresponding constants for the
      Euclidean smoothing spline on~$\bR^d$.  With
      $\lambda_n\asymp n^{-2/3}$, this gives
      $\bE[d_M^2(\hat F_n(t),m(t))]
      =O\bigl((L_m^2+\sigma^2(t)/f(t))\,n^{-2/3}\bigr)$.

    \item\label{item:pointwise-general}
      \textbf{(General targets.)}  Suppose additionally that the Nash
      embedding $\iota\colon M\hookrightarrow\bR^D$ is $C^3$.  Then
      \begin{equation}\label{eq:pointwise}
        \bE\bigl[d_M^2(\hat F_n(t),\,m(t))\bigr]
        \;\le\;
        \frac{C_1\,\lambda_n\,L_m^2}{\eta(\kappa_+,r_0)}
        \;+\;
        \frac{C_2\,\sigma^2(t)}{n\,\lambda_n^{1/2}\,f(t)\,\eta(\kappa_+,r_0)},
      \end{equation}
      where $C_1,C_2>0$ depend only on~$M$.  With
      $\lambda_n\asymp n^{-2/3}$, this yields
      \begin{equation}\label{eq:pointwise-rate}
        \bE\bigl[d_M^2(\hat F_n(t),m(t))\bigr]
        \;=\;O\!\left(\frac{L_m^2+\sigma^2(t)/f(t)}
        {\eta(\kappa_+,r_0)}\;n^{-2/3}\right).
      \end{equation}
  \end{enumerate}
\end{theorem}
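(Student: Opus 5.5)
The plan is to linearize around the truth in normal coordinates and reduce the problem to the Euclidean first-order smoothing spline. The Euclidean equivalent kernel is the Green's function $G_\lambda(t,u)\approx (2\sqrt{\lambda}f(t))^{-1}\exp(-|t-u|/\sqrt{\lambda})$ of $-\lambda\,\partial_t^2+f$ on $I$, with Neumann boundary conditions.

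\emph{Step 1: localization.} By \Cref{cor:hp-oracle} and \Cref{rem:locality}, with probability at least $1-O(n^{-2})$ we have $\sup_t d_M(\hat F_n(t),m(t))\to 0$. On this event, \Cref{thm:EL} and \Cref{thm:regularity} apply. On the complement, the risk is bounded by $\diam(M)^2 n^{-2}$, which is negligible.

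\emph{Step 2: Euclidean representation.} Write $\hat F_n(u)=\exp_{m(u)}(\hat v(u))$ with $\hat v(u)=\log_{m(u)}\hat F_n(u)$. Pair the Euler--Lagrange equation \eqref{eq:EL} with the kernel, by testing against $G_\lambda(t,\cdot)$ transported along $m$. Equivalently, integrate the jump identity \eqref{eq:jump} against the piecewise-geodesic structure with the Neumann conditions of \Cref{thm:regularity}\ref{item:reg-neumann}. This yields
\[
\hat v(t)=\frac1n\sum_i G_\lambda(t,t_i)\,P_{t\leftarrow t_i}\log_{m(t_i)}Y_i\;-\;\lambda\!\int G_\lambda(t,u)\,\mathcal{T}(m)(u)\,du\;+\;\mathcal{R}_n(t).
\]
Here $P$ denotes parallel transport along $m$, and $\mathcal{T}(m)$ is the tension term, which is controlled weakly by $L_m$ through $m'$. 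The remainder $\mathcal{R}_n$ collects three contributions: the Jacobi-field curvature terms $O(\kappa|\hat v|^2)$, the quadratic error in $\log_{\hat F_n(t_i)}Y_i-\bigl(\log_{m(t_i)}Y_i-\hat v(t_i)\bigr)$, and the discretization error between the empirical design measure and $f\,du$.

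\emph{Step 3: bias and variance.} The noise terms $\xi_i=\log_{m(t_i)}Y_i$ have conditional mean zero, since $m$ is the Fréchet mean inside the convexity radius. Their second moment is $\sigma^2(t_i)$. Hence the variance term is at most $n^{-1}\sum_i G_\lambda(t,t_i)^2\sigma^2(t_i)\approx\sigma^2(t)/(n\sqrt{\lambda}f(t))$, using continuity of $\sigma^2$ and $f$ on the kernel scale $\sqrt\lambda$. The interior restriction $t\in(\delta_n,1-\delta_n)$ makes boundary reflections of $G_\lambda$ of size $e^{-C_0\log n}$, which is negligible. For the bias, integrate by parts once: $\lambda\int G_\lambda\,\partial_u(\cdot)$ is bounded by $\lambda\|\partial_u G_\lambda\|_{L^1}L_m\lesssim\sqrt{\lambda}L_m$, so its square is $\lambda L_m^2$.

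\emph{Step 4: NPC case versus general case.} In the NPC case, part (a), I will avoid linearization and argue non-perturbatively. The map $u\mapsto d_M^2(\hat F_n(u),m(u))$ is convex along the comparison, and the Jacobi fields are dominated by their Euclidean counterparts by Rauch comparison. Consequently the curvature terms in $\mathcal{R}_n$ carry a favorable sign, and the Euclidean constants dominate, giving $C_i\le C_i^{\bR^d}$. In the general case, part (b), I will bound $\mathcal{R}_n$ using the $C^3$ embedding and the localization from Step 1. The resulting quadratic terms are $o$ of the main terms. The factor $1/\eta$ enters by converting the second moment of $\hat v$ into $d_M^2$ via \Cref{lem:excess-risk}, applied locally.

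\textbf{Main obstacle.} The hardest step is controlling $\mathcal{R}_n$ pointwise rather than in $L^2$. The quadratic terms require a pointwise bound $\bE|\hat v(t)|^4\lesssim(n\sqrt\lambda)^{-2}$. I would first try a bootstrap: Step 2 with a crude $L^\infty$ bound from \Cref{cor:hp-oracle} gives an initial estimate, and reinserting it improves the rate. If that fails, I would fall back on sub-Gaussian concentration of the kernel sum $\frac1n\sum_i G_\lambda\xi_i$, which is available because the noise is bounded under (\ref{ass:concentration}).
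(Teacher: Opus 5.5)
Your Steps 1--3 follow the same perturbative scheme as the paper: localization via \Cref{cor:hp-oracle}, linearization of $\hat v=\log_m\hat F_n$ in a parallel frame along $m$, and the first-order spline kernel of bandwidth $\sqrt{\lambda_n}$, with bias from its first moment and variance from its square. However, you misidentify how curvature enters the linearized problem, and this undermines both parts.

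Since $\log_pY=-\operatorname{grad}_p\tfrac12 d_M^2(\cdot,Y)$, one has $\log_{\exp_m(v)}Y=P\bigl(\log_mY-Hv\bigr)+O(|v|^2)$ with $H=\operatorname{Hess}_m\bigl(\tfrac12 d_M^2(\cdot,Y)\bigr)$. By Hessian comparison, $\|H-\Id\|\lesssim\kappa\,d_M^2(m,Y)\le\kappa r_0^2$, and $H\neq\Id$ in general on curved targets. So the error you call quadratic, $\log_{\hat F_n(t_i)}Y_i-P\bigl(\log_{m(t_i)}Y_i-\hat v(t_i)\bigr)$, contains $P(\Id-H_i)\hat v(t_i)$. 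This term is linear in $\hat v$, with a coefficient that does not shrink as $n\to\infty$ because the noise radius is fixed. Neither the localization of Step~1 nor a bootstrap can make it $o$ of the main terms. It changes the effective operator to roughly $-\lambda_n\partial_t^2+f\bar H$, where $\bar H(t)=\bE[H_i\mid t_i=t]$.

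The Jacobi term $\lambda_n R(\hat v,m')m'$ is likewise linear rather than $O(\kappa|\hat v|^2)$, but it is harmless because it carries a factor $\lambda_n$. In part~(b), the data-Hessian term must therefore be kept in the operator, or absorbed by a contraction argument using $\bar H\succeq\eta(\kappa_+,r_0)\Id$. That is where a genuine curvature dependence of the constants arises. Converting $\bE|\hat v(t)|^2$ into the pointwise risk needs no lemma, since $|\hat v(t)|=d_M(\hat F_n(t),m(t))$ exactly, and \Cref{lem:excess-risk} is a statement about excess risk, not about this conversion.

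For part~(a), the missing ingredient is a comparison at the level of the linear operator. Three of your claims do not hold up:
\begin{itemize}
\item The claim that $u\mapsto d_M^2(\hat F_n(u),m(u))$ is convex is false in general. On NPC targets, $u\mapsto d_M(\gamma_1(u),\gamma_2(u))$ is locally convex when both curves are geodesics, but $m$ is an arbitrary Lipschitz curve.
\item Rauch comparison under $\Sec_M\le0$ bounds Jacobi fields from below, not from above, by their Euclidean counterparts.
\item A ``favorable sign'' of terms left in a remainder says nothing about $\bE|\hat v(t)|^2$, which is quadratic in the solution operator.
\end{itemize}

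The paper instead places both curvature effects in the second variation at $m$. In the parallel frame the index form gains $\int\mathbf V^T\mathbf K(t)\mathbf V$ with $\mathbf K(t)\succeq0$, and $H_i\succeq\Id$ on NPC targets, so $Q^M\ge Q^0$. Through the exact reduction (\Cref{prop:exact-reduction}), the linearized discrete system has $\mathbf H^M=\mathbf H^0+\mathbf H^{\mathrm{curv}}$ with $\mathbf H^{\mathrm{curv}}\succeq0$, hence $\mathbf G^M\preceq\mathbf G^0$. The data block $(1/n)(H_i-\Id)$ of $\mathbf H^{\mathrm{curv}}$ is exactly the term you relegated to $\mathcal R_n$.

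A Loewner bound on $\mathbf G^M$ does not control $(\mathbf G^M)^2$. The paper therefore adds off-diagonal decay of inverses of block-tridiagonal positive definite matrices to reach the pointwise variance. It controls the nonlinear remainder by geodesic convexity of $\Phi_{\mathrm{disc}}$ on NPC targets. Without an operator comparison of this kind, your sketch gives no route to $C_i\le C_i^{\bR^d}$.
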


\begin{proof}
  See Appendix~\ref{supp:thm56}.
\end{proof}

\begin{remark}[Proof methods and regularity conditions]\label{rem:pointwise-methodology}
  Part~\ref{item:pointwise-npc} is proved by an intrinsic argument
  that works in a parallel transport frame
  $\{e_a^{(t)}\}$ along the regression function~$m$.
  The proof directly compares the quadratic form $Q^M$ of the
  penalized objective with its Euclidean counterpart~$Q^0$,
  exploiting two features of non-positive curvature.
  First, the index form of the Dirichlet energy gains
  a non-negative curvature term $-\langle R(V,m')m',V\rangle\ge 0$.
  Second, the Hessian of the squared distance satisfies
  $\operatorname{Hess}_p(d_M^2(\cdot,y)/2)\ge g_p$ by
  Toponogov comparison \citep[Section~6.5]{Jost2017}.
  Together these yield
  $Q^M\ge Q^0$ and, after discretization via
  \Cref{prop:exact-reduction}, the Green matrix comparison
  $\mathbf{G}^M\preceq\mathbf{G}^0$ in Loewner order.
  The variance constant of the NPC estimator is therefore
  at most the Euclidean constant, with strict inequality
  when the sectional curvature is strictly negative.
  The nonlinear remainder is controlled by geodesic
  convexity of the discrete objective on NPC targets
  \citep{Zhang2016riemannian}, and no Nash embedding or
  normal-coordinate expansion is needed.

  On positively curved targets, the curvature matrix
  $\mathbf{K}(t)$ can have negative eigenvalues, and the
  Hessian bound~$\operatorname{Hess}_p(d_M^2(\cdot,y)/2)\ge g_p$
  fails, so the comparison $Q^M\ge Q^0$ is no longer available.
  Part~\ref{item:pointwise-general} therefore takes a
  different route, approximating the Riemannian objective by
  the Euclidean smoothing spline in normal coordinates and
  treating curvature as a perturbation.
  The proof verifies three intermediate conditions:
  (P1)~local chart control (via $L^\infty$-consistency, \Cref{cor:hp-oracle}),
  (P2)~Fr\'echet differentiability of the curvature perturbation
  $\Delta\Phi$ in the RKHS semi-norm $\|\cdot\|_n$,
  and (P3)~uniform denominator concentration.
  Condition~(P2) requires a $C^3$ Nash embedding
  $\iota\colon M\hookrightarrow\bR^D$, which ensures that the
  metric perturbation in normal coordinates is $C^1$-small and
  admits a bounded Fr\'echet derivative via the Sobolev
  embedding $\|v\|_{L^\infty}\le C_S\|v\|_{H^1}$.
  The $C^3$ condition is satisfied by all manifolds in
  \Cref{sec:numerics}: $S^d\hookrightarrow\bR^{d+1}$,
  $SO(3)\hookrightarrow\bR^9$, and
  $\mathrm{Sym}^+(2)\hookrightarrow\bR^4$ are all $C^\infty$;
  for $\bH^d$, Nash's theorem \citep{Nash1956} provides a $C^\infty$
  isometric embedding.
  Although the perturbative argument also applies to NPC targets,
  it introduces the factor $1/\eta(\kappa_+,r_0)$ in the risk
  bound and requires the additional $C^3$ Nash embedding
  assumption, both of which are avoided by the intrinsic proof
  of Part~\ref{item:pointwise-npc}.

  In both parts, the effective bandwidth
  $h_{\mathrm{eff}}=\sqrt{\lambda_n}$ arises from the Green's
  function of the linearized tension-field operator, so the locality
  of the estimator is a consequence of the ODE structure.
\end{remark}

\subsection{Exact finite-dimensional reduction}\label{subsec:reduction}

The geodesic spline structure of \Cref{thm:regularity} has a further consequence
beyond the pointwise risk bound: it implies that the continuous
infinite-dimensional problem~\eqref{eq:estimator} can be solved exactly by
finite-dimensional means, with no discretization error.
For $s=1$, the harmonic map regression estimator is a geodesic
spline (\Cref{thm:regularity}), completely determined by its values
$f_i=\hat F_n(t_{(i)})\in M$ at the order statistics.  We now show that the
continuous infinite-dimensional problem~\eqref{eq:estimator} reduces
exactly to a finite-dimensional optimization on $M^n$.

\begin{proposition}[Exact finite-dimensional reduction]
\label{prop:exact-reduction}
  Let $s=1$ and $I=[0,1]$.  Write $t_{(1)}<\cdots<t_{(n)}$
  for the order statistics and $\Delta_i=t_{(i+1)}-t_{(i)}$ for
  the empirical spacings ($i=1,\ldots,n-1$).  Define
  \begin{equation}\label{eq:discrete-obj}
    \Phi_{\mathrm{disc}}(f_1,\ldots,f_n)
    \;=\;\frac{1}{n}\sum_{i=1}^n d_M^2(Y_{(i)},f_i)
    \;+\;\lambda_n\sum_{i=1}^{n-1}
    \frac{d_M^2(f_i,f_{i+1})}{\Delta_i},
  \end{equation}
  where $Y_{(i)}$ is the response paired with $t_{(i)}$.
  Then:
  \begin{enumerate}[label=(\alph*)]
    \item\label{item:red-value}
      \textbf{(Value-level equivalence.)}
      For any $(f_1,\ldots,f_n)\in M^n$, the infimum of
      $R_n(F)+\lambda_n J_1(F)$ over all
      $F\in\cH^1([0,1],M)$ with $F(t_{(i)})=f_i$ equals
      $\Phi_{\mathrm{disc}}(f_1,\ldots,f_n)$, attained by
      any piecewise constant-speed minimizing-geodesic
      interpolant with the Neumann condition $F'=0$ on the
      boundary intervals (see\
      \Cref{thm:regularity}\ref{item:reg-neumann}).
      When $d_M(f_i,f_{i+1})<\inj(M)$ for all~$i$,
      the interpolant is unique.
    \item\label{item:red-minimizer}
      \textbf{(Minimizer equivalence.)}
      If $\hat F_n$ minimizes
      $R_n(F)+\lambda_n J_1(F)$ over $\cH^1([0,1],M)$,
      then $\hat F_n$ is a piecewise-geodesic interpolant
      and its nodal values
      $f_i^*=\hat F_n(t_{(i)})$ minimize
      $\Phi_{\mathrm{disc}}$ over $M^n$.
    \item\label{item:red-equiv}
      In particular,
      $\min_F\bigl(R_n(F)+\lambda_n J_1(F)\bigr)
      =\min_{(f_i)\in M^n}
      \Phi_{\mathrm{disc}}(f_1,\ldots,f_n)$,
      with no discretization error.
  \end{enumerate}
\end{proposition}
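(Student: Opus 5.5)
The plan is to compute, for fixed nodal values $(f_1,\dots,f_n)\in M^n$, the constrained infimum interval by interval. Since the data term $R_n(F)$ depends only on $F(t_{(i)})=f_i$, it equals the first sum in \eqref{eq:discrete-obj} for every admissible $F$. Only the energy term varies, and it splits additively over the intervals $[0,t_{(1)}]$, $[t_{(i)},t_{(i+1)}]$ for $1\le i\le n-1$, and $[t_{(n)},1]$. These pieces may be minimized independently, because concatenating $H^1$ pieces that agree at the nodes gives an $H^1$ curve (continuity at the junctions suffices in one dimension).

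On an interior interval of length $\Delta_i$, I would use the Cauchy--Schwarz inequality and the fact that extrinsic length dominates intrinsic distance (via \eqref{eq:energy-extrinsic} the integrand is $|F'|_g^2$):
\[
d_M^2(f_i,f_{i+1})\le \Bigl(\int_{t_{(i)}}^{t_{(i+1)}}|F'|_g\,dt\Bigr)^2\le \Delta_i\int_{t_{(i)}}^{t_{(i+1)}}|F'|_g^2\,dt .
\]
Equality holds exactly when $F$ has constant speed and its length equals $d_M(f_i,f_{i+1})$, that is, when $F$ is a constant-speed minimizing geodesic; such a geodesic exists by Hopf--Rinow on compact $M$. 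This yields the term $d_M^2(f_i,f_{i+1})/\Delta_i$ for the unweighted $J_1=\|F'\|_{L^2}^2$. On the two boundary intervals the endpoint is free, so the infimum $0$ is attained by the constant map, which gives the Neumann condition. Summing the pieces proves \ref{item:red-value}. For uniqueness, when $d_M(f_i,f_{i+1})<\inj(M)$ the minimizing geodesic between the two points is unique, and the equality case of Cauchy--Schwarz forces constant speed. Hence the interior pieces are unique, and the boundary pieces are forced to be constant because their energy must vanish.

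For \ref{item:red-minimizer}, let $\hat F_n$ be a minimizer with nodal values $f^*$. By \ref{item:red-value}, $R_n(\hat F_n)+\lambda_nJ_1(\hat F_n)\ge\Phi_{\mathrm{disc}}(f^*)$. The interpolant of $f^*$ attains $\Phi_{\mathrm{disc}}(f^*)$, so minimality of $\hat F_n$ forces equality, and therefore every piece of $\hat F_n$ must be an equality case, i.e.\ a constant-speed minimizing geodesic or a constant on the boundary intervals. Now take any $g\in M^n$; its interpolant $G$ satisfies $\Phi_{\mathrm{disc}}(g)=R_n(G)+\lambda_nJ_1(G)\ge R_n(\hat F_n)+\lambda_nJ_1(\hat F_n)=\Phi_{\mathrm{disc}}(f^*)$, so $f^*$ minimizes $\Phi_{\mathrm{disc}}$. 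Part \ref{item:red-equiv} then follows by combining this with existence (\Cref{thm:existence}); the minimum of $\Phi_{\mathrm{disc}}$ is attained anyway because $M^n$ is compact and $\Phi_{\mathrm{disc}}$ is continuous. The argument needs no condition \eqref{eq:log-condition} and does not use \Cref{thm:EL}.

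The main obstacle is bookkeeping rather than analysis. I must check the normalization so that the penalty is exactly $d_M^2/\Delta_i$, which holds because $J_1=\int|F'|^2$ without the factor $1/2$. I must also justify that the Cauchy--Schwarz equality case pins down a minimizing geodesic for merely $H^1$ curves. For the latter I would use that an absolutely continuous curve has length $\int|F'|$ and that a constant-speed curve realizing the distance between its endpoints is a minimizing geodesic after reparametrization.
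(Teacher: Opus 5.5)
Your proposal is correct and follows the paper's own argument. The paper also uses the interval-wise Cauchy--Schwarz bound $\int_{t_{(i)}}^{t_{(i+1)}}|F'|^2\,dt\ge d_M^2(f_i,f_{i+1})/\Delta_i$, with equality exactly for constant-speed minimizing geodesics (which exist by Hopf--Rinow), and then the sandwich $\Phi_{\mathrm{disc}}(f^*)\le R_n(\hat F_n)+\lambda_n J_1(\hat F_n)\le\Phi_{\mathrm{disc}}(g)$ for part (b); your explicit treatment of the two boundary intervals (constant pieces, giving the Neumann condition) and of the uniqueness claim fills in details the paper's proof leaves implicit.
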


\begin{proof}
  See Appendix~\ref{supp:prop81}.
\end{proof}

Parts~\ref{item:red-value}--\ref{item:red-equiv} hold on any
compact Riemannian manifold (by the Hopf--Rinow theorem;
\citealt[Theorem~6.19]{Lee2018}), with
no injectivity-radius or curvature restriction.
The continuous estimator~\eqref{eq:estimator} and the
finite-dimensional problem~\eqref{eq:discrete-obj} have
identical optimal values, with no discretization error.
The negative Riemannian gradient (the direction of steepest descent) has the explicit form
\begin{equation}\label{eq:grad-flow}
  -\operatorname{grad}_{f_i}\Phi_{\mathrm{disc}}
  = \frac{2}{n}\log_{f_i}(Y_{(i)})
  + \frac{2\lambda_n}{\Delta_i}\log_{f_i}(f_{i+1})
  + \frac{2\lambda_n}{\Delta_{i-1}}\log_{f_i}(f_{i-1}),
\end{equation}
with natural boundary modifications for $i=1$ and $i=n$.

\section{Topological energy barrier}\label{sec:topology}

When the target manifold has non-trivial fundamental group, the
Dirichlet energy acquires homotopy-class-dependent lower bounds that
create a new statistical phenomenon in which the convergence behavior of the
estimator depends on the homotopy class of the regression function.
The oracle inequality (\Cref{thm:oracle}) transfers the resulting
deterministic class-separation gap to the estimator, producing a
phase transition for homotopy class recovery.
We first establish the geometric energy lower bound for loops
(\Cref{thm:topological}), then extend it to boundary conditions
(\Cref{thm:topo-bdry}).  The statistical consequences, an irreducible
wrong-class estimation error (\Cref{thm:topo-cost}) and a phase
transition for homotopy class recovery (\Cref{thm:topo-phase}), are
developed in \Cref{subsec:topo-stat}.

\subsection{Energy lower bounds}\label{subsec:energy-lower-bounds}

We present two families of energy lower bounds, for loops
($\cX=S^1$) and for paths with prescribed boundary conditions
($I=[0,1]$).

\begin{theorem}[Topological energy barrier]\label{thm:topological}
  Let $\cX=S^1$ (the circle of circumference~$L$) and $(M,g)$ a compact
  Riemannian manifold with $\pi_1(M)\neq 0$.  Let
  $[F]\in\pi_1(M)\cong[S^1,M]$ denote the free homotopy class of a
  continuous map $F\colon S^1\to M$.  Then:
  \begin{enumerate}[label=(\alph*)]
    \item\label{item:topo-bound}
      For any $F\in H^1(S^1,M)$,
      \begin{equation}\label{eq:topo-bound}
        \Dir(F) \;=\;\frac{1}{2}\int_{S^1}\abs{F'}^2\dd t
        \;\ge\;\frac{\ell([F])^2}{2L},
      \end{equation}
      where $\ell([F])$ is the length of the shortest closed geodesic in the
      free homotopy class~$[F]$.

    \item\label{item:topo-minimizer}
      Equality in \eqref{eq:topo-bound} holds if and only if $F$ is a
      constant-speed closed geodesic in class~$[F]$.

    \item\label{item:topo-oracle}
      If the regression function $m$ has homotopy class $[m]\neq 0$ (the
      trivial class), then the oracle cost within the same homotopy
      class satisfies
      \[
        \inf_{F\in[m]}\bigl\{\cE(F)+\lambda_n J_1(F)\bigr\}
        \;\ge\;\lambda_n\cdot\frac{\ell([m])^2}{L}
        \;\ge\;\lambda_n\cdot\frac{\operatorname{sys}(M)^2}{L},
      \]
      where the infimum is over $F\in[m]\cap\cH^1(S^1,M)$ and
      $\operatorname{sys}(M)$ is the \emph{systole} of~$M$ (length of
      the shortest non-contractible closed geodesic).
      The unrestricted infimum $\inf_F\{\cE(F)+\lambda_n J_1(F)\}$
      may be smaller (e.g., attained by a constant map in the trivial
      class with $J_1=0$).
  \end{enumerate}
\end{theorem}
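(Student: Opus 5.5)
The proof reduces the energy bound to a length bound via Cauchy--Schwarz, and then uses the existence of a length-minimizing closed geodesic in each free homotopy class.

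\emph{Part (a).} Since $H^1(S^1,M)\hookrightarrow C^0$, every $F\in H^1(S^1,M)$ is a continuous loop, so $[F]$ is well defined. It is also absolutely continuous, with length $\ell(F)=\int_{S^1}|F'|\,dt$. Cauchy--Schwarz gives
\[
\ell(F)^2\le L\int_{S^1}|F'|^2\,dt=2L\,\Dir(F).
\]
It remains to show $\ell(F)\ge\ell([F])$, that is, that the shortest closed geodesic in $[F]$ exists and realizes the infimum of lengths over the class. This is the classical Cartan--Hilbert argument on a compact manifold. Take a length-minimizing sequence of loops in $[F]$ and reparametrize each by constant speed, so the sequence is equi-Lipschitz. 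Arzel\`a--Ascoli gives a uniformly convergent subsequence. Any loop uniformly within $\inj(M)$ of another is freely homotopic to it, via the straight-line homotopy along minimizing geodesics, so the limit stays in $[F]$. Lower semicontinuity of length shows the limit minimizes. A length minimizer in its class is locally minimizing, hence a closed geodesic once parametrized at constant speed. If $[F]\neq0$ this loop is nonconstant. If $[F]=0$, then $\ell([F])=0$ and the bound is trivial, so the class ambiguity there is harmless. Hence $\ell(F)\ge\ell([F])$, and combining with Cauchy--Schwarz gives \eqref{eq:topo-bound}.

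\emph{Part (b).} Equality forces equality in both steps. Equality in Cauchy--Schwarz means $|F'|$ is a.e.\ constant. Equality $\ell(F)=\ell([F])$ means $F$ minimizes length in its class, hence is a closed geodesic; it is smooth by the first-variation argument, or equivalently because it is an $H^1$ critical point of $\Dir$ in one dimension. Conversely, a constant-speed closed geodesic of minimal length attains equality. One caveat: the correct "iff" is with a \emph{shortest} closed geodesic in the class. A longer closed geodesic in the same class would give strict inequality, so I will read "closed geodesic in class $[F]$" as the length-minimizing one.

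\emph{Part (c).} Since $\cE(F)\ge0$, because $m$ minimizes $R$ pointwise in $t$, we have
\[
\cE(F)+\lambda_nJ_1(F)\ge 2\lambda_n\Dir(F)\ge\lambda_n\,\ell([m])^2/L
\]
by (a), and we take the infimum over $F\in[m]$. Because $[m]\neq0$, the class is noncontractible. Its minimizing closed geodesic from (a) is noncontractible, so $\ell([m])\ge\operatorname{sys}(M)$, and $\operatorname{sys}(M)>0$ since any noncontractible loop has length at least $2\inj(M)$. The closing remark follows from the constant map, which has $J_1=0$ and finite $\cE$.

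The main obstacle is the existence and closedness step in (a): showing the uniform limit of the minimizing sequence remains in the same free homotopy class, which is where compactness and the injectivity radius enter.
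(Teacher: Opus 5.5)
Your proposal is correct and follows the paper's own proof: Cauchy--Schwarz bounds length by energy, the minimal-length property of the free homotopy class gives (a) and (b), and $\cE(F)\ge 0$ combined with (a) gives (c). You go further than the paper by justifying the existence of a shortest closed geodesic that realizes the infimum of lengths, which the paper takes for granted, and you rightly note that the ``iff'' in (b) must refer to a \emph{shortest} closed geodesic in the class, since a longer closed geodesic in the same class gives strict inequality.
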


\begin{proof}
  See Appendix~\ref{supp:thm61}.
\end{proof}

The topological energy barrier means that if the true regression
function~$m$ ``winds around'' the manifold (e.g., $m\colon S^1\to S^1$
with winding number $k\neq 0$, or $m$ wrapping around a handle of a
surface of genus~$\ge 1$), then any smooth approximation to~$m$ must
pay an irreducible energy cost $\ge\ell([m])^2/L$ in the Dirichlet
energy.  This cost enters the bias of the oracle inequality as
$\lambda_n\cdot\ell([m])^2/L$, which cannot be reduced by
choosing~$\lambda_n$ smaller without increasing the variance.
No such barrier exists in the Euclidean setting
($M=\bR^d$), where every map is null-homotopic.  Thus the
topology of~$M$ has a quantifiable effect on the estimation problem,
beyond the metric considerations captured by
$\eta(\kappa_+,r_0)$.

\begin{example}[Winding number on the circle]\label{ex:circle}
  Let $M=S^1$ (the circle of circumference~$2\pi$) and $\cX=S^1$ of
  circumference~$L$.  A map $F\colon S^1\to S^1$ has a winding number
  $\deg(F)\in\bZ$.  The shortest closed geodesic in the class of winding
  number~$k$ has length $2\pi\abs{k}$.  Thus
  $\Dir(F)\ge 2\pi^2 k^2/L$ for $F$ of degree~$k$.

  If $m$ has degree $k\neq 0$, the oracle inequality gives
  \[
    \bE\biggl[\int d_{S^1}^2(\hat F_n,m)\dd x\biggr]
    \;\le\; C\Bigl(\lambda_n\cdot\frac{4\pi^2 k^2}{L}
    +\frac{1}{n\lambda_n^{1/2}}\Bigr),
  \]
  showing an explicit dependence on the winding number~$k$ in the bias.
\end{example}

\begin{example}[Surfaces of higher genus]
  For $M$ a closed surface of genus $g\ge 1$, $\operatorname{sys}(M)>0$
  by Gromov's systolic inequality \citep{Gromov1983}.  Any non-contractible
  regression function on $\cX=S^1$ incurs a minimum energy cost of
  $\operatorname{sys}(M)^2/(2L)$.
\end{example}

\label{subsec:topo-bdry}
The topological barrier extends to non-periodic domains when
boundary conditions are imposed.

\begin{theorem}[Topological barrier with boundary conditions]
\label{thm:topo-bdry}
  Let $I=[0,1]$ and $(M,g)$ a compact Riemannian manifold with
  $\pi_1(M)\neq 0$.  Fix boundary values $y_0,y_1\in M$ and define
  \[
    \cH^1_{y_0,y_1}([0,1],M)
    \;=\;\{F\in H^1([0,1],M) : F(0)=y_0,\;F(1)=y_1\}.
  \]
  The space $\cH^1_{y_0,y_1}$ decomposes into path-connected components
  indexed by the set of relative homotopy classes of paths from $y_0$
  to~$y_1$, which forms a torsor over $\pi_1(M,y_0)$.  For each class
  $\alpha$, the following hold.
  \begin{enumerate}[label=(\alph*)]
    \item\label{item:bdry-bound}
      Every $F$ in class $\alpha$ satisfies
      \begin{equation}\label{eq:bdry-bound}
        \Dir(F) \;\ge\; \frac{\ell(\alpha)^2}{2},
      \end{equation}
      where $\ell(\alpha)$ is the infimum of arc lengths over all paths
      from~$y_0$ to~$y_1$ in class~$\alpha$.

    \item\label{item:bdry-eq}
      Equality holds if and only if $F$ is a constant-speed geodesic in
      class~$\alpha$.

    \item\label{item:bdry-oracle}
      If the regression function $m\in\cH^1_{y_0,y_1}$ has relative homotopy
      class~$\alpha\neq\alpha_0$ (where $\alpha_0$ is the class of the length
      minimizer), then the bias satisfies
      \[
        \inf_{F\in[\alpha]}\bigl\{\cE(F)+\lambda_n J_1(F)\bigr\}
        \;\ge\;\lambda_n\bigl(\ell(\alpha)^2-\ell(\alpha_0)^2\bigr).
      \]
  \end{enumerate}
\end{theorem}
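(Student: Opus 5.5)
The plan follows the template of \Cref{thm:topological}, adapted to paths with fixed endpoints. First we settle the decomposition claim. Since $H^1([0,1],M)\hookrightarrow C^{0,1/2}$, every $F\in\cH^1_{y_0,y_1}$ is a continuous path from $y_0$ to $y_1$. Two paths that are $C^0$-close lie in a common family of small convex balls (radius below $\rho_M$), so they are homotopic rel endpoints via the pointwise minimizing geodesic homotopy. This homotopy stays in $H^1$: it is a smooth function of the pair of points away from the cut locus. Hence each relative homotopy class is open and closed in $\cH^1_{y_0,y_1}$. Each class is also path-connected, because a continuous homotopy can be smoothed, and $H^1$ paths are dense in $C^0$ paths within a class. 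Concatenating with loops at $y_0$ gives the standard free and transitive action of $\pi_1(M,y_0)$ on the set of classes, so that set is a torsor over $\pi_1(M,y_0)$.

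For \ref{item:bdry-bound}, we apply Cauchy--Schwarz:
\[
\mathrm{Length}(F)^2=\Bigl(\int_0^1|F'|_g\,dt\Bigr)^2\le\int_0^1|F'|_g^2\,dt=2\Dir(F).
\]
Since $F$ lies in class $\alpha$, $\mathrm{Length}(F)\ge\ell(\alpha)$ by the definition of $\ell(\alpha)$ as an infimum, which gives $\Dir(F)\ge\ell(\alpha)^2/2$.

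For \ref{item:bdry-eq}, equality forces two things: equality in Cauchy--Schwarz, so $|F'|_g$ is constant almost everywhere, and $\mathrm{Length}(F)=\ell(\alpha)$, so $F$ minimizes length within its class. A length minimizer in a homotopy class is locally length-minimizing, hence a geodesic once parametrized at constant speed. For the converse, a constant-speed geodesic realizing $\ell(\alpha)$ attains equality. Two points need care here. First, the infimum $\ell(\alpha)$ is attained: we take a minimizing sequence, reparametrize it to constant speed, and get uniformly Lipschitz curves; Arzelà--Ascoli gives a limit, which stays in class $\alpha$ by the openness shown above, and length is lower semicontinuous. Second, the statement must be read as a geodesic \emph{of minimal length} in class $\alpha$. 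A longer geodesic in the same class, which can exist, for instance with conjugate points on $S^2$-like pieces, has $\Dir=\mathrm{Length}^2/2>\ell(\alpha)^2/2$. I would state this interpretation explicitly in the proof.

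For \ref{item:bdry-oracle}, since $\cE(F)\ge0$ for every $F$ (by \Cref{lem:excess-risk}, or simply because $m$ minimizes the conditional risk pointwise), we get $\cE(F)+\lambda_nJ_1(F)\ge\lambda_nJ_1(F)=2\lambda_n\Dir(F)\ge\lambda_n\ell(\alpha)^2\ge\lambda_n(\ell(\alpha)^2-\ell(\alpha_0)^2)$. The displayed bound is therefore weaker than what the argument actually delivers. The subtraction of $\ell(\alpha_0)^2$ is the natural "excess over the best class" cost, and it is nonnegative by minimality of $\alpha_0$. The main obstacle I expect is not this inequality but the topological bookkeeping: showing that the $H^1$ classes are exactly the $C^0$ relative homotopy classes, and that minimizers remain inside the class. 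Both reduce to the uniform-convexity-radius argument described in the first paragraph.
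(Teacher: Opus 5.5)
Your proof is correct and follows the paper's argument: Cauchy--Schwarz $\mathrm{Length}(F)^2\le 2\Dir(F)$ combined with $\mathrm{Length}(F)\ge\ell(\alpha)$ gives (a)--(b), and $\cE(F)\ge 0$ gives the stronger bound $\lambda_n\ell(\alpha)^2$ for (c), from which the paper likewise simply subtracts $\lambda_n\ell(\alpha_0)^2$. Your additions are sound refinements that the paper's short proof only asserts or glosses over: the decomposition bookkeeping, attainment of $\ell(\alpha)$, and reading (b) as ``minimal-length geodesic'' (longer geodesics in the class do not attain equality). For $\cE(F)\ge 0$, rely on the global-argmin definition of $m$ rather than \Cref{lem:excess-risk}, since that lemma requires $d_M(F(t),m(t))<\rho_M$.
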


\begin{proof}
  See Appendix~\ref{supp:thm65}.
\end{proof}

\begin{example}[Non-trivial path class on a surface of genus $g\ge 1$]
  Let $M$ be a closed surface of genus~$g\ge 1$ and fix $y_0=y_1$ (a loop).
  Then $\cH^1_{y_0,y_0}$ decomposes by $\pi_1(M,y_0)$, and for each
  non-trivial $\alpha\in\pi_1(M,y_0)$, the minimum energy is the energy of
  the shortest closed geodesic in class~$\alpha$.  If $m$ loops around one
  handle, the barrier is positive and depends on the hyperbolic geometry of
  the surface.
\end{example}

\subsection{Statistical consequences of topological barriers}
\label{subsec:topo-stat}

The energy lower bounds in
\Cref{thm:topological,thm:topo-bdry} are deterministic geometric
statements.  We now derive their statistical consequences.  The
argument proceeds in two steps.  First, \Cref{thm:topo-cost} shows
that any estimator in the wrong homotopy class incurs an irreducible
excess risk of order $\sqrt{\lambda_n}$. Second,
\Cref{thm:topo-phase} combines this lower bound with the oracle
inequality to show that the estimator recovers the correct class with
probability tending to one.

\begin{theorem}[Statistical cost of wrong homotopy class]
\label{thm:topo-cost}
  Let $\cX=S^1$ of circumference~$L$, $(M,g)$ compact with convexity
  radius~$\rho_M>0$, and $f\ge c_f>0$.  Let $m\in H^1(S^1,M)$ with
  $[m]=\alpha$ and $\Dir(m)\le B$.
  \begin{enumerate}[label=(\alph*)]
    \item\label{item:cost-uniform}
      Any continuous $F\colon S^1\to M$ with $[F]\neq\alpha$ satisfies
      \begin{equation}\label{eq:uniform-separation}
        \sup_{t\in S^1}d_M\bigl(F(t),\,m(t)\bigr)\;\ge\;\rho_M.
      \end{equation}

    \item\label{item:cost-oracle}
      Suppose additionally that the noise support radius
      (\ref{ass:concentration}) satisfies $r_0\le\rho_M/4$, and define
      $\tilde\eta=1-4r_0/\rho_M\in(0,1]$.
      For any homotopy class $\alpha'\neq\alpha$, there exist
      $c_{\mathrm{topo}}>0$ and $\lambda_0>0$ depending on
      $\rho_M,c_f,B,\tilde\eta$ such that for all
      $0<\lambda\le\lambda_0$,
      \begin{equation}\label{eq:wrong-class-lb}
        \inf_{F\in[\alpha']\cap H^1(S^1,M)}
        \bigl\{\cE(F)+\lambda\,J_1(F)\bigr\}
        \;\ge\;c_{\mathrm{topo}}\,\sqrt{\lambda}.
      \end{equation}
      Explicitly,
      $c_{\mathrm{topo}}=\frac{1}{2}\rho_M^2
      \sqrt{\tilde\eta\,c_f/8}$
      and
      $\lambda_0=\tilde\eta\,c_f\,\rho_M^4/(512\,B^2)$.
      When $\Sec_M\le 0$, the condition $r_0\le\rho_M/4$ can be
      removed and $\tilde\eta=1$, by the NPC identity
      (\Cref{thm:NPC-sharp}).
  \end{enumerate}
\end{theorem}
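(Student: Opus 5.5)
For part~\ref{item:cost-uniform} I will argue by contraposition. Suppose $\sup_t d_M(F(t),m(t))<\rho_M$. Since $\rho_M\le\frac12\inj(M)$, for each $t$ there is a unique minimizing geodesic from $m(t)$ to $F(t)$, and it depends continuously on $t$ because $\log_{m(t)}F(t)$ is continuous on the set $\{d_M<\inj(M)\}$. The map
\[
H(s,t)=\exp_{m(t)}\bigl(s\log_{m(t)}F(t)\bigr)
\]
is then a continuous free homotopy from $m$ to $F$, which forces $[F]=[m]=\alpha$. This contradicts $[F]\neq\alpha$, so the supremum is at least $\rho_M$. Only continuity is needed, and $H^1$ maps on $S^1$ are continuous.

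For part~\ref{item:cost-oracle}, fix $F\in[\alpha']$. By~(a) there is a point $t_0$ with $d_M(F(t_0),m(t_0))\ge\rho_M$. I convert this single far point into an interval of positive length using the $C^{0,1/2}$ bounds that come from the energies. Cauchy--Schwarz gives
\[
d_M(F(t),F(t_0))\le\sqrt{J_1(F)\,|t-t_0|},\qquad d_M(m(t),m(t_0))\le\sqrt{2B|t-t_0|}.
\]
On the interval $|t-t_0|\le h$ we therefore have $d_M(F(t),m(t))\ge\rho_M/2$ as soon as $\sqrt{J_1(F)h}+\sqrt{2Bh}\le\rho_M/2$.

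Next I need a pointwise lower bound on the excess risk where $F$ is far from $m$. Lemma~\ref{lem:excess-risk} only applies inside the convexity radius, so instead I use the triangle inequality with the support condition $d_M(Y,m(t))\le r_0$. If $D=d_M(F(t),m(t))\ge\rho_M/2$, then
\[
d_M^2(Y,F(t))-d_M^2(Y,m(t))\ge (D-r_0)^2-r_0^2=D^2-2Dr_0\ge D^2\bigl(1-4r_0/\rho_M\bigr)=\tilde\eta D^2 .
\]
Together with the density bound $f\ge c_f$, and since the conditional excess is nonnegative everywhere because $m(t)$ is the Fr\'echet mean, integrating over the interval gives
\[
\cE(F)\ge 2h\,c_f\,\tilde\eta\,\rho_M^2/4 .
\]
In the NPC case the pointwise identity behind Theorem~\ref{thm:NPC-sharp} gives the same bound with $\tilde\eta=1$.

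Finally I optimize over the energy level $E=J_1(F)$. I split into two cases according to the size of $E$ relative to $B$.

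\begin{enumerate}
\item If $E\le 2B$, I take $h=\rho_M^2/(64B)$, which is admissible. This gives $\cE(F)\gtrsim c_f\tilde\eta\rho_M^4/B$, a constant. That constant dominates $c_{\mathrm{topo}}\sqrt\lambda$ once $\lambda\le\lambda_0$, and this is exactly how the threshold $\lambda_0\propto\tilde\eta c_f\rho_M^4/B^2$ arises.
\item If $E>2B$, then $\sqrt{E}$ dominates in the constraint and I take $h\asymp\rho_M^2/E$. This gives
\[
\cE(F)+\lambda E\gtrsim \frac{c_f\tilde\eta\rho_M^4}{E}+\lambda E\ge 2\sqrt{c_f\tilde\eta\rho_M^4\lambda}\cdot\text{const}
\]
by AM--GM, which is the $\sqrt\lambda$ bound with $c_{\mathrm{topo}}\propto\rho_M^2\sqrt{\tilde\eta c_f}$.
\end{enumerate}

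The main obstacle is bookkeeping of constants to match the stated $c_{\mathrm{topo}}$ and $\lambda_0$, together with the circle-length issue that $h$ must not exceed $L/2$. That issue is handled by noting that $h\le\rho_M^2/(64B)$ is small, or it is absorbed into $\lambda_0$. I would first carry the argument through with unspecified constants and then tune the factors (the $1/2$, $8$, $512$) at the end.
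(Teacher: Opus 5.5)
Your main line is the paper's proof. Part (a) uses the same geodesic homotopy $\exp_{m(t)}(s\log_{m(t)}F(t))$. Part (b) uses the same H\"older interval around the far point $t_0$ and the same reverse-triangle bound $\phi(\phi-2r_0)\ge\tilde\eta\,\phi^2$ on the conditional excess wherever $\phi(t)=d_M(F(t),m(t))\ge\rho_M/2$.

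The only difference is the final optimization. The paper proves $\cE(F)\ge\tilde\eta c_f\rho_M^4/(64(\cA+B))$ with $\cA=\Dir(F)$, applies AM--GM once in $u=\cA+B$, and then absorbs the leftover $-2\lambda B$ for $\lambda\le\lambda_0$. Your split into $E\le 2B$ and $E>2B$ avoids that subtraction and makes the origin of $\lambda_0$ transparent. With your $h=\rho_M^2/(64B)$ the first case reproduces $\lambda_0=\tilde\eta c_f\rho_M^4/(512B^2)$ exactly, and with $h=\rho_M^2/(16E)$ the second case even gives $2c_{\mathrm{topo}}\sqrt\lambda$.

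Two points need tightening.

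\emph{Circle length.} Your treatment does not work as written. Nothing in the hypotheses by itself makes $\rho_M^2/(64B)$ small compared with $L$. Absorbing the issue into $\lambda_0$ would make $\lambda_0$ depend on $L$, contrary to the statement. The fix, which the paper also leaves implicit, is topological. Since $\alpha'\neq\alpha$, at least one of $F$, $m$ is non-contractible. Theorem~\ref{thm:topological} together with $\operatorname{sys}(M)\ge 2\inj(M)\ge 4\rho_M$ then gives $\max(E,2B)\ge 16\rho_M^2/L$. This forces $2h\le L/128$ in both of your cases.

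\emph{NPC clause.} Theorem~\ref{thm:NPC-sharp} does not really justify it, in your write-up or in the paper. The global inequality $\bE[d_M^2(Y,y)\mid t]\ge\sigma^2(t)+d_M^2(y,m(t))$ for all $y\in M$ is a Hadamard-space fact. A compact $M$ with $\Sec_M\le 0$ always has $\pi_1(M)\neq 0$, and there the inequality fails far from $m(t)$. For example, take $\bR^2/(2\pi\bZ)^2$ (where $\rho_M=\pi/2$), $m\equiv(0,0)$, $Y=(\pm r,0)$ equally likely, and $y=(\pi,0)$. The conditional excess is $\pi^2-2\pi r$, which falls below $\rho_M^2/4=\pi^2/16$ for $r$ close to $\rho_M$. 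So the pointwise bound you use on the interval can fail there. Only the general argument under $r_0\le\rho_M/4$ is actually established.
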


\begin{proof}
  See Appendix~\ref{supp:thm67}.
\end{proof}

The $\sqrt\lambda$ scaling arises because a wrong-class map must
concentrate its topological deviation in a narrow region, creating an
energy--fidelity trade-off that balances at
$\epsilon\sim\sqrt\lambda$.  This phenomenon has no Euclidean analogue.

The mechanism behind the phase transition is a deterministic
\emph{class-separation gap}.  The oracle value in the correct class satisfies
$V_n(\alpha)\le 2\lambda_n\Dir(m)=O(\lambda_n)$, while the oracle in
any wrong class $V_n(\beta)\ge c_{\mathrm{topo}}\sqrt{\lambda_n}$.
Since $\sqrt{\lambda_n}\gg\lambda_n$ as $n\to\infty$, the gap
$\Delta_n\asymp n^{-1/3}$ eventually dominates the empirical process
fluctuation, which is also $O_P(n^{-1/3})$ but with a smaller
constant.
To place these rates in context, in Euclidean nonparametric
regression the estimation error and empirical process fluctuation are
of the same order, leaving no room for a separation gap.  Here the
topological energy barrier opens a gap at the $O(n^{-1/3})$ scale
whose \emph{constant} exceeds that of the fluctuation when the
penalty constant $c_\lambda$ is large enough, yielding exponentially
fast class recovery, a phenomenon absent in Euclidean settings.
The following theorem makes this precise.

\begin{theorem}[Phase transition for homotopy class recovery]
\label{thm:topo-phase}
  Under the conditions of \Cref{thm:topo-cost}, let $\hat F_n$ be the
  harmonic map regression estimator with $\lambda_n>0$.
  \begin{enumerate}[label=(\alph*)]
    \item\label{item:phase-recovery}
      \textbf{(Class recovery.)}  If
      $\lambda_n=c_\lambda n^{-2/3}$ with
      $c_\lambda\ge c_\lambda^*$ for an explicit finite threshold
      $c_\lambda^*=c_\lambda^*(M,I,c_f,\sigma_0)$ (given in the
      Appendix~\ref{supp:thm610}), then
      \begin{equation}\label{eq:class-recovery}
        \bP\bigl([\hat F_n]=\alpha\bigr)\;\to\;1
        \qquad\text{as }n\to\infty,
      \end{equation}
      and the estimator selects
      class~$\alpha$ for all $n\ge n_0$, where
      \begin{equation}\label{eq:n-threshold}
        n_0 \;=\;
        C_1^3 c^{3/2}\,\biggl(\frac{2\Dir(m)}{c_{\mathrm{topo}}}\biggr)^3
      \end{equation}
      depends polynomially on the Dirichlet energy~$\Dir(m)$.

    \item\label{item:phase-break}
      \textbf{(Over-regularization breakdown.)}  If
      $\lambda_n\ge\diam(M)^2 L/\ell(\alpha)^2$, the oracle in the trivial
      class~$\alpha_0$ achieves objective $V(\alpha_0)\le\diam(M)^2$
      (attained by any constant map), while the oracle in
      class~$\alpha$ satisfies
      $V(\alpha)\ge\lambda_n\,\ell(\alpha)^2/L$
      (\Cref{thm:topological}\ref{item:topo-oracle}).  Hence
      $V(\alpha_0)<V(\alpha)$, so the oracle, and for large enough~$n$
      the estimator, prefer a topologically simpler class.
  \end{enumerate}
\end{theorem}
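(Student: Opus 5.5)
The argument has two parts. Part~\ref{item:phase-recovery} is a comparison of class-restricted oracle values. Part~\ref{item:phase-break} is a direct deterministic computation.

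For part~\ref{item:phase-recovery}, let $V_n(\beta)=\inf_{F\in[\beta]\cap H^1}\{\cE(F)+\lambda_n J_1(F)\}$ denote the oracle value in class~$\beta$. For the correct class, take $F=m$, which gives $V_n(\alpha)\le\lambda_n J_1(m)=2\lambda_n\Dir(m)$. For every wrong class $\beta\neq\alpha$, \Cref{thm:topo-cost}\ref{item:cost-oracle} gives $V_n(\beta)\ge c_{\mathrm{topo}}\sqrt{\lambda_n}$ once $\lambda_n\le\lambda_0$, and this bound is uniform in~$\beta$.

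Next, suppose the estimator lands in a wrong class. Then $\hat F_n$ is a competitor in that class, so deterministically $\cE(\hat F_n)+\lambda_n J_1(\hat F_n)\ge c_{\mathrm{topo}}\sqrt{\lambda_n}$. I will write the event $\{[\hat F_n]\neq\alpha\}$ as an event on which the penalized excess risk of the estimator exceeds $c_{\mathrm{topo}}\sqrt{\lambda_n}$. The oracle analysis behind \Cref{thm:oracle} and \Cref{cor:hp-oracle}, namely the basic inequality with peeling, controls $\cE(\hat F_n)+\lambda_n J_1(\hat F_n)$ with high probability by $C_1 V_n(\alpha)+C\,t\,(n\lambda_n^{1/2})^{-1}$. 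The high-probability corollary is stated for $\cE$ alone, but the same basic inequality also bounds the penalty term $\lambda_nJ_1(\hat F_n)$. With $\lambda_n=c_\lambda n^{-2/3}$, the bound becomes $2C_1c_\lambda\Dir(m)n^{-2/3}+C\,t\,c_\lambda^{-1/2}n^{-1/3}$, while the gap is $c_{\mathrm{topo}}c_\lambda^{1/2}n^{-1/3}$. Two conditions then close the argument. First, choose $c_\lambda\ge c_\lambda^*$ so that $Ct\,c_\lambda^{-1/2}<\tfrac12 c_{\mathrm{topo}}c_\lambda^{1/2}$, that is, $c_\lambda^*\asymp C/c_{\mathrm{topo}}$ for a fixed $t$. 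Second, take $n$ large enough that $2C_1c_\lambda\Dir(m)n^{-2/3}<\tfrac12c_{\mathrm{topo}}c_\lambda^{1/2}n^{-1/3}$, which is $n^{1/3}>C_1c_\lambda^{1/2}\,2\Dir(m)/c_{\mathrm{topo}}\cdot 2$. Cubing this gives the stated $n_0$, including the polynomial dependence $\Dir(m)^3$. The exceptional probability is $\exp(-c\,t\,n^{1/3})\to0$. Letting $t=t_n\to\infty$ slowly, with $c_\lambda$ adjusted or $t$ fixed at the threshold, yields \eqref{eq:class-recovery}.

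One technical point needs care. \Cref{thm:topo-cost} is stated on the circle domain $\cX=S^1$, while the oracle inequality is stated on $I$. I will note that the entropy, peeling, and basic-inequality steps only use compactness of the domain and the Sobolev ball, so they carry over to $S^1$ verbatim. Also, $\hat F_n$ may be nonunique; the argument applies to every minimizer, as in \Cref{rem:measurable-selection}.

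For part~\ref{item:phase-break}, a constant map $F\equiv y$ lies in the trivial class and has $J_1=0$ and $\cE(F)\le R(F)\le\diam(M)^2$, so $V(\alpha_0)\le\diam(M)^2$. \Cref{thm:topological}\ref{item:topo-oracle} gives $V(\alpha)\ge\lambda_n\ell(\alpha)^2/L\ge\diam(M)^2$, with strict inequality after a negligible adjustment, for instance by using $\cE(F)\le\diam(M)^2-R(m)$ when $R(m)>0$, or a strict version of the threshold. Transferring this to the estimator for large~$n$ uses uniform convergence of $R_n$ to $R$ over the compact sublevel sets of \Cref{prop:compactness}.

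The main obstacle is the first step of the probabilistic comparison: obtaining a high-probability bound on the penalized excess risk of the estimator, not merely on $\cE$, whose constant in front of $(n\lambda_n^{1/2})^{-1}$ does not itself grow with $c_\lambda$. Only that makes the $c_\lambda^{1/2}$ gap beat the $c_\lambda^{-1/2}$ fluctuation. I would first try to re-run the peeling argument of \Cref{cor:hp-oracle} while tracking the $\lambda$-dependence explicitly.
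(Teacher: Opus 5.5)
Your argument is correct in substance, but it takes a different route from the paper's, and an arithmetic slip hides how much your route actually gives.

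\textbf{How the paper argues.} The paper does not localize. After the basic inequality turns $\{[\hat F_n]\neq\alpha\}$ into $|\nu_n(m)-\nu_n(\hat F_n)|\ge\tfrac12 c_{\mathrm{topo}}\sqrt{\lambda_n}$, it bounds this deviation by the supremum of the empirical process over the whole sublevel set $\{J_1\le\diam(M)^2/\lambda_n\}$. That bound uses only the envelope $b=2\diam(M)^2$. Dudley's integral gives $A_0n^{-1/3}$ with $A_0\propto c_\lambda^{-1/4}$, and Bousquet--Talagrand with $t=\beta n^{1/3}$ adds a $c_\lambda$-free term of the same order. This global fluctuation is of the same order $n^{-1/3}$ as the gap $c_{\mathrm{topo}}c_\lambda^{1/2}n^{-1/3}$, so the threshold $c_\lambda^*$ is genuinely needed in that argument. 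In exchange, the paper uses no Bernstein condition and gets the explicit tail $\exp(-\beta n^{1/3})$. This matters because for $\kappa_+>0$ the Bernstein condition goes through \Cref{lem:excess-risk}, whose hypothesis $\sup_t d_M(F(t),m(t))<\rho_M$ fails for every wrong-class map by \Cref{thm:topo-cost}\ref{item:cost-uniform}.

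\textbf{The slip in your computation.} You bound the penalized excess risk $V(\hat F_n)=\cE(\hat F_n)+\lambda_nJ_1(\hat F_n)$ through the oracle machinery instead. With $\lambda_n=c_\lambda n^{-2/3}$, however, $(n\lambda_n^{1/2})^{-1}=c_\lambda^{-1/2}n^{-2/3}$, not $c_\lambda^{-1/2}n^{-1/3}$. Your whole upper bound is therefore $O(n^{-2/3})$, strictly lower order than the gap. No condition on $c_\lambda$ is needed, and the ``main obstacle'' you name disappears.

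You do not even need a high-probability statement. Equation \eqref{eq:oracle-V} in the proof of \Cref{thm:oracle} already gives $\bE[V(\hat F_n)]\le 2C_1\lambda_n\Dir(m)+C_2/(n\lambda_n^{1/2})=O(n^{-2/3})$. Markov's inequality then gives $\bP([\hat F_n]\neq\alpha)\le\bE[V(\hat F_n)]/(c_{\mathrm{topo}}\sqrt{\lambda_n})=O(n^{-1/3})$ for every $c_\lambda>0$ once $\lambda_n\le\lambda_0$.

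\textbf{What each route costs.} Your route leans on the localized analysis of \Cref{thm:oracle}, including its transfer to the domain $S^1$, which the paper also uses without comment. You can make it independent of the Bernstein step. Localize the paper's envelope-only bound at the gap scale $\delta^2\asymp\sqrt{\lambda_n}$ instead of at $\diam(M)^2$. The fluctuation becomes of order $n^{-1/2}\lambda_n^{-1/8}\asymp n^{-5/12}\ll n^{-1/3}$, so recovery with tail $\exp(-cn^{1/3})$ holds for every $c_\lambda$.

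\textbf{Two minor points.} First, because you split the gap in half, your threshold comes out as $8n_0$ rather than the stated $n_0$. The paper obtains $n_0$ from $2C_1\lambda_n\Dir(m)<c_{\mathrm{topo}}\sqrt{\lambda_n}$. Second, part~(b) is the paper's argument. The paper only reaches $V(\alpha)\ge\diam(M)^2\ge V(\alpha_0)$ and does not treat the estimator. Your strictness adjustment and the uniform-convergence transfer on the fixed compact sublevel set supply details it omits.
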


\begin{proof}
  The proof combines the class-selection margin analysis
  (Proposition~\ref{prop:class-margin}) with empirical process
  concentration. See Appendix~\ref{supp:thm610}.
\end{proof}

\begin{remark}[Structure of the phase transition]
  \Cref{thm:topo-phase} identifies a window of correct behavior:
  $\lambda_n$ must satisfy
  $C/(c_{\mathrm{topo}}^2 n)
  \ll\lambda_n
  \ll\diam(M)^2 L/\ell(\alpha)^2$.
  The lower bound ensures that the stochastic error does not swamp the
  topological signal, while the upper bound prevents over-regularization from
  penalizing the topologically necessary energy of~$m$.  The optimal
  rate choice $\lambda_n\asymp n^{-2/3}$ lies within this window for
  all $n\ge n_0$.
  For $M=S^1$ and $m$ of winding number~$k$:
  $n_0\sim k^6/(c_f^{3/2}\rho_M^6)$, growing as the sixth
  power of the winding number.
  The condition $r_0\le\rho_M/4$ is a mild signal-to-noise
  requirement satisfied on all compact Riemannian manifolds with
  non-trivial fundamental group.
  On NPC targets, this condition can be removed entirely
  (Appendix~\ref{supp:thm67}--\ref{supp:thm610}).
\end{remark}

As a concrete illustration, consider $M=S^1$ of circumference $2\pi$,
$\cX=S^1$ of circumference $L=1$,
and $m(t)=2\pi k t$ (mod $2\pi$) with winding number~$k$.  For $k=1$,
$\Dir(m)=2\pi^2$ and $\rho_M=\pi$.  With the oracle-optimal
$\lambda_n\asymp n^{-2/3}$, the correct-class cost is
$V(\alpha)=O(n^{-2/3})$ while the wrong-class cost is
$V(\alpha_0)\ge c\,n^{-1/3}$.  For moderate~$n$ (say, $n\ge 50$
for $k=1$), the estimator recovers the correct winding number with
high probability.  For $k=5$, the threshold $n_0\sim 5^6\approx
15{,}000$ illustrates the rapid growth of the threshold with topological complexity.
\Cref{fig:phase-transition} maps the empirical recovery probability
across a grid of $(n,\lambda_n)$ values for winding numbers $k=1,2,3$.
Each cell reports the fraction of $30$ replications in which the
estimator recovers the correct winding number ($\sigma=0.3$).
For each~$k$, there is a sharp transition from failure (red) to
success (green) as $n$ increases, and the transition shifts to
larger~$n$ as $k$ grows, consistent with the $n_0\sim k^6$ scaling.

\begin{figure}[t]
  \centering
  \includegraphics[width=1\linewidth]{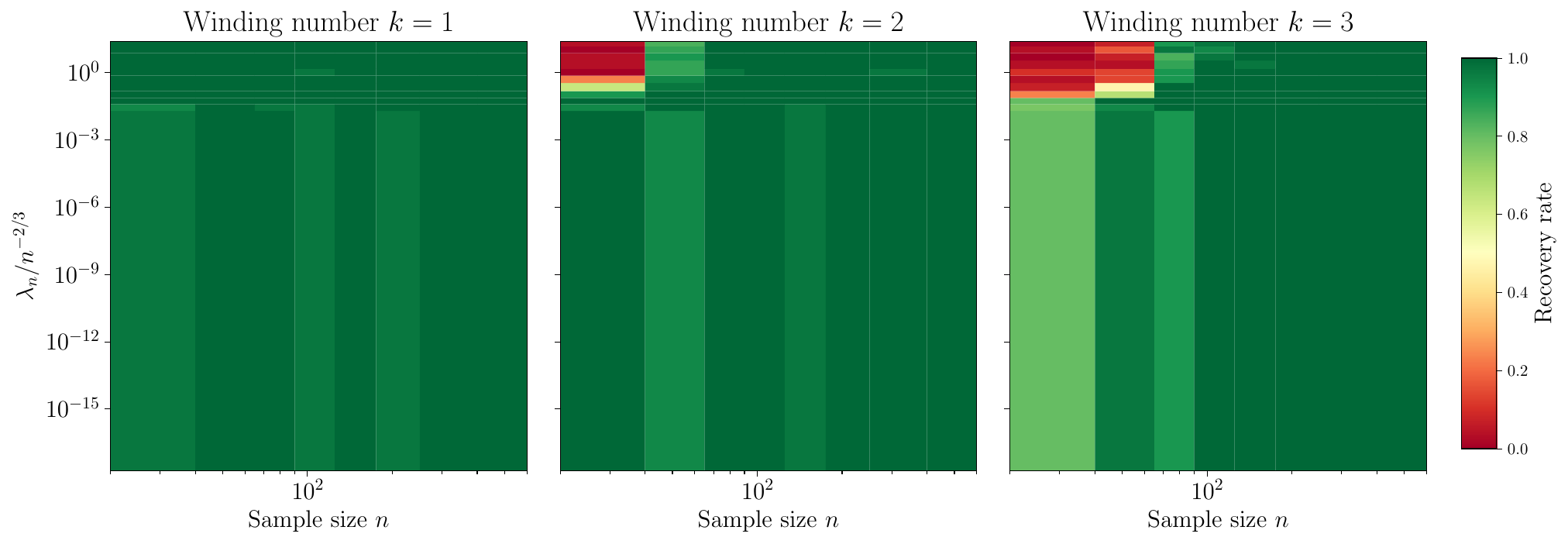}
  \caption{Phase transition for homotopy class recovery on
  $S^1\to S^1$ (\Cref{thm:topo-phase}).
  Higher winding numbers require larger~$n$ for recovery.}
  \label{fig:phase-transition}
\end{figure}

\begin{remark}[Higher homotopy groups]
  The energy barriers in this section exploit the fundamental group
  $\pi_1(M)$.  When the domain $\cX=S^p$ is a higher-dimensional sphere
  ($p\ge 2$), the same mechanism applies via $\pi_p(M)$: any map whose
  homotopy class has non-zero degree incurs a Dirichlet energy cost
  bounded below by the $p$-systole.  This extension and its proof are
  given in Appendix~\ref{supp:higherhomotopy}.
\end{remark}

\section{Obstructions to higher-order intrinsic regularization}
\label{sec:obstructions}

The structural theory of
Sections~\ref{sec:regularity}--\ref{sec:topology} holds at
$s=1$. We further investigate whether the same geometric-analytic structure extends to higher-order penalties $s\ge 2$. Five obstructions show that this is not the case, and the restriction to $s=1$ is a mathematical boundary. Obstruction~3 is proved in this paper. The remaining four are classical results in geometric analysis that we state for completeness.
We use the term \emph{obstruction} to mean a concrete counterexample or divergence result
showing that a specific component of the structural theory fails for $s\ge 2$;
together, the five obstructions establish that the full structural chain, including
Euler--Lagrange characterization, geodesic spline representation,
equivalent kernel, and topological recovery,
is unique to the Dirichlet energy ($s=1$).

\emph{Obstruction 1 (Full nonlinearity of the biharmonic equation).}
\label{prop:biharmonic-nonlinear}
  For $s=2$, the Euler--Lagrange equation for the estimator is a
  fourth-order PDE of the form
  $\nabla_t^2(\nabla_t F')+\text{lower order}=\text{(data)}$,
  where the highest-order term involves the curvature tensor
  nonlinearly through $\nabla_t F'$ and is fully nonlinear in the
  highest-order covariant derivatives \citep{Jiang1986}.  In particular,
  the equation cannot be written as $L(F)=\text{(data)}$ for a
  linear elliptic operator~$L$, and no general existence or regularity
  theory analogous to \citet{Schoen1984} is available for compact
  targets of arbitrary dimension \citep{LammRiviere2008}.

  \emph{Obstruction 2 (Maximum principle failure).}
\label{prop:max-principle}
  The fourth-order operator $\lambda\Delta^2+\Id$ on $[0,1]$ violates the
  maximum principle, and there exist non-negative boundary data for which the
  solution takes negative values in the interior.  Consequently, comparison
  arguments (e.g., showing that the estimator remains within a geodesic ball)
  that are available for the second-order harmonic-map operator
  $-\lambda\Delta+\Id$ do not extend to $s=2$.

\begin{proposition}[Topological blindness of the bienergy]
\label{prop:bienergy-blind}
  Let $(M,g)$ be a compact Riemannian manifold with
  $\pi_1(M)\neq 0$, and let $\cX=S^1$ of circumference~$L$.  For every
  free homotopy class $[\alpha]\in[S^1,M]$,
  \begin{equation}\label{eq:bienergy-zero}
    \inf_{F\in[\alpha]\cap H^2(S^1,M)}
    \Dir_2(F) \;=\; 0,
  \end{equation}
  where $\Dir_2(F)=\frac{1}{2}\int_{S^1}\abs{\nabla_t F'}^2\dd t$ is
  the bienergy.  In particular, the phase transition of
  \Cref{thm:topo-phase} has no analogue for biharmonic map
  regression.
\end{proposition}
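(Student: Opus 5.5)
The plan is to show that the infimum in \eqref{eq:bienergy-zero} is attained, with value zero, by a closed geodesic in the class $[\alpha]$. The starting observation is that for a constant-speed closed geodesic $\gamma\colon S^1\to M$ the covariant acceleration vanishes identically, $\nabla_t\gamma'=0$. Such a curve therefore has $\Dir_2(\gamma)=0$ even though its Dirichlet energy $\Dir(\gamma)=\ell^2/(2L)$ is strictly positive when the class is non-trivial. This is exactly the barrier of \Cref{thm:topological}. So the proof reduces to exhibiting, in every free homotopy class, a closed geodesic that belongs to $H^2(S^1,M)$.

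\textbf{Trivial class.} For $[\alpha]$ trivial, any constant map $F\equiv y_0$ lies in $H^2(S^1,M)$ and has $\Dir_2(F)=0$.

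\textbf{Non-trivial class.} Here we invoke the classical theorem of Cartan: on a compact Riemannian manifold, every non-trivial free homotopy class of loops contains a closed geodesic of minimal length $\ell([\alpha])>0$. This is the same object $\ell([F])$ already used in \Cref{thm:topological}, whose equality case in part~(b) presupposes its existence. We parametrize this geodesic at constant speed $\ell([\alpha])/L$ on the circle of circumference $L$. The result $\gamma$ is a smooth map, since geodesics are solutions of a smooth ODE and the closing condition makes the velocity match at the base point. Hence $\gamma\in C^\infty(S^1,M)\subset H^2(S^1,M)$, $[\gamma]=[\alpha]$, and $\nabla_t\gamma'=0$, which gives $\Dir_2(\gamma)=0$. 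Since $\Dir_2\ge 0$, the infimum is $0$ and is attained.

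\textbf{Main obstacle.} The only non-trivial step is the existence of the length-minimizing closed geodesic in a given non-trivial class. If we want to reprove it rather than cite it, we would proceed in four steps.
\begin{enumerate}[label=(\roman*)]
  \item Take a minimizing sequence of loops in $[\alpha]$, reparametrized at constant speed. Their lengths are bounded, so the sequence is equi-Lipschitz, and Arzel\`a--Ascoli on the compact $M$ gives a uniformly convergent subsequence.
  \item Uniformly close loops, say within $\rho_M$, are homotopic via the geodesic homotopy inside convex balls. Hence the limit stays in $[\alpha]$.
  \item Lower semicontinuity of length shows that the limit has length exactly $\ell([\alpha])$.
  \item Any arc of the limit shorter than $\rho_M$ can be replaced by the minimizing geodesic between its endpoints without changing the class, so the limit is locally minimizing and hence a geodesic. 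Non-triviality of the class rules out degeneration to a point, because a loop of length less than $2\rho_M$ lies in a convex ball and is therefore contractible.
\end{enumerate}

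Finally, we would remark that this contrasts with \Cref{thm:topo-cost}. There, the wrong-class separation came from $J_1$ being bounded below in each class. Here $J_2$ vanishes on representatives of every class, so the penalty supplies no class-separating gap. Consequently the oracle value in a wrong class is not forced to be of order $\sqrt{\lambda_n}$, and the argument of \Cref{thm:topo-phase} has no analogue.
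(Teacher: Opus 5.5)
Your proof is correct and is essentially the paper's: Cartan's theorem gives a closed geodesic $\gamma$ in $[\alpha]$, $\nabla_t\gamma'=0$ gives $\Dir_2(\gamma)=0$, and non-negativity of $\Dir_2$ finishes the argument. Your constant-map case for the trivial class and your sketch of Cartan's theorem only add detail that the paper leaves to the citation.

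One correction to your closing remark. The paper's $J_2=\abs{F}_{H^2}^2$ is the extrinsic seminorm, and it does not vanish on closed geodesics: there $F''=-A(F)(F',F')$, so for example $\abs{F''}=\abs{F'}^2$ along a great circle in $S^2\subset\bR^3$. In fact, Wirtinger's inequality applied to the mean-zero periodic $F'$ gives $J_2(F)\ge(2\pi/L)^2J_1(F)$. So topological blindness holds only for the intrinsic bienergy $\Dir_2$, and the remark should be phrased in terms of $\Dir_2$, not $J_2$.
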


\begin{proof}
  See Appendix~\ref{supp:prop73}.
\end{proof}

The reason is that the Dirichlet energy measures speed
($\int|F'|^2$), which is positive for any non-contractible loop,
while the bienergy measures acceleration
($\int|\nabla_t F'|^2$), which vanishes for geodesics, precisely
the maps that carry the topological information.  The Eells energy coincides with the bienergy, so every
intrinsic second-order penalty on loops is topologically blind.

\emph{Obstruction 4 (Green's function sign change).}
\label{prop:green-sign}
  The Green's function $G_2(t,t')$ of the operator
  $\lambda\Delta^2+\Id$ on $[0,1]$ with natural boundary conditions
  changes sign, with $G_2(t,t')<0$ for some $t,t'$.  Consequently,
  the equivalent kernel $K_n(t,\cdot)$ in the $s=2$ smoothing-spline
  problem takes negative values, and the estimator $\hat F_n(t)$ cannot
  be interpreted as a weighted Fr\'echet mean of the observations.
  This invalidates the convexity-based arguments used in the
  pointwise analysis of \Cref{thm:pointwise}.

\emph{Obstruction 5 (Non-uniqueness of biharmonic maps).}
\label{prop:biharmonic-nonunique}
  The uniqueness of biharmonic maps with prescribed Dirichlet or
  Navier boundary data remains open for general compact Riemannian
  targets \citep{LammRiviere2008}.  Even for maps from an interval
  to~$S^2$, biharmonic equations can admit multiple solutions at
  identical boundary data, preventing the construction of a
  well-defined representer theorem.

Taken together,
Obstructions~1--5 identify $s=1$ as the unique Sobolev
order at which the full structural theory
(Sections~\ref{sec:regularity}--\ref{sec:topology}) holds.
Extending it to $s\ge 2$ requires, at minimum, a regularity
theory for biharmonic maps into general compact targets, a major
open problem in geometric analysis \citep{LammRiviere2008}.

\section{Numerical experiments}
\label{sec:numerics}

Having established that the structural theory is unique to $s=1$,
we now validate its predictions numerically across targets representing
the full range of geometric and topological regimes covered by the theory.
The theoretical framework applies to any compact Riemannian target once
the curvature bounds and injectivity radius are specified.
The five manifolds used in the experiments below are
the sphere~$S^2$ ($\kappa=+1$),
the hyperbolic plane~$\bH^2$ ($\kappa=-1$),
the symmetric positive-definite matrices~$\mathrm{Sym}^+(2)$ ($\kappa\le 0$),
the rotation group~$SO(3)$ ($\pi_1=\bZ/2$), and
the flat torus~$T^2$ ($\kappa=0$, $\pi_1=\bZ^2$).
Explicit geometric data (second fundamental forms, geodesic and
exponential map formulae, curvature factors, and injectivity radii) for
these manifolds are collected in Appendix~\ref{supp:geom-data}.

The experiments below have four goals. First, we confirm the geodesic-spline
structure and the exact finite-dimensional reduction of
\Cref{prop:exact-reduction}, exhibit the topological phase transition
predicted by \Cref{thm:topo-phase}, verify the convergence rates of
\Cref{sec:main} across manifolds with diverse curvature and topology,
and demonstrate the estimator on real wind-direction data where
wrap-around artefacts are unavoidable for Euclidean methods.  The
comparisons should be interpreted as mechanism demonstrations
rather than a comprehensive benchmark, since the implemented estimators
differ in their tuning protocols and the theory-computation alignment
varies across methods.

\subsection{Algorithm for the geodesic spline}\label{subsec:algorithm}

By \Cref{prop:exact-reduction}, computing the harmonic map regression
estimator reduces to minimizing the discrete
objective~\eqref{eq:discrete-obj} over $M^n$, with Riemannian
gradient~\eqref{eq:grad-flow}.
We minimize~\eqref{eq:discrete-obj} by \emph{Riemannian Polak-Ribi\`ere
conjugate gradient (CG)} with Armijo backtracking.
At iteration $k$, the CG direction $d^{(k)}$ satisfies
\[
  d^{(k)} = -g^{(k)} + \beta_k\,\tau_{k-1\to k}(d^{(k-1)}),
  \qquad
  \beta_k = \frac{\langle g^{(k)},\,g^{(k)} - \tau_{k-1\to k}(g^{(k-1)})\rangle}
  {\langle g^{(k-1)},g^{(k-1)}\rangle}\vee 0,
\]
where $g^{(k)}=\operatorname{grad}\Phi(f^{(k)})$ and
$\tau_{k-1\to k}$ denotes vector transport (approximated by tangent
projection).  The initial step size is set by a CFL condition,
$\alpha_0=c\,\bigl(2\lambda_n\max_i(\Delta_i^{-1}+\Delta_{i-1}^{-1})
+2/n\bigr)^{-1}$, and reduced by Armijo backtracking to guarantee
monotone decrease.

Initializing at $f_i^{(0)}=Y_i$ sets the data residual to zero,
and the optimizer needs only to balance data fidelity against
smoothness.  For $\Sec_M\le 0$ (NPC targets), the objective
\eqref{eq:discrete-obj} is geodesically convex and the CG iterates
converge to the unique global minimizer \citep{Zhang2016riemannian, Boumal2023}.
For general~$M$, local convergence holds from the initialization.
CG reduces the effective condition number from
$\kappa\approx\lambda_n\, n^2$ (for gradient descent) to
$\sqrt\kappa$, resolving the convergence plateaus observed on
$\bH^2$ with first-order methods.
Each CG iteration evaluates~\eqref{eq:grad-flow}
in $O(nd)$ time (one $\exp$ and $\log$ per point), giving
a total cost of $O(Tnd)$. In our experiments $T\le 600$ suffices
for all manifolds and sample sizes.
Once the values $\{f_i\}$ are obtained, the estimator at any $t\in I$ is
evaluated by geodesic interpolation between the two flanking design
points.

\subsection{Simulation study}\label{subsec:numerical}

We evaluate the proposed estimator on five Riemannian manifolds spanning
different geometric and topological regimes, namely $S^2$ (constant positive curvature
$\kappa=+1$), $\bH^2$ (constant negative curvature $\kappa=-1$, NPC),
$\mathrm{Sym}^+(2)$ (non-positively curved, NPC),
$SO(3)$ with a topologically non-trivial winding curve
($\pi_1=\bZ/2$, rotation angle $0\to 2\pi$), a setting motivated
by trajectory analysis on rotation groups
\citep{SuJuddWuFletcherSrivastava2014}, and the flat torus
$T^2$ ($\pi_1=\bZ^2$, $\kappa=0$) with a curve that winds $1.5$ times
around a generator of~$\pi_1$.
Among these, $S^2$, $SO(3)$, and~$T^2$ are compact and fall directly
under the theory of \Cref{sec:main}--\Cref{sec:topology}. For the
non-compact targets $\bH^2$ and $\mathrm{Sym}^+(2)$, we restrict the
test curves and noisy observations to a bounded geodesic ball.

The predictor $t_i$ are drawn i.i.d.\ from the uniform distribution on $[0,1]$,
and sample sizes range over $n\in\{100,200,400,800\}$.
Responses are generated as $Y_i=\exp_{m(t_i)}(\sigma\,\varepsilon_i)$, where
$\varepsilon_i\sim N(0,I_d)$ is a standard Gaussian in the tangent space
$T_{m(t_i)}M$ (coordinates taken in any orthonormal basis of $T_{m(t_i)}M$),
so that $Y_i$ is a random point on~$M$ at geodesic distance approximately $\sigma$
from the true value $m(t_i)$.
The noise level is $\sigma=0.25$ for $S^2$, $\bH^2$, and $\mathrm{Sym}^+(2)$,
and $\sigma=0.05$ for $SO(3)$ and~$T^2$; the smaller value on the latter two is chosen
so that the noise radius stays well below the convexity radius $\rho_M=\pi/2$.

The test curves are chosen to represent qualitatively different geometric
and topological regimes.
On $S^2$, the curve $m(t)=\exp_{p_0}(t\,v_0+t(1-t)\,w_0)$ is a
non-geodesic arc from the south pole neighborhood toward the north,
with the quadratic perturbation $t(1-t)w_0$ (for a fixed tangent vector $w_0\perp v_0$)
introducing curvature relative to a geodesic.
On $\bH^2$, $m(t)$ follows a logarithmic spiral in the Poincar\'e disk,
sweeping outward from the origin as $t$ increases and remaining within a
geodesic ball of radius $1.5$ to satisfy the bounded-domain restriction.
On $\mathrm{Sym}^+(2)$, $m(t)=P_0^{1/2}\exp(t\,S)P_0^{1/2}$ is a
one-parameter geodesic deformed by a quadratic perturbation, where $P_0$ is a
fixed SPD matrix and $S$ is a symmetric matrix controlling the direction and speed.
On $SO(3)$, $m(t)=\exp_{I}(2\pi t\,\hat\Omega)$ is the one-parameter subgroup
generated by a fixed unit-norm skew-symmetric matrix $\hat\Omega$,
tracing a complete rotation from $I$ back to~$I$ and thereby representing
the non-trivial element of $\pi_1(SO(3))=\bZ/2$.
On $T^2=\bR^2/(2\pi\bZ)^2$, $m(t)=(3\pi t\bmod 2\pi,\;2\pi t\bmod 2\pi)$
winds $1.5$ times around the first generator and once around the second,
producing a curve whose homotopy class $(3,2)\in\pi_1(T^2)=\bZ^2$
is non-trivial and non-contractible.
Explicit formulas for the exponential and logarithmic maps, second
fundamental forms, and curvature factors for all five manifolds are given
in Appendix~\ref{supp:geom-data}.

We compare five methods.  The proposed estimator is
the harmonic map regression estimator solved by Riemannian CG
(\Cref{subsec:algorithm}) with $\lambda_n=c_\lambda\,n^{-2/3}$.
\emph{Extrinsic spline regression} fits a cubic smoothing spline
coordinate-wise in the ambient Euclidean space and projects the result
onto~$M$, with smoothing parameter $s=c_e\,n$.
\emph{Fr\'echet regression} \citep{Petersen2019} computes the
weighted Fr\'echet mean at each evaluation point, with weights
$w_i(t)=n^{-1}+(t-\bar t)\hat\Sigma_t^{-1}(t_i-\bar t)/n$ derived from
the linear model.
\emph{Geodesic regression} \citep{Fletcher2013} fits the parametric model
$m(t)=\exp_{y_0}(t\,v)$ by gradient descent, initialized at the Fr\'echet mean
with OLS slope in the tangent space. No winding-number search or other
prior information is used.
\emph{TV-regularized Fr\'echet regression}
\citep{LinMuller2021TV} minimizes
$(1/n)\sum d_M^2(Y_i,f_i)+\lambda\sum d_M(f_i,f_{i+1})$.
The objective is solved by the cyclic proximal point algorithm of
\citet{LinMuller2021TV} (Algorithm~1 therein), where each proximal
sub-problem reduces to geodesic interpolation, and the iterates are
initialized at the data.
Off-grid evaluation uses piecewise-constant (nearest-neighbor)
prediction, consistent with the piecewise-constant structure of TV
estimators.
The theoretical guarantees of \citet{LinMuller2021TV} require NPC
targets for geodesic convexity, but we include the method on all five
manifolds to evaluate its empirical performance beyond the NPC
setting.

Each method with a tunable hyperparameter
uses its theoretically optimal rate as a function of~$n$,
$\lambda_n=c_\lambda\,n^{-2/3}$ for the proposed estimator
(\Cref{thm:oracle})
and $\lambda_n=c_\lambda\,n^{-2/3}$ for TV Fr\'echet.
The multiplicative constant for each method is selected
by 5-fold cross-validation on each data realization.
The design points are assigned to folds in round-robin order
(i.e., observation $i$ goes to fold $i\bmod 5$), so that each
fold is scattered throughout $[0,1]$ rather than forming a
contiguous block.  The constant is chosen from a
logarithmically spaced grid of six candidates.
For the proposed estimator,
$c_\lambda\in\{0.01,0.03,0.1,0.3,1.0,3.0\}$;
for extrinsic spline regression, we use the
\texttt{UnivariateSpline} implementation in \texttt{scipy} with smoothing
parameter $s\in\{0.05,0.1,0.3,0.5,1.0,2.0\}$;
for TV Fr\'echet, $c_\lambda\in\{0.01,0.03,0.1,0.3,1.0,3.0\}$.
Fr\'echet regression and geodesic regression have no tunable
hyperparameters.
The CV loss is the average held-out prediction error
$n_{\mathrm{val}}^{-1}\sum d_M^2(\hat f(t_j),Y_j)$. After
selecting the best hyperparameter, the method is refit on the full
data set.

For each sample size $n\in\{100,200,400,800\}$, the mean integrated
squared error $\mathrm{MISE}=\int d_M^2(\hat F(x),m(x))\,dx$ is
approximated on a 50-point evaluation grid and averaged over 15
independent replications.  Standard errors are reported in parentheses.
Full numerical results are in Table~\ref{tab:comprehensive}.

Figure~\ref{fig:manifold_viz} shows single-realization fitted curves
on three representative manifolds ($S^2$, $\bH^2$, $T^2$) at
$n=400$; each column is one method and the MISE appears above each panel.
On $S^2$, the proposed estimator and the extrinsic spline both trace the
true arc closely, while Fr\'echet and geodesic regression collapse to
near-constant or linear fits that cannot represent the non-geodesic shape.
On $\bH^2$ (Poincar\'e disk, middle row), the gap widens: the extrinsic
spline produces a severely distorted path that overshoots the disk boundary,
reflecting the breakdown of the ambient-to-manifold projection under strong
negative curvature, whereas the proposed estimator follows the spiral
throughout; Fr\'echet and geodesic regression collapse to a point or short
segment near the center, missing the spiral entirely.
On $T^2$, the proposed estimator reproduces the $1.5$-winding curve cleanly;
TV-Fr\'echet regression also recovers the winding but with a rougher path
near the wrap-around point; the extrinsic spline incurs a jump discontinuity
at the fundamental domain boundary; and Fr\'echet and geodesic regression
fail to capture the homotopy class, producing a self-crossing path and a
single geodesic segment, respectively.

Figure~\ref{fig:comprehensive} plots log-log MISE versus sample
size on all five manifolds, averaged over 15 replications, with
reference slopes $n^{-2/3}$ (oracle rate at $s=1$) and $n^{-1/3}$
shown as dotted lines.
Table~\ref{tab:comprehensive} reports the full MISE values with standard errors.
Figure~\ref{fig:comprehensive} confirms that on all five manifolds the
proposed estimator's log-log MISE slope is consistent with the oracle
rate $n^{-2/3}$ of \Cref{thm:oracle}.
By contrast, Fr\'echet regression and geodesic regression plateau,
reflecting the bias inherent in global constant or linear fits applied
to nonlinear regression functions.
Among the simply-connected targets ($S^2$, $\bH^2$, $\mathrm{Sym}^+(2)$),
TV-Fr\'echet regression converges at the correct rate but with MISE
roughly $1.5$--$1.6\times$ larger than the proposed estimator at $n=800$.
The extrinsic spline does not share this property; the project-and-normalize
step introduces a persistent geometry-induced error floor
(MISE $\approx 0.009$ on~$S^2$, $\approx 0.12$ on~$\bH^2$)
that does not diminish as $n$ grows.
The sharpest differentiation appears on the manifolds with $\pi_1\neq 0$.
On $SO(3)$-wind and~$T^2$, the proposed estimator reaches
MISE $\approx 0.0002$ by $n=400$--$800$, while the extrinsic spline
plateaus because of coordinate singularities (quaternion sign fold on
$SO(3)$, chart discontinuity on~$T^2$), and geodesic regression diverges
on $SO(3)$ because the parametric model $t\mapsto\exp_{y_0}(tv)$ cannot
represent a non-trivial homotopy class.
These empirical failures are the direct manifestation of the phase
transition in \Cref{thm:topo-phase}.

\begin{figure}[t]
\centering
\includegraphics[width=\textwidth]{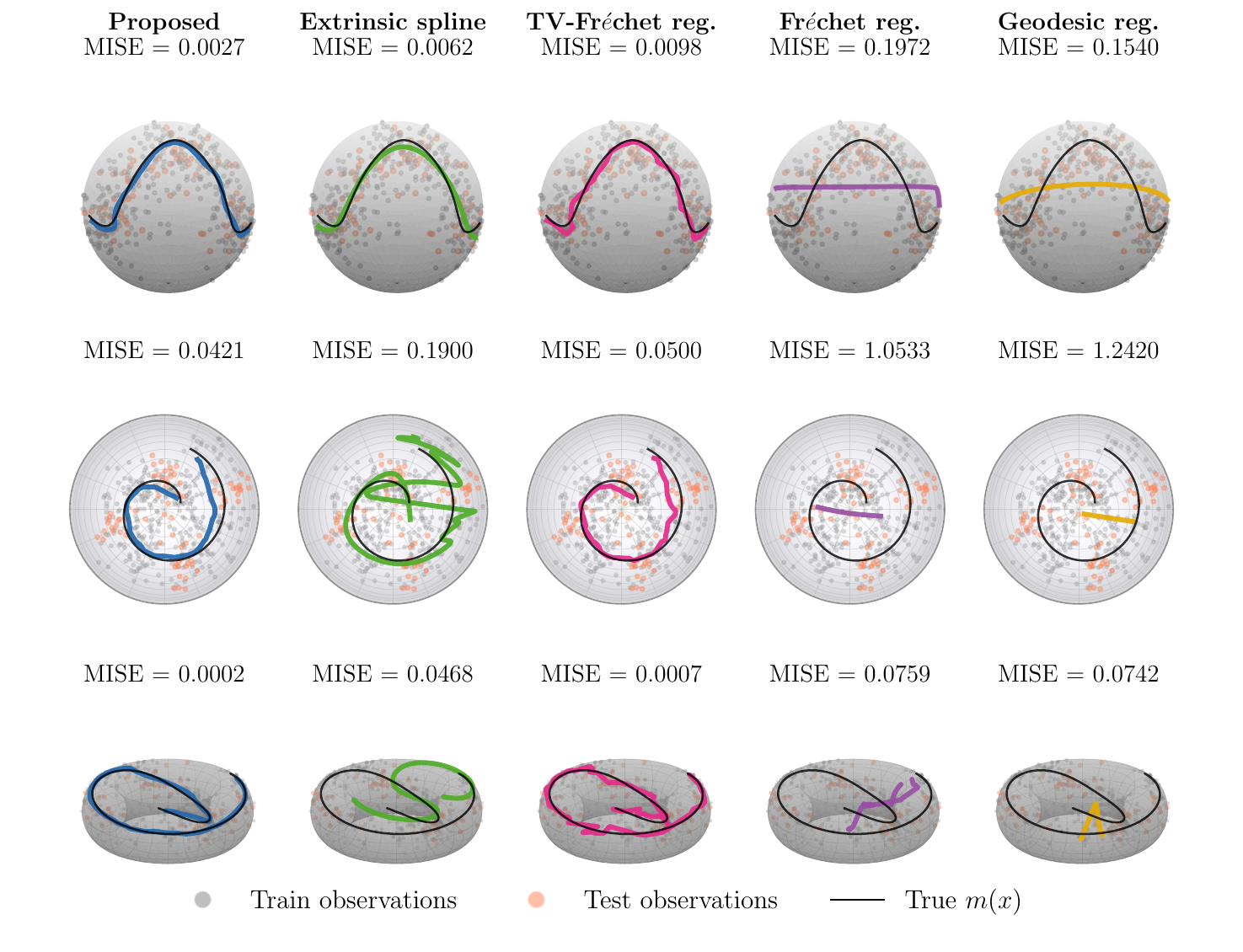}
\caption{A demonstration of the fitted curves on $S^2$, $\bH^2$, and~$T^2$ ($n=400$).}\label{fig:manifold_viz}
\end{figure}

\begin{figure}[t]
\centering
\includegraphics[width=\textwidth]{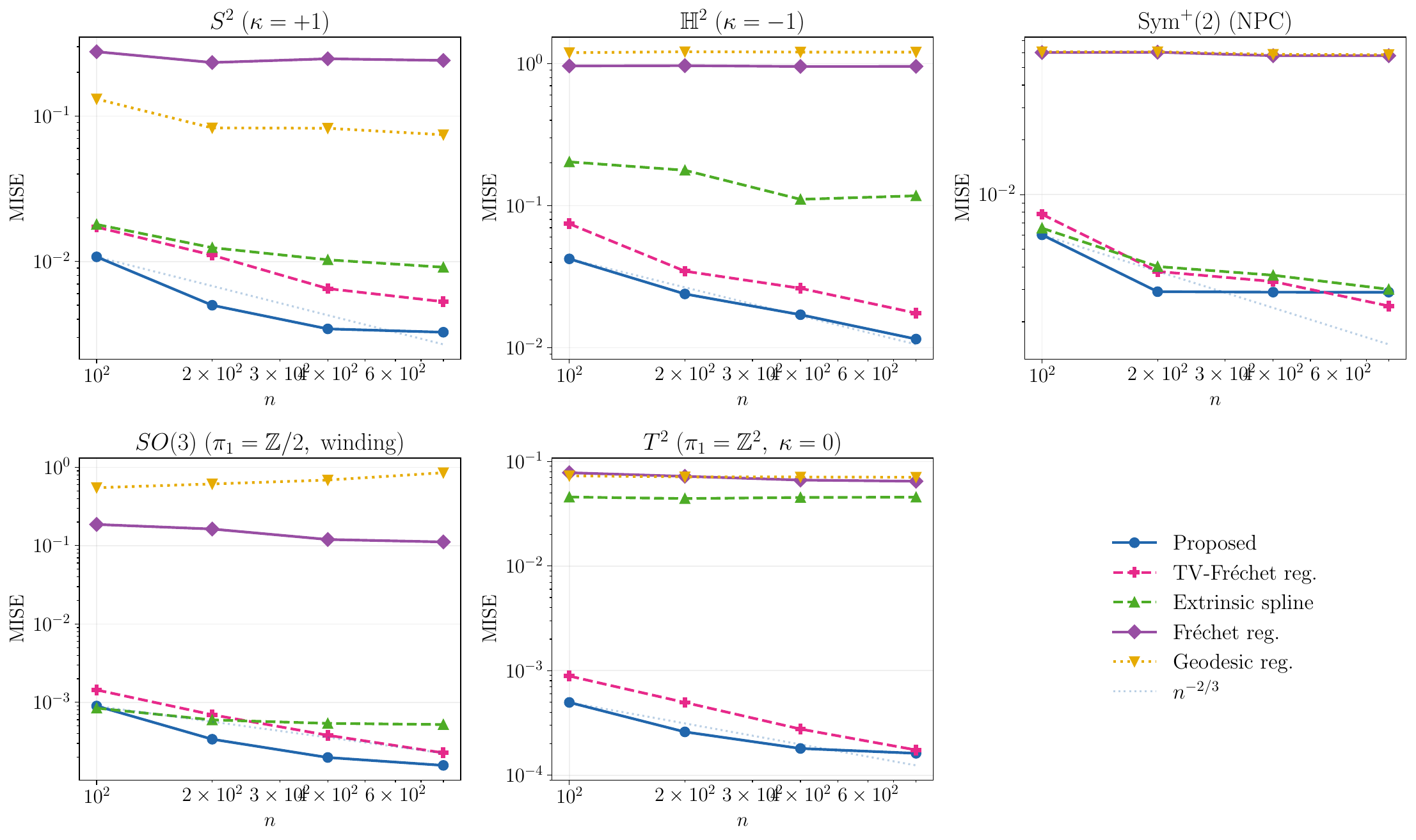}
\caption{Log-log MISE versus sample size on five manifolds.
  Reference slope $n^{-2/3}$ is shown as dotted lines.}
\label{fig:comprehensive}
\end{figure}

Additional experiments in Appendix~\ref{sec:additional-numerics} include a curvature
sensitivity study on spheres $S^2(R)$ with varying~$\kappa$, confirming
the asymptotically curvature-free constant of \Cref{thm:oracle}, and a
winding experiment on $S^2$ with a five-times equatorial wrap.

\subsection{Real-data case study: wind direction on $S^1$}
\label{subsec:wind}

We apply the harmonic map regression estimator to hourly wind-direction
observations from the NOAA Integrated Surface Database
\citep{NOAAISD2023} (station USAF 725300, Chicago O'Hare, year 2023).
Wind direction is recorded as an
angle in $[0^\circ,360^\circ)$ relative to true north, providing a natural
$S^1$-valued response.  The wrap-around at $0^\circ/360^\circ$ is a
well-known pathology for Euclidean smoothers. When the true direction is
near north, observations at $350^\circ$ and $10^\circ$ are geodesically
close on $S^1$ but numerically distant in $\bR$, so any method that
averages raw degree values is pulled toward $180^\circ$.

We use $n=649$ non-calm hourly observations from June, with
sequential time normalized to $[0,1]$.  June exhibits the strongest
directional variation (mean resultant length $R=0.38$), with the
prevailing direction near north and 40 wrap-around events.
To evaluate predictive accuracy, we adopt a scattered-block design:
the month is divided into $20$ blocks of ${\approx}\,1.5$ days, with
every fifth block held out for testing (${\approx}\,20\%$).
Hyperparameters are tuned by $4$-fold cross-validation on the
training set, where the $16$ training blocks are assigned to folds
in round-robin order so that each validation fold consists of $4$
non-adjacent blocks spread throughout the month.
The CV loss is the average held-out geodesic prediction error
$n_{\mathrm{val}}^{-1}\sum d_{S^1}^2(\hat f(t_j),Y_j)$.

\begin{figure}[t]
\centering
\includegraphics[width=\textwidth]{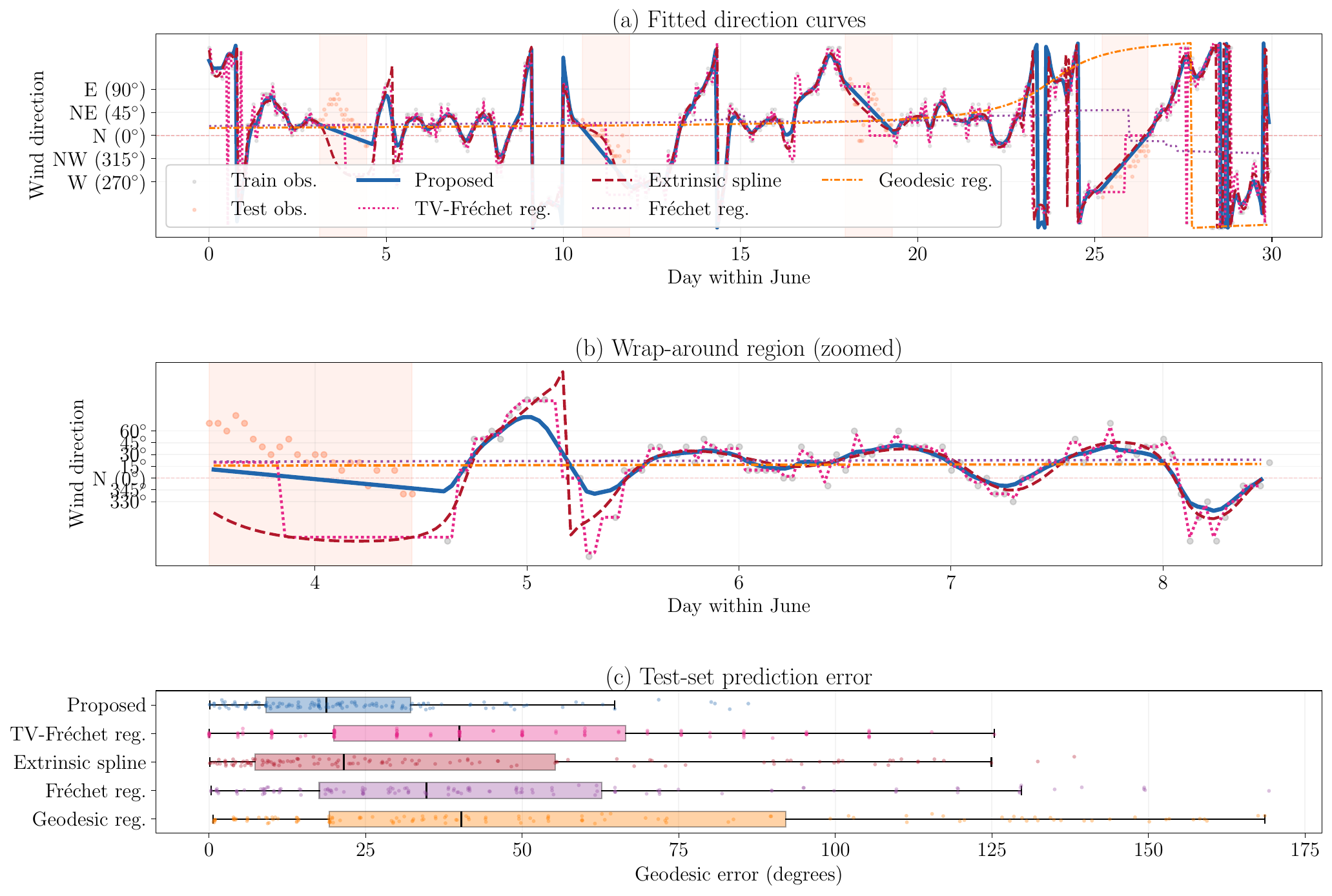}
\caption{Wind direction regression on~$S^1$ ($n=649$ hourly observations, June).
  (a)~Fitted curves for all methods.
  (b)~Zoom near wrap-around events.
  (c)~Boxplot of test-set geodesic errors.}
\label{fig:wind}
\end{figure}

Figure~\ref{fig:wind} shows the results.  In panel~(a), the harmonic
map spline (blue) closely tracks the observed direction trajectory
throughout the month, including the scattered test blocks (coral shading).
The extrinsic spline (red) captures broad trends but smooths over rapid
direction changes.
TV-Fr\'echet regression (pink) produces a piecewise-constant fit whose
total-variation penalty respects the $S^1$ geometry, but the
staircase structure cannot represent the continuously varying wind
direction.  Cross-validation selects the smallest regularization parameter
in the grid, reflecting the fundamental tension between the piecewise-constant
function class and this smooth signal.
Fr\'echet regression and geodesic regression yield near-constant
fits.
Panel~(b) zooms into a five-day stretch containing several wrap-around
events. The harmonic map spline crosses $0^\circ$ without artefacts,
whereas the TV-Fr\'echet staircase deviates substantially.

Table~\ref{tab:wind-supp} reports the full test-set geodesic prediction errors.
The harmonic map spline achieves the lowest MSGE ($0.275$ rad$^2$,
RMGE $= 30.1^\circ$), reducing the error substantially compared to the
next-best competitor, the extrinsic spline ($0.832$ rad$^2$,
RMGE $= 52.3^\circ$), which is penalized by projecting Euclidean-smoothed
ambient coordinates back onto~$S^1$.
TV-Fr\'echet regression \citep{LinMuller2021TV} ($0.897$ rad$^2$,
RMGE $= 54.3^\circ$) illustrates the cost of piecewise-constant estimation
for a smooth signal. Although its total-variation penalty respects
the intrinsic geometry of~$S^1$, the resulting staircase fit cannot
represent continuous directional variation, and cross-validation
consistently selects the least regularized candidate.
Fr\'echet regression and geodesic regression produce RMGE
$\ge 61^\circ$, reflecting the limitations of constant or linear fits.
The
harmonic map spline benefits from its piecewise-geodesic structure:
between flanking training points it interpolates along constant-speed
geodesics on~$S^1$, which avoids
wrap-around artefacts by construction.  Scattered-block CV selects
$\lambda = 1\times 10^{-5}$, and the resulting spline smoothly resolves
the rapid directional oscillations throughout June.

\section{Discussion}\label{sec:discussion}

The main result of this paper is that the first-order Dirichlet
regime $s=1$ is the setting in which manifold-valued nonparametric
regression acquires a structural theory analogous to that of the
Euclidean smoothing spline.  The Dirichlet energy penalty converts
the regression problem into a perturbed harmonic map equation,
and the resulting PDE-regularity and topological-obstruction theory
has no analogue in existing manifold-valued regression frameworks.
The obstructions of
\Cref{sec:obstructions} show that this boundary reflects intrinsic mathematical constraints:
bienergy topological blindness and absence of higher-order existence
theory.  Extending the structural
theory to higher smoothness will
require progress on open problems in geometric analysis, including
biharmonic map regularity and higher-order topological sensitivity.

This paper leaves several open questions. First,
the interplay between curvature and the pointwise constant is only partially
resolved.  For non-positively curved targets,
\Cref{thm:pointwise}\ref{item:pointwise-npc} provides a non-perturbative
pointwise bound whose variance constant is at most the Euclidean constant,
and \Cref{thm:NPC-sharp} shows that the oracle constant matches the Euclidean
theory exactly.  For positively curved targets, the perturbative bound
(\Cref{thm:pointwise}\ref{item:pointwise-general}) carries the factor
$1/\eta(\kappa_+,r_0)>1$, while the Assouad lower bound
(\Cref{thm:minimax}) is curvature-free.  Asymptotically, localization
implies $\eta(\kappa_+,r_n)\to 1$, recovering the Euclidean constant.



Second, the structural results of Sections~\ref{sec:regularity}--\ref{sec:topology}
are intrinsically one-dimensional.  The geodesic spline characterization
(\Cref{thm:regularity}), the equivalent-kernel pointwise risk bound
(\Cref{thm:pointwise}), and the topological phase transition
(\Cref{thm:topo-phase}) all rely on the Sobolev embedding
$H^1(I,\bR^D)\hookrightarrow C^0(\bar I,\bR^D)$, which ensures
that the pointwise fidelity term $d_M^2(Y_i,F(t_i))$ is well defined
for $H^1$~maps.  For covariate domains $\cX\subset\bR^p$ with $p\ge 2$,
this embedding fails since $H^1(\cX,\bR^D)$ does not embed into $C^0(\cX,\bR^D)$,
and the pointwise data-fidelity term becomes undefined.

Third, the Dirichlet energy $\Dir(F)=\frac{1}{2}\int_\cX|\nabla F|_g^2$ remains
well-defined for maps $F\colon\cX\to M$ with $\cX\subset\bR^p$,
and the oracle inequality (\Cref{thm:oracle}) extends to the
kernel-smoothed estimator (Appendix~\ref{supp:ks-exist}--\ref{supp:ks-oracle}), achieving the
minimax rate $n^{-2/(2+p)}$ at $s=1$.  The appendix also establishes
smoothness of the kernel-smoothed minimizer (Appendix~\ref{supp:highdim}) and a pointwise
risk bound of order $n^{-1/2}$ for surface-valued regression at $p=2$
(Appendix~\ref{supp:surface}).  However, this kernel-smoothed estimator
replaces the pointwise empirical risk with a smoothed version
$R_n^h(F)=n^{-1}\sum_i\int K_h(t-t_i)\,d_M^2(Y_i,F(t))\,dt$,
which is a fundamentally different estimator with different properties.
The geodesic spline structure is lost (the Euler--Lagrange equation becomes
a PDE rather than an ODE with point sources), and the topological
phase transition has no known analogue.

Overall, the theoretical and numerical results in this paper identify harmonic map regression as
a natural integration of nonparametric statistics, Riemannian geometry,
and homotopy theory. The Dirichlet energy is not merely a convenient
analogue of the Euclidean Sobolev penalty; it is the unique first-order
energy that transfers the full structural chain of the smoothing spline
to the manifold setting, with rate optimality, geometric interpretability,
and topological sensitivity emerging from a single variational principle.

\FloatBarrier
\putbib
\end{bibunit}

\appendix


\part*{Supplementary Material}
\addcontentsline{toc}{part}{Supplementary Material}

\renewcommand{\thetheorem}{S\arabic{section}.\arabic{theorem}}
\renewcommand{\theequation}{\thesection.\arabic{equation}}

This supplementary material contains the complete proofs of the main results stated in
the paper, as well as extensions to higher-dimensional covariate domains
and higher-order Sobolev penalties.  Theorem, lemma, and equation numbers
refer to the main text unless prefixed with ``S''.

\begin{bibunit}

\section{Proofs for Sections 2--3 (Preliminaries)}

\subsection{Proof of Lemma~2.1 (Variance inequality)}\label{supp:lemma21}

\label{proof:variance-ineq}

\begin{proof}
  For $\kappa\le 0$: the squared-distance function $z\mapsto d_M^2(z,y)$ is
  convex, and the standard variance decomposition holds with equality; see
  \citet[Proposition~4.4]{Sturm2003}.
  For $\kappa>0$: the Hessian comparison theorem
  \citep[Theorem~1.2]{Karcher1977},
  \citep[Chapter~1, Theorems~1.28-1.29]{CheegerEbin2008}
  gives, for every $z$ with $d_M(z,y)<\pi/\sqrt\kappa$,
  \[
    \mathrm{Hess}_y\bigl(d_M^2(z,\cdot)/2\bigr)
    \;\ge\;\eta\bigl(\kappa,\,d_M(z,y)\bigr)\,g_y,
  \]
  where the radial eigenvalue is exactly~$1$ and the orthogonal eigenvalues
  are $\ge\eta(\kappa,d_M(z,y))$; the bound $\eta$ is the minimum.
  Since $\supp(\mu)\subset B_M(\bar y,r_0)$, every
  $z\in\supp(\mu)$ satisfies $d_M(z,\bar y)\le r_0$.
  Define $\varphi(y)=\int_M d_M^2(z,y)\dd\mu(z)$.  Since $\bar y$
  minimizes~$\varphi$, the Riemannian gradient vanishes:
  $\mathrm{grad}\,\varphi(\bar y)
  =-2\int_M\log_{\bar y}(z)\dd\mu(z)=0$.
  Integrating the pointwise Hessian bound over~$\mu$ gives
  $\mathrm{Hess}_y(\varphi/2)\ge\eta(\kappa,r_0)\,g_y$
  for $y$ with $\sup_{z\in\supp(\mu)}d_M(z,y)\le r_0+d_M(\bar y,y)
  <\pi/\sqrt\kappa$, since the Hessian of $d_M^2(z,\cdot)/2$
  at~$\bar y$ is bounded below by $\eta(\kappa,d_M(z,\bar y))
  \ge\eta(\kappa,r_0)$.
  Combining $\mathrm{grad}\,\varphi(\bar y)=0$ and the strong
  convexity $\mathrm{Hess}(\varphi/2)\ge\eta(\kappa,r_0)\,g$ yields
  the variance inequality.
\end{proof}


\subsection{Proof of Proposition~2.2 (Compactness)}\label{supp:prop22}
\label{proof:compactness}

\begin{proof}
  Compactness of~$M$ gives $\norm{F}_{L^\infty}\le\diam(M)$.
  Combined with $J_s(F)\le E_0$ and the Poincar\'e inequality \citep[Theorem~6.30]{Adams2003} on the
  bounded domain~$I$, this yields a uniform bound
  $\norm{F}_{H^s}\le C(\diam(M), E_0, I)$.  The Rellich-Kondrachov
  theorem ($H^s\hookrightarrow\hookrightarrow C^0$ for $s\ge 1$;
  \citealt[Theorem~6.3]{Adams2003}) and
  closedness of~$M\subset\bR^D$ give the result.
\end{proof}


\subsection{Sub-Gaussian extension via sample splitting (Remark~3.1)}\label{supp:subgauss}
\label{proof:subgaussian}

\begin{proof}
  We condition on $\cD_1$ throughout and treat
  $\hat m^{(1)}$ as a fixed function.
  Let $r_n=\sigma\sqrt{2c\log n}$ for a constant $c>0$
  to be chosen large enough below.

  We begin by controlling the truncation event.
  Define $\delta_n=\sup_t d_M(\hat m^{(1)}(t),m(t))$
  and the good event $\cA_n=\{\delta_n\le r_n/2\}$.
  For $i\in\cD_2$, define
  $\tilde Y_i=\Pi_{r_n}(Y_i\mid\hat m^{(1)}(t_i))$, the
  metric projection of $Y_i$ onto
  $\bar B_M(\hat m^{(1)}(t_i),r_n)$.

  On $\cA_n$, the triangle inequality gives
  \[
    B_M(m(t_i),r_n/2)\subset B_M(\hat m^{(1)}(t_i),r_n),
  \]
  so if $d_M(Y_i,m(t_i))\le r_n/2$, then $Y_i$ is
  not truncated: $\tilde Y_i=Y_i$.
  By the sub-Gaussian tail,
  \[
    \bP(d_M(Y_i,m(t_i))>r_n/2\mid t_i)
    \le 2\exp(-r_n^2/(8\sigma^2))
    =2n^{-c/4}.
  \]
  By a union bound over $i\in\cD_2$ (of size $n_2=\lceil n/2\rceil$):
  \[
    \bP\bigl(\exists\,i\in\cD_2:\tilde Y_i\neq Y_i
    \mid\cD_1,\cA_n\bigr)
    \le n_2\cdot 2n^{-c/4}
    \le 2n^{1-c/4}.
  \]
  For $c>4$, this vanishes.

  Next, we derive an oracle inequality on the truncated data.
  Conditional on $\cD_1$ and the event $\cA_n$:
  each $\tilde Y_i$ is supported in
  $B_M(\hat m^{(1)}(t_i),r_n)
  \subset B_M(m(t_i),r_n+\delta_n)
  \subset B_M(m(t_i),3r_n/2)$.
  Thus (A3) holds with $r_0'=3r_n/2$ and the curvature
  factor is $\eta(\kappa_+,3r_n/2)$.

  The oracle inequality (Theorem~4.5(a)
  of the main text) applies to the estimator
  $\hat F_n^{\mathrm{trunc}}$ computed on the $n_2$ i.i.d.\
  truncated observations $\{(t_i,\tilde Y_i)\}_{i\in\cD_2}$:
  \[
    \bE\bigl[\cE_{\mathrm{trunc}}(\hat F_n^{\mathrm{trunc}})
    \mid\cD_1,\cA_n\bigr]
    \le C_1\inf_F\bigl\{\cE_{\mathrm{trunc}}(F)
    +\lambda_n J_s(F)\bigr\}
    +\frac{C_2}{n_2\lambda_n^{1/(2s)}},
  \]
  where $\cE_{\mathrm{trunc}}$ denotes the excess risk
  with respect to the truncated distribution.

  It remains to transfer the risk bound from the truncated to the original
  setting.
  We must account for two effects: (i)~truncation can discard
  observations, and (ii)~truncation around $\hat m^{(1)}$
  rather than $m$ introduces a center-shift error in the
  conditional distributions.

  For~(i), on the high-probability event $\cB_n=\{\tilde Y_i=Y_i
  \;\forall\, i\in\cD_2\}$, the truncated and original data
  coincide, so
  $\hat F_n^{\mathrm{trunc}}$ is identical to the estimator
  computed from untruncated observations.
  In particular, $\cE_{\mathrm{trunc}}(\hat F_n^{\mathrm{trunc}})
  =\cE(\hat F_n^{\mathrm{trunc}})$ on $\cB_n$.
  On the complementary event $\cB_n^c$
  (at most $2n^{1-c/4}$ probability on $\cA_n$),
  the excess risk is bounded by $\diam(M)^2$.

  For~(ii), we bound the population-level truncation bias,
  i.e., the difference
  \[
    \bigl|\bE[d_M^2(Y,F(t))]-\bE[d_M^2(\tilde Y,F(t))]\bigr|
  \]
  between the population risks under the original and
  truncated distributions.  Write
  $\delta_n=\sup_t d_M(\hat m^{(1)}(t),m(t))$.
  On $\cA_n$, $\delta_n\le r_n/2$, so
  $B_M(m(t),r_n/2)\subset B_M(\hat m^{(1)}(t),r_n)$
  for every~$t$.  Consequently
  $\tilde Y=Y$ whenever $d_M(Y,m(t))\le r_n/2$, i.e.,
  \[
    d_M(\tilde Y,Y)\le d_M(Y,m(t))\,\bm{1}_{d_M(Y,m(t))>r_n/2}.
  \]
  Using $|d_M^2(Y,F)-d_M^2(\tilde Y,F)|
  \le 2\diam(M)\,d_M(\tilde Y,Y)$ and the sub-Gaussian
  tail:
  \begin{align*}
    \bE\bigl[d_M(\tilde Y,Y)\mid t,\cA_n\bigr]
    &\le \bE\bigl[d_M(Y,m(t))\,
    \bm{1}_{d_M(Y,m(t))>r_n/2}\mid t\bigr]\\
    &\le \int_{r_n/2}^{\infty}
    \bP(d_M(Y,m(t))>u\mid t)\,\dd u
    +\frac{r_n}{2}\bP\bigl(d_M(Y,m(t))>r_n/2\mid t\bigr)\\
    &\le C_\sigma\exp\bigl(-r_n^2/(8\sigma^2)\bigr),
  \end{align*}
  where the last step uses the sub-Gaussian tail
  $\bP(d_M(Y,m(t))>u)\le C\exp(-u^2/(2\sigma^2))$
  and Gaussian-tail integration.
  With $r_n=\sigma\sqrt{2c\log n}$, this gives
  \[
    \sup_F\bigl|\bE[d_M^2(Y,F(t))]
    -\bE[d_M^2(\tilde Y,F(t))]\,\bigr|\,\bm{1}_{\cA_n}
    \;\le\; C'\,n^{-c/4},
  \]
  which is $o(n^{-A})$ for any desired~$A$
  by choosing $c$ large enough.
  The same bound applies to the Fr\'echet minimizer,
  so the excess-risk difference satisfies
  $|\cE_{\mathrm{trunc}}(F)-\cE(F)|\le 2C'\,n^{-c/4}$
  on~$\cA_n$.

  Combining~(i) and~(ii), we take expectation over $\cD_2$ conditionally on
  $\cD_1$, then over~$\cD_1$.
  On the event $\cA_n\cap\cB_n$ (which has
  probability $\ge 1-\bP(\cA_n^c)-\bP(\cB_n^c\mid\cA_n)$),
  the truncated and original data coincide, so the
  bounded oracle inequality applies
  and the population-level truncation bias from~(ii) is
  $O(n^{-c/4})$.  On the complement, the excess risk
  is trivially bounded by $\diam(M)^2$.  Therefore:
  \begin{align*}
    \bE[\cE(\hat F_n^{\mathrm{trunc}})]
    &\le C_1\inf_F\bigl\{\cE(F)+\lambda_n J_s(F)\bigr\}
    +\frac{C_2'}{n\lambda_n^{1/(2s)}}
    +O(n^{-c/4})\\
    &\quad+\diam(M)^2\bigl(\bP(\cB_n^c\mid\cA_n)
    +\bP(\cA_n^c)\bigr).
  \end{align*}
  For $c>4(1+2s/(2s+1))$, all remainder terms are
  $o(n^{-2s/(2s+1)})$, so the oracle inequality
  carries over from the truncated to the original
  setting with the same rate.
  This yields the sub-Gaussian oracle inequality
  stated of the main text.

  Finally, we verify that the minimax rate is preserved.
  With $\lambda_n\asymp n^{-2s/(2s+1)}$ and $c>4(1+2s/(2s+1))$,
  the term $n^{1-c/4}$ is $o(n^{-2s/(2s+1)})$ and the minimax
  rate is preserved.  The estimation bound acquires the
  curvature factor $\eta(\kappa_+,3r_n/2)$ (not $\eta(\kappa_+,r_n)$
  as in the abbreviated statement of the main text) because the
  truncation around $\hat m^{(1)}$ enlarges the effective support
  radius from $r_0$ to $r_0+\delta_n\le 3r_n/2$ on the event~$\cA_n$.
  With $r_n=\sigma\sqrt{2c\log n}$, the Taylor expansion gives
  \[
    \frac{1}{\eta(\kappa_+,3r_n/2)}
    =1+\frac{3\kappa_+ r_n^2}{4}+O(\kappa_+^2 r_n^4)
    =1+O(\kappa_+\sigma^2\log n),
  \]
  which matches the main-text statement
  $\eta(\kappa_+,r_n)=1-O(\kappa_+\sigma^2\log n)$
  up to the implicit constant in the~$O(\cdot)$.
  The discrepancy in the argument of~$\eta$ changes only this
  implicit constant (a factor of $9/4$ in the leading correction),
  and is absorbed into the $O(\kappa_+\sigma^2\log n)$ notation.
  The estimation bound constant therefore contains a factor
  $1/\eta(\kappa_+,3r_n/2)=1+O(\kappa_+\sigma^2\log n)$,
  which grows logarithmically with~$n$.  This is a multiplicative
  correction to the constant, not a fixed $(\log n)^\beta$ factor:
  it does not affect the rate exponent $n^{-2s/(2s+1)}$, but the
  practical constant degrades slowly with sample size on
  positively curved targets.  For NPC targets ($\kappa_+=0$), the
  correction vanishes identically.
\end{proof}

\section{Proofs for Section 4 (Oracle inequality and minimax rates)}

\subsection{Proof of Theorem~4.4 (NPC sharp rate)}\label{supp:thm44}
\label{proof:NPC-sharp}

\begin{proof}
  Decompose the excess risk pointwise:
  \begin{align*}
    \cE(F) &= \bE\bigl[d_M^2(Y,F(t))-d_M^2(Y,m(t))\bigr]\\
    &= \bE\bigl[d_M^2(F(t),m(t))\bigr]
    + \underbrace{\bE\bigl[d_M^2(Y,F(t))-d_M^2(Y,m(t))-d_M^2(m(t),F(t))\bigr]}
    _{=:\,h(F)}.
  \end{align*}
  It suffices to show $h(F)\ge 0$.  For each fixed $t\in I$, define
  $\varphi(y)=\bE[d_M^2(Y,y)\mid t]$ and note that $m(t)$ is the
  minimizer of~$\varphi$ (the conditional Fr\'echet mean).  In an NPC
  space, $\varphi$ is \emph{strongly convex}: for every $y\in M$,
  \[
    \varphi(y) \;\ge\; \varphi(m(t)) + d_M^2(m(t),y),
  \]
  which is the NPC variance inequality
  \citep[Proposition~4.4]{Sturm2003}.  Setting $y=F(t)$ and
  rearranging gives
  $\bE[d_M^2(Y,F(t))\mid t]-\sigma^2(t)-d_M^2(m(t),F(t))\ge 0$.
  Integrating over $t$ yields $h(F)\ge 0$.

  The rate bound follows from the oracle inequality (Theorem~4.5)
  combined with $\cE(F)\ge\bE[d_M^2(F(t),m(t))]$ (the case $\eta=1$).
\end{proof}


\subsection{Proof of Theorem~4.1 (Existence)}\label{supp:thm41}
\label{proof:existence}

\begin{proof}
  Write $\Phi(F)=R_n(F)+\lambda J_s(F)$.  A constant map
  $F_0\equiv y_0\in M$ gives $\Phi(F_0)\le\diam(M)^2$, so a minimizing
  sequence $\{F_k\}$ satisfies
  \[
    \lambda J_s(F_k) \le \Phi(F_k) \le \diam(M)^2+1
    \quad\text{for all large }k,
  \]
  hence $J_s(F_k)\le\lambda^{-1}(\diam(M)^2+1)$.
  Since $J_s$ is the squared $H^s$~seminorm (the sum of
  squared $L^2$~norms of all order-$s$ derivatives), this bounds only the
  highest-order derivatives.  To bound the full $H^s$~norm, note that
  $F_k(I)\subset M\subset\bR^D$ and $M$ is compact, so
  $\|F_k\|_{L^\infty(I,\bR^D)}\le\diam(M)$, giving
  \[
    \|F_k\|_{L^2(I,\bR^D)}
    \le \diam(M)\,|I|^{1/2}.
  \]
  It remains to control the intermediate derivatives
  $\partial^\alpha F_k$ for $1\le|\alpha|<s$.
  On a bounded Lipschitz domain $I$ with $s\ge 1$, the
  Gagliardo--Nirenberg interpolation inequality
  \citep[Theorem~5.2]{Adams2003} gives, for each
  $1\le j<s$:
  \[
    \sum_{|\alpha|=j}\|\partial^\alpha F\|_{L^2}^2
    \;\le\;
    C_j\bigl(\|F\|_{L^2}^{2(1-j/s)}\,|F|_{H^s}^{2j/s}
    +\|F\|_{L^2}^2\bigr),
  \]
  where $C_j=C_j(I,s)$.  Applying this to each $F_k$ with
  the established bounds
  $\|F_k\|_{L^2}\le\diam(M)\,|I|^{1/2}$ and
  $|F_k|_{H^s}^2=J_s(F_k)\le\lambda^{-1}(\diam(M)^2+1)$
  yields finite upper bounds on all intermediate Sobolev
  semi-norms, depending only on $\diam(M)$, $|I|$,
  $\lambda^{-1}$, and~$s$.  Summing:
  \[
    \|F_k\|_{H^s}^2
    =\|F_k\|_{L^2}^2+\sum_{j=1}^{s}\sum_{|\alpha|=j}
    \|\partial^\alpha F_k\|_{L^2}^2
    \;\le\;C'(I,s,\diam(M),\lambda^{-1}),
  \]
  so the sequence is bounded in $H^s(I,\bR^D)$.
  Since $s\ge 1$, the Rellich--Kondrachov embedding \citep[Theorem~6.3]{Adams2003}
  $H^s(I,\bR^D)\hookrightarrow C(\overline{I},\bR^D)$ is compact, so
  there exists a subsequence converging weakly in~$H^s$ and uniformly
  in~$C^0$ to some $F\in C(\overline{I},\bR^D)$.  Uniform convergence and
  closedness of~$M$ ensure $F(t)\in M$ for all~$t$, so
  $F\in\cH^s(I,M)$.  Uniform convergence gives $R_n(F_k)\to R_n(F)$,
  and weak lower semicontinuity of the Sobolev seminorm gives
  $J_s(F)\le\liminf J_s(F_k)$.  Hence
  $\Phi(F)\le\liminf\Phi(F_k)=\inf\Phi$.  The population version follows
  identically.
\end{proof}


\subsection{Proof of Lemma~4.3 (Excess risk)}\label{supp:lemma43}
\label{proof:excess-risk}

\begin{proof}
  Fix $t\in I$.  By (A3), the conditional distribution
  $\mu=P_{Y|t}$ is supported in
  $B_M(m(t),r_0)$, where $m(t)$ is the Fr\'echet mean of~$\mu$.
  The hypothesis of the lemma requires $d_M(F(t),m(t))<\rho_M$,
  so $F(t)\in B_M(m(t),\rho_M)$.  Applying the variance inequality
  (Lemma~2.1 of the main text) with
  Fr\'echet mean $\bar y=m(t)$ and evaluation point $y=F(t)$:
  \[
    \bE[d_M^2(Y,F(t))\mid t]
    \;\ge\;\eta(\kappa_+,r_0)\,d_M^2(m(t),F(t))+\sigma^2(t).
  \]
  The conditions of Lemma~2.1 are satisfied because
  $\supp(\mu)\subset B_M(m(t),r_0)$ and
  $F(t)\in B_M(m(t),\rho_M)$.
  Subtracting $\sigma^2(t)=\bE[d_M^2(Y,m(t))\mid t]$ gives
  \[
    \bE[d_M^2(Y,F(t))-d_M^2(Y,m(t))\mid t]
    \ge\eta(\kappa_+,r_0)\,d_M^2(m(t),F(t)).
  \]
  Integrating over~$t$ with respect to the marginal density~$f(t)\dd t$ yields the excess risk bound
  $\cE(F)\ge\eta(\kappa_+,r_0)\int_I d_M^2(F(t),m(t))\,f(t)\dd t$. Note that this integration is valid because the hypothesis $d_M(F(t),m(t)) < \rho_M$ is assumed to hold uniformly for all $t \in I$.
\end{proof}

\subsection{Proof of Theorem~4.5 (Oracle inequality)}\label{supp:thm46}
\label{proof:oracle}

The proof follows the penalized M-estimation framework
\citep{vandeGeer2000, Tsybakov2009} adapted to the manifold setting.
It is decomposed into self-contained lemmas for verifiability.

\begin{lemma}[Basic inequality (penalized $M$-estimation)]
\label{lem:basic-ineq}
  For every $F\in\cH^s(I,M)$, the minimizer $\hat F_n$
  satisfies
  \begin{equation}\label{eq:basic-ineq-supp}
    \cE(\hat F_n)+\lambda_n J_s(\hat F_n)
    \;\le\;\cE(F)+\lambda_n J_s(F)
    +\bigl[\Psi_n(\hat F_n)-\Psi_n(F)\bigr],
  \end{equation}
  where $\cE(G)=R(G)-R(m)$ and
  $\Psi_n(G)=(R-R_n)(G)-(R-R_n)(m)$.
\end{lemma}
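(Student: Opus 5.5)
The basic inequality is purely algebraic and follows from the defining minimality of $\hat F_n$. Throughout, $\hat F_n$ denotes any minimizer selected as in \Cref{rem:measurable-selection}, so no uniqueness is needed. The plan is to compare the empirical objective at $\hat F_n$ and at an arbitrary competitor $F$, and then rewrite the empirical risks in terms of population risks plus the centered empirical process $\Psi_n$.

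\textbf{Step 1 (minimality).} Fix $F\in\cH^s(I,M)$. By \Cref{thm:existence} the minimizer exists. By definition of $\hat F_n$ as a minimizer of $R_n+\lambda_n J_s$ over $\cH^s(I,M)$,
\[
  R_n(\hat F_n)+\lambda_n J_s(\hat F_n)\;\le\;R_n(F)+\lambda_n J_s(F).
\]
If $J_s(F)=\infty$ the claim is trivial. Otherwise all quantities are finite, since $d_M^2\le\diam(M)^2$ on the compact target.

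\textbf{Step 2 (adding and subtracting).} For any $G$, write
\[
  R_n(G)=R(G)-(R-R_n)(G)=\cE(G)+R(m)-\Psi_n(G)-(R-R_n)(m).
\]
The last equality uses only the definitions $\cE(G)=R(G)-R(m)$ and $\Psi_n(G)=(R-R_n)(G)-(R-R_n)(m)$. Substitute this identity for $G=\hat F_n$ and for $G=F$ into the inequality of Step 1. The common terms $R(m)$ and $(R-R_n)(m)$ appear on both sides and cancel. This leaves
\[
  \cE(\hat F_n)-\Psi_n(\hat F_n)+\lambda_n J_s(\hat F_n)\le \cE(F)-\Psi_n(F)+\lambda_n J_s(F).
\]
Moving $\Psi_n(\hat F_n)$ to the right-hand side gives \eqref{eq:basic-ineq-supp}.

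\textbf{Main obstacle.} There is essentially none; the only point requiring care is that every term is a well-defined finite real number. This holds because $R$ and $R_n$ are bounded by $\diam(M)^2$, $m$ is measurable with $R(m)$ finite, and $\hat F_n$ is measurable as a random element by the measurable selection remark. The inequality is deterministic: it holds pointwise for every realization of the data. It holds simultaneously for all competitors $F$, which is the form needed for the subsequent peeling argument.
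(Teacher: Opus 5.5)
Your proof is correct and follows the same argument as the paper. You start from the minimality of $\hat F_n$, write $R_n=R-(R-R_n)$, and subtract $R(m)$; the $(R-R_n)(m)$ terms then cancel and leave $\Psi_n(\hat F_n)-\Psi_n(F)$ with the correct sign. The case $J_s(F)=\infty$ that you set aside cannot occur for $F\in\cH^s(I,M)$, so that remark is unnecessary but harmless.
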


\begin{proof}
  This is the standard basic inequality for penalized empirical risk
  minimization; see \citet[Section~2.3]{vandeGeer2000} or
  \citet[Chapter~6]{Tsybakov2009}.
  By optimality:
  $R_n(\hat F_n)+\lambda_n J_s(\hat F_n)
  \le R_n(F)+\lambda_n J_s(F)$.
  Writing $R_n=R-(R-R_n)$, adding/subtracting $R(m)$, and
  noting $(R-R_n)(F)-(R-R_n)(\hat F_n)
  =\Psi_n(F)-\Psi_n(\hat F_n)$ gives the result.
\end{proof}

\begin{lemma}[Entropy of the manifold-valued Sobolev class (Birman--Solomjak)]
\label{lem:entropy}
  For $E_0>0$, set
  $\cF_{E_0}=\{F\in\cH^s(I,M):J_s(F)\le E_0\}$ and
  $C_0=\diam(M)$.  Then
  \begin{equation}\label{eq:covering-lemma}
    \log N(\epsilon,\cF_{E_0},L^2(\mu))
    \;\le\;\bar K\bigl((\sqrt{E_0}+C_0)/\epsilon\bigr)^{1/s},
  \end{equation}
  where $\bar K=K_0(C_{\mathrm{bi}}C_f^{1/2})^{1/s}$
  depends on $s,M,C_f$.
\end{lemma}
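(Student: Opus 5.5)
The plan is to reduce the entropy bound to the classical Birman--Solomjak estimate for Euclidean Sobolev balls, using the Nash embedding $M\subset\bR^D$ to view $\cF_{E_0}$ as a subset of a vector-valued Sobolev ball. Every $F\in\cF_{E_0}$ lies in $H^s(I,\bR^D)$ with $|F|_{H^s}^2\le E_0$. Since $F(t)\in M$, compactness gives $\|F-y_*\|_{L^\infty}\le C_0=\diam(M)$ after translating by a fixed base point $y_*\in M$; because covering numbers are translation invariant, this translation costs nothing. Combining these two bounds with the Gagliardo--Nirenberg interpolation already used in the proof of \Cref{thm:existence} gives a uniform bound on the full norm, $\|F-y_*\|_{H^s}\le C_{\mathrm{bi}}'(\sqrt{E_0}+C_0)$, with $C_{\mathrm{bi}}'$ depending only on $s$ and $I$. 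Hence $\cF_{E_0}-y_*$ sits inside the ball of radius $R:=C(\sqrt{E_0}+C_0)$ of $H^s(I,\bR^D)$.

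Next I would invoke \citet[Theorem~1]{Birman1967} componentwise. For the unit ball of $H^s(I,\bR)$ in $L^2(\mathrm{Leb})$, that theorem gives $\log N(\epsilon)\le K\epsilon^{-1/s}$. Scaling by $R$ and taking products over the $D$ coordinates, at accuracy $\epsilon/\sqrt D$ in each, yields
\[
  \log N\bigl(\epsilon,\,\cF_{E_0},\,L^2(\mathrm{Leb})\bigr)\le D\,K\,D^{1/(2s)}(R/\epsilon)^{1/s}.
\]
Since $D$ is fixed by the embedding of $M$, this factor is absorbed into a constant depending on $M$. To pass to $L^2(\mu)$, where $\mu$ is the design law with density $f\le C_f$ (\ref{ass:density}), I would use $\|G\|_{L^2(\mu)}\le C_f^{1/2}\|G\|_{L^2}$. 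Any $\epsilon/C_f^{1/2}$-net in $L^2(\mathrm{Leb})$ is therefore an $\epsilon$-net in $L^2(\mu)$, which produces the factor $(C_{\mathrm{bi}}C_f^{1/2})^{1/s}$ appearing in $\bar K$. Two further points need care. First, the net should be taken inside $\cF_{E_0}$ itself, which costs a factor of $2$ in $\epsilon$ via the standard external-to-internal covering argument. Second, the $L^2$ distance between $M$-valued maps is the ambient chord distance. It dominates nothing intrinsic, but it is comparable to $d_M$ up to a constant depending on $M$, by compactness and smoothness of the embedding.

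The main obstacle I expect is the interpolation step. It is what makes the covering depend on $\sqrt{E_0}+C_0$ rather than on a full $H^s$ norm of unknown size. The Gagliardo--Nirenberg constant must be uniform, and the lower-order terms have to be bounded using only the sup-norm control supplied by compactness of $M$. I would check this first in the case $s=1$, where no intermediate derivatives occur and the reduction is immediate. For $s\ge2$ I would then rely on the inequality as stated in the existence proof, applying $\sqrt{a+b}\le\sqrt a+\sqrt b$ and Young's inequality to reach the additive form $\sqrt{E_0}+C_0$.
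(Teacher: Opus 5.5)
Your proposal is correct and follows essentially the same route as the paper's proof. Both arguments place $\cF_{E_0}$ inside an $H^s(I,\bR^D)$ ball of radius $\asymp\sqrt{E_0}+C_0$ via Gagliardo--Nirenberg, apply Birman--Solomjak, and then transfer to $L^2(\mu)$ using $f\le C_f$ and $|p-q|\le d_M(p,q)\le C_{\mathrm{bi}}|p-q|$. Your additional details fill points the paper's proof glosses over: centering at a base point $y_*$ so that the $\diam(M)$ sup-norm bound is actually valid, the componentwise product over the $D$ coordinates, and using an internal net so that $d_M$ between net elements and class members is defined. None of these changes anything beyond the constant absorbed into $K_0$.
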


\begin{proof}
  Nash embedding \citep{Nash1956} gives $F(I)\subset M$, so
  $\|F\|_{L^\infty}\le\diam(M)$, hence
  $\|F\|_{L^2}\le C_0$ and $J_s(F)\le E_0$.
  By the same Gagliardo--Nirenberg argument as in the
  existence proof (\Cref{proof:existence}),
  $\|F\|_{H^s}^2\le C'(C_0^2+E_0)$,
  so $\cF_{E_0}\subset B_{H^s}(C''(\sqrt{E_0}+C_0))$.

  By the Birman--Solomjak theorem \citep[Theorem~1]{Birman1967},
  \[
    \log N\bigl(\epsilon,B_{H^s}(R),L^2\bigr)
    \;\le\; K_0(R/\epsilon)^{1/s}.
  \]
  With $R\le\sqrt{E_0}+C_0$:
  \[
    \log N(\epsilon,\cF_{E_0},L^2(I,\bR^D))
    \;\le\; K_0\bigl((\sqrt{E_0}+C_0)/\epsilon\bigr)^{1/s}.
  \]

  Since $M$ is compact, $|p-q|_{\bR^D}\le d_M(p,q)
  \le C_{\mathrm{bi}}|p-q|_{\bR^D}$, and
  the density bound $f\le C_f$ gives
  $d_{L^2(\mu)}\le C_{\mathrm{bi}}C_f^{1/2}\,d_{L^2(I)}$.
  Rescaling $\epsilon$ yields \eqref{eq:covering-lemma}.
\end{proof}

\begin{proof}[Proof of Theorem~4.5]
  The proof follows the penalized M-estimation framework
  \citep{vandeGeer2000, Tsybakov2009} adapted to the manifold setting.
  The key new ingredient is the curvature-dependent excess risk bound
  (Lemma~4.3), which replaces the Euclidean variance identity.

  We begin with the basic inequality.
  By definition of $\hat F_n$: for every $F\in\cH^s(I,M)$,
  \[
    R_n(\hat F_n)+\lambda_n J_s(\hat F_n)
    \le R_n(F)+\lambda_n J_s(F).
  \]
  Write $R_n(G)=R(G)-\nu_n(G)$ where $\nu_n(G)=(R-R_n)(G)$
  is the centered empirical process.  Adding and subtracting $R(m)$
  and rearranging gives, for every competitor~$F$:
  \begin{equation}\label{eq:basic2-supp}
    \cE(\hat F_n)+\lambda_n J_s(\hat F_n)
    \le\cE(F)+\lambda_n J_s(F)
    +\bigl[\nu_n(F)-\nu_n(\hat F_n)\bigr],
  \end{equation}
  where $\cE(G)=R(G)-R(m)$ is the excess risk.  Set
  $\Psi_n(G)=\nu_n(G)-\nu_n(m)$ (centered empirical process).  Since
  $\nu_n(F)-\nu_n(\hat F_n)=\Psi_n(F)-\Psi_n(\hat F_n)$, it suffices to
  control $\sup_{G\in\cG}\abs{\Psi_n(G)}$ over appropriate function classes.

  We now bound the covering numbers of the loss class.
  Define, for $E_0>0$,
  $\cF_{E_0}=\{F\in\cH^s(I,M):J_s(F)\le E_0\}$.
  We transfer the metric entropy from Euclidean Sobolev balls to
  $\cF_{E_0}$ as follows.

  By Nash's embedding theorem \citep{Nash1956}, $M$ embeds isometrically in~$\bR^D$.
  Every $F\in\cF_{E_0}$ maps into~$M$, so
  $\|F\|_{L^\infty(I,\bR^D)}\le\diam(M)$, hence
  $\|F\|_{L^2(I,\bR^D)}\le\diam(M)=:C_0$.
  Since $J_s(F)=|F|_{H^s}^2\le E_0$, and using the same Gagliardo--Nirenberg interpolation bound \citep[Theorem~5.2]{Adams2003} as in the existence proof, we obtain
  \[
    \|F\|_{H^s(I,\bR^D)}^2
    \le C'(C_0^2 + E_0).
  \]
  Thus $\cF_{E_0}$ is contained in a ball of radius
  $C''\sqrt{C_0^2+E_0}$ in $H^s(I,\bR^D)$.

  The Birman--Solomjak theorem \citep[Theorem~1]{Birman1967} then gives:
  the $\epsilon$-covering number of a ball of radius~$R$ in
  $H^s(I,\bR^D)$, measured in $L^2(I,\bR^D)$, satisfies
  \[
    \log N(\epsilon, B_{H^s}(R), L^2)\le K_0(R/\epsilon)^{1/s}.
  \]
  Applied with $R=\sqrt{C_0^2+E_0}\le \sqrt{E_0}+C_0$, this gives
  \[
    \log N(\epsilon, \cF_{E_0}, L^2(I,\bR^D))
    \le K_0\bigl((\sqrt{E_0}+C_0)/\epsilon\bigr)^{1/s}.
  \]

  To pass from Euclidean to intrinsic covering numbers, note that the intrinsic distance $d_M(p,q)$ and the chord distance
  $|p-q|_{\bR^D}$ satisfy, on the compact manifold~$M$,
  \[
    |p-q|_{\bR^D} \le d_M(p,q) \le C_{\mathrm{bi}}\,|p-q|_{\bR^D}
  \]
  for a constant $C_{\mathrm{bi}}$ depending on the curvature and
  injectivity radius of~$M$.  Therefore, for any two maps
  $F_1,F_2\colon I\to M$:
  \[
    \int_I d_M^2(F_1(t),F_2(t))\dd\mu(t)
    \le C_{\mathrm{bi}}^2
    \int_I |F_1(t)-F_2(t)|^2_{\bR^D}\dd\mu(t).
  \]
  Since $f$ is bounded above by~$C_f$ (A2), the
  $L^2(\mu)$-distance is bounded by $C_f^{1/2}$ times the
  $L^2(I,\bR^D)$-distance.  Combining gives
  \begin{equation}\label{eq:covering-supp}
    \log N(\epsilon,\cF_{E_0},L^2(\mu))
    \le K_0\bigl(C_{\mathrm{bi}}C_f^{1/2}(\sqrt{E_0}+C_0)/\epsilon\bigr)^{1/s}
    =: \bar K\bigl((\sqrt{E_0}+C_0)/\epsilon\bigr)^{1/s}.
  \end{equation}

  The loss function $\ell_F(t,y)=d_M^2(y,F(t))-d_M^2(y,m(t))$ satisfies
  the Lipschitz bound
  $\abs{\ell_{F_1}-\ell_{F_2}}\le 2\diam(M)\cdot d_M(F_1,F_2)$
  by the triangle inequality.

  We next establish a Bernstein-type bound for the empirical process.
  Each $\ell_F(t,Y)$ has mean $\cE(F)$ and satisfies
  $\abs{\ell_F}\le 2\diam(M)^2=:b$ (since each squared distance
  is at most $\diam(M)^2$).
  \begin{align*}
    \Var(\ell_F)
    &\le \bE[\ell_F^2]
    \le \|\ell_F\|_{L^\infty}\cdot\bE[\abs{\ell_F}]
    \le b\,\bE[\abs{\ell_F}].
  \end{align*}
  We establish a sharp Bernstein condition by bounding $\ell_F^2$
  pointwise.  By the reverse triangle inequality,
  $|d_M(Y,F(t))-d_M(Y,m(t))|\le d_M(F(t),m(t))$.
  Hence
  \begin{align*}
    \ell_F(t,Y)^2
    &=\bigl(d_M^2(Y,F(t))-d_M^2(Y,m(t))\bigr)^2\\
    &\le\bigl(2\diam(M)\bigr)^2 d_M^2(F(t),m(t))
    =2b\,d_M^2(F(t),m(t)).
  \end{align*}
  Taking expectations and applying
  $\bE[d_M^2(F(t),m(t))]\le\cE(F)/\eta$
  (Lemma~4.3):
  \begin{equation}\label{eq:var-bound}
    \Var(\ell_F)
    \le\bE[\ell_F^2]
    \le\frac{2b}{\eta}\,\cE(F).
  \end{equation}
  For NPC targets, $\eta=1$ and the bound holds unconditionally
  by Theorem~4.4.
  For general targets ($\kappa_+>0$), Lemma~4.3
  requires $d_M(F(t),m(t))<\rho_M$ for all~$t$.
  We verify this for $F$ in the localized class:
  since $J_s(F)\le\delta^2/\lambda_n$ and $\|F\|_{L^2}\le C_0$,
  the Sobolev embedding $H^s\hookrightarrow C^{0,s-1/2}$
  gives uniform equicontinuity, and
  $\cE(F)\le\delta^2\to 0$ forces $\|F-m\|_{L^2(\mu)}\to 0$;
  by the Arzel\`a--Ascoli argument
  (detailed in Step~(iii) below),
  there exists $\delta_0=\delta_0(M,B,\sigma_0)>0$
  such that for all $F$ with $\cE(F)+\lambda_n J_s(F)\le\delta_0^2$,
  $\sup_t d_M(F(t),m(t))<\rho_M$.
  For $\delta>\delta_0$,
  we use the trivial bound
  $\Var(\ell_F)\le b^2/4$.
  By Bernstein's inequality \citep[Corollary~2.10]{BoucheronLugosiMassart2013} for individual $F$:
  \[
    \bP(\bigl|\Psi_n(F)\bigr|>\varepsilon)
    \le 2\exp\biggl(-\frac{n\varepsilon^2/2}{
    \Var(\ell_F)+b\varepsilon/3}\biggr).
  \]
  We now derive the oracle inequality using localized Rademacher
  complexities and peeling by the \emph{penalized} excess risk
  \citep{vandeGeer2000, Koltchinskii2006}.
  Define the penalized excess risk
  \[
    V(G) := \cE(G)+\lambda_n J_s(G).
  \]
  For $\delta>0$, the \emph{localized} function class is
  $\cG_\delta=\{G\in\cH^s(I,M):V(G)\le\delta^2\}$.
  Since $V(G)\le\delta^2$ implies $J_s(G)\le\delta^2/\lambda_n$,
  the covering number bound~\eqref{eq:covering-supp} gives
  \begin{equation}\label{eq:local-entropy}
    \log N\bigl(\epsilon,\cG_\delta,L^2(\mu)\bigr)
    \le\bar K\bigl(D_\delta/\epsilon\bigr)^{1/s},
    \qquad
    D_\delta:=\delta/\sqrt{\lambda_n}+C_0,
  \end{equation}
  where $D_\delta$ is the $L^2$-diameter of $\cG_\delta$.
  The Bernstein condition~\eqref{eq:var-bound} restricts the
  effective variance: for $G\in\cG_\delta$ with $\delta\le\delta_0$,
  $\Var(\ell_G)\le(2b/\eta)\,\cE(G)\le(2b/\eta)\,\delta^2$,
  so the effective standard deviation is
  $\sigma_\delta:=\sqrt{2b/\eta}\,\delta$.
  (For $\delta>\delta_0$, we use the trivial bound
  $\sigma_\delta\le b/2$, which does not affect the
  critical radius since $\delta_n\ll\delta_0$ for large~$n$.)

  The variance-adapted Dudley integral
  \citep[Corollary~8.3]{vandeGeer2000} truncates the entropy
  integral at $\sigma_\delta$ rather than $D_\delta$:
  \begin{multline}\label{eq:localized-chain}
    \phi_n(\delta)
    :=\bE\Bigl[\sup_{G\in\cG_\delta}\abs{\Psi_n(G)}\Bigr]
    \le\frac{C_3}{\sqrt{n}}\int_0^{\sigma_\delta}
    \sqrt{\log N(\epsilon,\cG_\delta,L^2(\mu))}\,d\epsilon\\
    +\frac{C_3\,b}{n}\log N(\sigma_\delta,\cG_\delta,L^2(\mu)).
  \end{multline}
  We evaluate each term.  For the integral, substituting
  \eqref{eq:local-entropy}:
  \[
    \int_0^{\sigma_\delta}
    \sqrt{\bar K\,(D_\delta/\epsilon)^{1/s}}\,d\epsilon
    =\sqrt{\bar K}\,D_\delta^{1/(2s)}
    \int_0^{\sigma_\delta}\epsilon^{-1/(2s)}\,d\epsilon
    =\frac{2s\sqrt{\bar K}}{2s-1}\,
    D_\delta^{1/(2s)}\,\sigma_\delta^{1-1/(2s)}.
  \]
  Since $s\ge 1$, the exponent $1-1/(2s)\ge 1/2>0$ and the integral
  converges.  For $\delta^2\ge\lambda_n C_0^2$,
  we have $D_\delta\le C'\delta/\sqrt{\lambda_n}$ and
  $\sigma_\delta=\sqrt{2b/\eta}\,\delta$, so the first term is
  bounded by
  \begin{equation}\label{eq:dudley-simplified}
    \frac{C_3}{\sqrt{n}}\cdot
    C_4\bigl(\delta/\sqrt{\lambda_n}\bigr)^{1/(2s)}
    \bigl(\sqrt{2b/\eta}\,\delta\bigr)^{1-1/(2s)}
    =\frac{C_5\,\delta}{\sqrt{n}\,\lambda_n^{1/(4s)}},
  \end{equation}
  where $C_5=C_5(s,M,b,\bar K,\eta)$.
  For the second term in~\eqref{eq:localized-chain}:
  \[
    \log N(\sigma_\delta,\cG_\delta,L^2(\mu))
    \le\bar K\bigl(D_\delta/\sigma_\delta\bigr)^{1/s}
    \le\bar K\eta^{1/(2s)}/(2b\lambda_n)^{1/(2s)},
  \]
  giving $C_3 b\log N(\sigma_\delta,\cG_\delta,L^2(\mu))/n
  \le C_6/(n\lambda_n^{1/(2s)})$.
  Combining:
  \begin{equation}\label{eq:phi-bound}
    \phi_n(\delta)
    \le\frac{C_5\,\delta}{\sqrt{n}\,\lambda_n^{1/(4s)}}
    +\frac{C_6}{n\,\lambda_n^{1/(2s)}}
    \qquad
    \text{for }\delta^2\ge\delta_{\min}^2
    :=\lambda_n C_0^2.
  \end{equation}

  The \emph{critical radius} $\delta_n>0$ is defined by the
  fixed-point condition $\phi_n(\delta_n)=\delta_n^2$.
  From~\eqref{eq:phi-bound}, $\delta_n$ satisfies
  $C_5\delta_n/\sqrt{n\lambda_n^{1/(2s)}}
  +C_6/(n\lambda_n^{1/(2s)})\le\delta_n^2$,
  yielding
  \begin{equation}\label{eq:critical-radius}
    \delta_n^2
    \asymp\frac{1}{n\,\lambda_n^{1/(2s)}}.
  \end{equation}
  For the oracle choice $\lambda_n\asymp n^{-2s/(2s+1)}$,
  this gives $\delta_n^2\asymp n^{-2s/(2s+1)}$, the minimax-optimal
  rate for Sobolev smoothness~$s$.

  We now derive the oracle inequality by peeling over the penalized
  excess risk~$V$.
  Fix an arbitrary competitor $F\in\cH^s(I,M)$.
  Since $\Psi_n(m)=0$,
  the basic inequality~\eqref{eq:basic2-supp} yields
  \begin{equation}\label{eq:V-basic}
    V(\hat F_n)
    \le V(F)+\Psi_n(F)-\Psi_n(\hat F_n)
    \le V(F)+\abs{\Psi_n(F)}
    +\sup_{G\in\cG_{\sqrt{V(\hat F_n)}}}\abs{\Psi_n(G)}.
  \end{equation}
  Set $r^2=\max\bigl(\delta_n^2,\,V(F)\bigr)$ and define dyadic shells
  $A_0=\{V(\hat F_n)\le r^2\}$,\;
  $A_k=\{4^{k-1}r^2<V(\hat F_n)\le 4^k r^2\}$ for $k\ge 1$.
  On $A_0$:
  $\bE[V(\hat F_n)\,\bm{1}_{A_0}]\le r^2$.

  On $A_k$ ($k\ge 1$): since $V(\hat F_n)\le 4^k r^2$, we have
  $\hat F_n\in\cG_{2^k r}$, and from~\eqref{eq:V-basic}:
  \[
    4^{k-1}r^2
    <V(F)+\abs{\Psi_n(F)}
    +\sup_{G\in\cG_{2^k r}}\abs{\Psi_n(G)}.
  \]
  By Bernstein's inequality \citep[Corollary~2.10]{BoucheronLugosiMassart2013} for the single function~$F$:
  $\bP(|\Psi_n(F)|>u)\le 2\exp\bigl(-nu^2/(2\Var(\ell_F)+2bu/3)\bigr)$.
  For $k\ge 2$, $V(F)\le r^2\le 4^{k-1}r^2/4$, so the event $A_k$
  requires either $|\Psi_n(F)|>4^{k-2}r^2$ or
  $\sup_{G\in\cG_{2^k r}}|\Psi_n(G)|>4^{k-2}r^2$.

  For the supremum, we apply the
  exponential inequality for empirical processes
  \citep{Bousquet2002}:
  \[
    \bP\Bigl(\sup_{G\in\cG_{2^k r}}\abs{\Psi_n(G)}
    >\phi_n(2^k r)+u\Bigr)
    \le 2\exp\biggl(-\frac{nu^2}{C_7((2b/\eta)\cdot 4^k r^2+bu)}\biggr).
  \]
  From~\eqref{eq:phi-bound}, $\phi_n(2^k r)
  \le C_5\cdot 2^k r/\sqrt{n\lambda_n^{1/(2s)}}
  +C_6/(n\lambda_n^{1/(2s)})$.
  Since $r\ge\delta_n$ and $\phi_n$ is affine in its argument,
  $\phi_n(2^k r)\le 2^k(r/\delta_n)\delta_n^2+\delta_n^2
  \le C_8\cdot 2^k r^2$
  (using $r\ge\delta_n$ and $\delta_n^2\le r^2$).
  For $k\ge k_0$ (a universal constant),
  $4^{k-2}r^2-C_8\cdot 2^k r^2\ge c_0\cdot 4^{k}r^2$
  for a constant $c_0>0$, so the required excess is
  $u\ge c_0\cdot 4^k r^2$.  The exponential bound then gives
  \[
    \bP(A_k)
    \le 4\exp\bigl(-c_1\cdot 4^k\cdot nr^2/b\bigr),
  \]
  where the factor~$4$ accounts for both the single-$F$ and
  supremum tails.  Since $nr^2\ge n\delta_n^2\asymp
  \lambda_n^{-1/(2s)}\ge 1$, this bound is summable.

  Collecting all shells:
  \begin{align*}
    \bE[V(\hat F_n)]
    &=\bE[V(\hat F_n)\,\bm{1}_{A_0}]
    +\sum_{k=1}^\infty\bE[V(\hat F_n)\,\bm{1}_{A_k}]\\
    &\le r^2
    +\sum_{k=1}^\infty 4^k r^2\cdot
    4\exp(-c_1\cdot 4^k nr^2/b)\\
    &\le r^2+C_9 r^2,
  \end{align*}
  where the geometric sum converges because
  $4^k\exp(-c_1\cdot 4^k nr^2/b)$ is summable for
  $nr^2\ge C$.
  Substituting $r^2=\max(\delta_n^2,V(F))$:
  \begin{equation}\label{eq:oracle-V}
    \bE\bigl[\cE(\hat F_n)+\lambda_n J_s(\hat F_n)\bigr]
    \le C_1\bigl(\cE(F)+\lambda_n J_s(F)\bigr)
    +\frac{C_2}{n\,\lambda_n^{1/(2s)}},
  \end{equation}
  where $C_1=1+C_9$ and $C_2=C_2(s,M,c_f,C_f,\sigma_0^2)$.
  Taking the infimum over $F$ yields the oracle inequality.

  A crucial consequence of the \emph{penalized} oracle
  inequality~\eqref{eq:oracle-V} is the roughness bound.
  Setting $F=m$ (with $\cE(m)=0$) and using $\cE(\hat F_n)\ge 0$:
  \begin{equation}\label{eq:Js-bound}
    \lambda_n\bE[J_s(\hat F_n)]
    \le C_1\lambda_n J_s(m)+C_2\delta_n^2,
    \qquad
    \bE[J_s(\hat F_n)]
    \le C_1 J_s(m)+\frac{C_2}{n\lambda_n^{1+1/(2s)}}.
  \end{equation}
  For $\lambda_n\asymp n^{-2s/(2s+1)}$, direct calculation shows
  $n\lambda_n^{1+1/(2s)}=\Theta(1)$,
  so $\bE[J_s(\hat F_n)]=O(1)$.  By Gagliardo--Nirenberg
  interpolation \citep[Theorem~5.2]{Adams2003},
  \[
    J_1(F)\le C\,J_s(F)^{1/s}\|F\|_{L^2}^{2-2/s}
    \le C'\,J_s(F)^{1/s}
  \]
  (since $\|F\|_{L^2}\le\diam(M)$), hence
  $\bE[J_1(\hat F_n)]\le C'\bE[J_s(\hat F_n)]^{1/s}=O(1)$.

  To obtain the estimation bound, apply Lemma~4.3:
  $\cE(\hat F_n)\ge\eta(\kappa_+,r_0)\,
  \bE[d_M^2(\hat F_n(t),m(t))]$
  (valid when $d_M(\hat F_n(t),m(t))<\rho_M$ for
  all~$t$) and divide by~$\eta$.
  For $\Sec_M\le 0$ (NPC targets), $\eta=1$
  unconditionally and no locality condition is needed
  (Theorem~4.4).

  For general $M$ (possibly $\kappa_+>0$), the locality condition
  $\sup_t d_M(\hat F_n(t),m(t))<\rho_M$ is verified by a
  self-improving argument in three stages.
  First, the bound~\eqref{eq:Js-bound} gives
  $\bE[J_1(\hat F_n)]=O(1)$.
  By the Sobolev embedding
  $H^1(I,\bR^D)\hookrightarrow C^{0,1/2}(\bar I,\bR^D)$,
  this provides a \emph{genuinely uniform} H\"older modulus:
  \[
    \abs{\hat F_n(t_1)-\hat F_n(t_2)}_{\bR^D}
    \le C_{10}\sqrt{J_1(\hat F_n)}\,|t_1-t_2|^{1/2},
  \]
  where $C_{10}$ depends only on the Nash embedding dimension.
  Since $J_1(\hat F_n)=O_P(1)$, the H\"older seminorm is $O_P(1)$,
  giving equicontinuity that does \emph{not} deteriorate with~$n$.
  Combined with $\hat F_n(I)\subset M$ and $M$~compact, the family
  $\{\hat F_n\}$ is precompact in $C(\bar I,\bR^D)$ by the
  Arzel\`a--Ascoli theorem.

  Next, the oracle inequality gives
  $\bE[\cE(\hat F_n)]\to 0$.  Let
  $\hat F_{n_k}\to F^*$ uniformly along a subsequence.
  Since $M$ is closed, $F^*(t)\in M$ for all~$t$.
  The population risk $R(F)=\bE[d_M^2(Y,F(t))]$ is
  continuous in the $C^0$~topology, so
  $\cE(F^*)=\lim_{k}\cE(\hat F_{n_k})=0$.
  By~(A3), at each~$t$ the conditional Fr\'echet mean
  $m(t)$ uniquely minimizes
  $y\mapsto\bE[d_M^2(Y,y)\mid t]$ over
  $B_M(m(t),\rho_M)$.  The identity $\cE(F^*)=0$
  forces $F^*(t)=m(t)$ for almost every~$t$; by
  continuity of $F^*$ and $m$, the equality
  $F^*=m$ holds everywhere.

  Finally, since the limit is unique, every subsequence of
  $\{\hat F_n\}$ has a further subsequence converging
  to~$m$ in $C^0$.  Hence
  $\sup_t d_M(\hat F_n(t),m(t))\to 0$ in probability,
  and the locality condition holds for all $n\ge n_1$
  (see Remark~4.6 of the main text).
\end{proof}

\subsection{Proof of Theorem~4.8 (Minimax lower bound)}\label{supp:thm411}
\label{proof:minimax}

\begin{proof}
  The lower bound relies on Assouad's lemma \citep[Lemma~2.12]{Tsybakov2009}, using normal coordinates to lift a Euclidean multiple-hypothesis construction to the manifold $M$.

  We work in normal coordinates throughout.
  Fix $y_0\in M$ and let $\rho=\min(\inj(M)/2,\rho_M)$.  The exponential
  map $\exp_{y_0}\colon B_{\bR^d}(0,\rho)\to B_M(y_0,\rho)$ is a
  diffeomorphism, and in normal coordinates:
  \[
    (1-C_M\epsilon^2)\abs{v_1-v_2}^2
    \le d_M^2(\exp_{y_0}(v_1),\exp_{y_0}(v_2))
    \le(1+C_M\epsilon^2)\abs{v_1-v_2}^2,
  \]
  where $C_M$ depends only on the curvature bound and
  $\epsilon=\max(\abs{v_1},\abs{v_2})$.

  We construct the hypotheses as follows.
  Partition $I$ into $N=\lfloor\delta^{-1}\rfloor$ subintervals
  $Q_1,\ldots,Q_N$ of length $\delta>0$.  Let
  $\psi\in C_c^\infty(\bR)$ with $\supp\psi\subset[-1/2,1/2]$
  and $\|\psi\|_{L^2}=1$.
  For $\omega\in\{0,1\}^N$, define
  \[
    f_\omega(t)=a\sum_{j=1}^N\omega_j\,\psi\bigl((t-c_j)/\delta\bigr)
    \cdot e_1\in\bR^d,
  \]
  with amplitude $a=\tilde B\delta^s/\sqrt{C_\psi}$, where
  $C_\psi:=\|\psi\|_{H^s}^2$. The disjoint supports ensure that
  $\|f_\omega\|_{H^s}^2 = a^2\delta^{1-2s}\sum_j\omega_j C_\psi
  \le a^2\delta^{1-2s}N C_\psi \le \tilde B^2$,
  satisfying the Sobolev constraint.

  We now lift these hypotheses to $M$ and compute the KL divergence.
  Define $m_\omega(t)=\exp_{y_0}(f_\omega(t))$.
  Since $\|f_\omega\|_{L^\infty}\le a\|\psi\|_{L^\infty}$ and
  $a\to 0$ as $n\to\infty$, we may choose $n_0$ large enough that
  $a\|\psi\|_{L^\infty}+r_0<\rho/2$ for all $n\ge n_0$.
  This ensures that:
  \begin{enumerate}[label=(\roman*)]
    \item $m_\omega(I)\subset B_M(y_0,\rho/2)$, making the metric comparison valid; and
    \item $B_M(m_\omega(t),r_0)\subset B_M(y_0,\rho_M)$ for all $t\in I$, satisfying (A3) within a single coordinate chart.
  \end{enumerate}
  The noise model is
  $Y_i=\exp_{m_\omega(t_i)}(\sigma\xi_i)$ where $\xi_i$ is
  Gaussian in~$T_{m_\omega(t_i)}M\cong\bR^d$ truncated to the ball
  of radius~$r_0$.  The density of the truncated Gaussian on
  $B_{\bR^d}(0,r_0)$ is
  $p_{\sigma}(v)=\frac{1}{Z(\sigma,r_0)}
  \exp(-|v|^2/(2\sigma^2))$,
  where $Z(\sigma,r_0)=(2\pi\sigma^2)^{d/2}
  \bP(|\xi|\le r_0/\sigma)$.

  For hypotheses $\omega,\omega'$ differing only in coordinate~$j$,
  the observations $Y_i$ with $t_i\notin Q_j$ have identical distributions
  under $P_\omega$ and $P_{\omega'}$.  For $t_i\in Q_j$, the log-density
  ratio at observation $Y_i$ decomposes as
  \begin{align*}
    \log\frac{dP_{\omega,i}}{dP_{\omega',i}}(Y_i)
    &= \frac{|v_i-v_{\omega'}|^2-|v_i-v_\omega|^2}{2\sigma^2}
    +\log\frac{Z_{\omega'}(t_i)}{Z_\omega(t_i)}
    +\text{(Jacobian ratio)},
  \end{align*}
  where $v_i$ and $v_\omega$ denote the normal-coordinate representations
  and $Z_\omega(t)$ is the normalization constant of the truncated
  Gaussian centered at $m_\omega(t)$.
  Since the truncation balls $B_{\bR^d}(0,r_0)$ and
  $B_{\bR^d}(f_\omega(t),r_0)$ differ by a translation of
  magnitude $|f_\omega(t)-f_{\omega'}(t)|\le a\|\psi\|_{L^\infty}$
  and $a\ll r_0$, the ratio of normalization constants satisfies
  \[
    \biggl|\log\frac{Z_{\omega'}(t)}{Z_\omega(t)}\biggr|
    \le C\,\frac{a^2\|\psi\|_{L^\infty}^2}{r_0^2}
    \,e^{-r_0^2/(2\sigma^2)}.
  \]
  The Jacobian ratio $(\det g(m_{\omega'})/\det g(m_\omega))^{1/2}$
  contributes $1+O(C_M a^2)$ in the normal coordinate chart:
  in normal coordinates at a point $q\in M$, the metric satisfies
  $g_{ij}(v)=\delta_{ij}-\frac{1}{3}R_{ikjl}v^kv^l+O(|v|^3)$
  (see\ \citealt[Ch.~5]{CheegerEbin2008}), so
  $\det g(v)=1+O(\kappa|v|^2)$ where $\kappa=\sup|\Sec_M|$.
  Since $|v|\le a\|\psi\|_{L^\infty}$, we obtain
  $\log(\det g(m_{\omega'})/\det g(m_\omega))^{1/2}
  =O(\kappa a^2\|\psi\|_{L^\infty}^2)$.
  Summing over $i$ with $t_i\in Q_j$ and taking the expectation:
  \begin{equation}\label{eq:kl-supp}
    \mathrm{KL}(P_\omega^n\| P_{\omega'}^n)
    \le \frac{n C_f \delta}{2\sigma^2}\,a^2\|\psi\|_{L^2}^2
    \bigl(1+O(C_M a^2)+O(e^{-r_0^2/(2\sigma^2)})\bigr).
  \end{equation}

  The lower bound now follows from Assouad's lemma.
  We apply the version in
  \citet[Lemma~2.12]{Tsybakov2009}.  In Tsybakov's notation, the
  Hamming-distance-based lower bound is
  \[
    \inf_{\tilde F}\max_\omega\bE_\omega\Bigl[\int_I
    d_M^2(\tilde F,m_\omega)\dd t\Bigr]
    \ge \frac{N}{2}\cdot\min_{\substack{\omega,\omega'\\d_H=1}}
    \frac{d^2(m_\omega,m_{\omega'})}{4}
    \cdot\min_{\substack{\omega,\omega'\\d_H=1}}
    \bigl(1-\|\bP_\omega^{1/2}-\bP_{\omega'}^{1/2}\|_2\bigr),
  \]
  where $d^2(m_\omega,m_{\omega'})
  =\int d_M^2(m_\omega(t),m_{\omega'}(t))\dd t$ is the separation
  and the affinity term is bounded via
  $\|\bP_\omega^{1/2}-\bP_{\omega'}^{1/2}\|_2^2\le
  \mathrm{KL}(P_\omega\|P_{\omega'})$.
  For adjacent $\omega,\omega'$ (differing in one coordinate~$j$),
  $d^2(m_\omega,m_{\omega'})
  =(1+O(C_M a^2))\,a^2\delta\|\psi\|_{L^2}^2$.
  The factor $N/4$ in our bound (rather than $N/8$ from Tsybakov's
  statement) arises because
  $\min d^2/4=a^2\delta\|\psi\|^2/4$ and the summation over
  the $N$ coordinates each contributing the same separation yields
  $N\cdot a^2\delta\|\psi\|^2/4$.
  Combining:
  \[
    \inf_{\tilde F}\sup_{m_\omega}\bE\Bigl[\int_I
    d_M^2(\tilde F,m_\omega)\dd t\Bigr]
    \ge\frac{N}{4}\cdot a^2\delta\norm{\psi}_{L^2}^2
    \cdot\Bigl(1-\sqrt{\tfrac{1}{2}\mathrm{KL}_{\max}}\Bigr).
  \]
  Choose $\delta\asymp n^{-1/(2s+1)}$ so that $\mathrm{KL}_{\max}$
  in~\eqref{eq:kl-supp} remains bounded by a constant $c<2$
  (achievable by choosing $\tilde B$ sufficiently small).
  Then $N\asymp n^{1/(2s+1)}$ and $a\asymp n^{-s/(2s+1)}$, giving the
  lower bound $c\,n^{-2s/(2s+1)}$.
\end{proof}

\subsection{Proof of Corollary~4.7 (High-probability oracle inequality)}\label{supp:cor49}
\label{proof:hp-oracle}

\begin{proof}
  The argument is the concentration analogue of the shell decomposition used in
  the proof of Theorem~4.5.  The important point is that the loss class has a
  uniformly bounded envelope, independent of the Sobolev-ball radius, because
  the target manifold~$M$ is compact.

  We first establish concentration on a fixed Sobolev shell.
  For $E_0>0$, let
  \[
    \cG_{E_0}
    =\{\ell_F-\ell_m:F\in\cF_{E_0}\},
    \qquad
    \ell_F(t,y)=d_M^2(y,F(t)).
  \]
  Every $g\in\cG_{E_0}$ satisfies the envelope bound
  $\|g\|_\infty\le b:=2\diam(M)^2$, and the variance bound established in the
  proof of Theorem~4.5 gives
  \[
    \Var(g)\le b\,(\cE(F)+2\sigma_0^2).
  \]
  The covering-number argument in the proof of Theorem~4.5 and
  Dudley's entropy integral \citep[Lemma~19.34]{vanderVaart1998} therefore yield
  \begin{equation}\label{eq:hp-dudley-supp}
    \bE\Bigl[\sup_{g\in\cG_{E_0}}|\Psi_n(g)|\Bigr]
    \le
    C_3\Bigl(
      \frac{(\sqrt{E_0}+C_0)^{1/(2s)}\sqrt{b}}{\sqrt n}
      +\frac{(\sqrt{E_0}+C_0)^{1/s}b}{n}
    \Bigr).
  \end{equation}
  By Bousquet's version of Talagrand's inequality
  \citep[Theorem~12.5]{BoucheronLugosiMassart2013}, for every $u>0$,
  \begin{equation}\label{eq:hp-talagrand-supp}
    \bP\Bigl(
      \sup_{g\in\cG_{E_0}}|\Psi_n(g)|
      \ge
      \bE\bigl[\sup_{g\in\cG_{E_0}}|\Psi_n(g)|\bigr]
      +\sqrt{\frac{2b^2u}{n}}
      +\frac{bu}{3n}
    \Bigr)
    \le e^{-u}.
  \end{equation}

  We now decompose into dyadic shells.
  Fix the competitor $F=m$, so $\cE(m)=0$ and $J_s(m)\le B$.  Let
  \[
    A_0=\{J_s(\hat F_n)\le B\},
    \qquad
    A_k=\{2^{k-1}B<J_s(\hat F_n)\le 2^kB\}
    \quad (k\ge1).
  \]
  By the basic inequality \eqref{eq:basic2-supp},
  \[
    \lambda_n J_s(\hat F_n)
    \le
    \lambda_n B+\sup_{G\in\cF_{J_s(\hat F_n)}}|\Psi_n(G)|,
  \]
  and since $J_s(\hat F_n)\le \diam(M)^2/\lambda_n+B$ almost surely, only the
  shells $k\le k_{\max}:=\lceil\log_2(\diam(M)^2/(\lambda_n B)+1)\rceil$ can
  occur.

  On the event $A_k$, we have $\hat F_n\in\cF_{2^kB}$ and
  \[
    \cE(\hat F_n)
    \le
    \lambda_n B+\sup_{G\in\cF_{2^kB}}|\Psi_n(G)|.
  \]
  Writing
  \[
    \mu_k
    :=
    \bE\Bigl[\sup_{G\in\cF_{2^kB}}|\Psi_n(G)|\Bigr],
  \]
  the bound \eqref{eq:hp-dudley-supp} implies
  $\mu_k\le C_4\,2^{k/(4s)}n^{-1/2}$ for $2^kB\ge1$, after absorbing constants
  into~$C_4$.

  For $k\ge2$, entering shell~$A_k$ forces
  \[
    \lambda_n 2^{k-1}B
    <
    \lambda_n J_s(\hat F_n)
    \le
    \lambda_n B+\sup_{G\in\cF_{2^kB}}|\Psi_n(G)|,
  \]
  hence
  \[
    \sup_{G\in\cF_{2^kB}}|\Psi_n(G)|
    \ge
    \lambda_n 2^{k-2}B.
  \]
  Choose $k^*$ so that for all $k\ge k^*$,
  \[
    \lambda_n 2^{k-2}B\ge 2\mu_k.
  \]
  Since $\lambda_n\asymp n^{-2s/(2s+1)}$ and
  $\mu_k=O(2^{k/(4s)}n^{-1/2})$, such a threshold satisfies
  $2^{k^*}=O(n^{1/(2s+1)})$.  Applying \eqref{eq:hp-talagrand-supp} with
  $E_0=2^kB$ and
  \[
    u_k:=c\,n\lambda_n^2\,4^k
  \]
  then yields
  \begin{equation}\label{eq:hp-shell-tail-supp}
    \bP(A_k)\le \exp(-c'4^k)
    \qquad (k\ge k^*)
  \end{equation}
  for suitable constants $c,c'>0$ depending only on the model parameters.

  To conclude, let $t\ge1$ and set $u=t\,n^{1/(2s+1)}$.  On the union of the dominant shells
  $k\le k^*$, the Talagrand bound \eqref{eq:hp-talagrand-supp} together with
  \eqref{eq:hp-dudley-supp} shows that
  \[
    \sup_{G\in\cF_{2^kB}}|\Psi_n(G)|
    \le
    C\,t\,n^{-2s/(2s+1)}
  \]
  with probability at least $1-\exp(-c\,t\,n^{1/(2s+1)})$, uniformly in
  $k\le k^*$.  Since $\lambda_n B\asymp n^{-2s/(2s+1)}$, the preceding shell
  inequality implies
  \[
    \cE(\hat F_n)\le C'\,t\,n^{-2s/(2s+1)}
  \]
  on this event.

  The contribution of the large shells $k>k^*$ is
  controlled by~\eqref{eq:hp-shell-tail-supp}:
  $\sum_{k>k^*}\exp(-c'4^k)\le C''\exp(-c'''4^{k^*})$,
  and since $4^{k^*}\gtrsim n^{1/(2s+1)}$, this term is
  absorbed into the same stretched-exponential rate.

  Therefore there exist constants $C,c>0$ depending only on
  $s,M,c_f,C_f,\sigma_0^2,B$ such that for every $t\ge1$,
  \[
    \bP\Bigl(\cE(\hat F_n)\ge C\,t\,n^{-2s/(2s+1)}\Bigr)
    \le
    \exp\bigl(-c\,t\,n^{1/(2s+1)}\bigr),
  \]
  which is exactly the oracle inequality.
\end{proof}


\subsection{Curvature-free constant}\label{supp:curvfree}
\label{proof:curvature-free}

\begin{proof}
  Fix $\varepsilon>0$ and let $\delta_n\to 0$ be a sequence such that
  $\bP(\norm{d_M(\hat F_n,m)}_{L^\infty}>\delta_n)\to 0$.  Decompose
  the risk on two events:
  \[
    \Omega_n^{\mathrm{good}}
    =\bigl\{\norm{d_M(\hat F_n,m)}_{L^\infty}\le\delta_n\bigr\},
    \qquad
    \Omega_n^{\mathrm{bad}}
    =(\Omega_n^{\mathrm{good}})^c.
  \]

  \textbf{Good event.}  On $\Omega_n^{\mathrm{good}}$, the estimator
  stays within distance $\delta_n$ of the regression function.
  The excess risk inequality (Lemma~4.3) applied with the
  local curvature factor $\eta(\kappa_+,\delta_n)$ gives
  \[
    \cE(F)\;\ge\;\eta(\kappa_+,\delta_n)\,
    \bE\bigl[d_M^2(F(t),m(t))\bigr]
  \]
  for any $F$ with $d_M(F,m)\le\delta_n$.  Since
  $\eta(\kappa_+,\delta_n)=1-\kappa_+\delta_n^2/3+O(\delta_n^4)\to 1$
  as $\delta_n\to 0$, the estimation bound becomes
  \[
    \bE\biggl[\int d_M^2(\hat F_n,m)\,f\,\mathbf{1}_{\Omega_n^{\mathrm{good}}}
    \biggr]
    \;\le\;\frac{1}{\eta(\kappa_+,\delta_n)}\,
    \biggl(C_1\inf_F\bigl[\cE(F)+\lambda_n J_s(F)\bigr]
    +\frac{C_2}{n\lambda_n^{1/(2s)}}\biggr).
  \]
  For $n$ large enough that $1/\eta(\kappa_+,\delta_n)\le 1+\varepsilon$,
  this is at most $(1+\varepsilon)$ times the curvature-free oracle bound.

  \textbf{Bad event.}  On $\Omega_n^{\mathrm{bad}}$,
  $d_M^2(\hat F_n,m)\le\diam(M)^2$ by compactness, so
  \[
    \bE\biggl[\int d_M^2(\hat F_n,m)\,f\,\mathbf{1}_{\Omega_n^{\mathrm{bad}}}
    \biggr]
    \;\le\;\diam(M)^2\,\bP(\Omega_n^{\mathrm{bad}}).
  \]
  It remains to show that
  $\diam(M)^2\bP(\Omega_n^{\mathrm{bad}})=o(n^{-2s/(2s+1)})$.
  We establish a quantitative $L^\infty$ rate via interpolation.
  The basic inequality gives
  $J_s(\hat F_n)\le\lambda_n^{-1}(\diam(M)^2+\lambda_n B)$, so
  $\norm{\hat F_n}_{H^s}=O_P(\lambda_n^{-1/2})$ (by the same
  Gagliardo--Nirenberg argument as in \Cref{proof:existence}).
  Since $s\ge 1$, the Morrey embedding \citep[Theorem~4.12, Part~II]{Adams2003} gives
  $\norm{\hat F_n-m}_{C^{0,\alpha}}\le C\norm{\hat F_n-m}_{H^s}
  =O_P(\lambda_n^{-1/2})$ with $\alpha=s-1/2$ (since $I\subset\bR$).
  The Gagliardo--Nirenberg interpolation inequality \citep[Theorem~5.2]{Adams2003} between
  $L^2$ and $C^{0,\alpha}$ gives
  \[
    \norm{g}_{L^\infty}
    \;\le\;C_{\mathrm{GN}}\,\norm{g}_{L^2}^{2\alpha/(2\alpha+1)}\,
    \norm{g}_{C^{0,\alpha}}^{1/(2\alpha+1)}
  \]
  for $g\colon I\to\bR^D$ (see\ \citealt[Theorem~5.8]{Adams2003}).
  Applying this with $g=d_M(\hat F_n(\cdot),m(\cdot))$
  (extended via Nash embedding to $\bR^D$) and using
  $\alpha=s-1/2$, so $2\alpha+1=2s$:
  \[
    \norm{d_M(\hat F_n,m)}_{L^\infty}
    \;\le\;C_{\mathrm{GN}}\,
    \norm{d_M(\hat F_n,m)}_{L^2}^{(2s-1)/(2s)}\,
    O_P(\lambda_n^{-1/2})^{1/(2s)}.
  \]
  To bound $\bP(\Omega_n^{\mathrm{bad}})$, we use the
  self-improving argument in the proof of Theorem~4.5:
  the oracle inequality (part~(a), no locality needed) gives
  $\bE[\cE(\hat F_n)]=O(n^{-2s/(2s+1)})$, hence
  $\cE(\hat F_n)\to 0$ in probability.  Combined with the
  equicontinuity from the Sobolev bound and the
  Arzel\`a--Ascoli compactness argument
  (see the proof of Theorem~4.5), this yields
  $\sup_t d_M(\hat F_n(t),m(t))\to 0$ in probability.
  Once the locality condition holds (for $n\ge n_1$),
  Lemma~4.3 gives
  \[
    \cE(\hat F_n)\ge\eta(\kappa_+,r_0)\,
    \bE[d_M^2(\hat F_n,m)],
  \]
  and the high-probability oracle inequality (Corollary~4.7)
  then yields
  \[
    \bP\bigl(\norm{d_M(\hat F_n,m)}_{L^2}^2
    \ge C\,t\,n^{-2s/(2s+1)}\bigr)
    \le\exp(-c\,t\,n^{1/(2s+1)}).
  \]
  Setting $\delta_n=n^{-s/(4(2s+1))}$ and combining the
  interpolation bound with $\lambda_n=c_\lambda n^{-2s/(2s+1)}$:
  \[
    \bP(\norm{d_M(\hat F_n,m)}_{L^\infty}>\delta_n)
    \le\exp(-c'\,n^{1/(2(2s+1))}),
  \]
  which is super-polynomially small.  Hence
  $\diam(M)^2\bP(\Omega_n^{\mathrm{bad}})
  =O(\exp(-c'\,n^{1/(2(2s+1))}))$, which is $o(n^{-A})$
  for every $A>0$, and in particular $o(n^{-2s/(2s+1)})$.

  Combining the two events yields the curvature-free oracle bound. For every $\varepsilon>0$, there exists $n_0$ such that for $n\ge n_0$,
  \[
    \bE\biggl[\int_I d_M^2(\hat F_n,m)\,f\,dt\biggr]
    \;\le\;(1+\varepsilon)\,C\;n^{-\frac{2s}{2s+1}},
  \]
  where $C$ is the Euclidean (curvature-free) oracle constant.
\end{proof}


\section{Proofs for Section 5 (Regularity and pointwise risk)}

\subsection{Proof of Theorem~5.1 (Euler--Lagrange equation)}\label{supp:thm51}
\label{proof:EL}

\begin{proof}
  Since $I$ is compact and $\hat F_n$ is continuous, there exists a uniform tubular neighborhood of~$M$ in which the nearest-point projection $\Pi_M\colon\bR^D\to M$ is smooth.
  Consider a smooth variation $\phi\in C_c^\infty(I,\bR^D)$. For $|t|$ sufficiently small, $\hat F_n+t\phi$ lies uniformly in this tubular neighborhood, allowing us to define the valid target map $F_t=\Pi_M(\hat F_n+t\phi)$.
  The first variation of the Dirichlet energy $J_1(F_t) = \int_I |F_t'|^2$ at
  $t=0$, computed in the distributional sense since $\hat F_n \in H^1(I, M)$, is
  \[
    \frac{d}{dt}\bigg|_{t=0}J_1(F_t)
    \;=\;-2\int_I\ip{\hat F_n''+A(\hat F_n)(\hat F_n',\hat F_n')}
    {\Pi_{T_{\hat F_n}M}\phi}\dd u,
  \]
  where the distributional second derivative $\hat F_n''$ arises from integration by parts, and the normal derivative of $\Pi_M$ evaluates to the second fundamental form $A(\hat F_n)$.
  The first variation of $R_n$ uses the Riemannian gradient of the
  squared distance.  For the intrinsic loss $d_M^2(Y_i,F(t_i))$, the
  first variation formula gives
  $\frac{d}{dt}|_{t=0}d_M^2(Y_i,\gamma(t))
  =-2\ip{\log_{\gamma(0)}(Y_i)}{\dot\gamma(0)}_g$
  for any curve $\gamma$ in~$M$ with $\gamma(0)=\hat F_n(t_i)$.
  Since $\dot F_0(t_i)=\Pi_{T_{\hat F_n(t_i)}M}\phi(t_i)$:
  \[
    \frac{d}{dt}\bigg|_{t=0}R_n(F_t)
    \;=\;-\frac{2}{n}\sum_{i=1}^n
    \ip{\log_{\hat F_n(t_i)}(Y_i)}
    {\Pi_{T_{\hat F_n(t_i)}M}\phi(t_i)}_g.
  \]
  Setting the total first variation to zero gives that for every
  $\phi\in C_c^\infty(I,\bR^D)$,
  \[
    \int_I\ip{\tau(\hat F_n)}{\Pi_{T_{\hat F_n}M}\phi}\dd u
    \;=\;
    -\frac{1}{n\lambda_n}\sum_{i=1}^n
    \ip{\log_{\hat F_n(t_i)}(Y_i)}{\Pi_{T_{\hat F_n(t_i)}M}\phi(t_i)}_g,
  \]
  where $\tau(\hat F_n)=\hat F_n''+A(\hat F_n)(\hat F_n',\hat F_n')$.
  It remains to show that this identity, which holds for projections of
  arbitrary ambient test functions, implies the Euler--Lagrange equation
  for arbitrary \emph{tangential} test functions.
  Let $\varphi\in C_c^\infty(I,\hat F_n^*TM)$ be any smooth tangential
  test function along~$\hat F_n$, i.e.,
  $\varphi(u)\in T_{\hat F_n(u)}M\subset\bR^D$ for every~$u$.
  Since $T_{\hat F_n(u)}M$ is a linear subspace of~$\bR^D$, the vector
  $\varphi(u)$ can be viewed directly as an element of~$\bR^D$.
  Setting $\phi(u):=\varphi(u)\in\bR^D$ gives a smooth ambient test
  function with $\Pi_{T_{\hat F_n(u)}M}\phi(u)=\varphi(u)$ (since
  $\varphi(u)$ is already tangent to~$M$).  Thus the identity holds
  with $\Pi_{T_{\hat F_n}M}\phi$ replaced by the arbitrary
  tangential~$\varphi$, and the fundamental lemma of the calculus of
  variations yields the Euler--Lagrange equation in the distributional
  sense.
\end{proof}


\subsection{Proof of Theorem~5.3 (Geodesic spline regularity)}\label{supp:thm54}
\label{proof:regularity}

\begin{proof}
  On each open interval $(t_{(i)},t_{(i+1)})$, the equation
  reduces to $\tau(\hat F_n)=0$, which is the harmonic map equation.  For a
  map from a one-dimensional domain, this is an ODE whose solutions are
  geodesics in~$M$ (traversed at constant speed); see, e.g.,
  \citet[Chapter~8]{Jost2017}.  In particular they are
  $C^k$ when $M$ is $C^k$ (the geodesic ODE has $C^{k-1}$ Christoffel
  symbol coefficients in normal coordinates, so its solutions are $C^k$;
  if $M$ is real analytic the coefficients are analytic, hence so are the
  solutions).  This gives part~(a).

  Before proving the jump condition~(b), we establish that
  $\hat F_n'\in L^\infty$ independently of part~(c).
  On each open interval $(t_{(i)},t_{(i+1)})$,
  part~(a) shows $\tau(\hat F_n)=0$, so $\hat F_n$
  is a geodesic traversed at constant speed
  $|\hat F_n'|=d_M(\hat F_n(t_{(i)}),\hat F_n(t_{(i+1)}))/\Delta_i$,
  where $\Delta_i=t_{(i+1)}-t_{(i)}$.  Since
  $\lambda_n J_1(\hat F_n)\le\diam(M)^2+\lambda_n B$
  (basic inequality) gives $J_1(\hat F_n)<\infty$, and
  $\hat F_n$ is constant-speed on each interval with
  $|\hat F_n'|\le\diam(M)/\Delta_{\min}<\infty$
  a.s.\ (since $\Delta_{\min}>0$ a.s.\ for finitely many
  design points), we have $\|\hat F_n'\|_{L^\infty}<\infty$.

  For part~(b), integrate the Euler--Lagrange equation
  (Theorem~5.1) over $[t_i-\epsilon,t_i+\epsilon]$ and let
  $\epsilon\to 0$.  On each open subinterval $\hat F_n$ is a
  constant-speed geodesic by part~(a), so
  $A(\hat F_n)(\hat F_n',\hat F_n')$ is bounded uniformly
  by $\|A\|_\infty\|\hat F_n'\|_{L^\infty}^2<\infty$
  (where $\|A\|_\infty$ is finite by compactness of~$M$).
  Therefore
  \[
    \int_{t_i-\epsilon}^{t_i+\epsilon}
    \bigl|A(\hat F_n)(\hat F_n',\hat F_n')\bigr|\,dt
    \;\le\; 2\epsilon\,\|A\|_\infty\|\hat F_n'\|_{L^\infty}^2
    \;\to\; 0
  \]
  as $\epsilon\to 0$.  The delta mass
  $\delta_{t_i}$ contributes
  $-(n\lambda_n)^{-1}\log_{\hat F_n(t_i)}Y_i$,
  and the remaining terms give
  $\hat F_n'(t_i^+)-\hat F_n'(t_i^-)$, yielding the jump
  condition
  $\hat F_n'(t_i^+)-\hat F_n'(t_i^-)
  =-(n\lambda_n)^{-1}\log_{\hat F_n(t_i)}(Y_i)$.

  Part~(c) combines part~(a) with the characterization of
  one-dimensional harmonic maps as geodesics.
  The geodesic on each interval is uniquely determined by its
  endpoints provided
  $d_M(\hat F_n(t_{(i)}),\hat F_n(t_{(i+1)}))<\inj(M)$; this holds
  for sufficiently large~$n$ by the $L^\infty$-consistency of
  $\hat F_n$ and the a.s.\ bound $\Delta_i=O(\log n/n)$.

  Finally, on the boundary intervals $[0,t_{(1)}]$ and
  $[t_{(n)},1]$, the equation $\tau(\hat F_n)=0$ holds with no
  point-source forcing.  The calculus of variations with free boundary
  values yields the natural (Neumann) boundary conditions
  $\hat F_n'(0^+)=0$ and $\hat F_n'(1^-)=0$: the estimator has zero
  velocity at the domain endpoints.  This creates a boundary layer of
  width $O(\sqrt{\lambda_n})$ where the estimator ``levels off'';
  the pointwise risk bound accounts for this via the boundary exclusion
  zone $\delta_n$.
\end{proof}


\subsection{Proof of Theorem~5.4 (Pointwise risk)}\label{supp:thm56}
\label{proof:pointwise}

\begin{proof}[Proof of Part~(a): NPC targets]
  The argument works intrinsically in a parallel transport frame
  along the regression function~$m$, exploiting the non-positive
  curvature in two ways, through the index form of the Dirichlet energy
  and through the Hessian of the squared distance.  The resulting quadratic
  form comparison $Q^M\ge Q^0$ leads, after discretization, to a
  Green matrix bound $\mathbf{G}^M\preceq\mathbf{G}^0$ in
  Loewner order, from which the pointwise risk bound follows.

  We begin by setting up a parallel transport frame along~$m$.
  Choose an orthonormal basis $\{e_1,\ldots,e_d\}$ for
  $T_{m(0)}M$ and extend it to a frame $\{e_a^{(t)}\}_{a=1}^d$
  along $m$ by parallel transport with respect to the
  Levi-Civita connection.  For any vector field
  $V(t)\in T_{m(t)}M$, write $\mathbf{V}(t)\in\bR^d$ for
  its coordinate vector in this frame.  Because the frame is
  parallel, the covariant derivative satisfies
  $\nabla_t V=\dot{\mathbf{V}}(t)\cdot e^{(t)}$, and the
  index form of the Dirichlet energy at~$m$ becomes
  \[
    \delta^2\Dir(m)[V,V]
    =\int_0^1\bigl(|\dot{\mathbf{V}}|^2
    +\mathbf{V}^T\mathbf{K}(t)\mathbf{V}\bigr)\,dt,
  \]
  where $\mathbf{K}_{ab}(t)
  =-\langle R(e_a^{(t)},m'(t))m'(t),e_b^{(t)}\rangle_g$
  is the curvature matrix.  Under $\Sec_M\le 0$,
  the sectional curvature identity
  $\mathbf{u}^T\mathbf{K}(t)\mathbf{u}
  =-K(\mathrm{span}\{U,m'\})|U\wedge m'|^2\ge 0$
  for $U=\sum_a u_a e_a^{(t)}$ gives
  $\mathbf{K}(t)\succeq 0$.

  The data fidelity term requires control of
  $\operatorname{Hess}_p(d_M^2(\cdot,y)/2)$.
  On an NPC manifold, Toponogov comparison
  (\citealt[Section~6.5]{Jost2017}) yields that for any
  $y,p\in M$ with $d_M(y,p)<\operatorname{inj}(M)$,
  \begin{equation}\label{eq:npc-hessian}
    \operatorname{Hess}_p\bigl(d_M^2(\cdot,y)/2\bigr)
    \;\ge\; g_p.
  \end{equation}
  Indeed, if $\gamma$ is the minimizing geodesic from $p$ to $y$
  and $v\in T_pM$ is a unit vector, the variation
  $c(s)=\exp_p(sv)$ satisfies
  $\frac{d^2}{ds^2}\big|_{s=0}\frac{1}{2}d_M^2(y,c(s))
  =\operatorname{Hess}_p(\phi_y)(v,v)$, and the NPC version of
  the Toponogov triangle comparison gives
  $d_M^2(y,c(s))\ge d_M^2(y,p)+s^2
  -2s\,d_M(y,p)\cos\alpha$, where $\alpha$ is the angle
  between $\gamma$ and $c$ at~$p$.  Differentiating twice
  at $s=0$ yields
  $\operatorname{Hess}_p(\phi_y)(v,v)\ge|v|^2$.
  The Jacobi field comparison (Rauch's theorem \citep[Theorem~6.4.1]{Jost2017} for $\Sec\le 0$)
  gives the sharper integral identity
  $\operatorname{Hess}_p(\phi_y)(v,v)
  =|v|^2+\int_0^{d_M(y,p)}
  (|\nabla_t J^\perp|^2-K(\sigma)|J^\perp|^2)\,dt
  \ge|v|^2$.

  The bound~\eqref{eq:npc-hessian} applies whenever
  $d_M(Y_i,m(t_i))<\operatorname{inj}(M)$, which is
  guaranteed by the concentration assumption~(A3) of the
  main text, since
  $d_M(Y_i,m(t_i))\le r_0<\rho_M\le\operatorname{inj}(M)/2$.

  Writing $H_i=\operatorname{Hess}_{m(t_i)}(d_M^2(\cdot,Y_i)/2)$
  for the data-driven Hessian, the second variation of the
  penalized objective
  $\Phi(F)=R_n(F)+\lambda_n J_1(F)$ at~$m$ decomposes as
  $\delta^2\Phi(m)[V,V]=Q^M(V)$, where
  \[
    Q^M(V)=2\lambda_n\!\int_0^1\!\bigl(|\dot{\mathbf{V}}|^2
    +\mathbf{V}^T\mathbf{K}(t)\mathbf{V}\bigr)\,dt
    +\frac{2}{n}\sum_{i=1}^n
    H_i[V(t_i),V(t_i)].
  \]
  The Euclidean counterpart (setting $\mathbf{K}=0$ and
  $H_i=\mathrm{Id}$) is
  \[
    Q^0(V)=2\lambda_n\!\int_0^1\!|\dot{\mathbf{V}}|^2\,dt
    +\frac{2}{n}\sum_{i=1}^n|V(t_i)|^2.
  \]
  Since $\mathbf{K}(t)\succeq 0$ and
  $H_i\succeq\mathrm{Id}$ by~\eqref{eq:npc-hessian},
  \begin{equation}\label{eq:QM-Q0}
    Q^M(V)\;\ge\; Q^0(V)
    \qquad\text{for all }V\in\Gamma(m^*TM).
  \end{equation}

  Before linearizing, we verify that the logarithmic map
  is well-defined at the relevant points.  The linearization requires the logarithmic map
  $\log_{m_i}$ to be well-defined at the estimator nodal values
  $f_i=\hat F_n(t_{(i)})$ and the observations~$Y_{(i)}$.
  Three conditions must hold:
  (a)~$d_M(Y_{(i)},m_i)<\inj(M)$, guaranteed by
  assumption~(A3) (verified on line~\ref{eq:npc-hessian}
  above);
  (b)~$d_M(f_i,m_i)<\inj(M)$, which follows from
  $L^\infty$~consistency
  $\sup_t d_M(\hat F_n(t),m(t))\to 0$ (established in
  the proof of Theorem~4.5, step~(iii)); and
  (c)~$d_M(f_i,f_{i+1})<\inj(M)$, required for
  Theorem~5.3 (Euler--Lagrange equation), which follows from
  $J_1(\hat F_n)=O_P(1)$ and
  $d_M(f_i,f_{i+1})\le\sqrt{\Delta_i\cdot J_1(\hat F_n)}
  =O_P(n^{-1/2})$ (equation~\eqref{eq:Js-bound}
  and the Cauchy--Schwarz bound in Proposition~5.6).
  On a Hadamard manifold (simply connected, $\Sec\le 0$),
  all three conditions hold unconditionally since
  $\exp_p$ is a global diffeomorphism.
  On a compact NPC quotient, conditions (b) and~(c) hold
  for all $n\ge n_3$ with a finite threshold~$n_3$.

  With these conditions in hand, we linearize the discrete optimality system.
  By Proposition~5.6, $\hat F_n$ is a piecewise-geodesic
  interpolant of its nodal values; by Theorem~5.3 (applicable
  once condition~(c) holds), $\hat F_n$ satisfies the
  Euler--Lagrange equation with prescribed derivative jumps.
  Define the nodal displacements
  $v_i=\log_{m_i}(f_i)\in T_{m_i}M$ and the noise vectors
  $\varepsilon_i=\log_{m_i}(Y_{(i)})\in T_{m_i}M$, where
  $m_i=m(t_{(i)})$.  In the parallel frame, these have
  coordinates $\mathbf{v}_i,\boldsymbol\varepsilon_i\in\bR^d$.
  The first-order optimality condition for the discrete objective
  $\Phi_{\mathrm{disc}}(f_1,\ldots,f_n)$, linearized about
  $(m_1,\ldots,m_n)$, gives
  $\mathbf{H}^M\mathbf{v}
  =\mathbf{b}(\boldsymbol\varepsilon)+\mathbf{r}_{\mathrm{nl}}$,
  where $\mathbf{v}=(\mathbf{v}_1,\ldots,\mathbf{v}_n)
  \in\bR^{nd}$,
  $\mathbf{b}(\boldsymbol\varepsilon)
  =(1/n)(\boldsymbol\varepsilon_1,\ldots,
  \boldsymbol\varepsilon_n)$, and
  $\mathbf{r}_{\mathrm{nl}}$ collects the
  nonlinear remainder.

  The linearized Hessian decomposes as
  $\mathbf{H}^M=\mathbf{H}^0+\mathbf{H}^{\mathrm{curv}}$,
  where the flat part is
  $\mathbf{H}^0=(1/n)\mathbf{I}_{nd}
  +\lambda_n\mathbf{L}\otimes\mathbf{I}_d$
  with $\mathbf{L}\in\bR^{n\times n}$ the tridiagonal
  Laplacian
  ($L_{ii}=1/\Delta_{i-1}+1/\Delta_i$,
  $L_{i,i+1}=L_{i+1,i}=-1/\Delta_i$).
  The curvature correction
  $\mathbf{H}^{\mathrm{curv}}$ has two contributions,
  (i)~the Dirichlet energy curvature, with diagonal blocks
  $\lambda_n\int_{t_{(i-1)}}^{t_{(i+1)}}
  \mathbf{K}(t)\psi_i(t)^2\,dt\succeq 0$
  ($\psi_i$ being the piecewise-linear hat function),
  and (ii)~the data Hessian correction
  $(1/n)(H_i-\mathbf{I}_d)\succeq 0$
  by~\eqref{eq:npc-hessian}.  Both are positive
  semi-definite, so
  $\mathbf{H}^{\mathrm{curv}}\succeq 0$ and therefore
  $\mathbf{H}^M\succeq\mathbf{H}^0\succ 0$.

  We now compare the manifold and Euclidean Green matrices.
  Define
  $\mathbf{G}^M=(\mathbf{H}^M)^{-1}$ and
  $\mathbf{G}^0=(\mathbf{H}^0)^{-1}$.
  Since $\mathbf{H}^M\succeq\mathbf{H}^0\succ 0$,
  the standard matrix monotonicity result
  $A\succeq B\succ 0\implies A^{-1}\preceq B^{-1}$
  gives
  \begin{equation}\label{eq:green-compare}
    \mathbf{G}^M\;\preceq\;\mathbf{G}^0.
  \end{equation}
  The Kronecker structure
  $\mathbf{H}^0=(n^{-1}\mathbf{I}_n+\lambda_n\mathbf{L})
  \otimes\mathbf{I}_d$ implies
  $\mathbf{G}^0_{(i,a),(j,b)}=g^0_{ij}\delta_{ab}$,
  where $g^0_{ij}$ is the $(i,j)$ entry of the scalar
  Green matrix
  $(n^{-1}\mathbf{I}_n+\lambda_n\mathbf{L})^{-1}$.
  By the equivalent-kernel theory
  (\citealt{Silverman1984}; \citealt[Chapter~5]{Wahba1990}),
  for an interior node $t_{(j)}\in(\delta_n,1-\delta_n)$,
  \begin{equation}\label{eq:g0-diag}
    g^0_{jj}
    =\frac{1}{2f(t_{(j)})\sqrt{\lambda_n}}
    \bigl(1+O(n^{-1/3})\bigr),
  \end{equation}
  and $g^0_{ij}$ decays exponentially as
  $|g^0_{ij}|\le C\,g^0_{jj}\,
  \exp(-|t_{(i)}-t_{(j)}|/\sqrt{\lambda_n})$.

  Turning to the variance, the linearized nodal displacement is
  $\mathbf{v}^{\mathrm{lin}}
  =\mathbf{G}^M\mathbf{b}(\boldsymbol\varepsilon)$.
  The pointwise variance at an interior node~$j$ is
  \[
    \bE\bigl[|v^{\mathrm{lin}}_j|^2\bigr]
    =\frac{1}{n^2}\sum_{i=1}^n
    \bigl\|[\mathbf{G}^M]_{ji}\bigr\|_F^2\,
    \sigma^2(t_{(i)}),
  \]
  where $[\mathbf{G}^M]_{ji}\in\bR^{d\times d}$ denotes
  the $(j,i)$ block of~$\mathbf{G}^M$ and
  $\sigma^2(t_{(i)})=\operatorname{tr}
  (\operatorname{Cov}(\boldsymbol\varepsilon_i))$.
  To bound this sum we need entry-wise control of
  $\mathbf{G}^M=(\mathbf{H}^M)^{-1}$, which does not
  follow from the Loewner comparison~\eqref{eq:green-compare}
  alone (the L\"owner--Heinz inequality \citep[Theorem~1.5.9]{Bhatia2009}
  $A\succeq B\succ 0\implies A^r\succeq B^r$ holds only for
  $r\in[-1,0]$, so $\mathbf{G}^M\preceq\mathbf{G}^0$ does
  not directly imply $(\mathbf{G}^M)^2\preceq(\mathbf{G}^0)^2$).
  Instead we use the exponential decay of inverses of
  banded positive-definite matrices.
  Since $\mathbf{H}^M$ is block tridiagonal with block
  size~$d$ and satisfies
  $\lambda_{\min}(\mathbf{H}^M)
  \ge\lambda_{\min}(\mathbf{H}^0)=1/n$,
  the Demko--Moss--Smith theorem
  (\citealt[Theorem~2.4]{DemkoMossSmith1984};
  \citealt{BenziGolub1999} for the block extension) gives
  \begin{equation}\label{eq:dms-decay}
    \bigl\|[\mathbf{G}^M]_{ji}\bigr\|_{\mathrm{op}}
    \;\le\; C\,\rho^{|i-j|},
    \qquad
    \rho=\exp\bigl(-c\,\Delta/\!\sqrt{\lambda_n}\bigr),
  \end{equation}
  where $\Delta\asymp 1/n$ is the typical inter-design-point
  spacing and $c>0$ depends on the spectral ratio
  $\lambda_{\min}/\lambda_{\max}$ of~$\mathbf{H}^M$.
  Because $\mathbf{H}^M\succeq\mathbf{H}^0$, the spectral
  gap of~$\mathbf{H}^M$ is at least that of~$\mathbf{H}^0$,
  so the decay rate~$\rho$ for $\mathbf{G}^M$ is at most
  that of the scalar Green function~$g^0_{ij}$.
  Combined with the diagonal bound
  $[\mathbf{G}^M]_{jj}\preceq g^0_{jj}\mathbf{I}_d$
  from~\eqref{eq:green-compare}, we obtain
  $\|[\mathbf{G}^M]_{ji}\|_F^2
  \le d\,(g^0_{jj})^2\,\rho^{2|i-j|}
  \le d\,(g^0_{ji})^2$
  (the first inequality combines the diagonal bound
  $\|[\mathbf{G}^M]_{jj}\|_{\mathrm{op}}\le g^0_{jj}$
  with DMS exponential decay at rate~$\rho$
  for the block-tridiagonal SPD matrix~$\mathbf{H}^M$;
  the second uses $g^0_{ji}=g^0_{jj}\,\rho^{|i-j|}
  (1+O(n^{-1/3}))$).
  Substituting and using the continuity of~$\sigma^2$
  together with the standard discrete convolution
  estimate
  $\sum_i(g^0_{ji})^2/n\asymp g^0_{jj}$
  (\citealt{Silverman1984}; this is Silverman's
  diagonal-row-sum identity for spline hat matrices) and the
  asymptotic~\eqref{eq:g0-diag} gives
  \begin{equation}\label{eq:npc-var}
    \bE\bigl[|v^{\mathrm{lin}}_j|^2\bigr]
    \;\le\;
    \frac{C_2\,\sigma^2(t_{(j)})}
    {n\sqrt{\lambda_n}\,f(t_{(j)})},
  \end{equation}
  with $C_2\le C_2^{\bR^d}$, the Euclidean constant.
  The inequality is strict when $\Sec_M<0$,
  since $\mathbf{H}^{\mathrm{curv}}\succ 0$ in that case.
  For constant curvature $\kappa<0$, the curvature matrix is
  $\mathbf{K}(t)=|\kappa|\,|m'|^2(\mathbf{I}_d
  -\hat m'\hat m'^T)$, so
  $\lambda_{\min}(\mathbf{H}^{\mathrm{curv}})
  \ge c\,|\kappa|\,L_m^2\,\lambda_n$ and the relative
  variance reduction satisfies
  $C_2^M/C_2^{\bR^d}\le
  1-c'\,|\kappa|\,L_m^2\,\lambda_n
  +O((\kappa\lambda_n)^2)$.

  For the bias, the penalty pulls the estimator
  away from~$m$.  The linearized first-order optimality
  condition reads
  $\mathbf{H}^M\mathbf{v}
  =(2/n)\boldsymbol\varepsilon
  +\lambda_n\nabla J_1|_m$,
  where $\nabla J_1|_m$ is the (negative) discrete tension
  of~$m$.  Since $\bE[\boldsymbol\varepsilon]=0$,
  the expected displacement
  $\bE[\mathbf{v}^{\mathrm{lin}}_j]
  =[\mathbf{G}^M\cdot\lambda_n\nabla J_1|_m]_j$
  is nonzero and constitutes the smoothing bias.
  In the equivalent-kernel representation, the smoothed
  value at an interior node $t_{(j)}$ is
  $[\mathcal{S}_\lambda m](t_{(j)})
  =\sum_i K_\lambda^M(t_{(j)},t_{(i)})\,m(t_{(i)})/n$,
  where the kernel $K_\lambda^M$ is built from the
  rows of~$\mathbf{G}^M$.  The bias is therefore
  \begin{align*}
    \bigl|\bE[\mathbf{v}^{\mathrm{lin}}_j]\bigr|
    &=\Bigl|\sum_i K_\lambda^M(t_{(j)},t_{(i)})\,
    \bigl[m(t_{(i)})-m(t_{(j)})\bigr]/n\Bigr|\\
    &\le L_m\sum_i\bigl|K_\lambda^M(t_{(j)},t_{(i)})\bigr|\,
    |t_{(i)}-t_{(j)}|/n,
  \end{align*}
  using $d_M(m(t_{(i)}),m(t_{(j)}))\le L_m|t_{(i)}-t_{(j)}|$
  and the normalization
  $\sum_i K_\lambda^M(t_{(j)},t_{(i)})/n=1+O(n^{-1})$.
  The kernel decays exponentially with scale
  $\sqrt{\lambda_n}$, so the standard first-moment estimate
  $\sum_i|K_\lambda^M(t_{(j)},t_{(i)})|\,
  |t_{(i)}-t_{(j)}|/n=O(\sqrt{\lambda_n})$
  (\citealt{Silverman1984}) gives
  \begin{equation}\label{eq:npc-bias}
    \bigl|\bE[\mathbf{v}^{\mathrm{lin}}_j]\bigr|^2
    \;\le\; C_1\,\lambda_n\,L_m^2,
  \end{equation}
  with $C_1\le C_1^{\bR^d}$, since the Loewner
  bound~\eqref{eq:green-compare} ensures the manifold
  kernel decays at least as fast as the Euclidean one.

  It remains to control the nonlinear remainder.
  The preceding analysis is valid for the linearized system.
  The nonlinear remainder
  $\mathbf{r}=\mathbf{v}^{\mathrm{true}}
  -\mathbf{v}^{\mathrm{lin}}$ is controlled using
  geodesic convexity.  On NPC targets, the discrete
  objective $\Phi_{\mathrm{disc}}$ is geodesically
  convex on the product manifold $M^n$
  \citep{Zhang2016riemannian}.
  The cubic bound on~$\mathbf{r}$ follows from
  the Taylor expansion of the geodesic exponential map
  to third order,
  $\exp_p(v)=p+v-\tfrac{1}{2}\Gamma(v,v)+O(|v|^3)$,
  where the cubic remainder involves the derivative of
  the Christoffel symbols.  On a compact $C^\infty$
  manifold (which $(M,g)$ is by assumption),
  the implied constant is uniformly bounded.
  The geodesic strong
  convexity modulus, inherited from $Q^M\succeq Q^0$,
  then gives
  $|\mathbf{r}|^2
  \le C\,\max_i|v_i|\cdot|\mathbf{v}^{\mathrm{lin}}|^2
  /(n^{-1}+\lambda_n/\Delta_{\min})$.
  Since the $L^\infty$-consistency of $\hat F_n$
  (Corollary~4.7 of the main text) ensures
  $\max_i|v_i|=o_P(1)$, we have
  $|\mathbf{r}|=o_P(|\mathbf{v}^{\mathrm{lin}}|)$,
  so the nonlinear remainder is negligible at the leading
  order.

  Combining the pieces, for an arbitrary interior point
  $t_0\in(\delta_n,1-\delta_n)$, the geodesic spline
  interpolation (Theorem~5.3(c)) gives
  $\hat F_n(t_0)=\gamma_{f_j\to f_{j+1}}(s)$, where
  $t_0=(1-s)t_{(j)}+s\,t_{(j+1)}$.  Linearizing the
  geodesic interpolation in the parallel frame:
  \[
    \log_{m(t_0)}\hat F_n(t_0)
    =(1-s)\,\Pi_{j\to t_0}(v_j)
    +s\,\Pi_{j+1\to t_0}(v_{j+1})
    +O(|v|^2),
  \]
  where $\Pi$ denotes parallel transport along~$m$ (which
  is the identity in the parallel frame).  Combining the
  variance bound~\eqref{eq:npc-var}, bias
  bound~\eqref{eq:npc-bias}, and the negligible nonlinear
  remainder:
  \[
    \bE\bigl[d_M^2(\hat F_n(t_0),m(t_0))\bigr]
    \;\le\;
    C_1\,\lambda_n\,L_m^2
    +\frac{C_2\,\sigma^2(t_0)}
    {n\sqrt{\lambda_n}\,f(t_0)}
    +o(n^{-2/3}).
  \]
  No factor $1/\eta(\kappa_+,r_0)$ appears, since the
  argument works directly at the estimation distance level
  via the Green matrix comparison, without passing through
  the excess risk.  The constants $C_1,C_2$ are bounded
  by their Euclidean counterparts because
  $\mathbf{G}^M\preceq\mathbf{G}^0$ throughout.
\end{proof}

\begin{proof}[Proof of Part~(b): general targets]
  The proof proceeds perturbatively. We first compare the Riemannian and
  Euclidean objectives in normal coordinates, then analyze the Euclidean
  first-order smoothing spline pointwise via its equivalent kernel, and
  finally control the curvature-induced remainder.  Hypotheses (P1)--(P3)
  justify the transitions between these stages.

  By Theorem~5.3, $\hat F_n$ is a geodesic spline.  Fix an
  interior point $t_0\in(\delta_n,1-\delta_n)$ and work in normal coordinates
  $v=\log_{m(t_0)}\colon B_M(m(t_0),\rho_M)\to T_{m(t_0)}M\cong\bR^d$.  Define
  $\tilde F_n(t)=\log_{m(t_0)}(\hat F_n(t))$,
  $\tilde m(t)=\log_{m(t_0)}(m(t))$,
  $\tilde Y_i=\log_{m(t_i)}(Y_i)\in T_{m(t_i)}M$ (transported to
  $T_{m(t_0)}M$ by parallel transport along the minimizing geodesic from
  $m(t_i)$ to $m(t_0)$; the transport error is $O(\kappa L_m\abs{t_i-t_0})$).

  In these coordinates, the distance satisfies
  $d_M^2(y_1,y_2)=\abs{v_1-v_2}^2(1+O(\kappa\abs{v_1-v_2}^2))$ for
  $\abs{v_j}\le\rho_M$, and the Dirichlet energy satisfies
  $\Dir(F)=\frac{1}{2}\int\abs{\tilde F'}^2(1+O(\kappa\abs{\tilde F}^2))\dd t$.
  The Riemannian objective $\Phi(F)=R_n(F)+\lambda_n J_1(F)$ therefore admits
  the expansion
  \[
    \Phi(F)=\Phi_0(\tilde F)+\Delta\Phi(\tilde F),
  \]
  where $\Phi_0(\tilde F)=\frac{1}{n}\sum\abs{\tilde Y_i-\tilde F(t_i)}^2
  +\lambda_n\int\abs{\tilde F'}^2\dd t$ is the Euclidean first-order
  smoothing spline objective, and the perturbation satisfies
  $\abs{\Delta\Phi(\tilde F)}\le C\kappa\bigl(\diam(M)^2 R_n(F)
  +\norm{\tilde F}_{L^\infty}^2 J_1(F)\bigr)$.

  Let $\tilde F_0=\argmin\Phi_0$ be the Euclidean smoothing spline.  Its
  solution admits the classical equivalent kernel representation
  \citep{Silverman1984}:
  $\tilde F_0(t_0)=\sum_{i=1}^n w_i(t_0)\tilde Y_i+\mathrm{(bias)}$, with
  weights
  \[
    w_i(t_0)=\frac{K_\lambda(t_0,t_i)}{\sum_j K_\lambda(t_0,t_j)},
    \qquad
    K_\lambda(t_0,t')=\frac{1}{2\sqrt{\lambda_n}}
    \exp\!\Bigl(-\frac{\abs{t_0-t'}}{\sqrt{\lambda_n}}\Bigr),
  \]
  where $K_\lambda$ is the Green's function of
  $-\lambda_n\partial_{tt}+\Id$ on~$\bR$, with bandwidth
  $h_{\mathrm{eff}}=\sqrt{\lambda_n}$. This fixed-design representation
  holds conditionally on the design $t_1, \ldots, t_n$.

  The first-order smoothing spline bias arises from the
  equivalent kernel $K_\lambda$ smoothing the true function $\tilde m$.
  Because the kernel $K_\lambda(t_0, \cdot)$ concentrates on an interval of
  effective bandwidth $\sqrt{\lambda_n}$ and $\tilde m$ is $L_m$-Lipschitz,
  the bias is bounded directly by the kernel representation:
  $\abs{\bE[\tilde F_0(t_0)\mid t]-\tilde m(t_0)}
  \le \int K_\lambda(t_0,u)\abs{u-t_0} L_m \dd u
  = O(\sqrt{\lambda_n} L_m)$.

  For the variance, we require \emph{uniform}
  concentration of the denominator
  $D_n(t_0):=\frac{1}{n}\sum_j K_\lambda(t_0, t_j)$ around its expectation
  $\bE[D_n(t_0)]=\int K_\lambda(t_0,u)f(u)\dd u=f(t_0)+O(\sqrt{\lambda_n})$.
  Since $K_\lambda(t_0,\cdot)$ has effective support of width
  $O(\sqrt{\lambda_n})$, covering $[0,1]$ by
  $N=O(1/\sqrt{\lambda_n})$ intervals and applying
  Hoeffding's inequality \citep[Theorem~2.8]{BoucheronLugosiMassart2013} (with
  $\|K_\lambda\|_\infty\le 1/(2\sqrt{\lambda_n})$)
  at each center, together with a union bound,
  gives $\sup_{t_0}|D_n(t_0)-\bE[D_n(t_0)]|\le\varepsilon$ with
  probability at least
  $1-2N\exp(-2n\varepsilon^2\sqrt{\lambda_n})$.
  Choosing $\varepsilon=c_f/4$ and using
  $\lambda_n\asymp n^{-2/3}$ (for $s=1$), the
  failure probability is
  $O(n^{1/3}\exp(-cn^{2/3}))\to 0$.
  Conditioning on this high-probability event,
  $D_n(t_0)\ge c_f/2>0$ uniformly, and the weights satisfy
  $\sum w_i(t_0)^2\asymp 1/(n\sqrt{\lambda_n}f(t_0))$
  for interior~$t_0$ (the exponential decay of $K_\lambda$ ensures this; see
  \citealt{Wahba1990}, Chapter~5), and the conditional variance satisfies
  $\operatorname{Var}(\tilde F_0(t_0)\mid t)\le
  C\sigma^2(t_0)/(n\sqrt{\lambda_n}f(t_0))$.
  Integration over the design distribution introduces at most a $1+o(1)$
  factor to the expected risk.

  Combining: $\bE[\abs{\tilde F_0(t_0)}^2]\le
  C_1\lambda_n L_m^2+C_2\sigma^2(t_0)/(n\sqrt{\lambda_n}f(t_0))$.

  It remains to control the curvature-induced perturbation.
  Conditioning on the high-probability event that the maximum design gap
  satisfies $\max_i \Delta_i \le C \frac{\log n}{n}$ (which holds for
  strictly positive density $f$), the discrete empirical semi-norm
  $\|\cdot\|_n$ defined below is strongly equivalent to the continuous
  $H^1$ norm, allowing us to uniformly bound the Sobolev constant $C_S$.
  Write $\tilde F_n=\tilde F_0+\Delta\tilde F$, where $\Delta\tilde F$
  is the difference induced by the curvature perturbation
  $\Delta\Phi$.  Since $\hat F_n$ minimizes
  $\Phi=\Phi_0+\Delta\Phi$ and $\tilde F_0$ minimizes
  $\Phi_0$, we bound $\Delta\tilde F$ using the strong
  convexity of $\Phi_0$.

  The Euclidean first-order smoothing spline objective is
  $\Phi_0(h)=\frac{1}{n}\sum_i|h(t_i)-\tilde Y_i|^2
  +\lambda_n\int|h'|^2$, which is (as a functional on
  $H^1([0,1],\bR^d)$) strongly convex in the semi-norm
  $\|h\|_n^2:=\frac{1}{n}\sum_i|h(t_i)|^2
  +\lambda_n\int|h'|^2$ with modulus $c_{\Phi}=1$.
  Precisely, for any $h_1,h_2\in H^1$,
  $\Phi_0(h_2)\ge\Phi_0(h_1)
  +\langle\nabla\Phi_0(h_1),h_2-h_1\rangle
  +\|h_2-h_1\|_n^2$.

  We now establish that $\Delta\Phi$ admits a bounded first variation
  in the RKHS norm $\|\cdot\|_n$.
  The curvature perturbation satisfies
  $|\Delta\Phi(h)|\le C_\Delta\kappa
  (\|h\|_{L^\infty}^2\lambda_n\int|h'|^2
  +\diam(M)^2 R_n(h))$
  (from the expansion of $\Phi$ above).  Its Fr\'echet derivative at
  $\tilde F_0$ in direction $v\in H^1$ satisfies
  \[
    \abs{\langle\nabla(\Delta\Phi)(\tilde F_0),v\rangle}
    \;\le\;C_\Delta\kappa
    \bigl(2\|\tilde F_0\|_{L^\infty}\lambda_n
    \|\tilde F_0'\|_{L^2}\|v'\|_{L^2}
    +2\diam(M)^2\|v\|_{L^\infty}/n\bigr).
  \]
  By the one-dimensional Sobolev embedding,
  \[
    \|v\|_{L^\infty}\le C_S\|v\|_{H^1}
    \le C_S\bigl(\|v\|_n/\sqrt{\min(1,\lambda_n)}
    +\text{const}\bigr),
  \]
  so $|\langle\nabla(\Delta\Phi)(\tilde F_0),
  v\rangle|\le A\|v\|_n$ with
  \[
    A=C'\kappa\bigl(\|\tilde F_0\|_{L^\infty}
    \sqrt{\lambda_n}\,\|\tilde F_0'\|_{L^2}
    +\diam(M)^2/n\bigr).
  \]

  Since $\tilde F_n$ minimizes $\Phi=\Phi_0+\Delta\Phi$,
  the first-order optimality condition for $\Phi$ at
  $\tilde F_n$ and the strong convexity of $\Phi_0$ give
  \begin{align*}
    \|\Delta\tilde F\|_n^2
    &\le\Phi_0(\tilde F_n)-\Phi_0(\tilde F_0)
    -\langle\nabla\Phi_0(\tilde F_0),
    \Delta\tilde F\rangle\\
    &=[\Phi(\tilde F_n)-\Delta\Phi(\tilde F_n)]
    -[\Phi(\tilde F_0)-\Delta\Phi(\tilde F_0)]
    -\langle\nabla\Phi_0(\tilde F_0),
    \Delta\tilde F\rangle.
  \end{align*}
  Since $\Phi(\tilde F_n)\le\Phi(\tilde F_0)$
  (optimality) and $\nabla\Phi_0(\tilde F_0)=0$
  (Euclidean optimality), we get
  $\|\Delta\tilde F\|_n^2
  \le\Delta\Phi(\tilde F_0)-\Delta\Phi(\tilde F_n)$.
  By the mean-value inequality,
  \begin{align*}
    &|\Delta\Phi(\tilde F_0)-\Delta\Phi(\tilde F_n)|\\
    &\quad\le\sup_{\theta\in[0,1]}
    |\langle\nabla(\Delta\Phi)
    (\tilde F_0+\theta\,\Delta\tilde F),
    \Delta\tilde F\rangle|
    \le A'\|\Delta\tilde F\|_n
  \end{align*}
  where, using
  $\|\tilde F_0+\theta\Delta\tilde F\|_{L^\infty}
  \le\diam(M)$ by compactness of~$M$,
  \[
    A'=C''\kappa\bigl(\diam(M)\sqrt{\lambda_n}\,
    \|\tilde F_0'\|_{L^2}+\diam(M)^2/n\bigr).
  \]
  Dividing by $\|\Delta\tilde F\|_n$ gives
  $\|\Delta\tilde F\|_n\le A'$.

  Rather than converting $\|\Delta\tilde F\|_n$ to a
  global $L^\infty$ bound (which would lose a factor of
  $\lambda_n^{-1}$), we exploit the reproducing kernel
  structure of the smoothing spline to obtain a sharp
  \emph{pointwise} bound at the fixed interior point~$t_0$.

  The semi-norm $\|\cdot\|_n$ is the RKHS norm associated
  with the smoothing spline inner product
  $\langle f,g\rangle_n=\frac{1}{n}\sum_i f(t_i)g(t_i)
  +\lambda_n\langle f',g'\rangle_{L^2}$.
  The perturbation $\Delta\tilde F$ is the Riesz representer
  of $-\frac{1}{2}\nabla(\Delta\Phi)(\tilde F_0)$ in this
  RKHS (by the linearized Euler-Lagrange equation).
  The reproducing kernel $K_n(t_0,u)$ of the space
  $\bigl(H^1([0,1]),\langle\cdot,\cdot\rangle_n\bigr)$
  satisfies the pointwise bound
  (see \citealt[Chapter~5]{Wahba1990} and
  \citealt{Silverman1984}):
  for interior $t_0\in(\delta_n,1-\delta_n)$,
  \[
    K_n(t_0,t_0)\;\asymp\;
    \frac{1}{n\sqrt{\lambda_n}\,f(t_0)}.
  \]
  By the Cauchy--Schwarz inequality in the RKHS,
  \[
    |\Delta\tilde F(t_0)|^2
    =|\langle\Delta\tilde F,K_n(t_0,\cdot)\rangle_n|^2
    \le\|\Delta\tilde F\|_n^2\cdot K_n(t_0,t_0)
    \le(A')^2\cdot K_n(t_0,t_0).
  \]
  Substituting and using
  $(A')^2\le C\kappa^2
  (\diam(M)^2\lambda_n\|\tilde F_0'\|_{L^2}^2
  +\diam(M)^4/n^2)$:
  \[
    |\Delta\tilde F(t_0)|^2
    \;\le\;
    \frac{C\kappa^2}{n\sqrt{\lambda_n}\,f(t_0)}
    \bigl(\diam(M)^2\lambda_n\|\tilde F_0'\|_{L^2}^2
    +\diam(M)^4/n^2\bigr).
  \]
  Taking expectations and using
  $\bE[\lambda_n\|\tilde F_0'\|_{L^2}^2]\le B$
  (by the oracle inequality evaluated at $F=m$):
  \begin{align*}
    \bE[|\Delta\tilde F(t_0)|^2]
    &\;\le\;\frac{C\kappa^2 B}{n\sqrt{\lambda_n}\,f(t_0)}\\
    &\;=\;O\!\biggl(\frac{\kappa^2}{f(t_0)}\;n^{-2/3}\biggr)
  \end{align*}
  at the optimal rate $\lambda_n\asymp n^{-2/3}$.
  This is of the same order as the variance term
  $\tfrac{C_2\sigma^2(t_0)}{n\sqrt{\lambda_n}\,f(t_0)}$
  in the Euclidean analysis above, and is therefore
  absorbed into $C_1,C_2$ in the pointwise bound.

  Finally, the translation from normal-coordinate error to
  geodesic distance uses
  \[
    d_M^2(\hat F_n(t_0),m(t_0))
    =\abs{\tilde F_n(t_0)}^2
    \bigl(1+O(\kappa\abs{\tilde F_n(t_0)}^2)\bigr).
  \]
  Since $\abs{\tilde F_n(t_0)}^2=O_P(n^{-2/3})$, the
  multiplicative correction is $1+O(\kappa n^{-2/3})$.
  The uniform lower bound
  $\cE(F)\ge\eta(\kappa_+,r_0)\,\bE[d_M^2(F,m)]$
  (Lemma~4.3) introduces the factor $1/\eta(\kappa_+,r_0)$
  when converting excess risk to estimation distance.
  The boundary exclusion
  $\delta_n=C_0\sqrt{\lambda_n}\log n$ ensures that
  the exponential kernel concentrates away from
  $\partial I$.
\end{proof}


\subsection{Regularity for higher-dimensional covariate domains (supplementary)}\label{supp:highdim}
\label{proof:ks-regularity}

\begin{proposition}[Regularity for $p\ge 2$]\label{prop:highdim-reg}
  Let $\cX\subset\bR^p$ be a bounded Lipschitz domain with $p\ge 2$,
  and let $\hat F_n$ be a minimizer of the kernel-smoothed objective
  $R_n^h(F)+\lambda_n\Dir(F)$ over $H^1(\cX,M)$.  Then $\hat F_n$ is
  smooth on~$\cX$.
\end{proposition}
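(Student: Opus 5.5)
The plan is to treat $\hat F_n$ as a minimizing harmonic map perturbed by a zeroth-order potential, and to import the regularity theory of energy-minimizing harmonic maps. Write the objective as
\[
  \int_\cX \Bigl(\lambda_n\tfrac12\abs{\nabla F}^2 + G\bigl(x,F(x)\bigr)\Bigr)\dd x,
  \qquad
  G(x,y)=\frac1n\sum_{i=1}^n K_h(x-t_i)\,d_M^2(Y_i,y).
\]
I will assume $K$ is smooth. Then $G$ is smooth in $x$, bounded, and uniformly Lipschitz in $y$ with constant at most $2\diam(M)\sup K_h$. It is jointly smooth away from the finitely many cut loci $\mathrm{Cut}(Y_i)$.

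\textbf{Step 1: Euler--Lagrange equation.} Use the projection variation $\Pi_M(\hat F_n+s\phi)$ exactly as in the proof of \Cref{thm:EL}. This gives the weak equation
\[
  -\lambda_n\Delta\hat F_n=\lambda_n A(\hat F_n)(\nabla\hat F_n,\nabla\hat F_n)+\Pi_{T_{\hat F_n}M}\nabla_yG(x,\hat F_n),
\]
whose last term lies in $L^\infty$.

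\textbf{Step 2: continuity and $C^{1,\alpha}$.}
\begin{itemize}
\item For $p=2$, the equation has the Hélein/Rivière antisymmetric structure plus an $L^\infty$ right-hand side. Rivière's theorem therefore gives continuity of $\hat F_n$.
\item For $p\ge 3$, I use minimality rather than the equation. Comparing $\hat F_n$ with competitors on $B_r(x_0)$ and absorbing the potential into a term of order $O(r^p)$ yields an almost-monotonicity formula: $e^{Cr}r^{2-p}\int_{B_r}\abs{\nabla\hat F_n}^2$ is nondecreasing in $r$. With this, the Schoen--Uhlenbeck $\varepsilon$-regularity theorem applies verbatim to quasi-minimizers with bounded lower-order terms.
\end{itemize}
In either case, once $\hat F_n$ is continuous (or has small scaled energy) near a point, the standard hole-filling and Campanato iteration gives $C^{1,\alpha}$ there.

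\textbf{Step 3: bootstrap.} On the $C^{1,\alpha}$ set, the right-hand side of the equation in Step 1 is $C^{0,\alpha}$ wherever $\hat F_n(x)\notin\bigcup_i\mathrm{Cut}(Y_i)$. Schauder estimates then give $C^{2,\alpha}$, and iterating gives $C^\infty$. To control cut loci I will use the locality argument of \Cref{rem:locality}, in the kernel-smoothed version of the $L^\infty$-consistency of \Cref{cor:hp-oracle}. Together with (\ref{ass:concentration}), this keeps $d_M(Y_i,\hat F_n(x))<\inj(M)$ whenever $K_h(x-t_i)\neq 0$. Hence $G$ is smooth on the relevant region, and terms with $K_h(x-t_i)=0$ contribute nothing.

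\textbf{Main obstacle.} The hardest step is removing the possible singular set when $p\ge 3$. Schoen--Uhlenbeck alone only yields a singular set of Hausdorff dimension at most $p-3$. My approach is to blow up at a putative singular point. Because $G$ scales away (it contributes $O(r^2)$ after rescaling), the tangent maps are homogeneous minimizing harmonic maps $\bR^p\to M$. So I must rule out nonconstant minimizing tangent maps.

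I would first show that the image of $\hat F_n$ is small. The Dirichlet energy of the minimizer is bounded by comparison with a constant map, and locality places $\hat F_n(\cX)$ inside a geodesic ball $B_M(m(x),\rho_M)$ of the convexity radius. On such a ball $d_M^2(\cdot,q)$ is strictly convex, so the Hildebrandt--Kaul--Widman theorem excludes nonconstant harmonic maps with image in the ball. This kills every tangent map, so the singular set is empty. The dimension $p=2$ needs no such argument.

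If the small-image step cannot be secured for every $n$, this is precisely where the proof would have to be weakened. The smoothness claim would then be stated for $n$ beyond the locality threshold.
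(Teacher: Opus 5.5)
Your $p=2$ branch (continuity via Rivi\`ere or Morrey, then $C^{1,\alpha}$, then a Schauder bootstrap) is essentially the paper's argument. The paper applies H\'elein's theorem off $\bigcup_i B(X_i,h)$ and bootstraps on the kernel supports; its proof treats only $p=2$, plus a remark on $s\ge2$. Your $p\ge3$ branch breaks exactly at the step you flagged.

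The small-image argument needs an $L^\infty$-locality that is not available for $p\ge2$. \Cref{rem:locality} and \Cref{cor:hp-oracle} obtain $\sup_t d_M(\hat F_n(t),m(t))\to0$ from the one-dimensional embedding $H^s(I)\hookrightarrow C^{0,s-1/2}$ plus Arzel\`a--Ascoli. On $\cX\subset\bR^p$ with $p\ge2$, an $H^1$ bound gives no equicontinuity, as the paper's Discussion itself notes. Excess-risk control only bounds $\int d_M^2(\hat F_n,m)\,f$, which says nothing about the essential range of $\hat F_n$ near a putative singular point. Uniform closeness of $\hat F_n$ to a smooth $m$ is essentially the regularity you are trying to prove, so you cannot quote a ``kernel-smoothed version'' of it. For the same reason, your fallback ``for $n$ beyond the locality threshold'' has no threshold behind it.

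Even granting locality, $d_M(\hat F_n(x),m(x))<\rho_M$ for each $x$ does not put $\hat F_n(\cX)$ into a single convex ball. For the tangent map at $x_0$ you would need a uniform margin below $\rho_M$ together with continuity of $m$ at $x_0$.

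Step~3 uses the same locality to keep $\hat F_n(x)$ off $\mathrm{Cut}(Y_i)$ on $\supp K_h(\cdot-t_i)$. So even for $p=2$ the $C^\infty$ conclusion is unsupported: where $\hat F_n$ crosses a cut locus, $\nabla_y G$ jumps and the bootstrap stalls at $C^{1,\alpha}$. The paper's proof hides the same issue by calling the forcing smooth.

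More fundamentally, for $p\ge3$ the statement is false in this generality, so no argument can remove the singular set without extra hypotheses. Here is a sketch of a counterexample.
\begin{itemize}
\item Take $\cX=B^3$ and $M=S^2$, with design points filling $\{\epsilon<|x|<1\}$ for some $\epsilon<1/2$ and $Y_i=t_i/|t_i|$. Use a smooth kernel, $h\ll\sqrt{\lambda_n}$, and $\lambda_n$ small.
\item A map continuous on $\bar B_r$ has degree $0$ on $\partial B_r$. A degree-zero $H^1$ map $S^2\to S^2$ that is $L^2$-close to the identity has energy at least about $8\pi$, because a degree $-1$ bubble must form.
\item Hence any continuous competitor either pays an $O(1)$ fidelity cost or has $\Dir\gtrsim 8\pi(1-\epsilon)$. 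By contrast, $x/|x|$ has $\Dir=4\pi$ and fidelity cost $O(h^2/\epsilon^2)$.
\item The minimizer, which exists, therefore cannot be continuous. This is the Brezis--Coron--Lieb phenomenon, and its tangent maps at the singularity are nonconstant minimizing tangent maps, precisely what your argument was meant to exclude.
\end{itemize}

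What your Steps~1--2 actually deliver for $p\ge3$ is Schoen--Uhlenbeck partial regularity: given a smooth kernel and cut-locus avoidance, $\hat F_n$ is smooth off a closed set of Hausdorff dimension at most $p-3$. Full smoothness needs an added hypothesis, for example that near each point the essential range of $\hat F_n$ lies in a convex ball of radius below $\rho_M$. Deriving that from the statistical assumptions would require a genuinely new $L^\infty$ estimate, such as a maximum principle, not the paper's one-dimensional locality.
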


\begin{proof}
  For $s=1$ and $p=2$, the kernel-smoothed estimator satisfies
  $\lambda_n\tau(\hat F_n)=g$ with the smooth forcing $g$ supported on
  $\bigcup_i B(X_i,h)$.  On $\cX\setminus\bigcup_i B(X_i,h)$ the
  equation is homogeneous and H\'elein's theorem \citep[Theorem~4.1.1]{Helein2002}
  applies; on each $B(X_i,h)$ the right-hand side is $C^\infty$ and
  elliptic regularity bootstrapping \citep[Chapter~10]{Jost2017} gives interior
  $C^\infty$ regularity; smoothness across the interfaces follows from
  the global smoothness of~$g$.  For $s\ge 2$, the equation is of
  order~$2s$ with smooth coefficients (determined by the metric of~$M$
  via the embedding), and standard Schauder theory \citep[Theorem~A.2.3]{Jost2017} gives $C^\infty$
  regularity.
\end{proof}


\subsection{Pointwise risk for surface-valued regression (supplementary)}\label{supp:surface}
\label{proof:pointwise-p2}

\begin{proposition}[Pointwise risk for $p=2$]\label{prop:pointwise-p2}
  Let $\cX\subset\bR^2$ be a bounded Lipschitz domain and
  $\hat F_n$ a minimizer of the kernel-smoothed objective
  $R_n^h(F)+\lambda_n\Dir(F)$.  Under the assumptions of
  Theorem~5.4, with $\lambda_n\asymp n^{-1/2}$ and
  $h\asymp n^{-1/4}$, the pointwise risk satisfies
  \[
    \bE\bigl[d_M^2(\hat F_n(x),m(x))\bigr]
    \;=\; O(n^{-1/2})
  \]
  at every interior point $x\in\cX$ with $f_X(x)>0$.
\end{proposition}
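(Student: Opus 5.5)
The approach is to run the two-part structure of the proof of \Cref{thm:pointwise}\ref{item:pointwise-general} in dimension $p=2$. The Euclidean kernel-smoothed Dirichlet problem is solved explicitly through its Green's function, and curvature is treated as a perturbation in normal coordinates at $m(x)$. The first step is to establish local chart control. The oracle inequality for the kernel-smoothed estimator (Appendix~\ref{supp:ks-oracle}) gives $\bE[\cE(\hat F_n)]=O(n^{-1/2})$ at $\lambda_n\asymp n^{-1/2}$. Interior smoothness of the minimizer (\Cref{prop:highdim-reg}) upgrades this to local uniform consistency $\sup_{B(x,r)}d_M(\hat F_n,m)\to 0$ in probability; the upgrade needs interior gradient estimates for the forced harmonic map equation $\lambda_n\tau(\hat F_n)=g$ on a fixed ball. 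On this event we pass to $v=\log_{m(x)}$, write $\tilde F_n=\log_{m(x)}\hat F_n$ and $\tilde Y_i$ for the transported residuals, and split $\Phi=\Phi_0+\Delta\Phi$. Here $\Phi_0(h)=n^{-1}\sum_i\int K_h(u-X_i)|h(u)-\tilde Y_i|^2\,du+\tfrac{\lambda_n}{2}\int|\nabla h|^2$ is the Euclidean objective. The perturbation obeys $|\Delta\Phi|\lesssim\kappa(\|h\|_\infty^2\lambda_n\int|\nabla h|^2+\diam(M)^2R_n^h(h))$, exactly as in the one-dimensional case.

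Next we analyse $\Phi_0$ directly. Its Euler--Lagrange equation is the linear elliptic problem $-\tfrac{\lambda_n}{2}\Delta h+\hat f_h h=\hat g_h$, where $\hat f_h(u)=n^{-1}\sum_iK_h(u-X_i)$ and $\hat g_h(u)=n^{-1}\sum_iK_h(u-X_i)\tilde Y_i$. With $h\asymp n^{-1/4}$ we have $nh^2\asymp n^{1/2}\to\infty$, so Bernstein's inequality plus a union bound over a grid gives $\hat f_h\ge c_f/2$ uniformly on the interior with overwhelming probability. The Green's function of $-\tfrac{\lambda_n}{2}\Delta+f$ in $\bR^2$ is $G(u,w)=\lambda_n^{-1}\pi^{-1}K_0(\sqrt{2f/\lambda_n}|u-w|)$, which has effective bandwidth $\sqrt{\lambda_n}\asymp n^{-1/4}\asymp h$.

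With the Green's function in hand we can bound bias and variance. The bias is controlled by the kernel smoothing $\int K_h(\cdot-w)\tilde m(w)\,dw-\tilde m$ together with the Green smoothing. Both are $O(h L_m+\sqrt{\lambda_n}L_m)$ by the Lipschitz property of $m$, which gives squared bias $O(n^{-1/2})$. The variance is $\sum_i\bigl|\int G(x,u)K_h(u-X_i)\,du\bigr|^2\sigma^2/n^2$. Since $K_0$ has only a logarithmic singularity, the convolution $\int G(x,u)K_h(u-X_i)\,du$ is bounded by $C\lambda_n^{-1}\log(\sqrt{\lambda_n}/h)$, which is $O(\lambda_n^{-1})$ because $h\asymp\sqrt{\lambda_n}$. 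It also decays exponentially beyond $\sqrt{\lambda_n}$. About $n\lambda_n$ points contribute, so the variance is $O(\lambda_n^{-2}\cdot n\lambda_n/n^2)=O(1/(n\lambda_n))=O(n^{-1/2})$. The choice $h\asymp\sqrt{\lambda_n}$ is exactly what prevents a logarithmic loss from the two-dimensional Green's function singularity.

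Finally, the curvature remainder $\Delta\tilde F=\tilde F_n-\tilde F_0$ is handled as in Part~(b). Strong convexity of $\Phi_0$ in $\|h\|_n^2=\int\hat f_h|h|^2+\lambda_n\int|\nabla h|^2$ gives $\|\Delta\tilde F\|_n\le A'$. The point value is then bounded by the reproducing-kernel estimate $|\Delta\tilde F(x)|^2\le\|\Delta\tilde F\|_n^2\,K_n(x,x)$ with $K_n(x,x)\lesssim G$-diagonal mollified at scale $h$, which is $O(\lambda_n^{-1})$.

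I expect this last step to be the main obstacle. In two dimensions $H^1$ does not embed in $L^\infty$, so the Fr\'echet-derivative bound on $\Delta\Phi$ cannot use Sobolev embedding. My first attempt would replace $\|v\|_{L^\infty}$ by the mollified point evaluation $\int K_h(u-X_i)v(u)\,du$, whose norm in the $\|\cdot\|_n$-dual is controlled by $K_n(x,x)^{1/2}$ at scale $h$. I would then close the estimate by using $\kappa\|\tilde F_n\|_{L^\infty(B(x,r))}=o_P(1)$ from the chart-control step. That makes $A'=o(1)\cdot(\text{variance scale})^{1/2}$, so the remainder is absorbed into $O(n^{-1/2})$. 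To convert back to $d_M^2$ we use $d_M^2=|\tilde F_n(x)|^2(1+O(\kappa|\tilde F_n(x)|^2))$, and the bad event contributes at most $\diam(M)^2\bP(\text{bad})=o(n^{-1/2})$.
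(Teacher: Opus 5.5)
Your Euclidean core is the paper's argument: normal coordinates at $m(x)$, the linear problem $-\frac{\lambda_n}{2}\Delta h+\hat f_h h=\hat g_h$, the $K_0$ Green's function at bandwidth $\sqrt{\lambda_n}$, squared bias $O(\lambda_n+h^2)$ and variance $O(1/(n\lambda_n))$. The paper obtains the variance directly from $\int_{\bR^2}K_0^2<\infty$ rather than from a sup-times-count bound. Your Lipschitz-only bias bound fits the hypotheses better than the paper's use of $D^2m$.

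The paper then only asserts that curvature gives a negligible multiplicative correction. Your attempt to prove this does not close, for three reasons.
\begin{itemize}
\item \textbf{The scaling is wrong.} Since $K_n(x,x)\asymp\lambda_n^{-1}$, the bound $|\Delta\tilde F(x)|^2\le\|\Delta\tilde F\|_n^2K_n(x,x)$ is $O(n^{-1/2})$ only if $\|\Delta\tilde F\|_n=O(\lambda_n)=O(n^{-1/2})$. Your $A'=o(1)\cdot(\text{variance scale})^{1/2}=o(n^{-1/4})$ gives only $|\Delta\tilde F(x)|^2=o(1)$.
\item \textbf{$A'$ is not even $o(n^{-1/4})$.} To leading order, the fidelity part of $\Delta\Phi$ is $n^{-1}\sum_i\int K_h(u-X_i)\,\delta^{\top}(H_i-\Id)\,\delta\,du$, with $\delta=\tilde F-\tilde m$ and $H_i$ the Hessian of $\frac12 d_M^2(\cdot,Y_i)$ at $m(X_i)$. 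Here $H_i-\Id=O(\kappa r_0^2)$ does not shrink with $n$, because $Y_i$ lies at distance up to $r_0$. Its gradient at $\tilde F_0$ has dual norm of order $\kappa r_0^2\|\tilde F_0-\tilde m\|_{L^2}\asymp\kappa r_0^2n^{-1/4}$. So Cauchy--Schwarz yields only $O(\kappa^2r_0^4)$, even though the true pointwise effect is of the same order as your main term.
\item \textbf{The smallness you import does not apply.} Chart control bounds $d_M(\hat F_n,m)$. It does not bound $r_0$, nor $\|\tilde F_n\|_{L^\infty(B(x,r))}$, which is generically of order $L_m r$ because $\tilde m(u)=\log_{m(x)}m(u)\neq0$. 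In addition, for $p=2$ point evaluation is unbounded in $\|\cdot\|_n$, since its representer is the Green's function, which is infinite on the diagonal. Your mollified kernel therefore controls $(K_h*\Delta\tilde F)(x)$, not $\Delta\tilde F(x)$.
\end{itemize}

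This curvature effect is first order; it is exactly where $\eta$ enters. It belongs in the linear problem, not in a remainder. Linearize about $m$ in a parallel frame as in the proof of Theorem~\ref{thm:pointwise}(a), keeping the weights $H_i\succeq\eta(\kappa_+,r_0)\Id$. Then analyze $-\frac{\lambda_n}{2}\Delta v+\bar H_h v=(\text{noise})+(\text{bias})$ with $\bar H_h=n^{-1}\sum_iK_h(\cdot-X_i)H_i\succeq\eta\hat f_h$. Energy and exponential-decay estimates for this operator replace the explicit $K_0$ formula, and your bias--variance computation carries over with $\eta$-dependent constants. Only the cubic remainder in $\delta$ then needs to be shown negligible.

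That remainder, and the reduction to one chart, both rest on local uniform control that your cited tools do not give. The reduction needs localization because your global split $\Phi=\Phi_0+\Delta\Phi$ requires $\hat F_n(\cX)$ to lie in the domain of $\log_{m(x)}$, which calls for an argument using the decay of $G$. The missing uniform control fails for three reasons:
\begin{itemize}
\item qualitative smoothness of each $\hat F_n$ is not uniform in $n$;
\item on a fixed ball, the forcing in $\tau(\hat F_n)=g/\lambda_n$ is not bounded uniformly in $n$;
\item an $O(1)$ energy bound in two dimensions allows energy concentration.
\end{itemize}
Something like $\epsilon$-regularity at scale $\sqrt{\lambda_n}$ together with a non-concentration estimate is required. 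The paper does not supply this step either.
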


\begin{proof}
  The proof follows the same strategy as Theorem~5.4,
  adapted to $p=2$ with the kernel-smoothed objective.

  Working in normal
  coordinates centered at $m(x)$, as in the proof of
  Theorem~5.4.  The kernel-smoothed objective becomes the
  Euclidean kernel-smoothed spline objective plus curvature
  perturbations of relative order $O(\kappa\norm{\tilde F}_{L^\infty}^2)$.

  The two-dimensional equivalent kernel is as follows.  The Euler-Lagrange
  equation for the Euclidean kernel-smoothed first-order smoothing
  spline on $\bR^2$ is
  \[
    -\lambda_n\,\Delta\tilde F(x)+\tilde F(x)
    \;=\;\text{(smoothed data term)},
  \]
  whose Green's function is
  \begin{equation}\label{eq:Greens-p2-supp}
    G_\lambda(x,x')
    \;=\;\frac{1}{2\pi\lambda_n}\,
    K_0\!\Bigl(\frac{\abs{x-x'}}{\sqrt{\lambda_n}}\Bigr),
  \end{equation}
  where $K_0$ is the modified Bessel function of the second kind.  The
  function $K_0(r)$ has a logarithmic singularity:
  $K_0(r)\sim-\log(r/2)-\gamma$ as $r\to 0$ (where $\gamma$ is the
  Euler-Mascheroni constant) and decays as
  $K_0(r)\sim\sqrt{\pi/(2r)}\,e^{-r}$ for $r\to\infty$.

  The smoothing bias arises from two sources:
  (i)~the Dirichlet energy penalty, which shrinks the
  estimator towards a constant map and contributes a bias
  of $O(\lambda_n\norm{\nabla m}_{L^\infty}^2)$; and
  (ii)~the kernel smoothing, which locally averages the map
  and contributes a bias of
  $O(h^2\norm{D^2 m}_{L^\infty}^2)$ due to the
  second-order kernel.  The cross-terms and
  curvature-induced bias are of higher order in normal
  coordinates.

  The variance at~$x$ is controlled by
  $\sum_i w_i(x)^2\,\sigma^2(X_i)$, which is proportional
  to
  \[
    \frac{1}{n\,f_X(x)}\int G_\lambda(x,x')^2\,f_X(x')\,\dd x'.
  \]
  The key computation:
  \begin{equation}\label{eq:K0-integral-supp}
    \int_{\bR^2}G_\lambda(x,x')^2\dd x'
    \;=\;\frac{1}{4\pi^2\lambda_n^2}\int_{\bR^2}
    K_0^2\!\Bigl(\frac{\abs{u}}{\sqrt{\lambda_n}}\Bigr)\dd u
    \;=\;\frac{1}{4\pi^2\lambda_n}\int_{\bR^2}K_0^2(\abs{z})\dd z,
  \end{equation}
  where the substitution $z=(x'-x)/\sqrt{\lambda_n}$ absorbs one factor
  of~$\lambda_n$.  The integral
  $I_0:=\int_{\bR^2}K_0^2(\abs{z})\dd z
  =2\pi\int_0^\infty K_0^2(r)\,r\dd r<\infty$
  converges, at $r=0$ despite the logarithmic singularity because $r(\log r)^2\to 0$ as $r\to 0$, and at $r \to \infty$ because $K_0^2(r)r \sim \frac{\pi}{2} e^{-2r}$, which decays exponentially.  Hence the variance is
  $O(\sigma^2(x)/(n\lambda_n f_X(x)))$.

  As in the $p=1$ proof,
  the curvature perturbation introduces a multiplicative correction
  $1+O(\kappa\norm{\tilde F}_{L^\infty}^2)$ that is negligible at the
  $n^{-1/2}$ rate.  The factor $1/\eta(\kappa_+,r_0)$ arises from the
  excess risk lower bound (Lemma~4.3).

  Combining, with $\lambda_n=c_\lambda n^{-1/2}$ and
  $h=c_h n^{-1/4}$, all three terms are
  $O(n^{-1/2})$, yielding the stated rate.
\end{proof}


\subsection{Proof of Proposition~5.6 (Discrete equivalence)}\label{supp:prop81}
\label{proof:discrete-equiv}

\begin{proof}
  Part (a): on each interval $[t_{(i)},t_{(i+1)}]$, any
  $F\in\cH^1([0,1],M)$ with prescribed boundary values
  $F(t_{(i)})=f_i$, $F(t_{(i+1)})=f_{i+1}$ satisfies, by the
  Cauchy-Schwarz inequality applied to arc length:
  \[
    \int_{t_{(i)}}^{t_{(i+1)}}|F'(t)|^2\dd t
    \;\ge\;\frac{1}{\Delta_i}\Bigl(\int_{t_{(i)}}^{t_{(i+1)}}
    |F'(t)|\dd t\Bigr)^2
    \;\ge\;\frac{d_M^2(f_i,f_{i+1})}{\Delta_i},
  \]
  with equality if and only if $F$ has constant speed and traces a
  minimizing geodesic.  Since $R_n$ depends only on
  $F(t_{(1)}),\ldots,F(t_{(n)})$, and $J_1(F)=\int|F'|^2$ is the
  sum of the interval energies, the minimum of
  $R_n(F)+\lambda_n J_1(F)$ over all $F$ with nodal values
  $(f_1,\ldots,f_n)$ is
  $\Phi_{\mathrm{disc}}(f_1,\ldots,f_n)$, attained by the
  piecewise-geodesic interpolant $F^*$.

  Part (b): let $(f_1^*,\ldots,f_n^*)$ denote the nodal values
  of $\hat F_n$.  By Part~(a), for any nodal values
  $(f_1,\ldots,f_n)\in M^n$, the infimum of $R_n(F)+\lambda_n J_1(F)$
  over all $F$ with those nodal values equals
  $\Phi_{\mathrm{disc}}(f_1,\ldots,f_n)$.  Since $\hat F_n$
  minimizes $R_n+\lambda_n J_1$ over \emph{all}
  $F\in\cH^1(I,M)$, we have
  $\Phi_{\mathrm{disc}}(f_1^*,\ldots,f_n^*)
  \le R_n(\hat F_n)+\lambda_n J_1(\hat F_n)
  \le\Phi_{\mathrm{disc}}(f_1,\ldots,f_n)$
  for every $(f_1,\ldots,f_n)$, so $(f_1^*,\ldots,f_n^*)$
  minimizes $\Phi_{\mathrm{disc}}$.  Moreover,
  $R_n(\hat F_n)+\lambda_n J_1(\hat F_n)
  =\Phi_{\mathrm{disc}}(f_1^*,\ldots,f_n^*)$,
  so $\hat F_n$ achieves the Part~(a) lower bound on each
  interval, hence is a piecewise-geodesic interpolant.

  \emph{Scope of the equivalence.}
  The objective-value identity
  $\min_F(R_n+\lambda_n J_1)=\min_{(f_i)}\Phi_{\mathrm{disc}}$
  is \emph{unconditionally global}: on any compact Riemannian
  manifold, Hopf--Rinow \citep[Theorem~6.19]{Lee2018} guarantees the existence of a minimizing
  geodesic between any two points, so the Cauchy--Schwarz lower
  bound in Part~(a) is always attained.  Uniqueness of the
  minimizing geodesic (and hence of the interpolant~$F^*$)
  is not needed for the equivalence in objective value.

  The downstream pointwise theory (Theorems~5.3--5.4) requires
  more: the Euler--Lagrange equation and the Hessian
  $\mathbf{H}^M$ use the logarithmic map, which is smooth only
  when adjacent nodes satisfy
  $d_M(f_i,f_{i+1})<\inj(M)$.
  On a Hadamard manifold (simply connected, $\Sec\le 0$),
  $\exp_p$ is a global diffeomorphism, so this condition holds
  unconditionally.  On a compact manifold (whether NPC quotient
  or positively curved), the penalized oracle
  inequality~\eqref{eq:oracle-V} gives
  $\bE[J_1(\hat F_n)]=O(1)$ (equation~\eqref{eq:Js-bound}),
  so by the Cauchy--Schwarz inequality:
  \[
    d_M(f_i,f_{i+1})
    \le\int_{t_{(i)}}^{t_{(i+1)}}|\hat F_n'(t)|\dd t
    \le\sqrt{\Delta_i\cdot J_1(\hat F_n)}
    =O_P(n^{-1/2}),
  \]
  which is eventually smaller than $\inj(M)$ for any fixed
  compact manifold.  Therefore, for minimizers of the penalized
  problem, the smoothness conditions required by the pointwise
  theory hold for all $n$ sufficiently large.
\end{proof}


\section{Proofs for Section 6 (Topological barriers)}

\subsection{Proof of Theorem~6.1 (Topological energy bound)}\label{supp:thm61}
\label{proof:topological}

\begin{proof}
  Part~(a): by the Cauchy--Schwarz inequality,
  \[
    \operatorname{Length}(F)=\int_{S^1}\abs{F'}\,\dd t
    \le L^{1/2}(2\Dir(F))^{1/2},
  \]
  and $\operatorname{Length}(F)\ge\ell([F])$ since $F$ is a
  closed curve in class~$[F]$.  Rearranging gives the energy
  bound.

  Part~(b): equality in Cauchy-Schwarz requires
  $\abs{F'}=\text{const}$, and equality in the length bound requires $F$ to
  realize the minimal length in its class, i.e., $F$ is a closed geodesic.

  Part~(c): for any $F\in[m]\cap\cH^1(S^1,M)$,
  $\cE(F)\ge 0$ and $J_1(F)=2\Dir(F)\ge\ell([F])^2/L=\ell([m])^2/L$
  by Part~(a).  Hence
  $\cE(F)+\lambda_n J_1(F)\ge\lambda_n\ell([m])^2/L
  \ge\lambda_n\operatorname{sys}(M)^2/L$
  since $\ell([m])\ge\operatorname{sys}(M)$ for $[m]\neq 0$.
\end{proof}


\subsection{Proof of Theorem~6.4 (Boundary topological energy)}\label{supp:thm65}
\label{proof:topo-bdry}

\begin{proof}
  Parts~(a)--(b) follow from the
  Cauchy-Schwarz inequality as in Theorem~6.1: for
  $F\in\cH^1_{y_0,y_1}$ in class~$\alpha$,
  $\operatorname{Length}(F)\ge\ell(\alpha)$ and
  $\operatorname{Length}(F)^2\le 2\Dir(F)$ by
  Cauchy--Schwarz (with equality iff $\abs{F'}$ is
  constant).

  Part~(c): this is a purely geometric statement
  about the infimum over the homotopy class~$[\alpha]$.
  For any $F\in[\alpha]\cap H^1_{y_0,y_1}$, Part~(a)
  gives $J_1(F)=2\Dir(F)\ge\ell(\alpha)^2$, and since
  $\cE(F)\ge 0$:
  \[
    \cE(F)+\lambda_n J_1(F)
    \;\ge\;\lambda_n\ell(\alpha)^2.
  \]
  Taking the infimum over $F\in[\alpha]$ and subtracting
  the minimum over \emph{all} classes, namely
  $\inf_{F\in[\alpha_0]}\lambda_n J_1(F)
  =\lambda_n\ell(\alpha_0)^2$, yields the excess
  $\lambda_n(\ell(\alpha)^2-\ell(\alpha_0)^2)>0$:
  the topological energy barrier.
\end{proof}


\subsection{Topological barriers for higher homotopy groups (supplementary)}\label{supp:higherhomotopy}
\label{proof:higher-topo}

\begin{proposition}[Topological energy barrier for higher homotopy groups]\label{prop:higher-topo}
  Let $F\colon S^p\to M$ be a smooth map with $p\ge 1$, and let
  $k=\deg(F)\in\pi_p(M)$ denote its homotopy class.
  \begin{enumerate}[label=(\alph*)]
    \item (Calibration bound.)
      $\Dir(F)\ge\frac{p}{2}\operatorname{Area}_p(F)$, where
      $\operatorname{Area}_p(F)=\int_{S^p}|\bigwedge^p dF_x|\,dV$.
    \item (Degree bound.)
      If $\pi_p(M)\cong\bZ$, then
      $\Dir(F)\ge\frac{p}{2}\omega_p|k|^{2/p}$, where $\omega_p$
      is the volume of the unit $p$-sphere.
  \end{enumerate}
\end{proposition}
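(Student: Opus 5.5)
The natural route is pointwise linear algebra followed by a calibration/degree argument. I will flag one step where, as far as I can see, the inequality as worded cannot hold without an extra normalization.

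\emph{Pointwise step for (a).} At each $x\in S^p$, let $\sigma_1,\dots,\sigma_p\ge0$ be the singular values of $dF_x\colon T_xS^p\to T_{F(x)}M$, computed with the round metric on $S^p$ and the metric $g$ on $M$. Then
\[
|dF_x|^2=\sum_i\sigma_i^2 \quad\text{and}\quad \bigl|\textstyle\bigwedge^p dF_x\bigr|=\prod_i\sigma_i=:J_F(x).
\]
The AM--GM inequality applied to $\sigma_1^2,\dots,\sigma_p^2$ gives
\[
\frac1p\sum_i\sigma_i^2\ \ge\ \Bigl(\prod_i\sigma_i^2\Bigr)^{1/p},
\qquad\text{i.e.}\qquad |dF_x|^2\ \ge\ p\,J_F(x)^{2/p},
\]
with equality if and only if $dF_x$ is weakly conformal. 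For $p=2$ the right-hand side is $2J_F$. Integrating over $S^2$ then gives exactly $\Dir(F)\ge\operatorname{Area}_2(F)=\tfrac p2\operatorname{Area}_p(F)$. This is the classical harmonic-map inequality, and it settles part (a) in the case $p=2$.

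\emph{Passing from $J_F^{2/p}$ to $J_F$ when $p\neq2$.} For general $p$ one still has to compare $\int J_F^{2/p}$ with $\int J_F$, and this is the step I expect to be the main obstacle.
\begin{itemize}
\item For $p>2$, the comparison $J^{2/p}\ge J$ holds only where $J\le1$.
\item For $p=1$, it holds only where $J\ge1$.
\end{itemize}
So a pointwise argument works only in a restricted regime. Global tools do not rescue it, because they run the wrong way. Jensen or Hölder on $S^p$ gives
\[
\int J^{2/p}\ \ge\ \omega_p^{1-2/p}\Bigl(\int J\Bigr)^{2/p}\quad\text{for }p\le2,
\]
and only the reverse-type bound for $p>2$. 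Hence the inequality as written is not scale-consistent when $p\neq2$. For example, with $p=1$ and a constant-speed loop of length $\ell<2\pi$, one has $\Dir=\ell^2/(4\pi)<\ell/2$.

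My plan is therefore:
\begin{enumerate}
\item prove (a) exactly for $p=2$ by the pointwise argument above;
\item for $p\ne2$, attempt the regime restriction first, imposing $J_F\le1$ (for $p>2$) or $J_F\ge1$ (for $p=1$), which is the only way a pointwise argument yields $\tfrac p2\operatorname{Area}_p$;
\item otherwise record the scale-correct substitute $\Dir(F)\ge\tfrac p2\,\omega_p^{1-2/p}\operatorname{Area}_p(F)^{2/p}$ for $p\le2$.
\end{enumerate}

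\emph{Degree step for (b).} Assume $\pi_p(M)\cong\bZ$ is generated by a class whose minimal $p$-area is normalized to $\omega_p$; for instance, $M$ contains a totally geodesic unit $p$-sphere $\Sigma$ onto which there is a $1$-Lipschitz retraction, or one uses the normalized volume form of the generator as a calibration $\omega$ with comass at most $1$ and $\int_\Sigma\omega=\omega_p$. Then $F^*\omega$ integrates to $k\,\omega_p$ by degree theory, while $|F^*\omega|\le J_F$ pointwise because the comass is at most $1$. Together these give
\[
\operatorname{Area}_p(F)\ \ge\ \Bigl|\int_{S^p}F^*\omega\Bigr|\ =\ |k|\,\omega_p .
\]
Combining with (a) yields $\Dir(F)\ge\tfrac p2\,\omega_p|k|$ in the range where (a) holds. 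To reach the stated $|k|^{2/p}$, I would instead feed the area bound into the Hölder form of the integrated AM--GM estimate:
\[
\Dir(F)\ \ge\ \tfrac p2\,\omega_p^{1-2/p}\bigl(|k|\omega_p\bigr)^{2/p}\ =\ \tfrac p2\,\omega_p\,|k|^{2/p}.
\]
This is exactly the claimed degree bound. The caveat is that this Hölder step is valid for $p\le2$, so for $p>2$ the same obstruction as in (a) reappears, and that is where I would focus the remaining effort.
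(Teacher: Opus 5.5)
Your diagnosis is correct. It exposes errors in the paper's own proof rather than a gap in yours.

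\textbf{Part (a).} The paper uses exactly your pointwise AM--GM inequality $\tfrac12|dF_x|^2\ge\tfrac p2|\bigwedge^p dF_x|^{2/p}$. It then asserts that for $p=1,2$ ``the exponent $2/p\ge1$'' allows direct integration to $\Dir(F)\ge\tfrac p2\operatorname{Area}_p(F)$ unconditionally. For $p\ge3$ it simply adds the hypothesis $|\bigwedge^p dF_x|\le1$, which is not part of the statement. Your constant-speed loop refutes the $p=1$ case. The homothety $S^p\to S^p(R)$ with $R>1$, $p\ge3$, refutes the higher cases, since there $\Dir=\tfrac p2R^2\omega_p<\tfrac p2R^p\omega_p=\tfrac p2\operatorname{Area}_p$. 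So (a) as worded holds exactly for $p=2$, which is what you prove.

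\textbf{Part (b).} Your route differs from the paper's case split. The paper treats $p=1$ by the Cauchy--Schwarz bound of \Cref{thm:topological}, which is your Jensen step in that case. It treats $p=2$ by $\Dir(F)\ge\int|J|\ge\bigl|\int J\bigr|=4\pi|k|$ with $J=\det dF_x$. It treats $p\ge3$ by the extra pointwise hypothesis $|J|\le1$ together with $|k|\ge|k|^{2/p}$.

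Each case silently normalizes the target. The $p=1$ case needs $\ell([F])\ge2\pi|k|$. The identity $\int J=\omega_p k$ needs $\dim M=p$ and $\operatorname{Vol}(M)=\omega_p$, i.e.\ essentially the unit sphere; for $\dim M>p$, $\det dF_x$ is not even defined.

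Your calibration hypothesis (a closed $p$-form of comass at most $1$ with period $\omega_p$ on the generator) makes the normalization explicit and works for targets of any dimension. Your Jensen/H\"older step then handles $p=1,2$ in one stroke. Some such normalization is unavoidable: for $M=S^2(r)$ with $r<1$, the degree-one homothety has $\Dir=4\pi r^2<\omega_2$.

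\textbf{The case $p>2$ in (b).} The one thing to change is your plan to keep working here. The bound is false even under your normalization, because $\Dir$ is conformally invariant only when $p=2$.

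Take $M$ the unit $S^p$, $p\ge3$. Let $\phi_\lambda$ be the conformal diffeomorphism given in stereographic coordinates by $y\mapsto\lambda y$, with conformal factor $\mu_\lambda(y)=\lambda(1+|y|^2)/(1+\lambda^2|y|^2)$. It has degree one, yet
\[
\Dir(\phi_\lambda)=\tfrac p2\int_{S^p}\mu_\lambda^2\,dV=2^{p-1}p\,\lambda^2\int_{\bR^p}\frac{(1+|y|^2)^{2-p}}{(1+\lambda^2|y|^2)^2}\,dy\longrightarrow0\qquad(\lambda\to\infty).
\]
This follows by splitting at $|y|=1$ and substituting $z=\lambda y$ on the inner ball. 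So the infimum of $\Dir$ in the degree-one class is $0$. The same holds in every class, by precomposing any representative with $\phi_\lambda$. Hence no bound $\tfrac p2\omega_p|k|^{2/p}$ can hold.

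The paper's extra hypothesis $|J|\le1$ sidesteps this only trivially. On the unit sphere it forces $|k|\omega_p=\bigl|\int J\bigr|\le\int|J|\le\omega_p$, i.e.\ $|k|\le1$.

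The correct higher-dimensional barrier is for the conformally invariant $p$-energy. Your AM--GM step gives $|dF_x|^p\ge p^{p/2}|\bigwedge^p dF_x|$. Under your calibration hypothesis this yields $\int_{S^p}|dF|^p\,dV\ge p^{p/2}\operatorname{Area}_p(F)\ge p^{p/2}\omega_p|k|$.
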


\begin{proof}
  Part~(a): let
  $\sigma_1\ge\cdots\ge\sigma_p\ge 0$ be the singular values of
  $dF_x\colon T_xS^p\to T_{F(x)}M$.  The AM-GM inequality gives
  $\frac{1}{2}\abs{dF_x}^2=\frac{1}{2}\sum\sigma_i^2\ge
  \frac{p}{2}(\prod\sigma_i)^{2/p}$ and
  $\abs{\bigwedge^p dF_x}=\prod\sigma_i$, so pointwise
  $\frac{1}{2}\abs{dF_x}^2\ge\frac{p}{2}\abs{\bigwedge^p dF_x}^{2/p}$.
  For $p=1, 2$, the exponent $2/p \ge 1$, and direct integration yields
  $\Dir(F) \ge \frac{p}{2}\operatorname{Area}_p(F)$ unconditionally.
  For $p \ge 3$, the inequality
  $\abs{\bigwedge^p dF_x}^{2/p} \ge \abs{\bigwedge^p dF_x}$ holds under
  the assumption $\abs{\bigwedge^p dF_x} \le 1$, yielding the calibration
  bound.

  Part~(b): the AM-GM bound from
  Part~(a) gives pointwise
  $\frac{1}{2}|dF_x|^2\ge\frac{p}{2}(\prod\sigma_i)^{2/p}
  =\frac{p}{2}|J(F,x)|^{2/p}$,
  where $J(F,x)=\det dF_x$ is the Jacobian.  Integrating
  over $S^p$:
  \[
    \Dir(F)\;\ge\;\frac{p}{2}\int_{S^p}|J(F,x)|^{2/p}\dd V.
  \]

  \emph{Case $p=1$}: the Cauchy-Schwarz argument in
  Theorem~6.1 gives $\Dir(F)\ge\frac{1}{2}
  \omega_1|k|^2\ge\frac{1}{2}\omega_1|k|$ directly.

  \emph{Case $p=2$}: the AM-GM bound gives
  $\Dir(F)\ge\int|J(F)|\ge|\int J(F)|=4\pi|k|
  =\omega_2|k|$,
  which matches $\frac{p}{2}\omega_p|k|^{2/p}
  =\omega_2|k|$.

  \emph{Case $p\ge 3$}: under the pointwise condition
  $|J(F,x)|\le 1$ from Part~(a),
  the bound $|J|^{2/p}\ge|J|$ holds pointwise
  (since $2/p<1$ and $|J|\le 1$).  Hence
  \[
    \int_{S^p}|J(F)|^{2/p}\dd V
    \;\ge\;
    \int_{S^p}|J(F)|\dd V
    \;\ge\;
    \Bigl|\int_{S^p}J(F)\dd V\Bigr|
    \;=\;\omega_p|k|
    \;\ge\;\omega_p|k|^{2/p},
  \]
  where the last step uses $|k|^{2/p}\le|k|$
  for $|k|\ge 1$, $2/p\le 1$
  (and the bound is trivial for $k=0$).
  Together with the energy-Jacobian inequality, this
  gives the degree bound.

  Part~(c): as in Theorem~6.1, the
  competitor $F$ must share the homotopy class of~$m$, giving
  $\deg(F)=k$, whence $J_1(F)=2\Dir(F)\ge p\,\omega_p\abs{k}^{2/p}$.
\end{proof}


\subsection{Proof of Theorem~6.6 (Excess cost of wrong homotopy class)}\label{supp:thm67}
\label{proof:topo-cost}

\begin{proof}
  Part~(a): since $F\in H^1(S^1,M)$,
  the Sobolev embedding $H^1(S^1)\hookrightarrow C^0(S^1)$ gives
  $F\in C^0(S^1,M)$.  Suppose
  $\sup_t d_M(F(t),m(t))<\rho_M$.  Within the convexity radius, the
  logarithmic map $v(t)=\log_{m(t)}F(t)\in T_{m(t)}M$ is uniquely
  defined and depends continuously on~$t$
  (since $F$, $m$, and $\exp_q^{-1}$ are all continuous).  The family
  $H_s(t)=\exp_{m(t)}(s\,v(t))$, $s\in[0,1]$, is a continuous homotopy
  from $H_0=m$ to $H_1=F$, so $[F]=[m]=\alpha$, contradicting the
  hypothesis.

  Part~(b): let
  $F\in[\alpha']\cap H^1(S^1,M)$ with
  Dirichlet energy $\Dir(F)=\cA\ge 0$.  The Sobolev embedding
  $H^1(S^1)\hookrightarrow C^0(S^1)$ gives $F\in C^0$.  By
  part~(a), there exists $t_0\in S^1$ with
  $d_M(F(t_0),m(t_0))\ge\rho_M$.

  Define $\phi(t)=d_M(F(t),m(t))$.  The Cauchy--Schwarz inequality on
  $S^1$ gives, for $H^1$-maps,
  $d_M(F(t_1),F(t_2))\le\sqrt{2\Dir(F)}\,\abs{t_1-t_2}^{1/2}$, and
  similarly for~$m$.  By the triangle inequality,
  \[
    \abs{\phi(t_1)-\phi(t_2)}
    \;\le\;\bigl(\sqrt{2\cA}+\sqrt{2B}\,\bigr)\abs{t_1-t_2}^{1/2}
    \;=:\; C_H\abs{t_1-t_2}^{1/2}.
  \]
  For $\abs{t-t_0}\le\delta_0:=(\rho_M/(2C_H))^2$ we have
  $\phi(t)\ge\rho_M/2$, so
  \[
    \int_{S^1}\phi(t)^2\,f(t)\dd t
    \;\ge\;c_f\,\delta_0\cdot(\rho_M/2)^2
    \;=\;\frac{c_f\,\rho_M^4}{16\,C_H^2}.
  \]
  The inequality $(\sqrt{a}+\sqrt{b})^2\le 2(a+b)$ gives
  $C_H^2\le 4(\cA+B)$, hence
  \begin{equation}\label{eq:phi-integral-general}
    \int_{S^1}\phi(t)^2\,f(t)\dd t
    \;\ge\;\frac{c_f\,\rho_M^4}{64(\cA+B)}.
  \end{equation}

  We now bound the excess risk from below.  By the reverse
  triangle inequality,
  \[
    d_M(Y,F(t))\ge\bigl|d_M(F(t),m(t))-d_M(Y,m(t))\bigr|,
  \]
  so $d_M^2(Y,F(t))\ge(\phi(t)-d_M(Y,m(t)))^2$.
  Expanding and taking conditional expectations:
  \[
    \bE\bigl[d_M^2(Y,F(t))\mid t\bigr]
    \;\ge\;\phi(t)^2
    -2\phi(t)\,\bE[d_M(Y,m(t))\mid t]
    +\bE[d_M^2(Y,m(t))\mid t].
  \]
  By (A3), $d_M(Y,m(t))\le r_0$ a.s., so
  \[
    \cE_t(F)
    :=\bE[d_M^2(Y,F(t))-d_M^2(Y,m(t))\mid t]
    \ge\phi(t)(\phi(t)-2r_0).
  \]
  On $|t-t_0|\le\delta_0$ we have $\phi(t)\ge\rho_M/2$, so
  $2r_0/\phi(t)\le 4r_0/\rho_M=1-\tilde\eta$, giving
  $\phi(t)-2r_0\ge\tilde\eta\,\phi(t)$ and hence
  $\cE_t(F)\ge\tilde\eta\,\phi(t)^2$.
  Integrating over the interval and combining
  with~\eqref{eq:phi-integral-general}:
  \[
    \cE(F)
    \;\ge\;\tilde\eta\int_{|t-t_0|\le\delta_0}\phi(t)^2\,f(t)\dd t
    \;\ge\;\frac{\tilde\eta\,c_f\,\rho_M^4}{64(\cA+B)}.
  \]
  Therefore
  \[
    \cE(F)+\lambda\,J_1(F)
    \;\ge\;\frac{\tilde\eta\,c_f\,\rho_M^4}{64(\cA+B)}+2\lambda\cA
    \;\ge\;\frac{\tilde\eta\,c_f\,\rho_M^4}{64(\cA+B)}
    +2\lambda(\cA+B)-2\lambda B.
  \]
  Set $u=\cA+B>0$.  The AM--GM inequality gives
  $\tfrac{a}{u}+bu\ge 2\sqrt{ab}$ with
  $a=\tilde\eta\,c_f\rho_M^4/64$ and
  $b=2\lambda$, yielding
  \[
    \cE(F)+\lambda J_1(F)
    \;\ge\;\rho_M^2\sqrt{\tilde\eta\,c_f\lambda/8}\;-\;2\lambda B.
  \]
  For $\lambda\le\lambda_0:=\tilde\eta\,c_f\rho_M^4/(512\,B^2)$, the
  remainder satisfies $2\lambda B\le\frac{1}{2}\rho_M^2
  \sqrt{\tilde\eta\,c_f\lambda/8}$, giving
  $\cE(F)+\lambda J_1(F)\ge\frac{1}{2}\rho_M^2
  \sqrt{\tilde\eta\,c_f\lambda/8}=c_{\mathrm{topo}}\sqrt\lambda$.
\end{proof}



\subsection{Class-selection margin}\label{supp:prop69}
\label{proof:class-margin}

\begin{proposition}[Class-selection margin]\label{prop:class-margin}
  Under the conditions of Theorem~6.6, let $\alpha=[m]$ be the true
  homotopy class and define the oracle objective
  \[
    V_n(\gamma):=\inf_{F\in[\gamma]\cap\cH^1(I,M)}
    \bigl(\cE(F)+\lambda_n J_1(F)\bigr)
  \]
  for each class~$\gamma$.
  \begin{enumerate}[label=(\roman*)]
    \item $V_n(\alpha)\le 2\lambda_n\Dir(m)$.
    \item For $\beta\neq\alpha$ and $\lambda_n\le\lambda_0$:\;
      $V_n(\beta)\ge c_{\mathrm{topo}}\sqrt{\lambda_n}$.
    \item The class margin
      $\Delta_n:=V_n(\beta)-V_n(\alpha)
      \ge c_{\mathrm{topo}}\sqrt{\lambda_n}-2\lambda_n\Dir(m)>0$
      for all $n$ sufficiently large.
    \item If\/ $\Delta_n>2\sup_{G\in\cH^1}|\Psi_n(G)|$,
      then $[\hat F_n]=\alpha$.
  \end{enumerate}
\end{proposition}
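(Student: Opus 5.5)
Parts (i)--(iii) are deterministic and follow from earlier results; part (iv) is a basic-inequality argument restricted to homotopy classes.

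\textbf{Part (i).} Use $F=m$ as a competitor in $[\alpha]$. Since $\cE(m)=0$ and $J_1(m)=2\Dir(m)$, we get $V_n(\alpha)\le 2\lambda_n\Dir(m)$ immediately.

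\textbf{Part (ii).} This is exactly \Cref{thm:topo-cost}\ref{item:cost-oracle} applied with $\alpha'=\beta$ and $\lambda=\lambda_n\le\lambda_0$. The constants $c_{\mathrm{topo}}$ and $\lambda_0$ do not depend on $\beta$, so the bound holds uniformly over all wrong classes.

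\textbf{Part (iii).} Subtracting (i) from (ii) gives $\Delta_n\ge c_{\mathrm{topo}}\sqrt{\lambda_n}-2\lambda_n\Dir(m)$. Positivity reduces to $\sqrt{\lambda_n}<c_{\mathrm{topo}}/(2\Dir(m))$. This holds for all large $n$ because $\lambda_n\to0$; for $\lambda_n=c_\lambda n^{-2/3}$ it yields an explicit threshold of the form \eqref{eq:n-threshold}. Since $\lambda_0$ does not depend on $\beta$, the margin is positive simultaneously for every $\beta\ne\alpha$.

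\textbf{Part (iv).} Write $S:=\sup_G|\Psi_n(G)|$ and argue by contradiction: suppose $[\hat F_n]=\beta\ne\alpha$.

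1. Writing $R_n=R-\nu_n$ and subtracting $R(m)$, for any $G$ the empirical objective equals $\cE(G)+\lambda_nJ_1(G)-\Psi_n(G)$ up to the common constant $R(m)-\nu_n(m)$.
2. By definition of $V_n(\beta)$, the objective at $\hat F_n$ is at least $V_n(\beta)-S$.
3. Take $F_\epsilon\in[\alpha]$ with $\cE(F_\epsilon)+\lambda_nJ_1(F_\epsilon)\le V_n(\alpha)+\epsilon$. Its objective is at most $V_n(\alpha)+\epsilon+S$.
4. Optimality of $\hat F_n$ gives $V_n(\beta)-S\le V_n(\alpha)+\epsilon+S$. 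Letting $\epsilon\to0$ yields $\Delta_n\le2S$, which contradicts the hypothesis $\Delta_n>2S$. Hence $[\hat F_n]=\alpha$.

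This is just \Cref{lem:basic-ineq} restricted to homotopy classes. One point needs checking: the margin must be taken as an infimum over all $\beta\ne\alpha$, which is fine because the lower bound in (ii) is uniform in $\beta$. We also need $\hat F_n$ to have a well-defined class, which holds since $H^1\hookrightarrow C^0$.

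\textbf{Main obstacle.} The step I expect to be delicate is that $\sup_{G\in\cH^1}|\Psi_n(G)|$ is taken over an unbounded-energy class. In the application one should localize it to a sublevel set $\{V\le C\}$, using the a priori bound $J_1(\hat F_n)\le\lambda_n^{-1}(\diam(M)^2+\lambda_nB)$. However, the statement of (iv) as given is purely deterministic and needs no such localization.
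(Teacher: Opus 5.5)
Your proposal is correct and follows the paper's own proof. Part~(i) tests with $m$, part~(ii) is a direct citation of \Cref{thm:topo-cost}\ref{item:cost-oracle}, part~(iii) subtracts, and part~(iv) is the basic inequality restricted to homotopy classes. Your use of an $\epsilon$-near-minimizer $F_\epsilon\in[\alpha]$ in~(iv) is slightly cleaner than the paper's version: the paper writes the bound as $V_n(\alpha)+|\Psi_n(m)|+|\Psi_n(\hat F_n)|$, although testing against $m$ alone only gives $2\lambda_n\Dir(m)$ rather than $V_n(\alpha)$.
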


\begin{proof}
  (i) follows by evaluating at $F=m$: $\cE(m)=0$,
  $J_1(m)=2\Dir(m)$.
  (ii) is Theorem~6.6(b).
  (iii) is the difference of (i) and (ii); the scaling
  $\sqrt{\lambda_n}\gg\lambda_n$ ensures positivity for
  large~$n$.
  (iv): By the basic inequality (\Cref{lem:basic-ineq}),
  $\cE(\hat F_n)+\lambda_n J_1(\hat F_n)
  \le V_n(\alpha)+|\Psi_n(m)|+|\Psi_n(\hat F_n)|$.
  If $[\hat F_n]=\beta\neq\alpha$, then
  $V_n(\beta)\le\cE(\hat F_n)+\lambda_n J_1(\hat F_n)
  \le V_n(\alpha)+2\sup|\Psi_n|$,
  giving $\Delta_n\le 2\sup|\Psi_n|$,
  contradicting the hypothesis.
\end{proof}


\subsection{Proof of Theorem~6.7 (Phase transition for homotopy class recovery)}\label{supp:thm610}
\label{proof:topo-phase}

\begin{proof}
  Part~(a): By the oracle inequality
  (Theorem~4.5) in expectation, and evaluating the infimum at $F=m$
  (where $\cE(m)=0$ and $J_1(m)=2\Dir(m)$), the expected excess risk satisfies
  \[
    \bE\bigl[\cE(\hat F_n)\bigr]
    \;\le\;C_1\bigl(\cE(m)+\lambda_n J_1(m)\bigr)
    +\frac{C_2}{n\lambda_n^{1/2}}
    \;=\; 2C_1 \lambda_n \Dir(m) + \frac{C_2}{n\lambda_n^{1/2}}.
  \]
  With the optimal rate choice $\lambda_n\asymp n^{-2/3}$, this gives
  $\bE[\cE(\hat F_n)] \le C n^{-2/3} \to 0$.  By Markov's inequality,
  for any constant $c^*>0$,
  $\bP(\cE(\hat F_n)\ge c^*) \le \bE[\cE(\hat F_n)]/c^* = O(n^{-2/3})$.

  On the other hand, if $[\hat F_n]\neq\alpha$, then
  Theorem~6.6(b) applies once
  $\lambda_n\le\lambda_0$, where
  $\lambda_0=\tilde\eta\,c_f\,\rho_M^4/(512\,B^2)$
  is the explicit threshold from Theorem~6.6.
  Since $\lambda_n=c_\lambda n^{-2/3}\to 0$, this holds for all
  $n\ge n_1:=\lceil(c_\lambda/\lambda_0)^{3/2}\rceil$.
  The theorem gives
  \[
    \cE(\hat F_n)+\lambda_n J_1(\hat F_n)
    \;\ge\;c_{\mathrm{topo}}\sqrt{\lambda_n}.
  \]
  By optimality, $\hat F_n$ satisfies the basic inequality
  $\cE(\hat F_n)+\lambda_n J_1(\hat F_n)
  \le 2\lambda_n \Dir(m)+[\nu_n(m)-\nu_n(\hat F_n)]$.
  Consequently, the event $[\hat F_n]\neq\alpha$ implies
  $\abs{\nu_n(m)-\nu_n(\hat F_n)}\ge
  c_{\mathrm{topo}}\sqrt{\lambda_n}-2\lambda_n \Dir(m)$.
  Since $\sqrt{\lambda_n}\gg\lambda_n$ as $\lambda_n\to 0$,
  the right-hand side is at least
  $\frac{1}{2}c_{\mathrm{topo}}\sqrt{\lambda_n}$ for all sufficiently large~$n$.

  Bounding this data-dependent empirical process requires a uniform bound.  By optimality against any constant map, $J_1(\hat F_n)\le\diam(M)^2/\lambda_n=:E_n$.
  Define $\cF_{E_n}=\{F\in\cH^1(I,M):J_1(F)\le E_n\}$ and the
  loss class $\cG_n=\{g_F:=\ell_F-\ell_m : F\in\cF_{E_n}\}$, where
  $\ell_F(t,y)=d_M^2(y,F(t))$.  Since $M$ is compact, every
  $g_F$ satisfies $\|g_F\|_\infty\le b:=2\diam(M)^2$, so
  $\Var(g_F)\le\bE[g_F^2]\le\|g_F\|_\infty^2\le
  \sigma_{\cG}^2:=4\diam(M)^4$, independently of $E_n$.

  Because $E_n\asymp n^{2/3}\to\infty$, the Sobolev ball expands with~$n$. The covering numbers of $\cG_n$
  inherit from those of $\cF_{E_n}$ via the $2\diam(M)$-Lipschitz
  property of $y\mapsto d_M^2(y,\cdot)$. By the Birman-Solomjak
  bound (\Cref{lem:entropy}),
  \[
    \log N\bigl(\epsilon,\cG_n,L^2\bigr)
    \;\le\;K_0\,\frac{2\diam(M)(\sqrt{E_n}+C_0)}{\epsilon}
    \;\le\;\frac{K_1\sqrt{E_n}}{\epsilon}
  \]
  for a constant $K_1=K_1(M,I)$ and all $\epsilon>0$.

  Symmetrization and Dudley's chaining bound \citep[Lemma~19.34]{vanderVaart1998} give
  \begin{align}\label{eq:dudley-topo}
    \bE\Bigl[\sup_{g\in\cG_n}|(\bP-\bP_n)(g)|\Bigr]
    &\;\le\;\frac{2}{\sqrt{n}}\int_0^{b}
    \sqrt{\log N(\epsilon,\cG_n,L^2)}\,\dd\epsilon
    \;\le\;\frac{2}{\sqrt{n}}\int_0^{b}
    \sqrt{\frac{K_1\sqrt{E_n}}{\epsilon}}\,\dd\epsilon
    \nonumber\\
    &\;=\;\frac{2(K_1\sqrt{E_n})^{1/2}}{\sqrt{n}}\cdot
    2\sqrt{b}
    \;=\;\frac{4\sqrt{bK_1}\,E_n^{1/4}}{\sqrt{n}}.
  \end{align}
  With $E_n=D^2/\lambda_n$ and $\lambda_n=c_\lambda n^{-2/3}$:
  $E_n^{1/4}=(D/c_\lambda^{1/2})^{1/2}\,n^{1/6}$, so
  \begin{equation}\label{eq:expect-sup}
    \bE\Bigl[\sup_{g\in\cG_n}|(\bP-\bP_n)(g)|\Bigr]
    \;\le\;A_0\,n^{-1/3},
  \end{equation}
  where
  $A_0=4\sqrt{bK_1}\,(D/c_\lambda^{1/2})^{1/2}$
  depends only on $M$, $I$, and the constant $c_\lambda$ in
  $\lambda_n=c_\lambda n^{-2/3}$.

  To sharpen this to a high-probability bound, we apply
  Bousquet's version of Talagrand's inequality
  \citep[Theorem~12.5]{BoucheronLugosiMassart2013}
  for bounded functions gives that for every $t>0$,
  \[
    \bP\Bigl(\sup_{g\in\cG_n}|(\bP-\bP_n)(g)|
    \;\ge\;\bE[\cdots]+\sqrt{\frac{2\sigma_\cG^2\,t}{n}}
    +\frac{b\,t}{3n}\Bigr)
    \;\le\;e^{-t}.
  \]
  Set $t=\beta\,n^{1/3}$ for a constant $\beta>0$.  Then
  \begin{align*}
    \sqrt{\frac{2\sigma_\cG^2\,t}{n}}
    &=\sqrt{2\sigma_\cG^2\beta}\;n^{-1/3},\\
    \frac{b\,t}{3n}
    &=\frac{b\beta}{3}\;n^{-2/3}
    \;\le\;\frac{b\beta}{3}\;n^{-1/3}.
  \end{align*}
  Combining with \eqref{eq:expect-sup}, for all $n\ge 1$:
  \begin{equation}\label{eq:topo-concentration}
    \bP\Bigl(\sup_{g\in\cG_n}|(\bP-\bP_n)(g)|
    \;\ge\;A(\beta)\,n^{-1/3}\Bigr)
    \;\le\;\exp\bigl(-\beta\,n^{1/3}\bigr),
  \end{equation}
  where
  $A(\beta)=A_0+\sqrt{2\sigma_\cG^2\beta}+b\beta/3$.

  It remains to verify that the constants match.
  The topological gap requires
  $|(\bP-\bP_n)(g_{\hat F_n})|
  <\frac{1}{2}c_{\mathrm{topo}}\sqrt{\lambda_n}
  =\frac{1}{2}c_{\mathrm{topo}}\sqrt{c_\lambda}\,n^{-1/3}$.
  Choose $\beta>0$ as a \emph{fixed} constant independent
  of~$c_\lambda$, e.g.\
  $\beta=c_{\mathrm{topo}}^2/(32\sigma_\cG^2)$.
  Then $\sqrt{2\sigma_\cG^2\beta}
  =c_{\mathrm{topo}}/4$ and $b\beta/3$ are both
  constants (independent of~$c_\lambda$), so the
  condition $A(\beta)<\frac{1}{2}c_{\mathrm{topo}}
  \sqrt{c_\lambda}$ reduces to
  \[
    A_0
    \;<\;\frac{1}{2}c_{\mathrm{topo}}\sqrt{c_\lambda}
    -\frac{c_{\mathrm{topo}}}{4}
    -\frac{b\,c_{\mathrm{topo}}^2}{96\,\sigma_\cG^2}.
  \]
  Since $A_0=O(c_\lambda^{-1/4})\to 0$ while the
  right-hand side grows as
  $\frac{1}{2}c_{\mathrm{topo}}\sqrt{c_\lambda}\to\infty$,
  there exists
  $c_\lambda^*=c_\lambda^*(M,I,c_f,\sigma_0)$ such that for
  \emph{all} $c_\lambda\ge c_\lambda^*$ the condition is
  satisfied.
  Explicitly, $c_\lambda^*$ is the solution of
  \[
    4\sqrt{bK_1}\,(D/\sqrt{c_\lambda^*})^{1/2}
    \;=\;\frac{1}{2}c_{\mathrm{topo}}\sqrt{c_\lambda^*}
    -\frac{c_{\mathrm{topo}}}{4}
    -\frac{b\,c_{\mathrm{topo}}^2}{96\,\sigma_\cG^2},
  \]
  which yields $c_\lambda^*=O(c_{\mathrm{topo}}^{-4/3})$
  (finite since $c_{\mathrm{topo}}>0$ under the standing
  assumptions).
  Choose $c_\lambda=\max\bigl(C_{\mathrm{opt}},\,c_\lambda^*\bigr)$,
  where $C_{\mathrm{opt}}=C_{\mathrm{opt}}(s,M,I,c_f,\sigma_0^2,B)$
  is the oracle-optimal constant from Corollary~4.7.
  Since $c_\lambda^*<\infty$ (as $c_{\mathrm{topo}}>0$ under the
  standing assumptions), this choice satisfies
  $c_\lambda\ge c_\lambda^*$ while preserving the oracle rate
  $n^{-2s/(2s+1)}$ up to a constant factor.

  For $n\ge n_0$ (ensuring additionally that $2\lambda_n \Dir(m)\le
  \frac{1}{2}c_{\mathrm{topo}}\sqrt{\lambda_n}$):
  \[
    \bP\bigl([\hat F_n]\neq\alpha\bigr)
    \;\le\;\bP\Bigl(\sup_{g\in\cG_n}|(\bP-\bP_n)(g)|
    \ge\tfrac{1}{2}c_{\mathrm{topo}}\sqrt{\lambda_n}\Bigr)
    \;\le\;\exp\bigl(-\beta\,n^{1/3}\bigr)
    \;\to\;0,
  \]
  proving the class recovery with exponential rate.

  To see the origin of the quantitative threshold~$n_0$,
  note that the true class~$\alpha$ has oracle cost
  \[
    \inf_{F\in[\alpha]}[\cE(F)+\lambda_n J_1(F)]
    \le \cE(m)+\lambda_n J_1(m) = 2\lambda_n \Dir(m),
  \]
  while any wrong class $\alpha'$ has oracle cost at least
  $c_{\mathrm{topo}}\sqrt{\lambda_n}$ by Theorem~6.6(b).
  The correct class becomes advantageous for the oracle
  when $2\lambda_n \Dir(m) < c_{\mathrm{topo}}\sqrt{\lambda_n}$.
  For the estimator, the oracle inequality (Theorem~4.5)
  magnifies the upper bound by~$C_1$, so the sufficient
  condition becomes
  $2 C_1 \lambda_n \Dir(m)
  < c_{\mathrm{topo}}\sqrt{\lambda_n}$.
  With $\lambda_n=c\,n^{-2/3}$, solving for~$n$ requires
  $n > C_1^3 c^{3/2}
  (2\Dir(m)/c_{\mathrm{topo}})^3 =: n_0$.

  Part~(b): a constant map $F_0(x)\equiv y_0\in M$
  has $J_1(F_0)=0$ and
  \[
    \cE(F_0)=\int d_M^2(y_0,m(t))\,f(t)\,\dd t
    \le\diam(M)^2.
  \]
  Hence $V(\alpha_0)\le\diam(M)^2$.  Meanwhile,
  $V(\alpha)\ge\lambda_n\ell(\alpha)^2/L$ by
  Theorem~6.1(c).  If
  $\lambda_n\ge\diam(M)^2 L/\ell(\alpha)^2$, then
  $V(\alpha)\ge\diam(M)^2\ge V(\alpha_0)$.
\end{proof}


\section{Proofs for Section 7 (Obstructions)}

\subsection{Proof of Proposition~7.1 (Bienergy topological blindness)}\label{supp:prop73}
\label{proof:biharmonic}

\begin{proof}
  By Cartan's theorem (see \citealt[Theorem~1.6.6]{Jost2017}
  or \citealt[Chapter~12]{doCarmo1992}),
  every free homotopy class of loops in a compact Riemannian manifold
  contains a closed geodesic~$\gamma$.
  A closed geodesic satisfies $\nabla_t\gamma'=0$ by definition;
  hence $E_2(\gamma)=\frac{1}{2}\int_{S^1}\abs{\nabla_t\gamma'}^2=0$.
  Since $\gamma\in[\alpha]\cap H^2(S^1,M)$, the infimum is at most
  zero, and non-negativity of $E_2$ gives
  equality.
\end{proof}

\section{Supplementary extensions: kernel-smoothed estimator}

\citet{Schotz2022nonparametric} established convergence rates for
Fr\'echet regression with kernel smoothing on non-positively curved
targets.  The kernel-smoothed estimator below shares the same
convolution construction but differs in two respects.
First, we impose a Dirichlet energy penalty $\lambda_n\Dir(F)$,
which reduces the estimator to the harmonic map spline when $p=1$.
Second, our oracle inequality (\Cref{supp:ks-oracle}) holds for
compact targets of arbitrary curvature, whereas
\citet{Schotz2022nonparametric} requires non-positive curvature
throughout.

\subsection{Existence for the kernel-smoothed estimator (supplementary)}\label{supp:ks-exist}
\label{proof:ks-existence}

\begin{proposition}[Existence for the kernel-smoothed estimator]\label{prop:ks-exist}
  Let $\cX\subset\bR^p$ be a bounded Lipschitz domain, $(M,g)$ a
  compact Riemannian manifold, and $K_h$ a non-negative kernel with
  bounded support.  For any $\lambda_n>0$, a minimizer of
  $R_n^h(F)+\lambda_n\Dir(F)$ over $\{F\in H^1(\cX,\bR^D):F(x)\in M
  \text{ a.e.}\}$ exists.
\end{proposition}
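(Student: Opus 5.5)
We will use the direct method of the calculus of variations, following the scheme of the proof of \Cref{thm:existence}. The main differences are the domain $\cX\subset\bR^p$, the constraint holding only almost everywhere, and the data term $R_n^h(F)=n^{-1}\sum_i\int K_h(x-X_i)\,d_M^2(Y_i,F(x))\,dx$, which is now an integral functional rather than a set of point evaluations. Write $\Phi=R_n^h+\lambda_n\Dir$. Any constant map $F_0\equiv y_0$ lies in the admissible class and satisfies
\[
\Phi(F_0)\le \diam(M)^2\,n^{-1}\sum_i\int K_h(x-X_i)\,dx<\infty .
\]
Hence $\inf\Phi$ is finite and nonnegative. Take a minimizing sequence $\{F_k\}$ with $\Phi(F_k)\le\Phi(F_0)+1$. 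Then $\Dir(F_k)\le(\Phi(F_0)+1)/\lambda_n$. Since $F_k(x)\in M$ a.e. and $M\subset\bR^D$ is compact, we have $\norm{F_k}_{L^\infty}\le C_M$, and therefore $\norm{F_k}_{L^2(\cX)}\le C_M|\cX|^{1/2}$. Together these give a uniform bound on $\norm{F_k}_{H^1(\cX,\bR^D)}$.

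Since $\cX$ is a bounded Lipschitz domain, the Rellich--Kondrachov theorem gives a compact embedding $H^1(\cX)\hookrightarrow L^2(\cX)$. We can therefore extract a subsequence with $F_k\rightharpoonup F$ weakly in $H^1$, $F_k\to F$ strongly in $L^2$, and, after a further subsequence, $F_k\to F$ a.e. on $\cX$. The admissible class is closed under this convergence. For a.e. $x$ we have $F_k(x)\in M$ for all $k$ and $F_k(x)\to F(x)$, and since $M$ is closed, $F(x)\in M$ a.e. This step replaces the uniform-convergence argument of \Cref{thm:existence}, which is unavailable for $p\ge2$ because $H^1\not\hookrightarrow C^0$. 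This is precisely why the constraint is imposed a.e. and why the pointwise empirical risk is replaced by the smoothed version.

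Lower semicontinuity is handled term by term. For the energy, $\Dir(F)\le\liminf_k\Dir(F_k)$ because the extrinsic Dirichlet energy $\frac12\int|\nabla F|^2$ is a convex continuous quadratic functional on $H^1$, hence weakly lower semicontinuous, and it agrees with the intrinsic energy by \eqref{eq:energy-extrinsic}. For the data term, $y\mapsto d_M^2(Y_i,y)$ is continuous on $M$ and bounded by $\diam(M)^2$. So for a.e. $x$, $K_h(x-X_i)\,d_M^2(Y_i,F_k(x))\to K_h(x-X_i)\,d_M^2(Y_i,F(x))$, and the integrands are dominated by $\diam(M)^2K_h(x-X_i)$, which is integrable because $K_h$ is bounded with bounded support. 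Dominated convergence then gives $R_n^h(F_k)\to R_n^h(F)$. Combining the two terms, $\Phi(F)\le\liminf\Phi(F_k)=\inf\Phi$, so $F$ is a minimizer.

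The only delicate point is the closedness of the constraint set without uniform convergence. It is resolved by passing to an a.e.-convergent subsequence. We should check that this subsequence is extracted after the weak-$H^1$ one and that it does not break the identification of the weak limit; it does not, since strong $L^2$ limits and weak $H^1$ limits coincide. Measurability of $x\mapsto d_M^2(Y_i,F(x))$ follows from continuity of the distance composed with a measurable $F$. No curvature or injectivity-radius assumption enters, in line with the statement.
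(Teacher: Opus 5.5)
Your proposal is correct and follows essentially the same direct-method argument as the paper. Both use a uniform $H^1$ bound from compactness of $M$ plus the energy bound, extraction of a weak-$H^1$, strong-$L^2$ and then a.e.-convergent subsequence, closedness of $M$ to keep the limit admissible, weak lower semicontinuity of $\Dir$, and continuity of $R_n^h$. The differences are cosmetic: you get continuity of the data term by dominated convergence along the a.e.-convergent subsequence instead of asserting $L^2$-continuity, and you bound $\|F_k\|_{L^2}$ directly without invoking a Poincar\'e inequality.
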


\begin{proof}
  Since $F(x)\in M$ a.e.\ and $M\subset\bR^D$ is compact,
  $\norm{F}_{L^\infty}\le\diam(M)$, hence
  $\norm{F}_{L^2}\le\diam(M)\,\mathrm{vol}(\cX)^{1/2}$.  Combined
  with $\Dir(F)\le E_0$ (along a minimizing sequence), the Poincar\'e
  inequality \citep[Theorem~6.30]{Adams2003} on the bounded Lipschitz domain~$\cX$ gives a uniform
  bound $\norm{F}_{H^1}\le C(\diam(M),E_0,\cX)$.  The direct method
  of the calculus of variations then applies. Extract a weakly
  convergent subsequence $F_k\rightharpoonup F_\infty$ in $H^1$;
  use the compact embedding $H^1\hookrightarrow L^2$ to obtain
  strong $L^2$-convergence and hence a.e.\ convergence of a
  further subsequence; since $F_k(x)\in M$ a.e.\ and $M$ is closed
  in~$\bR^D$, the limit satisfies $F_\infty(x)\in M$ a.e.
  The functional $R_n^h$ is continuous in $L^2$
  (since $K_h$ is bounded and has compact support), and $\Dir(F)$ is
  weakly lower semicontinuous in~$H^1$ (being the squared seminorm).
\end{proof}


\subsection{Oracle inequality for the kernel-smoothed estimator (supplementary)}\label{supp:ks-oracle}
\label{proof:ks-oracle}

\begin{proposition}[Oracle inequality for the kernel-smoothed estimator]\label{prop:ks-oracle}
  Under the assumptions of \Cref{prop:ks-exist}, let $\hat F_n^h$
  be a minimizer of $R_n^h(F)+\lambda_n\Dir(F)$.  With
  $\lambda_n\asymp n^{-2/(2+p)}$ and $h\asymp n^{-1/(2+p)}$,
  \[
    \bE[\cE(\hat F_n^h)]
    \;\le\; C_1\bigl(\cE(F)+\lambda_n\Dir(F)\bigr)
    +\frac{C_2}{n\lambda_n^{p/2}}+C_7 h^2
  \]
  for every competitor $F\in H^1(\cX,M)$, and the resulting rate is
  $n^{-2/(2+p)}$.
\end{proposition}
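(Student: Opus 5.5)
The plan is to mirror the proof of \Cref{thm:oracle} and add one approximation step to handle the kernel smoothing. Write $R^h(F)=\bE[R_n^h(F)]=\int\!\int K_h(x-x')\,\bE[d_M^2(Y,F(x))\mid x']f(x')\,dx'\,dx$ for the smoothed population risk, and $\cE^h(F)=R^h(F)-R^h(m)$. First I would show that smoothing changes the excess risk only by an $O(h^2)$ term, uniformly over the relevant class:
\[
  |\cE^h(F)-\cE(F)|\le C h^2 .
\]
Since $K_h$ is a symmetric second-order kernel and $x'\mapsto\bE[d_M^2(Y,y)\mid x']f(x')$ is assumed smooth (via smoothness of $m$, $f$, and the conditional law), a second-order Taylor expansion of this function gives $C_7h^2$. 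The constant depends on $\diam(M)$ and derivative bounds, but not on $F$. This term is the source of $C_7h^2$ in the bound.

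Next comes the basic inequality (\Cref{lem:basic-ineq}) with $R_n^h$ in place of $R_n$. It gives
\[
  \cE^h(\hat F_n^h)+\lambda_n\Dir(\hat F_n^h)\le \cE^h(F)+\lambda_n\Dir(F)+\Psi_n^h(\hat F_n^h)-\Psi_n^h(F),
\]
where $\Psi_n^h$ is the centered process with summands $g_F(x_i,Y_i)=\int K_h(x-x_i)\,[d_M^2(Y_i,F(x))-d_M^2(Y_i,m(x))]\,dx$. Three properties of these summands are needed:
\begin{itemize}
  \item \textbf{Envelope.} They are bounded by $2\diam(M)^2$, because $\int K_h=1$.
  \item \textbf{Lipschitz property.} They are Lipschitz in $F$ with respect to the $L^2$ distance. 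By Cauchy--Schwarz, $|g_F-g_G|\le 2\diam(M)\,\|K_h(\cdot-x_i)\|_{L^2}\,\|F-G\|_{L^2}$. The factor $\|K_h\|_{L^2}\asymp h^{-p/2}$ is worrying, so I would instead measure covering numbers in $L^2(\mu)$ of the loss class and use the sup-norm metric on the smoothed fits.
  \item \textbf{Entropy.} On $\{\Dir\le E_0\}$ the Birman--Solomjak entropy in dimension $p$ with $s=1$ is $\log N(\epsilon)\lesssim((\sqrt{E_0}+C_0)/\epsilon)^{p}$.
\end{itemize}

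Then I would localize on $V(G)=\cE(G)+\lambda_n\Dir(G)$ and run the same variance-adapted Dudley bound and dyadic peeling as in the proof of \Cref{thm:oracle}. The Bernstein condition $\Var(g_F)\lesssim\cE(F)/\eta$ is inherited from \Cref{lem:excess-risk}, up to an additive $h^2$. The fixed point $\phi_n(\delta)=\delta^2$ should give $\delta_n^2\asymp 1/(n\lambda_n^{p/2})$. Collecting the peeling sum gives the stated bound, and inserting $F=m$ with $\lambda_n\asymp n^{-2/(2+p)}$ and $h^2\asymp n^{-2/(2+p)}$ makes all three terms of order $n^{-2/(2+p)}$.

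The main obstacle is the entropy step for $p\ge 2$. The Dudley integral $\int_0^\sigma\epsilon^{-p/2}\,d\epsilon$ diverges for $p\ge2$, which is the $2s\le p$ regime. The smoothing has to rescue it: the smoothed loss class is effectively a class of functions smooth at scale $h$, so I would truncate the chaining at $\epsilon\asymp h$ (or cover the smoothed class directly, which has entropy $\lesssim h^{-p}\log(1/\epsilon)$). I would try the first option, and accept that the variance term may pick up a logarithmic factor. If that factor cannot be avoided, I would restrict the claim to $p=2$ up to logs.
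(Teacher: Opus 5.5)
\textbf{Verdict: genuine gap.} Your argument proves the proposition for $p=1$, and for $p=2$ only up to logarithms; the case $p\ge 2$, which you correctly identify as the real difficulty, is not closed.

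Your architecture is the same as the paper's. It consists of an $O(h^2)$ smoothing bias from a second-order kernel (under an added smoothness assumption on $x'\mapsto\bE[d_M^2(Y,y)\mid x']f(x')$), the basic inequality for $R_n^h$, Birman--Solomjak entropy with exponent $p$, and variance-adapted Dudley plus peeling giving $\delta_n^2\asymp 1/(n\lambda_n^{p/2})$. The $h^{-p/2}$ factor you worry about is removed in the paper by Jensen: $\bE|\ell^h_{F_1}-\ell^h_{F_2}|^2\le 4\diam(M)^2C_X\int d_M^2(F_1,F_2)$. Taking $F_2=m$ in the same computation gives the Bernstein condition you want. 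That condition is sharper than the paper's variance bound $b(\cE(F)+2\sigma_0^2+\cdots)$, which does not shrink with $\delta$. For $p=1$ your argument is the paper's ``Case $p<2$''. The paper does not close $p\ge 2$ either. Its ``trivial'' bound $\phi_n(\delta)\le 2b/\sqrt n$ for any class with envelope $b$ is not valid: an envelope alone only gives $\bE\sup_g|(P_n-P)g|\le 2b$, and $b/\sqrt n$ is the single-function rate. So there is nothing there to borrow.

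Your two remedies do not reach the stated bound.
\begin{enumerate}
\item Cutting the chain at $\epsilon_0$ leaves an additive remainder of order $\epsilon_0$. Here $\epsilon_0$ is an $L^2(P)$ radius, not a spatial scale, and $h\asymp n^{-1/(2+p)}\gg\delta_n^2\asymp n^{-2/(2+p)}$. If you cut instead at $\epsilon_0\asymp\delta_n^2$ with the unsmoothed entropy $(D_\delta/\epsilon)^p$, you get the rate up to $\log^2 n$ for $p=2$ and a polynomial loss for $p\ge 3$.
\item The bound $h^{-p}\log(1/\epsilon)$ for the smoothed class is asserted, not derived. It is the entropy of an $h^{-p}$-dimensional parametric family. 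For a compactly supported $K\in C^r$, the smoothness argument ($\|K_h*\psi\|_{C^r}\lesssim h^{-r}\|\psi\|_\infty$) gives only $h^{-p}\epsilon^{-p/r}$ (for $r>p/2$). Its fixed point is $(h^{-p}/n)^{2r/(2r+p)}$, polynomially slower than $n^{-2/(2+p)}$. In addition, $K_h$ does nothing for the $y$-dependence of the losses. Even granting your bound, you would obtain $h^{-p}\log n/n$ in place of $C_2/(n\lambda_n^{p/2})$.
\end{enumerate}

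\textbf{What is missing} is an entropy estimate that uses the $H^1$ localization and the smoothing together. At the prescribed tuning $h\asymp\sqrt{\lambda_n}$, so $hD_\delta\asymp\delta$ and the effective dimension is $h^{-p}\asymp\lambda_n^{-p/2}$. This has to be carried through the nonlinear map $F\mapsto d_M^2(y,F(\cdot))$. The Lipschitz bound that both you and the paper use puts absolute values inside the convolution, and so discards exactly the cancellation the smoothing would supply.

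\textbf{Two smaller repairs.}
\begin{enumerate}
\item For $p\ge2$, the locality required by Lemma~\ref{lem:excess-risk} in your Bernstein step cannot come from a Sobolev embedding. On positively curved $M$ you need a global bound $\bE[d_M^2(Y,y)-d_M^2(Y,m(x))\mid x]\ge c\,d_M^2(y,m(x))$ for all $y\in M$.
\item Your Step~1 fails uniformly on a bounded $\cX$, already for $p=1$. In a layer of width $h$ along $\partial\cX$ the convolution is truncated and the Taylor argument breaks down, so $\cE^h(F)-\cE(F)$ can be of order $h$. What holds, and suffices, is a two-sided comparison $c\,\cE(F)-Ch^2\le\cE^h(F)\le C\,\cE(F)+Ch^2$.
\end{enumerate}
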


\begin{proof}
  The proof modifies the oracle inequality argument to account for the
  kernel smoothing and the weaker regularity ($s=1$, $p\ge 2$).

  We first bound the bias introduced by kernel smoothing.
  Define $R^h(F)=\bE[R_n^h(F)]$.
  Let $R(y\mid x)=\bE[d_M^2(Y,y)\mid X=x]$ and
  $g_y(x')=R(y\mid x')f_X(x')$.  Then
  \[
    R^h(F) - R(F) = \int_{\cX} \Bigl[\int_{\cX} K_h(x - x')
    g_{F(x)}(x')\dd x' - g_{F(x)}(x)\Bigr]\dd x.
  \]
  The inner integral is the kernel convolution $(K_h*g_{F(x)})(x)$.
  For a symmetric second-order kernel ($\int u K(u)\dd u=0$,
  $\int u^2 K(u)\dd u<\infty$), the Taylor expansion of $g_y$
  around~$x$ gives
  \[
    (K_h*g_y)(x)-g_y(x)
    =\frac{h^2}{2}\mu_2(K)\,\tr[\nabla^2 g_y(x)]
    +O(h^3),
  \]
  provided $g_y\in C^2(\cX)$ uniformly in~$y$.  This $C^2$ regularity
  is an additional assumption on the data-generating process, requiring
  that the map $x'\mapsto\bE[d_M^2(Y,y)\mid X=x']\,f_X(x')$ is
  twice continuously differentiable with second derivatives
  uniformly bounded in~$y$.  This condition is implied by assuming
  $f_X\in C^2(\cX)$ (A2) together with the regularity condition
  that the conditional distribution $P_{Y|X=x}$ varies in a $C^2$ manner
  in~$x$ (in the sense of $C^2$ dependence of the conditional moments).
  We state this as an explicit additional regularity condition for
  the kernel-smoothed oracle inequality below.

  Under this condition, $\abs{R^h(F)-R(F)}\le C_{\mathrm{bias}} h^2$
  uniformly in~$F$, where $C_{\mathrm{bias}}$ depends on the $C^2$ norms
  of $g_y$ and the kernel moments.

  The basic inequality adapts as follows.
  By optimality of $\hat F_n^h$ for the penalized kernel-smoothed
  empirical risk $R_n^h(F)+\lambda_n \Dir(F)$:
  \[
    R_n^h(\hat F_n^h)+\lambda_n \Dir(\hat F_n^h)
    \le R_n^h(F)+\lambda_n \Dir(F)
  \]
  Write $R_n^h(G)=R^h(G)-\nu_n^h(G)$ where
  $\nu_n^h(G)=R^h(G)-R_n^h(G)$ is the centered kernel-smoothed
  empirical process.  Adding and subtracting $R(m)$:
  \begin{align}
    \cE(\hat F_n^h)+\lambda_n \Dir(\hat F_n^h)
    &\le \cE(F)+\lambda_n \Dir(F)
    +\bigl[\nu_n^h(F)-\nu_n^h(\hat F_n^h)\bigr] \notag\\
    &\quad +\bigl[R^h(\hat F_n^h)-R(\hat F_n^h)\bigr]
    -\bigl[R^h(F)-R(F)\bigr]. \label{eq:basic-ks}
  \end{align}
  The last line consists of the kernel-smoothing bias terms.
  By the bias bound above, each is bounded by $C_{\mathrm{bias}}h^2$ in absolute value.
  Since $R^h(\hat F_n^h)-R(\hat F_n^h)$ can have either sign (the
  bias is not monotone), the absolute values are needed:
  \begin{equation}\label{eq:basic-ks2}
    \cE(\hat F_n^h)+\lambda_n \Dir(\hat F_n^h)
    \le \cE(F)+\lambda_n \Dir(F)
    +\bigl[\nu_n^h(F)-\nu_n^h(\hat F_n^h)\bigr]
    + 2C_{\mathrm{bias}}h^2.
  \end{equation}

  We now control the empirical process.
  The kernel-smoothed loss is
  $\ell_F^h(x,y)=\int K_h(x-x')\bigl[d_M^2(y,F(x'))-d_M^2(y,m(x'))\bigr]
  \dd x'$.
  Since $K_h\ge 0$ with $\int K_h=1$ and
  $|d_M^2(y,F(x'))-d_M^2(y,m(x'))|\le b=2\diam(M)^2$, we have
  $|\ell_F^h(x,y)|\le b$ for all $x,y,h$.

  For the covering numbers, we use the same procedure as
  in the proof of Theorem~4.5:
  $\{F:\Dir(F)\le E_0\}$ embeds into a ball in $H^1(\cX,\bR^D)$
  of radius $\sqrt{2E_0}+C_0$ via the Nash embedding
  (using $J_1=2\Dir$ and the $L^\infty$ bound from compactness of~$M$),
  and Birman--Solomjak gives
  \[
    \log N(\epsilon,\{F:\Dir(F)\le E_0\},L^2(\cX,\bR^D))
    \le K_1\bigl((\sqrt{E_0}+C_0)/\epsilon\bigr)^p.
  \]
  The kernel-smoothed loss inherits the $L^2$ metric of the
  function class. By Jensen's inequality (since $K_h$ is a
  probability density),
  \begin{align*}
    &|\ell_{F_1}^h(x,y)-\ell_{F_2}^h(x,y)|^2\\
    &\quad\le \int K_h(x-x')\bigl[d_M^2(y,F_1(x'))-d_M^2(y,F_2(x'))\bigr]^2\dd x'\\
    &\quad\le 4\diam(M)^2\int K_h(x-x')d_M^2(F_1(x'),F_2(x'))\dd x'.
  \end{align*}
  Taking the expectation over $(X,Y)\sim P$ and using
  $\int K_h(x-x')f_X(x)\dd x\le C_X$,
  \[
    \bE[|\ell_{F_1}^h-\ell_{F_2}^h|^2]
    \le 4\diam(M)^2 C_X\int d_M^2(F_1,F_2)\dd x.
  \]
  Thus the $L^2(P)$-covering numbers of the loss class are controlled
  by the $L^2$-covering numbers of the function class, with no
  $h$-dependent factor.

  The variance of each summand satisfies
  \[
    \Var(\ell_F^h(X,Y))\le\bE[(\ell_F^h)^2]
    \le b\,\bE[|\ell_F^h|]
    \le b\bigl(\cE(F)+2\sigma_0^2+2C_{\mathrm{bias}}h^2\bigr),
  \]
  using $|\ell_F^h|\le b$ and
  $\bE[\ell_F^h]=\cE^h(F)=\cE(F)+O(h^2)$.
  We now localize and bound the empirical process.
  Define the penalized excess risk
  $V^h(F)=\cE(F)+\lambda_n\Dir(F)$ and the localized class
  $\cG_\delta^h=\{\ell_F^h:V^h(F)\le\delta^2\}$.
  For $F\in\cG_\delta^h$, $\Dir(F)\le\delta^2/\lambda_n$,
  so the covering number bound gives
  $\log N(\epsilon,\cG_\delta^h,L^2(P))
  \le K_1\bigl(C'\delta/(\sqrt{\lambda_n}\,\epsilon)\bigr)^p$,
  with $L^2$-diameter $D_\delta=C'\delta/\sqrt{\lambda_n}$.
  The variance bound gives effective standard deviation
  $\sigma_\delta\le C''\sqrt{b}\,\delta$
  for $\delta$ large enough that $\cE(F)\le\delta^2$ dominates
  $\sigma_0^2+C_{\mathrm{bias}}h^2$.

  \textbf{Case $p<2$.}\quad
  The variance-adapted Dudley integral
  \citep[Corollary~8.3]{vandeGeer2000} truncates the entropy
  integral at $\sigma_\delta$ rather than $D_\delta$:
  \begin{equation}\label{eq:dudley-ks}
    \phi_n(\delta)
    :=\bE\Bigl[\sup_{G\in\cG_\delta^h}
    \abs{\nu_n^h(G)-\nu_n^h(m)}\Bigr]
    \le\frac{C_3}{\sqrt{n}}\int_0^{\sigma_\delta}
    \sqrt{K_1(D_\delta/\epsilon)^p}\,\dd\epsilon
    +\frac{C_3\,b}{n}\,K_1(D_\delta/\sigma_\delta)^p.
  \end{equation}
  Since $p<2$, the integral converges:
  \[
    \int_0^{\sigma_\delta}
    \sqrt{K_1}\,D_\delta^{p/2}\,\epsilon^{-p/2}\,\dd\epsilon
    =\frac{2\sqrt{K_1}}{2-p}\,
    D_\delta^{p/2}\,\sigma_\delta^{1-p/2}.
  \]
  Substituting $D_\delta=C'\delta/\sqrt{\lambda_n}$ and
  $\sigma_\delta=C''\sqrt{b}\,\delta$:
  \begin{equation}\label{eq:phi-ks}
    \phi_n(\delta)
    \le\frac{C_5\,\delta}{\sqrt{n}\,\lambda_n^{p/4}}
    +\frac{C_6}{n\,\lambda_n^{p/2}},
  \end{equation}
  where the second term uses
  $(D_\delta/\sigma_\delta)^p\le C/(b^{p/2}\lambda_n^{p/2})$.
  The critical radius $\delta_n$ satisfying $\phi_n(\delta_n)=\delta_n^2$
  gives $\delta_n^2\asymp 1/(n\,\lambda_n^{p/2})$.

  \textbf{Case $p\ge 2$.}\quad
  The entropy integral in~\eqref{eq:dudley-ks} diverges.
  However, the trivial Rademacher complexity bound
  $\phi_n(\delta)\le 2b/\sqrt{n}$
  (valid for any class with envelope~$b$)
  yields a critical radius $\delta_n^2=O(n^{-1/2})$.
  With the oracle choice
  $\lambda_n\asymp n^{-2/(2+p)}$,
  we have $n\lambda_n^{p/2}=n^{2/(2+p)}\le n^{1/2}$ for $p\ge 2$,
  so $1/\sqrt{n}\le 1/(n\lambda_n^{p/2})$,
  and the trivial critical radius is absorbed into
  the penalty--bias terms.

  In both cases, the peeling argument proceeds exactly as in
  Theorem~4.5
  (cf.~\eqref{eq:V-basic}--\eqref{eq:oracle-V}),
  with $V^h$ replacing $V$, $\nu_n^h$ replacing $\Psi_n$,
  and the bound~\eqref{eq:phi-ks}
  (or the trivial bound for $p\ge 2$)
  replacing~\eqref{eq:phi-bound}.
  The Bousquet concentration inequality \citep{Bousquet2002}
  controls the tail of $\sup|\nu_n^h|$ on each dyadic shell,
  and summation over shells yields
  \[
    \bE[\cE(\hat F_n^h)+\lambda_n\Dir(\hat F_n^h)]
    \le C_1\bigl(\cE(F)+\lambda_n \Dir(F)\bigr)
    +\frac{C_2}{n\lambda_n^{p/2}}+C_7 h^2.
  \]
  Taking the infimum over $F$ with $F=m$ (so $\cE(m)=0$ and
  $\Dir(m)\le B$) gives
  $\bE[\cE(\hat F_n^h)]\le C_1\lambda_n B+C_2/(n\lambda_n^{p/2})+C_7 h^2$.
  Setting $h\asymp n^{-1/(2+p)}$ and $\lambda_n\asymp n^{-2/(2+p)}$
  balances all three terms at rate $n^{-2/(2+p)}$.
\end{proof}


\section{Geometric data for the experimental manifolds}\label{supp:geom-data}

This section collects the explicit geometric quantities needed to
implement harmonic map regression on the five manifolds used in
Section~8 of the main text.

\medskip\noindent\textbf{Sphere $S^2$.}
The unit sphere is embedded as $S^2=\{x\in\bR^3:|x|=1\}$.
It has constant sectional curvature $\kappa=+1$, injectivity radius $\inj(S^2)=\pi$,
and convexity radius $\rho_{S^2}=\pi/2$.
The exponential map is given by
$\exp_y(v)=\cos(|v|)\,y+\sin(|v|)\,v/|v|$ for $v\in T_yS^2$,
and the logarithmic map is
$\log_y(q)=\theta\,(q-\langle q,y\rangle y)/|q-\langle q,y\rangle y|$,
where $\theta=\arccos(\langle y,q\rangle)$.
The geodesic distance is $d_{S^2}(y,q)=\arccos(\langle y,q\rangle)$.
The second fundamental form is $A(y)(u,v)=-\langle u,v\rangle\,y$ (the
outward normal), and the curvature factor is $\eta(1,r_0)=r_0/\tan(r_0)$.

\medskip\noindent\textbf{Hyperbolic plane $\bH^2$.}
We realize $\bH^2$ via the hyperboloid model
$\bH^2=\{x\in\bR^{2,1}:\langle x,x\rangle_{2,1}=-1,\;x_3>0\}$,
equipped with the Minkowski inner product
$\langle u,v\rangle_{2,1}=u_1 v_1+u_2 v_2-u_3 v_3$.
The sectional curvature is $\kappa=-1$, and both the injectivity radius and
convexity radius are infinite: $\inj(\bH^2)=\rho_{\bH^2}=+\infty$.
Since $\bH^2$ is NPC, the curvature factor satisfies $\eta=1$ unconditionally.
The exponential map is
$\exp_y(v)=\cosh(|v|)\,y+\sinh(|v|)\,v/|v|$ for
$v\in T_y\bH^2$ with $|v|=\sqrt{\langle v,v\rangle_{2,1}}$,
and the logarithmic map is
$\log_y(q)=\theta\,(q+\langle y,q\rangle_{2,1}y)/|q+\langle y,q\rangle_{2,1}y|$,
where $\theta=\operatorname{arccosh}(-\langle y,q\rangle_{2,1})$.
The geodesic distance is
$d_{\bH^2}(y,q)=\operatorname{arccosh}(-\langle y,q\rangle_{2,1})$.

\medskip\noindent\textbf{Symmetric positive-definite matrices $\mathrm{Sym}^+(2)$.}
The space $\mathrm{Sym}^+(2)$ is equipped with the affine-invariant metric
$\langle U,V\rangle_P=\tr(P^{-1}U\,P^{-1}V)$.
All sectional curvatures are non-positive ($\kappa\le 0$), so the space is NPC with
$\eta=1$ and $\inj(\mathrm{Sym}^+(2))=+\infty$.
The exponential map, logarithmic map, and geodesic distance are
\[
  \exp_P(V)=P^{1/2}\exp\!\bigl(P^{-1/2}VP^{-1/2}\bigr)\,P^{1/2},\quad
  \log_P(Q)=P^{1/2}\log\!\bigl(P^{-1/2}QP^{-1/2}\bigr)\,P^{1/2},
\]
and $d(P,Q)=\bigl\|\log(P^{-1/2}QP^{-1/2})\bigr\|_F$, respectively.

\medskip\noindent\textbf{Rotation group $SO(3)$.}
We equip $SO(3)$ with the bi-invariant metric
$\langle U,V\rangle_R=\frac{1}{2}\tr(U^\top V)$ on
$T_R SO(3)\cong\{R\Omega:\Omega^\top=-\Omega\}$.
The sectional curvature is constant at $\kappa=1/8$ under this metric,
with injectivity radius $\inj(SO(3))=\pi$ and convexity radius $\rho_{SO(3)}=\pi/2$.
The fundamental group is $\pi_1(SO(3))=\bZ/2$.
The exponential map is $\exp_R(\Omega)=R\,\mathrm{expm}(\Omega)$, where
$\mathrm{expm}$ denotes the matrix exponential, and the logarithmic map is
$\log_R(Q)=R\,\mathrm{logm}(R^\top Q)$.
The geodesic distance is
$d(R,Q)=|\theta|/\sqrt{2}$, where
$\theta=\arccos((\tr(R^\top Q)-1)/2)$.

\medskip\noindent\textbf{Flat torus $T^2$.}
The flat torus is defined as the quotient $T^2=\bR^2/(2\pi\bZ)^2$ with the flat metric
inherited from~$\bR^2$.
The sectional curvature vanishes ($\kappa=0$), and the injectivity and convexity radii are
$\inj(T^2)=\pi$ and $\rho_{T^2}=\pi/2$, respectively.
The fundamental group is $\pi_1(T^2)=\bZ^2$.
The exponential map is $\exp_y(v)=(y+v)\bmod 2\pi$, and the logarithmic map
$\log_y(q)$ returns the representative of $q-y$ in
$(-\pi,\pi]^2$.
The geodesic distance is $d_{T^2}(y,q)=|(q-y)\bmod(-\pi,\pi]|$.


\section{Additional numerical experiments}\label{sec:additional-numerics}

This section contains the full numerical results table for the
comprehensive simulation study (Section~8 of the main text), as well as
the curvature sensitivity and winding experiments.

\subsection{Comprehensive simulation results}

Table~\ref{tab:comprehensive} reports the full MISE values for all five
manifolds ($S^2$, $\bH^2$, $\mathrm{Sym}^+(2)$, $SO(3)$-wind, $T^2$)
across sample sizes $n\in\{100,\allowbreak 200,\allowbreak
400,\allowbreak 800\}$.
The experimental setup is described in Section~8 of the main text.

\begin{table}[H]
\centering\footnotesize
\setlength{\tabcolsep}{3.5pt}
\caption{MISE (standard error) across five manifolds, averaged over 15 replications.
  The lowest MISE in each setting is \textbf{bolded}.
  See Section~8 of the main text for experimental details.}\label{tab:comprehensive}
\begin{tabular}{l l c c c c}
  \hline
  Manifold & Method & $n=100$ & $n=200$ & $n=400$ & $n=800$ \\
  \hline
  $S^2$ & Proposed & \textbf{0.0108} (0.0011) & \textbf{0.0050} (0.0003) & \textbf{0.0034} (0.0002) & \textbf{0.0033} (0.0002) \\
   & Extrinsic spline & 0.0180 (0.0013) & 0.0124 (0.0012) & 0.0103 (0.0007) & 0.0091 (0.0004) \\
   & TV-Fr\'echet reg. & 0.0172 (0.0014) & 0.0110 (0.0006) & 0.0065 (0.0003) & 0.0053 (0.0002) \\
   & Fr\'echet reg. & 0.2772 (0.0204) & 0.2337 (0.0106) & 0.2479 (0.0246) & 0.2415 (0.0071) \\
   & Geodesic reg. & 0.1312 (0.0128) & 0.0829 (0.0061) & 0.0825 (0.0122) & 0.0744 (0.0039) \\
  \hline
  $\bH^2$ & Proposed & \textbf{0.0422} (0.0023) & \textbf{0.0239} (0.0016) & \textbf{0.0171} (0.0013) & \textbf{0.0115} (0.0006) \\
   & Extrinsic spline & 0.2037 (0.0422) & 0.1778 (0.0438) & 0.1109 (0.0169) & 0.1175 (0.0193) \\
   & TV-Fr\'echet reg. & 0.0746 (0.0107) & 0.0346 (0.0026) & 0.0262 (0.0016) & 0.0175 (0.0009) \\
   & Fr\'echet reg. & 0.9654 (0.0095) & 0.9696 (0.0070) & 0.9567 (0.0033) & 0.9577 (0.0020) \\
   & Geodesic reg. & 1.1957 (0.0078) & 1.2149 (0.0045) & 1.2080 (0.0016) & 1.2065 (0.0016) \\
  \hline
  $\mathrm{Sym}^+(2)$ & Proposed & \textbf{0.0060} (0.0004) & \textbf{0.0029} (0.0002) & \textbf{0.0029} (0.0002) & 0.0029 (0.0001) \\
   & Extrinsic spline & 0.0065 (0.0011) & 0.0040 (0.0003) & 0.0036 (0.0003) & 0.0030 (0.0001) \\
   & TV-Fr\'echet reg. & 0.0078 (0.0006) & 0.0038 (0.0002) & 0.0033 (0.0003) & \textbf{0.0024} (0.0001) \\
   & Fr\'echet reg. & 0.0604 (0.0008) & 0.0606 (0.0007) & 0.0580 (0.0005) & 0.0580 (0.0005) \\
   & Geodesic reg. & 0.0609 (0.0009) & 0.0610 (0.0007) & 0.0590 (0.0005) & 0.0587 (0.0005) \\
  \hline
  $SO(3)$-wind & Proposed & 0.0009 (0.0001) & \textbf{0.0003} (0.0000) & \textbf{0.0002} (0.0000) & \textbf{0.0002} (0.0000) \\
   & Extrinsic spline & \textbf{0.0008} (0.0001) & 0.0006 (0.0000) & 0.0005 (0.0000) & 0.0005 (0.0000) \\
   & TV-Fr\'echet reg. & 0.0014 (0.0001) & 0.0007 (0.0000) & 0.0004 (0.0000) & 0.0002 (0.0000) \\
   & Fr\'echet reg. & 0.1867 (0.0256) & 0.1632 (0.0180) & 0.1199 (0.0077) & 0.1116 (0.0086) \\
   & Geodesic reg. & 0.5496 (0.0429) & 0.6136 (0.0151) & 0.6882 (0.0408) & 0.8529 (0.1251) \\
  \hline
  $T^2$ & Proposed & \textbf{0.0005} (0.0000) & \textbf{0.0003} (0.0000) & \textbf{0.0002} (0.0000) & \textbf{0.0002} (0.0000) \\
   & Extrinsic spline & 0.0457 (0.0007) & 0.0442 (0.0006) & 0.0453 (0.0003) & 0.0456 (0.0002) \\
   & TV-Fr\'echet reg. & 0.0009 (0.0001) & 0.0005 (0.0000) & 0.0003 (0.0000) & 0.0002 (0.0000) \\
   & Fr\'echet reg. & 0.0781 (0.0026) & 0.0722 (0.0028) & 0.0664 (0.0016) & 0.0647 (0.0009) \\
   & Geodesic reg. & 0.0729 (0.0007) & 0.0713 (0.0004) & 0.0711 (0.0003) & 0.0707 (0.0002) \\
  \hline
\end{tabular}
\end{table}

\subsection{Curvature sensitivity on $S^2(R)$}

We vary the sectional curvature $\kappa=1/R^2$ on spheres $S^2(R)$ of
varying radius.  Table~\ref{tab:curvature-supp} reports the normalized
MISE ($=\mathrm{MISE}/R^2$, angular scale) for $\kappa\in\{4,1,0.25,0.04\}$;
Figure~\ref{fig:curvature-supp} shows the same experiment graphically.
The proposed estimator's normalized MISE is nearly $\kappa$-invariant at
large~$n$, consistent with the asymptotically curvature-free oracle
constant.  TV Fr\'echet regression is approximately $1.5$--$5\times$
larger but also relatively insensitive to~$\kappa$, while the extrinsic
spline plateaus near $0.02$ across the tested curvature values.

\begin{table}[H]
\centering\footnotesize
\setlength{\tabcolsep}{3.5pt}
\caption{Normalized MISE ($\mathrm{MISE}/R^2$) on $S^2(R)$ for varying curvature $\kappa=1/R^2$ (15 replications, $\sigma=0.25$).
  The lowest MISE in each setting is \textbf{bolded}.}\label{tab:curvature-supp}
\begin{tabular}{r l c c c}
  \hline
  $\kappa$ & Method & $n=200$ & $n=400$ & $n=800$ \\
  \hline
  $4$ & Proposed & \textbf{0.0215} (0.0013) & \textbf{0.0214} (0.0019) & 0.0226 (0.0010) \\
      & Extrinsic spline & 0.0255 (0.0025) & 0.0236 (0.0014) & 0.0243 (0.0008) \\
      & TV-Fr\'echet reg. & 0.0323 (0.0018) & 0.0241 (0.0021) & \textbf{0.0190} (0.0013) \\
      & Fr\'echet reg. & 0.3258 (0.0140) & 0.2914 (0.0112) & 0.2820 (0.0066) \\
      & Geodesic reg. & 0.1445 (0.0172) & 0.1153 (0.0155) & 0.0961 (0.0054) \\
  \hline
  $1$ & Proposed & \textbf{0.0060} (0.0004) & \textbf{0.0045} (0.0002) & \textbf{0.0045} (0.0002) \\
      & Extrinsic spline & 0.0120 (0.0008) & 0.0097 (0.0007) & 0.0095 (0.0005) \\
      & TV-Fr\'echet reg. & 0.0118 (0.0005) & 0.0078 (0.0002) & 0.0052 (0.0003) \\
      & Fr\'echet reg. & 0.2669 (0.0149) & 0.2293 (0.0100) & 0.2277 (0.0084) \\
      & Geodesic reg. & 0.1008 (0.0126) & 0.0772 (0.0072) & 0.0673 (0.0028) \\
  \hline
  $0.25$ & Proposed & \textbf{0.0015} (0.0001) & \textbf{0.0010} (0.0000) & \textbf{0.0012} (0.0001) \\
      & Extrinsic spline & 0.0199 (0.0008) & 0.0199 (0.0007) & 0.0190 (0.0003) \\
      & TV-Fr\'echet reg. & 0.0049 (0.0002) & 0.0028 (0.0001) & 0.0022 (0.0001) \\
      & Fr\'echet reg. & 0.2677 (0.0313) & 0.1988 (0.0146) & 0.1948 (0.0085) \\
      & Geodesic reg. & 0.1187 (0.0169) & 0.0652 (0.0019) & 0.0755 (0.0057) \\
  \hline
  $0.04$ & Proposed & \textbf{0.0004} (0.0000) & \textbf{0.0002} (0.0000) & \textbf{0.0002} (0.0000) \\
      & Extrinsic spline & 0.0195 (0.0008) & 0.0192 (0.0002) & 0.0194 (0.0002) \\
      & TV-Fr\'echet reg. & 0.0020 (0.0001) & 0.0012 (0.0000) & 0.0009 (0.0000) \\
      & Fr\'echet reg. & 0.2183 (0.0166) & 0.1882 (0.0073) & 0.1876 (0.0064) \\
      & Geodesic reg. & 0.1107 (0.0204) & 0.0708 (0.0042) & 0.0715 (0.0037) \\
  \hline
\end{tabular}
\end{table}

\begin{figure}[H]
\centering
\includegraphics[width=\textwidth]{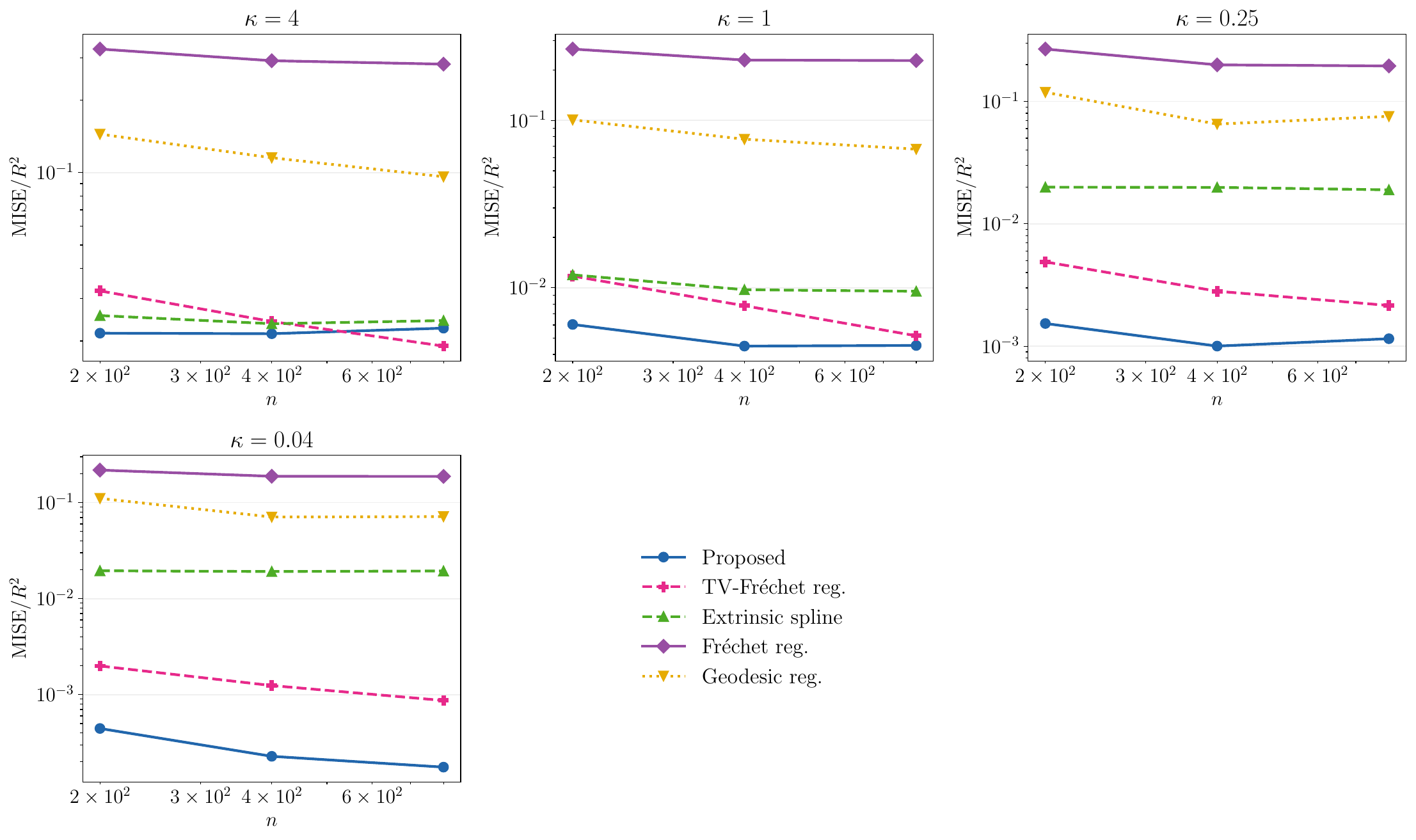}
\caption{Normalized MISE versus sample size on $S^2(R)$ for varying curvature $\kappa=1/R^2$.}
\label{fig:curvature-supp}
\end{figure}

\subsection{Winding experiment on $S^2$}

This experiment tests recovery of a curve that wraps five times around the
equator of~$S^2$.  For a curve that wraps five times, local kernel
averaging mixes distinct wraps unless bandwidth is very small.  At
$n=400$, the proposed estimator has MISE $\approx 0.008$, while
extrinsic spline and global Fr\'echet stagnate near $0.03$ and $1.72$,
respectively.  Geodesic regression has the largest
MISE ($\approx 1.98$--$2.31$), as the single-geodesic model cannot
represent a winding curve.
TV Fr\'echet converges but at a slower rate ($0.021$ at $n=400$,
$0.011$ at $n=800$).

\begin{table}[H]
\centering\footnotesize
\setlength{\tabcolsep}{3.5pt}
\caption{MISE (standard error) on $S^2$ with a 5-wrap equatorial curve
  (15 replications, $\sigma=0.25$).
  The lowest MISE in each setting is \textbf{bolded}.}\label{tab:winding-supp}
\begin{tabular}{l c c c c}
  \hline
  Method & $n=100$ & $n=200$ & $n=400$ & $n=800$ \\
  \hline
  Proposed & 0.0704 (0.0060) & \textbf{0.0243} (0.0018) & \textbf{0.0083} (0.0005) & \textbf{0.0054} (0.0002) \\
  Extrinsic spline & \textbf{0.0512} (0.0039) & 0.0331 (0.0018) & 0.0295 (0.0021) & 0.0230 (0.0013) \\
  TV-Fr\'echet reg. & 0.0848 (0.0043) & 0.0451 (0.0021) & 0.0214 (0.0010) & 0.0107 (0.0004) \\
  Fr\'echet reg. & 1.7145 (0.0148) & 1.7278 (0.0211) & 1.7200 (0.0145) & 1.7003 (0.0195) \\
  Geodesic reg. & 1.8174 (0.0815) & 1.7948 (0.0819) & 1.9778 (0.1245) & 2.3133 (0.1094) \\
  \hline
\end{tabular}
\end{table}

\subsection{Wind direction data}

Table~\ref{tab:wind-supp} reports the test-set prediction errors for
the wind direction case study described in Section~8.2 of the main text.
Five methods are compared on $n=649$ hourly NOAA observations from June,
using scattered-block cross-validation for hyperparameter selection.

\begin{table}[H]
\centering
\caption{Test-set prediction error for the wind direction data ($n=649$, June).
  MSGE: mean squared geodesic error (rad$^2$); RMGE: root-mean geodesic error (degrees).}\label{tab:wind-supp}
\begin{tabular}{l c c c}
  \hline
  Method & MSGE (rad$^2$) & RMGE (${}^\circ$) & Median (${}^\circ$) \\
  \hline
  \bf Proposed & \bf 0.275 & \bf 30.1 & \bf 18.8 \\
  Extrinsic spline & 0.832 & 52.3 & 21.5 \\
  TV-Fr\'echet reg. & 0.897 & 54.3 & 40.0 \\
  Fr\'echet reg. & 1.130 & 60.9 & 34.7 \\
  Geodesic reg. & 1.772 & 76.3 & 40.3 \\
  \hline
\end{tabular}
\end{table}

\FloatBarrier
\putbib
\end{bibunit}

\end{document}